\newcommand{\id}{ \operatorname{id} }											
\newcommand{\diam}{ \operatorname{diam} }										
\newcommand{\cont}[2][]{
	\ifthenelse{\equal{#1}{}}%
		{\mathcal{C}\left(#2\right)}%
		{\mathcal{C}^{#1}\left(#2\right)}%
	}
\newcommand{\inter}[2][]{
	\ifthenelse{\equal{#1}{}}%
		{#2^{\circ}}%
		{#1^{\circ}}%
	}
\newcommand{\bndr}[2][]{
	\ifthenelse{\equal{#1}{}}%
		{\partial #2}%
		{\partial \left( #1 \right)}%
	}
\newcommand{\ext}[2][]{
	\ifthenelse{\equal{#1}{}}%
		{\operatorname{ext} #2}%
		{\operatorname{ext} \left( #1 \right)}%
	}
\newcommand{\modulus}{\operatorname{mod}}										
\newcommand{\norm}[1]{ \left\Vert #1 \right\Vert }								
\newcommand{\abs}[1]{ \left\lvert {#1} \right\rvert }							
\newcommand{\Exp}[1]{ \operatorname{exp}\left( #1 \right) }	
\newcommand\restr[2]{															
					{ 									
  					\left.\kern-\nulldelimiterspace 	
 					#1 									
  					\vphantom{\big|} 					
  					\right|_{#2} 						
  					}}
\newcommand{\aint}[2][]{%
	\ifthenelse{\equal{#1}{}}%
					{%
\mathchoice%
      {\mathop{\kern 0.2em\vrule width 0.6em height 0.69678ex depth -0.58065ex
              \kern -0.8em \intop}\nolimits_{\kern -0.45em#2}^{#1}}%
      {\mathop{\kern 0.1em\vrule width 0.5em height 0.69678ex depth -0.60387ex
              \kern -0.6em \intop}\nolimits_{#2}^{#1}}%
      {\mathop{\kern 0.1em\vrule width 0.5em height 0.69678ex depth -0.60387ex
              \kern -0.6em \intop}\nolimits_{#2}^{#1}}%
      {\mathop{\kern 0.1em\vrule width 0.5em height 0.69678ex depth -0.60387ex
              \kern -0.6em \intop}\nolimits_{#2}^{#1}}}%
					{%
\mathchoice%
      {\mathop{\kern 0.2em\vrule width 0.6em height 0.69678ex depth -0.58065ex
              \kern -0.8em \intop}\nolimits_{\kern -0.45em#1}^{#2}}%
      {\mathop{\kern 0.1em\vrule width 0.5em height 0.69678ex depth -0.60387ex
              \kern -0.6em \intop}\nolimits_{#1}^{#2}}%
      {\mathop{\kern 0.1em\vrule width 0.5em height 0.69678ex depth -0.60387ex
              \kern -0.6em \intop}\nolimits_{#1}^{#2}}%
      {\mathop{\kern 0.1em\vrule width 0.5em height 0.69678ex depth -0.60387ex
              \kern -0.6em \intop}\nolimits_{#1}^{#2}}}}
\DeclareMathOperator*{\esssup}{ess\,sup}
\DeclareMathOperator*{\aplim}{aplim}												
\newcommand{\md}[2][]{																
	\ifthenelse{\equal{#1}{}}%
					{ \operatorname{md}(#2)	}%
					{ \operatorname{md}(#1)(#2) 	}}
\newcommand{\apmd}[2][]{															
	\ifthenelse{\equal{#1}{}}%
					{ \operatorname{F}_{#2}	}%
					{ \operatorname{F}_{#1}(#2) 	}}
\newcommand{\D}{\,\mathrm{d}}
\newtheorem{theorem}{Theorem}[section]
\newtheorem{proposition}[theorem]{Proposition}
\newtheorem{corollary}[theorem]{Corollary}
\newtheorem{lemma}[theorem]{Lemma}
\newtheorem{definition}[theorem]{Definition}
\newtheorem{remark}[theorem]{Remark}
\newtheorem{openproblem}{Open Problem}
\numberwithin{equation}{section}						
\begin{document}
\selectlanguage{\british}

\title[Uniformization Of Metric Surfaces]{Uniformization Of Metric Surfaces Using Isothermal Coordinates}

\author{Toni Ikonen}

\address{University of Jyvaskyla \\ Department of Mathematics and Statistics \\
P.O. Box 35 (MaD) \\
FI-40014 University of Jyvaskyla}
\email{toni.m.h.ikonen@jyu.fi}

\thanks{The author was partially supported by the Academy of Finland, project number 308659}

\subjclass[2010]{Primary 30L10, Secondary 30C65, 28A75, 51F99, 52A38.}
\keywords{Quasiconformal, uniformization, surface, reciprocality, isothermal, approximate metric differential}


\begin{abstract}
We establish a uniformization result for metric surfaces --- metric spaces that are topological surfaces with locally finite Hausdorff $2$-measure.

Using the geometric definition of quasiconformality, we show that a metric surface that can be covered by quasiconformal images of Euclidean domains is quasiconformally equivalent to a Riemannian surface. To prove this, we construct suitable isothermal coordinates.
\end{abstract}

\maketitle\thispagestyle{empty}

\begingroup
\hypersetup{hidelinks}
\tableofcontents
\endgroup

\section{Introduction}\label{sec:intro}
The Riemann mapping theorem states that given a simply connected proper subdomain $U$ of $\mathbb{R}^{2}$, there exists a conformal map $\phi \colon \mathbb{D} \rightarrow U$, where $\mathbb{D}$ is the Euclidean disk. Recall that conformal maps preserve angles but they do not necessarily preserve lengths of paths or areas. We say that domains $U$ and $V$ are \emph{conformally equivalent} if there exists a conformal map from $U$ to $V$.

When the topology of $U$ is more complicated, so is the classification result. For example, if $U = A( 1, \, r ) \subset \mathbb{R}^{2}$ in an Euclidean annulus of inner radius $1$ and outer radius $r > 1$, two such annuli $A( 1, \, r )$ and $A( 1, \, r' )$ are conformally equivalent if and only if $r = r'$.

If we relax the definition of conformal map to allow for distortion of infinitesimal balls in a uniformly controlled manner, we obtain the class of quasiconformal maps. With this relaxation, it turns out that for every pair of outer radii $1 < r$ and $1 < r'$, there exists a quasiconformal map from $A( 1, \, r )$ onto $A( 1, \, r' )$.

Such a map takes the infinitesimal Euclidean balls in $A( 1, \, r )$ to infinitesimal ellipses in $A( 1, \, r' )$, and the distortion is determined from the eccentricity of the ellipses. For a fixed $r > 1$, the distortion of every quasiconformal map $\eta \colon A( 1, \, r ) \rightarrow A( 1, \, r' )$ has a lower bound $C_{r'}$ that blows up as $r' \rightarrow \infty$.

Similar questions can be considered when the topology type of the surface is more complicated. This is the domain of Teichmüller theory of surfaces; see for example \cite{lehto,imayoshi,hubbard}. Roughly speaking, the Teichmüller theory classifies Riemann surfaces up to conformal maps, and quasiconformal maps measure how far apart two Riemann surfaces are from one another. We are interested in metric surfaces therefore it is more natural to consider Riemannian surfaces instead of Riemann surfaces.

Quasiconformal maps also arise when we try to find isothermal coordinates in a given Riemannian surface. Indeed, given a Riemannian surface $( Y, \, g )$ and a smooth chart $f \colon V \rightarrow U \subset \mathbb{R}^{2}$, by considering a smaller open set $V' \subset V$, we may assume without loss of generality that $f$ is quasiconformal. We interpret the Riemannian metric $g$ on $V$ as a particular choice of an ellipse at each point of $V$. Then the chart $f$ maps these ellipses to ellipses in $U$. We ask whether it is possible to find a diffeomorphism $\eta \colon U \rightarrow W \subset \mathbb{R}^{2}$ such that the particular ellipses in $V$ are mapped to Euclidean balls by $\eta \circ f$.

The existence of such a diffeomorphism $\eta$ is guaranteed by the measurable Riemann mapping theorem; see for example \cite{ahlfors-bers,astala}. When we apply this theorem to the ellipse field of $f$, the composition $\eta \circ f$ maps the ellipses in $V$ to Euclidean balls. Classically, the coordinates $\eta \circ f$ are called \emph{isothermal coordinates}.

We are interested in two questions. Given a metric space $Y$ homeomorphic to a surface, what conditions guarantee that there exists a Riemannian surface $Z$ and a quasiconformal map $f \colon Y \rightarrow Z$? Moreover, is it possible to find a good notion of isothermal coordinates on $Y$?

One approach to these questions is to use \emph{quasisymmetric} parametrizations. Quasisymmetric maps are homeomorphisms that distort all shapes in a controlled manner, not just infinitesimal ones.

We recall some known results. Suppose that $Y$ is homeomorphic to $\mathbb{S}^{2}$ and $Y$ is Ahlfors $2$-regular (see \Cref{sec:recip:qs}). Then the Bonk--Kleiner theorem \cite{bonk-kleinerthm} shows that the surface $Y$ is quasisymmetrically equivalent to $\mathbb{S}^{2}$ if and only if $Y$ is linearly locally contractible (see \Cref{sec:recip:qs}). This approach has applications to group theory; see \cite{bonk-kleinerthm,cannon:bonk-kleiner,uniformization} and references therein. Similar results hold when $Y$ is compact and orientable \cite{compactqs} or if $Y$ is homeomorphic to a planar domain \cite{qs:koebe}. If the surface $Y$ is locally Ahlfors $2$-regular and locally linearly locally contractible (as defined in \cite{locallyqs}), the surface $Y$ has quasisymmetric coordinates \cite{locallyqs}. Such coordinates are a possible metric analog of smooth coordinates on $Y$.

We use a different approach motivated by \cite{uniformization}. Theorem 1.4 of \cite{uniformization} states the following. Suppose that $Y$ is homeomorphic to $\mathbb{R}^{2}$ and has locally finite Hausdorff $2$-measure. Then there exists a quasiconformal map $f \colon Y \rightarrow U$ into a planar domain $U \subset \mathbb{R}^{2}$ if and only if $Y$ is a \emph{reciprocal disk}. We use the \emph{geometric} definition of quasiconformality (see \eqref{eq:outer:dilatation}). Geometrically quasiconformal maps distort families of paths in a controlled manner; to quantify this we use the \emph{(path) modulus} (see \eqref{eq:modulus}).

The space $Y$ is a reciprocal disk if it satisfies the following two conditions. First, we require that there exists a constant $\kappa > 0$ such that for any set $Q \subset Y$ homeomorphic to $\left[ 0, \, 1 \right]^{2}$, the moduli of paths $M_{\rightarrow}$ and $M_{\uparrow}$ joining the opposite sides of $\partial Q$ in $Q$ control each other in the following quantitative way: for such a $Q$,
\begin{equation*}
	\kappa^{-1}
	\leq
	M_{\rightarrow} \cdot M_{\uparrow}
	\leq
	\kappa.
\end{equation*}
This condition is defined precisely in \eqref{upper:bound} and \eqref{lower:bound}. The second condition requires that points have zero modulus in a suitable sense (see \eqref{point:zero:modulus}). As an example, if the Hausdorff $2$-measure of the metric balls of $Y$ of radius $r$ are bounded from above by a constant multiple of $r^{2}$, then the space $Y$ is reciprocal \cite[Theorem 1.6]{uniformization}.

One of the reasons why Theorem 1.4 of \cite{uniformization} requires that the Hausdorff $2$-measure is locally finite is that the distance $d$ of $Y$ uniquely determines this measure. Moreover, in order to guarantee that quasiconformal maps have good properties, the spaces in question need to have a lot of rectifiable paths with respect to modulus; the local finiteness of the Hausdorff $2$-measure combined with coarea estimates and planar topology guarantee a wealth of rectifiable paths in $Y$. Usually, the existence of large families of rectifiable paths is guaranteed by strong assumptions on the geometry of the space via Poincaré inequalities \cite{PIspaces,1poincareandmodulus,currentspoincare}, synthetic curvature lower bounds \cite{sturm1,sturm2,lott-villani}, or strong control on the measure of balls (Ahlfors regularity) and strong connectivity properties of the space via modulus estimates (Loewner property) \cite{controlledgeometry,locallyboundedgeometry}.

We are interested in generalizing the notion of reciprocal disks to surfaces $Y$ with non-trivial fundamental group. Suppose that $Y$ is a metric space with locally finite Hausdorff $2$-measure homeomorphic to some surface. In this paper, such a $Y$ is called a \emph{metric surface}, and we say that $Y$ is \emph{locally reciprocal} if it can be covered by reciprocal disks. We know from Theorem 1.4 of \cite{uniformization} that local reciprocality of $Y$ is equivalent to requiring that every point of $Y$ has a neighbourhood that is quasiconformally homeomorphic to a domain in $\mathbb{R}^{2}$.

Suppose that $V \subset Y$ is a reciprocal disk and $f \colon V \rightarrow U \subset \mathbb{R}^{2}$ is a quasiconformal map. We ask whether it is possible to find a quasiconformal map $\eta \colon U \rightarrow W \subset \mathbb{R}^{2}$ such that $\eta \circ f$ is isothermal in some meaningful way; compare this to the Riemannian setting outlined above.

It turns out that we can associate to $U$ and $f$ a measurable field of convex bodies whose shapes govern the distortion of $f$ (and the distortion of $f$ controls the shapes of the bodies). Then we obtain a quasiconformal map $\eta$ from the measurable Riemann mapping theorem by associating to the field of convex bodies a canonical choice of a field of ellipses and requiring that $\eta$ maps the ellipses to Euclidean balls. This idea was used in \cite{uniformization} to minimize the distortion of $f$, where the particular choice of an ellipse was the \emph{John ellipse}. The key idea of \cite{upper:modulus:bound} was to use the \emph{distance ellipse} --- a canonical choice of an ellipse related to the Banach--Mazur distance (see for example \cite[Chapter 37]{enoughsymmetries}, \cite{upper:modulus:bound}, or \Cref{sec:uniformization}). We use the latter approach and the resulting map $\eta \circ f$ will be our notion of \emph{isothermal coordinates} (see \Cref{sec:uniformization}). In general, the map $\eta \circ f$ is not conformal but its distortion is still well-controlled, and what we prove is that the atlas of isothermal coordinates yield a natural choice of ellipses (conformal structure) on $Y$ and thus a natural choice of a Riemannian distance $d_{G}$ on $Y$. Analogously to the classical uniformization result, the distance $d_{G}$ is constructed in such a way that $( Y, \, d_{G} )$ is complete and has constant curvature $-1$, $0$ or $1$ (see \Cref{sec:uniformization}). We prove that the homeomorphism $f = \id \colon ( Y, \, d ) \rightarrow ( Y, \, d_{G} )$ is quasiconformal (see \Cref{sec:uniformization:main:theorem}). We also prove that $f$ has the smallest distortion among all choices of Riemannian surface $( Z, \, d_{G'} )$ and quasiconformal map $h \colon ( Y, \, d ) \rightarrow ( Z, \, d_{G'} )$ (see \Cref{sec:isothermal:Riemannian}).

To get a better understanding of the above, we consider the Euclidean annulus $Y = A( 1, \, r )$ with an outer radius $r > 1$ and distance $d$ induced by different $\norm{ \cdot }_{p}$-norms for $\infty \geq p \geq 1$. First, consider the case $p = 2$. In this case, the Riemannian surface $( Y, \, d_{G} ) = ( A(1, \, r), \, d_{G} )$ is complete and has constant curvature $-1$. We emphasize that since $( Y, \, d )$ is not complete and it has constant curvature $0$, the distance $d_{G}$ is not the Euclidean distance $d = \norm{ \cdot }_{2}$. However, the map
\begin{equation*}
	f = \id \colon ( A( 1, \, r ), \, \norm{ \cdot }_{2} ) \rightarrow ( A(1, \, r), \, d_{G} )
\end{equation*}
is conformal. The existence of such a distance $d_{G}$ is the Riemannian version of the classical uniformization result.

Fix some $\infty \geq p \geq 1$ with $p \neq 2$ and consider the metric surface $( Y, \, d ) = ( A(1, \, r), \, \norm{ \cdot }_{p} )$. It turns out that, even if $p \neq 2$, the Riemannian surface
\begin{equation*}
	( Y, \, d_{G} ) = ( A(1, \, r), \, d_{G} )
\end{equation*}
obtained from our methods is the same surface as in the case $p = 2$. However, now the map
\begin{equation*}
	f_{p} = \id \colon ( A( 1, \, r ), \, \norm{ \cdot }_{p} ) \rightarrow ( A(1, \, r), \, d_{G} )
\end{equation*}
is not conformal; the distortion of $f_{p}$ equals a constant $D_{p} > 1$. In fact, the results of \Cref{sec:isothermal:Riemannian} imply that the distortion $D_{p}$ is the \emph{Banach--Mazur distance} between the Banach spaces $( \mathbb{R}^{2}, \, \norm{ \cdot }_{p} )$ and $( \mathbb{R}^{2}, \, \norm{ \cdot }_{2} )$ (see \cite[Chapter 37]{enoughsymmetries}). The distortion is bounded from above by $\sqrt{2}$ and equals $\sqrt{2}$ if $p = 1$ or $p = \infty$. As $p \rightarrow 2$, the distortion $D_{p}$ converges to one.

Suppose that $( Z, \, d_{G'} )$ is a Riemannian surface quasiconformally equivalent to $( Y, \, d_{G} )$. We prove in \Cref{sec:isothermal:Riemannian} that the distortion of any quasiconformal map
\begin{equation*}
	h
	\colon
	( A(1, \, r), \, \norm{ \cdot }_{p} )
	\rightarrow
	( Z, \, d_{G'} )
\end{equation*}
is bounded from below by $D_{p}$. Our methods imply that the Riemannian surface $( Y, \, d_{G} )$ is unique up to conformal maps in the following sense: The distortion of $h$ equals $D_{p}$ if and only if the composition $h \circ f_{p}^{-1} \colon ( Y, \, d_{G} ) \rightarrow ( Z, \, d_{G'} )$ is conformal (see \Cref{sec:isothermal:Riemannian}). Therefore, for $r' > 1$, a quasiconformal map
\begin{equation*}
	h
	\colon
	( A(1, \, r), \, \norm{ \cdot }_{p} )
	\rightarrow
	( A(1, \, r'), \, \norm{ \cdot }_{2} )
\end{equation*}
can have the optimal distortion $D_{p}$ only if $r' = r$.
\subsection{Preliminaries}\label{sec:intro:definitions}
In this paper, $Y$ refers to a topological surface, i.e., a $2$-manifold that is connected. We do not assume that $Y$ is orientable. Moreover, we assume that $Y$ has a distance $d$ that induces the surface topology. We do not assume that $( Y, \, d )$ is complete. The Hausdorff $2$-measure induced by $d$ is denoted by $\mathcal{H}^{2}_{d}$. For topological reasons $\mathcal{H}^{2}_{d}$ is positive on open subsets of $Y$ \cite[Chapter 7]{dimension}.

The triple $Y_{d} = (Y, \, d, \, \mathcal{H}^{2}_{d})$ is a \emph{metric surface} if $Y$ and $d$ are as above and $\mathcal{H}^{2}_{d}$ is locally finite. If there is no chance for confusion, we omit the subscript $d$ from $Y_{d}$ and from the measure $\mathcal{H}^{2}_{d}$.

Consider a metric surface $Y$. The \emph{modulus} of a family of paths $\Gamma$ in $Y$ is
\begin{equation}
	\label{eq:modulus}
	\modulus \Gamma
	=
	\inf
	\int_{Y}
		\rho^{2}
	\D \mathcal{H}^{2},
\end{equation}
where the infimum is taken over all Borel functions $\rho \colon Y \rightarrow \left[0, \, \infty\right]$ that satisfy $\int_{ \gamma } \rho \D s \geq 1$ for all locally rectifiable paths $\gamma \in \Gamma$. If $E$, $F$ and $G$ are subsets of $Y$, the notation $\Gamma( E, \, F; \, G )$ refers to the family of paths joining $E$ to $F$ in $G$.

A homeomorphism $\phi \colon Y \rightarrow Z$ between metric surfaces is \emph{quasiconformal (in the geometric sense)} if there exists a constant $K \geq 1$ such that
\begin{align}
	\label{eq:outer:dilatation}
	K^{-1}
	\modulus \Gamma
	&\leq
	\modulus \phi \Gamma
	\leq
	K
	\modulus \Gamma
\end{align}
for all path families $\Gamma$ in $Y$, where $\phi \Gamma = \left\{ \phi \circ \gamma \mid \gamma \in \Gamma \right\}$. The smallest constant $K \geq 1$ for which \eqref{eq:outer:dilatation} holds for all path families is called the \emph{maximal dilatation} of $\phi$ and is denoted by $K( \phi )$. The map $\phi$ is $K$-quasiconformal if $K( \phi ) \leq K$. We say that a $1$-quasiconformal map is \emph{conformal (in the geometric sense)}.

The \emph{inner dilatation} of a quasiconformal map $\phi$ is the smallest constant $K_{I}( \phi ) \geq 1$ for which the latter inequality in \eqref{eq:outer:dilatation} holds for all path families. The \emph{outer dilatation} of $\phi$ is $K_{I}( \phi^{-1} )$ and it is denoted by $K_{O}( \phi )$. The \emph{distortion} of $\phi$ is $\left[ K_{O}( \phi )K_{I}( \phi ) \right]^{1/2}$.

We recall known results from the simply connected setting. To that end, we say that a compact set $Q \subset Y$ in a metric surface is a \emph{quadrilateral} if it is homeomorphic to the closed square $\left[0, \, 1\right]^{2}$ with its boundary arcs $\xi_{1}$, $\xi_{2}$, $\xi_{3}$ and $\xi_{4}$ labelled in cyclic order.

We say that an open subset $V \subset Y$ of a metric surface is a \emph{disk} if it is homeomorphic to $\mathbb{R}^{2}$. A disk $V \subset \mathbb{R}^{2}$ is \emph{reciprocal} (a \emph{reciprocal disk}) if there exists a constant $\kappa = \kappa( V ) > 0$ such that for every quadrilateral $Q \subset V$ the following three conditions hold: First, the product of the moduli of opposite sides has the upper bound
\begin{align}
	\label{upper:bound}
	\modulus \Gamma\left( \xi_{1}, \, \xi_{3}; \, Q \right)
	\modulus \Gamma\left( \xi_{2}, \, \xi_{4}; \, Q \right)
	\leq
	\kappa.
\end{align}
Secondly, the product has the lower bound
\begin{equation}
	\label{lower:bound}
	\frac{ 1 }{ \kappa }
	\leq
	\modulus \Gamma\left( \xi_{1}, \, \xi_{3}; \, Q \right)
	\modulus \Gamma\left( \xi_{2}, \, \xi_{4}; \, Q \right).
\end{equation}
Thirdly, whenever $x \in Q$ and $R > 0$ such that $V \setminus B( x, \, R ) \neq \emptyset$, then
\begin{equation}
	\label{point:zero:modulus}
	\lim_{ r \rightarrow 0^{+} }
	\modulus \Gamma\left( \overline{B}( x, \, r ) \cap V, \, V \setminus B( x, \, R ); \, \overline{B}( x, \, R ) \cap V \right)
	=
	0.
\end{equation}
The lower bound \eqref{lower:bound} holds for a uniform $\kappa$ for quadrilaterals in metric surfaces \cite{uniformization:lower:reciprocal}. Therefore only \eqref{upper:bound} and \eqref{point:zero:modulus} can fail for a disk in a metric surface. Reciprocal disks have the following characterization.
\begin{theorem}[Theorems 1.4 and 1.5, \cite{uniformization}]\label{thm:Kai}
A disk $V$ in a metric surface is reciprocal if and only if there exists a quasiconformal map $f \colon V \rightarrow U \subset \mathbb{R}^{2}$. Moreover, the map $f$ can be taken to be $2$-quasiconformal.
\end{theorem}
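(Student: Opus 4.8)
The plan is to prove the two implications separately, with essentially all of the work in the direction ``reciprocal $\Rightarrow$ quasiconformal''. For the easy direction, suppose $f\colon V\to U\subset\mathbb{R}^{2}$ is $K$-quasiconformal. I would use that planar domains are reciprocal: for a topological quadrilateral $Q'\subset\mathbb{R}^{2}$ the conformal modulus satisfies $\modulus\Gamma(\xi_{1}',\xi_{3}';Q')\cdot\modulus\Gamma(\xi_{2}',\xi_{4}';Q')=1$ by reciprocity of extremal length (map conformally to a Euclidean rectangle and exhaust), while \eqref{point:zero:modulus} holds there because points have vanishing $2$-capacity in the plane. Since $f$ sends a quadrilateral $Q\subset V$ to a quadrilateral $f(Q)\subset U$ and carries the opposite-side path families to the corresponding ones, applying \eqref{eq:outer:dilatation} to these families turns the Euclidean identity into \eqref{upper:bound} and \eqref{lower:bound} with $\kappa=K^{2}$ and transports \eqref{point:zero:modulus} from $f(Q)$ back to $Q$ up to the factor $K$; hence $V$ is reciprocal.

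For the converse I would first reduce to quadrilaterals: exhaust $V$ by an increasing sequence $Q_{1}\subset Q_{2}\subset\cdots$ of quadrilaterals with $\bigcup_{n}\inter{Q_{n}}=V$, and reduce to producing for each $n$ a homeomorphism $f_{n}$ of $Q_{n}$ onto a Euclidean rectangle $R_{n}$ with $K(f_{n})\le 2$ independently of $n$; after normalizing the images of three marked boundary points, the $f_{n}$ form a normal family by the standard compactness properties of quasiconformal maps, and a subsequential limit $f\colon V\to U$ is then the desired $2$-quasiconformal homeomorphism onto a planar domain, with \eqref{point:zero:modulus} ensuring the limit does not collapse. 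On a fixed quadrilateral $Q$ with sides $\xi_{1},\dots,\xi_{4}$ in cyclic order, set $\Gamma_{1}=\Gamma(\xi_{1},\xi_{3};Q)$ and $\Gamma_{2}=\Gamma(\xi_{2},\xi_{4};Q)$; local finiteness of $\mathcal{H}^{2}$ makes both moduli finite, and \eqref{lower:bound} --- which by \cite{uniformization:lower:reciprocal} holds on any metric surface with a universal constant, so it is not truly a hypothesis --- makes them positive. Let $\rho_{1}$ be the unique $L^{2}$ minimizer for $\modulus\Gamma_{1}$ (it exists: the admissible densities, closed up to Fuglede's lemma, form a closed convex set on which $\rho\mapsto\int\rho^{2}$ is strictly convex and coercive), and define the potential $u(x)=\inf_{\gamma}\int_{\gamma}\rho_{1}\,\mathrm{d}s$ over rectifiable paths in $Q$ from $\xi_{1}$ to $x$; define $v$ from the extremal density $\rho_{2}$ of $\modulus\Gamma_{2}$, scaled so that it records position along the level sets of $u$, with target a rectangle $R$ whose aspect ratio encodes $\modulus\Gamma_{1}$. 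One checks that $u,v$ are continuous (here \eqref{point:zero:modulus} rules out oscillation near points), that $u=0$ on $\xi_{1}$ and $u=1$ on $\xi_{3}$ by the Beurling-type characterization of extremal densities, that $u,v\in N^{1,2}(Q)$ with $\rho_{i}$ an upper gradient realizing the energy $\int_{Q}\rho_{i}^{2}=\modulus\Gamma_{i}$, and takes the candidate map to be $f=(u,v)\colon Q\to R$.

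The heart of the argument is to show $f$ is a homeomorphism onto $R$ with $K(f)\le 2$. For injectivity I would prove each level set $u^{-1}(s)$ is a connected set separating $\xi_{1}$ from $\xi_{3}$, hence a member of $\Gamma_{2}$, so that it meets every level set $v^{-1}(t)$; a monotonicity/degree argument then forces $u^{-1}(s)\cap v^{-1}(t)$ to be a single point, and continuity together with invariance of domain promotes this to a homeomorphism $Q\to R$ restricting to $\partial Q\to\partial R$. The point condition \eqref{point:zero:modulus} reappears here, to guarantee the level sets are genuine arcs and that $f$ collapses no nondegenerate continuum. For the dilatation I would use that $f$ is differentiable almost everywhere (from the $N^{1,2}$ regularity) and compare, along the $u$- and $v$-foliations via the coarea inequality, the pointwise Jacobian $J_{f}$ and the operator norm $|Df|$ with $\rho_{1}$ and $\rho_{2}$; feeding in the extremality of the $\rho_{i}$ and the universal reciprocal lower bound should produce the pointwise distortion inequalities $|Df|^{2}\le 2J_{f}$ and $J_{f}\le 2\,\ell(Df)^{2}$ almost everywhere, hence $K(f)\le 2$.

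The step I expect to be the main obstacle is exactly this last one, together with the homeomorphism claim: controlling the level-set geometry of the modulus-extremal potentials so that $f$ is genuinely injective, and then squeezing the dilatation estimate down to the \emph{universal} constant $2$ rather than one depending on $\kappa$ --- the latter is where the precise quantitative form of the reciprocal inequalities, and not merely their qualitative validity, is indispensable. The remaining ingredients --- existence of extremal densities, Newtonian regularity of the potentials, the exhaustion, and the normal-family passage to the limit --- I expect to be comparatively routine.
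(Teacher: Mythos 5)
This statement is imported from \cite{uniformization} (Theorems 1.4 and 1.5 there); the present paper offers no proof of it, only the citation, so there is no internal argument to compare against. Your outline is a broadly reasonable sketch of the strategy in the cited reference: push reciprocity of planar extremal length through the modulus inequality for the easy direction, and for the hard direction build a uniformizing map quadrilateral-by-quadrilateral from an extremal density and a conjugate potential, then exhaust $V$ and pass to a normal-family limit.

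Two caveats are worth recording. First, in the cited source the conjugate $v$ is constructed as the conjugate function of $u$ (via a flux integral over level sets), and only afterwards identified with the extremal object of the dual modulus problem; constructing $u$ and $v$ independently from $\rho_{1}$ and $\rho_{2}$ and then ``scaling $v$ to record position along the level sets of $u$'', as you propose, requires a compatibility argument between the two foliations that you have not supplied and which is not automatic. Second --- and you acknowledge this --- the homeomorphism claim and the universal constant $2$ are exactly where all the difficulty lives: the level-set topology hinges on \eqref{point:zero:modulus} and the reciprocal upper bound \eqref{upper:bound} in ways that go well beyond a generic ``monotonicity/degree argument'', and the pointwise estimate $|Df|^{2}\le 2\,J_{f}$ is a carefully tuned inequality rather than a soft consequence of coarea plus extremality. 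As a road map to the argument in \cite{uniformization} the proposal is in the right spirit; as a proof it leaves the load-bearing steps unsubstantiated.
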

Romney proved that $f$ can be taken to be a quasiconformal map with $K_{O}( f ) \leq \frac{ \pi }{2}$ and $K_{I}( f ) \leq \frac{ 4 }{ \pi }$. Romney's result is sharp due to Example 2.2 of \cite{uniformization}.

A central goal of this paper is to generalize \Cref{thm:Kai} to cases where $Y$ has a non-trivial fundamental group.
\begin{definition}\label{def:local:reciprocality}
A metric surface $Y$ is \emph{locally reciprocal} if it can be covered by reciprocal disks.
\end{definition}
By \Cref{thm:Kai}, a metric surface is locally reciprocal if and only if it can be covered by quasiconformal images of subdomains of $\mathbb{R}^{2}$. As a consequence, local reciprocality is a quasiconformal invariant.

We also recall Theorem 1.6 of \cite{uniformization}: A disk $D \subset Y_{d}$ is reciprocal if there exists a constant $C = C(D) > 0$ such that for every $x \in D$ and $r > 0$
\begin{equation}
	\label{eq:local:mass:upper:bound}
	\mathcal{H}^{2}_{d}( B(x, \, r) \cap D )
	\leq
	C r^{2}.
\end{equation}
We see from this result that Riemannian surfaces, or metric surfaces that have locally $2$-bounded geometry in the sense of Heinonen--Koskela \cite{controlledgeometry,locallyboundedgeometry} are locally reciprocal.

A local mass upper bound of the type \eqref{eq:local:mass:upper:bound} is not a necessary condition for local reciprocality: there exists a distance $d_{w}$ on $\mathbb{R}^{2}$ for which the identity map $\id \colon \mathbb{R}^{2} \rightarrow \mathbb{R}^{2}_{d_{w}}$ is conformal and that the Hausdorff $2$-density of $\mathcal{H}^{2}_{d_{w}}$ at the origin is $\infty$; see \cite[Section 5.1]{RRR} for details.

There are metric surfaces that are not locally reciprocal. An example is obtained by collapsing a small geodesic disk of a Riemannian surface to a point $y$ and endowing the quotient space $Y$ with the quotient metric \cite[Example 5.9]{collapsedisk}. The space $Y \setminus \left\{ y \right\}$ is locally reciprocal but every domain containing $y$ violates \eqref{point:zero:modulus}. See also Example 2.1 of \cite{uniformization}.
\subsection{Main results}\label{sec:intro:main}
We introduce three main theorems in this section. The first result provides a characterization of locally reciprocal surfaces. The latter results are applications.
\begin{restatable}{theorem}{thmuniformizationgeneral}\label{thm:uniformization:general}
A metric surface $Y_{d}$ is locally reciprocal if and only if there exists a complete Riemannian surface $Z_{d_{G}}$ of curvature $-1$, $0$, or $1$ together with a $\frac{\pi}{2}$-quasiconformal map
\begin{equation}
	\label{eq:uniformization:map}
	\phi
	\colon
	Z_{d_{G}}
	\rightarrow
	Y_{d},
\end{equation}
such that the outer dilatation $K_{O}( \phi ) \leq \frac{4}{\pi}$, the inner dilatation $K_{I}( \phi ) \leq \frac{ \pi }{ 2 }$, and the distortion $\left[ K_{O}( \phi )K_{I}( \phi ) \right]^{1/2} \leq \sqrt{2}$. The bounds on the dilatations and the distortion are best possible.
\end{restatable}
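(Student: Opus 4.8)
The plan is to prove both directions of the equivalence, the substance lying in the forward implication. For the reverse implication, suppose $\phi\colon Z_{d_G}\to Y_d$ is a quasiconformal map as in the statement. A complete Riemannian surface is locally bi-Lipschitz to the Euclidean plane, so every point of $Z_{d_G}$ has a relatively compact neighbourhood $D$ satisfying the mass bound \eqref{eq:local:mass:upper:bound}; by Theorem~1.6 of \cite{uniformization} each such $D$ is a reciprocal disk, so $Z_{d_G}$ is locally reciprocal. Local reciprocality is a quasiconformal invariant: if $V$ is a reciprocal disk and $g\colon V\to V'$ is quasiconformal, then \Cref{thm:Kai} provides a quasiconformal chart $h\colon V\to U\subset\mathbb R^2$, hence $h\circ g^{-1}\colon V'\to U$ is quasiconformal and \Cref{thm:Kai} makes $V'$ reciprocal. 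Therefore $Y_d=\phi(Z_{d_G})$ is locally reciprocal.

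For the forward implication, assume $Y_d$ is locally reciprocal, fix a cover of $Y$ by reciprocal disks $V_i$, and choose quasiconformal charts $f_i\colon V_i\to U_i\subset\mathbb R^2$ from \Cref{thm:Kai}. The key step is to upgrade each $f_i$ to an \emph{isothermal} chart. Using the local finiteness of $\mathcal H^2_d$ together with the quasiconformality of $f_i$, the inverse $f_i^{-1}$ has a well-defined approximate metric differential almost everywhere on $U_i$, whose unit ball is a symmetric convex body $K_u\subset\mathbb R^2$; this is the content of the earlier sections on the approximate metric differential. Assigning to $K_u$ its distance ellipse $\mathcal D(K_u)$ produces a measurable field of ellipses on $U_i$, equivalently a Beltrami coefficient $\mu_i$ with $\lVert\mu_i\rVert_\infty<1$, the bound following from $K(f_i)<\infty$. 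Let $\eta_i\colon U_i\to W_i$ be the quasiconformal solution of the associated Beltrami equation furnished by the measurable Riemann mapping theorem, and put $\psi_i=\eta_i\circ f_i\colon V_i\to W_i$.

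Two properties of the $\psi_i$ finish the proof. First, the distance ellipse depends $\mathrm{GL}(2)$-equivariantly on $K_u$, and on overlaps the approximate metric differentials of $f_i^{-1}$ and $f_j^{-1}$ are intertwined by the almost-everywhere linear differential of the quasiconformal transition $f_j\circ f_i^{-1}$; hence the straightened ellipse fields agree on overlaps, so the transition maps $\psi_j\circ\psi_i^{-1}$ are quasiconformal with conformal differential almost everywhere, and therefore conformal, or anticonformal in the non-orientable case. Thus $\{(\psi_i,W_i)\}$ is a conformal, respectively dianalytic, atlas, turning $Y$ into a Riemann surface, respectively a Klein surface, $\Sigma$. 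Second, because $\eta_i$ straightens $\mathcal D(K_u)$, the unit ball of the approximate metric differential of $\psi_i^{-1}$ on $W_i$ is a symmetric convex body whose distance ellipse is a round disk, and a convex-geometry estimate — with the square as the unique extremal body — yields pointwise inequalities which the modulus machinery of the earlier sections, together with a standard localization argument, promotes to the global bounds $K_O(\psi_i^{-1})\le\frac4\pi$ and $K_I(\psi_i^{-1})\le\frac\pi2$. Now invoke the classical uniformization theorem: in the orientable case the universal cover of $\Sigma$ is $\widehat{\mathbb C}$, $\mathbb C$, or $\mathbb H$, and the induced complete conformal metric of constant curvature $1$, $0$, or $-1$ descends to $\Sigma$; in the non-orientable case one passes to the oriented double cover, notes that the anticonformal deck involution preserves the unique complete constant-curvature conformal metric, and descends it. Writing $Z_{d_G}$ for the resulting Riemannian surface, the metric $d_G$ induces the original topology, so $\phi=\mathrm{id}\colon Z_{d_G}\to Y_d$ is a homeomorphism coinciding in each chart $\psi_i$ with $\psi_i^{-1}$ up to the conformal coordinate change on $Z_{d_G}$; hence $K_O(\phi)\le\frac4\pi$, $K_I(\phi)\le\frac\pi2$, and the distortion is at most $\sqrt2$. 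Sharpness is witnessed by the Euclidean plane $(\mathbb R^2,\lVert\cdot\rVert_\infty)$ (or an annulus $A(1,r)$ with $\lVert\cdot\rVert_1$ or $\lVert\cdot\rVert_\infty$), for which the unit ball is the extremal square at every point: a direct computation gives equality in all three bounds, and the lower bound for the distortion of maps into arbitrary Riemannian surfaces proved in \Cref{sec:isothermal:Riemannian} shows the constants $\frac4\pi$, $\frac\pi2$, $\sqrt2$ cannot be improved; compare Example~2.2 of \cite{uniformization}.

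The main obstacle is making the first step of the forward implication rigorous: that a quasiconformal chart of a metric surface admits a well-behaved approximate metric differential, that the distance-ellipse/Beltrami construction produces genuinely conformal transition maps, and that it achieves the asserted pointwise dilatation bounds with the square as the extremal shape. This is exactly where the local finiteness of $\mathcal H^2_d$ (with coarea estimates and planar topology, guaranteeing a wealth of rectifiable curves), the reciprocality hypotheses, the regularity theory for the Beltrami equation, and the convex geometry of the distance ellipse all enter; once this is in place, the gluing into a Riemann or Klein surface and the appeal to classical uniformization are comparatively routine.
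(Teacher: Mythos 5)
Your proposal follows essentially the same route as the paper: (i) the reverse implication is the quasiconformal invariance of local reciprocality; (ii) the forward implication constructs isothermal charts via the distance-ellipse/Banach--Mazur Beltrami coefficient, solves the Beltrami equation, verifies the transition maps are conformal, applies the classical uniformization theorem, and invokes the Banach--Mazur estimates to get the pointwise dilatation bounds, promoted to global bounds via the essential-sup characterization; (iii) sharpness is derived from the minimality of isothermal parametrizations together with an explicit $\ell^\infty$ example. This is precisely the architecture the paper uses (\Cref{prop:minimal:TFAE}, \Cref{maximal:atlas:metric}, \Cref{cor:essentiallymaintheorem}, \Cref{thm:isothermal:surface}).

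One step is glossed over. In the non-orientable case you write that ``the anticonformal deck involution preserves the unique complete constant-curvature conformal metric,'' but when the double cover is the sphere this uniqueness is false: the round metrics in the conformal class of $\widehat{\mathbb{C}}$ form a M\"obius-group orbit, and a fixed-point-free anticonformal involution need not preserve an arbitrarily chosen one. What is needed is the non-trivial classical fact that every fixed-point-free anticonformal involution of $\widehat{\mathbb{C}}$ is M\"obius-conjugate to the antipodal map (which \emph{does} preserve the corresponding round metric); the paper proves this by an explicit normalization of the coefficients in \Cref{coveringspaces}. For the hyperbolic and Euclidean cases your argument is fine since there conformal deck transformations are automatically isometries (or similarities forced to be isometries by the group being fixed-point-free).

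Finally, a small difference in the sharpness argument: you exhibit a single example $(\mathbb{R}^2,\lVert\cdot\rVert_\infty)$, which is adequate for the theorem's literal claim, while the paper goes further and constructs, within each quasiconformal equivalence class, a metric surface achieving all three bounds simultaneously, by gluing an $\ell^\infty$-cube field to a background constant-curvature metric. That extra uniformity is not required for the stated result but is what the paper actually proves.
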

We prove a stronger version of \Cref{thm:uniformization:general} in \Cref{sec:isothermal:Riemannian}; see \Cref{thm:isothermal:surface} and \Cref{cor:essentiallymaintheorem}.

The classical uniformization theorem and \Cref{thm:uniformization:general} imply the following.
\begin{corollary}\label{thm:unifomization:general:cor}
Let $Y_{d}$ be a locally reciprocal metric surface. Then every disk $D \subset Y_{d}$ is reciprocal.
\end{corollary}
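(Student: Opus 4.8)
The plan is to reduce the statement to \Cref{thm:Kai}: it suffices to exhibit, for an arbitrary disk $D \subset Y_{d}$, a quasiconformal map of $D$ onto a planar domain.

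First I would apply \Cref{thm:uniformization:general} to the locally reciprocal surface $Y_{d}$, obtaining a complete Riemannian surface $Z_{d_{G}}$ of curvature $-1$, $0$, or $1$ --- of which only the smoothness will be used --- together with a quasiconformal homeomorphism $\phi \colon Z_{d_{G}} \to Y_{d}$. Fixing a disk $D \subset Y_{d}$, set $D' = \phi^{-1}(D)$, an open subset of $Z_{d_{G}}$ homeomorphic to $\mathbb{R}^{2}$. The restriction $\phi|_{D'} \colon D' \to D$ is again quasiconformal, because for a path family contained in an open subset of a metric surface the modulus in \eqref{eq:modulus} is the same whether computed in the subset or in the ambient surface, so the inequalities \eqref{eq:outer:dilatation} for $\phi$ pass verbatim to $\phi|_{D'}$; hence $(\phi|_{D'})^{-1} \colon D \to D'$ is quasiconformal as well.

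Next I would flatten $D'$ using the classical uniformization theorem. The Riemannian metric of $Z_{d_{G}}$ turns the simply connected surface $D'$ into a Riemann surface, and since $D'$ is non-compact it is conformally equivalent, by the uniformization theorem, to $\mathbb{C}$ or to the unit disk, hence in either case to a planar domain $\Omega \subset \mathbb{R}^{2}$ via a conformal diffeomorphism $g \colon D' \to \Omega$. The point to verify is that $g$ is quasiconformal in the geometric sense of \eqref{eq:outer:dilatation}: on a smooth Riemannian surface $\mathcal{H}^{2}$ equals the Riemannian area, so the modulus \eqref{eq:modulus} coincides with the classical conformal modulus, which a conformal diffeomorphism preserves via the standard change of variables; thus $g$ is in fact $1$-quasiconformal. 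Composing, $h = g \circ (\phi|_{D'})^{-1} \colon D \to \Omega$ is a quasiconformal map of $D$ onto a planar domain, and \Cref{thm:Kai} yields that $D$ is reciprocal.

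The only steps I expect to require genuine care are the two transfers: that quasiconformality survives restriction to the open set $D'$, and --- the more delicate one --- that the geometric notion of quasiconformality for $g$ coincides with its conformality as a map of Riemann surfaces, which reduces to identifying $\mathcal{H}^{2}_{d_{G}}$ on $D'$ with the Riemannian area and matching the two moduli under $g$. Both are standard, and the sole external ingredient beyond \Cref{thm:uniformization:general} and \Cref{thm:Kai} is the classical uniformization theorem, invoked exactly to realize the simply connected Riemann surface $D'$ as a planar domain. A slightly different route avoids the restriction step: one first notes that $D$ is itself locally reciprocal --- a sub-disk of a reciprocal disk is reciprocal, a straightforward consequence of the monotonicity of the modulus --- and then applies \Cref{thm:uniformization:general} directly to $D$, uniformizing the resulting Riemannian model.
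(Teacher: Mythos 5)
Your proof is correct and matches the route the paper intends: apply \Cref{thm:uniformization:general} to obtain a Riemannian model, pull the disk $D$ back, flatten it with the classical uniformization theorem, and close with \Cref{thm:Kai}. The paper only remarks that ``the classical uniformization theorem and \Cref{thm:uniformization:general} imply the following'' without spelling out the argument, and what you wrote is precisely the intended unpacking, including the two technical checks that the restriction of a quasiconformal map to an open subset stays quasiconformal and that the conformal diffeomorphism from the uniformization theorem is $1$-quasiconformal in the geometric sense.
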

\Cref{thm:unifomization:general:cor} implies that reciprocality is a local condition: a disk in a metric surface is reciprocal if and only if it is locally reciprocal.

When we apply our techniques to Alexandrov surfaces we obtain the following uniformization result.
\begin{restatable}{theorem}{thmuniformizationalexandrov}\label{thm:uniformization:alexandrov}
Any Alexandrov surface $Y_{d}$ is locally reciprocal and the map in \Cref{thm:uniformization:general} can be taken to be conformal and absolutely continuous in measure.
\end{restatable}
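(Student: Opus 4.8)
\emph{Proof sketch.} The plan is to deduce local reciprocality from a quadratic mass bound, invoke \Cref{thm:uniformization:general} to obtain a quasiconformal uniformizing map, and then upgrade that map to a conformal one by showing that the infinitesimal structure of an Alexandrov surface is round almost everywhere.

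First I would record that for a two-dimensional Alexandrov space the Bishop--Gromov volume comparison bounds the Hausdorff $2$-measure of a metric ball by the area of the corresponding ball in the model surface of the relevant curvature bound $k$; for radii below a threshold $r_{0}(k)$ this gives $\mathcal{H}^{2}( B(x, r) ) \leq C(k)\, r^{2}$. In particular $\mathcal{H}^{2}$ is locally finite, so $Y_{d}$ is a metric surface in the sense of \Cref{sec:intro:definitions}, and every sufficiently small disk $D \ni x$ satisfies \eqref{eq:local:mass:upper:bound}; Theorem~1.6 of \cite{uniformization} then shows that $D$ is reciprocal. As $x$ is arbitrary, $Y_{d}$ is locally reciprocal, and \Cref{thm:uniformization:general} produces a complete constant-curvature Riemannian surface $Z_{d_{G}}$ together with a $\tfrac{\pi}{2}$-quasiconformal map $\phi \colon Z_{d_{G}} \to Y_{d}$.

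To make $\phi$ conformal, I would return to the construction of isothermal coordinates in \Cref{sec:uniformization}: a Romney-type quasiconformal chart $f \colon V \to U \subset \mathbb{R}^{2}$ (as in \Cref{thm:Kai} and \cite{upper:modulus:bound}) is postcomposed with the map $\eta$ coming from the measurable Riemann mapping theorem applied to the field of distance ellipses of the approximate metric differential, and at $\mathcal{H}^{2}$-a.e.\ point the pointwise distortion of $\eta \circ f$ equals the Banach--Mazur distance from the unit ball of the approximate metric differential to the round disk. So it suffices to prove that for an Alexandrov surface the approximate metric differential is, at $\mathcal{H}^{2}$-a.e.\ point, a positive multiple of the Euclidean norm. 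This I would get from the structure theory of Alexandrov surfaces: such a surface is of bounded integral curvature, the set $S$ of cone points (total angle $\neq 2\pi$) is at most countable, hence $\mathcal{H}^{2}$-null, and off $S$ the tangent cone is $\mathbb{R}^{2}$. Equivalently, via Reshetnyak's conformal representation every point of $Y \setminus S$ has a neighbourhood isometric to $(\Omega, e^{2u}\abs{\mathrm{d}z}^{2})$ with $\Omega \subset \mathbb{C}$ and $u$ a difference of subharmonic functions, $e^{2u} \in L^{1}_{\mathrm{loc}}(\Omega)$ and $e^{2u} > 0$ a.e.; in such coordinates the approximate metric differential is $e^{u(z)}\abs{\,\cdot\,}$ at Lebesgue points of $u$, which is an inner-product norm. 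Hence $\eta \circ f$ has pointwise distortion $1$ a.e.; the a.e.\ differentiability, ACL-property and change-of-variables for quasiconformal maps between metric surfaces (developed earlier) upgrade this to $K(\eta\circ f)=1$, and composing with the conformal identification of the underlying Riemann surface with its constant-curvature model yields a conformal $\phi$. The positivity and local integrability of $e^{2u}$ give that $\phi$ and $\phi^{-1}$ satisfy Lusin's condition (N), i.e.\ $\phi$ is absolutely continuous in measure.

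The main obstacle is the singular set $S$. Although $S$ is countable and $\mathcal{H}^{2}$-null, the definition \eqref{eq:outer:dilatation} of quasiconformality is stated in terms of moduli of path families, so one must verify that $S$ is negligible for these: a cone point of total angle $\theta$ corresponds to a conformal weight $\abs{z}^{2\beta}$ with $\beta = \theta/(2\pi) - 1 > -1$, locally integrable but unbounded when $\theta < 2\pi$, and one has to check both that \eqref{point:zero:modulus} survives at such points and, crucially, that the a.e.\ statement ``pointwise distortion $1$'' forces the global identity $\modulus \phi\Gamma = \modulus \Gamma$ across the countable exceptional set --- that is, that $S$ is removable for the relevant path families. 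This is exactly where the ACL-regularity and the Lusin conditions (N) and (N${}^{-1}$) for quasiconformal maps between metric surfaces must be brought in; establishing removability and absolute continuity hand in hand, rather than assuming them, is the crux of the argument.
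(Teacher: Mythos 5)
Your proposal is correct in outline but takes a genuinely different route from the paper, and one of your stated worries is misplaced.

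For local reciprocality you invoke Bishop--Gromov volume comparison to get $\mathcal{H}^{2}(B(x,r)) \leq C r^{2}$ and then Theorem~1.6 of \cite{uniformization}. The paper instead recalls that every point of an Alexandrov surface (away from the topological boundary) has a neighbourhood biLipschitz to a planar domain. This does more work: besides local reciprocality it immediately gives that $\rho_{\id_{Y_{d}}} = \chi_{Y_{d}}$ $\mathcal{H}^{2}_{d}$-a.e., so Conditions ($N$) and ($N^{-1}$) for the uniformization map come directly from \Cref{lem:Condition(N):inverse}, whereas your route to absolute continuity via positivity and $L^{1}_{\mathrm{loc}}$ of $e^{2u}$ needs the additional fact that $\mathcal{H}^{2}_{d}$ is precisely the weighted Lebesgue measure in Reshetnyak coordinates, which you cite but would have to justify.

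For the pointwise roundness of $\apmd{d}$ you import Reshetnyak's conformal representation for surfaces of bounded integral curvature and read off $\apmd{\phi} = e^{u}\abs{\cdot}$ at Lebesgue points of $e^{u}$. The paper stays inside its own Newtonian--Sobolev framework: it uses that the Gromov--Hausdorff tangent cones of an Alexandrov surface are isometric to $\mathbb{R}^{2}$ at $\mathcal{H}^{2}_{d}$-a.e.\ point, and a blow-up argument at Lebesgue density points of the sets $K_{i}$ from \Cref{newtonian:seminorm} showing that the tangent of $Y_{d}$ at $\phi(x)$ is $(\mathbb{R}^{2}, \apmd{\phi}(x))$ for $m_{2}$-a.e.\ $x$. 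Both routes are legitimate; yours is closer to classical surface theory but imports heavier machinery, the paper's stays self-contained and also generalizes more readily.

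Your closing paragraph identifies a non-issue as ``the crux.'' Once local reciprocality is established, \Cref{thm:uniformization:general} already gives a quasiconformal $u$, and then \Cref{cor:global:inner:and:outer:dilatation} together with \Cref{cor:conformality} and \Cref{prop:Möbiusgroup} convert the statement ``pointwise distortion $=1$ $\nu$-a.e.'' directly into global conformality; there is no residual removability problem for the countable singular set $S$ because modulus is computed against a quasiconformal map that is already in hand and the $\mathcal{H}^{2}$-null set $S$ is automatically absorbed. You correctly point at this machinery in the middle of your argument and then unnecessarily re-open the question at the end.
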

Alexandrov spaces are metric spaces that satisfy a generalized curvature lower bound; see for example \cite[Chapter 10]{lengthspace}. We prove the result in \Cref{sec:stuff}.

We obtain the following quasisymmetric uniformization of compact metric surfaces.
\begin{restatable}{theorem}{thmcompactQSunif}\label{thm:compact:QS:unif}
Suppose that $Y_{d}$ is a compact Ahlfors $2$-regular linearly locally contractible metric surface. Then the map in \Cref{thm:uniformization:general} can be taken to be $\eta$-quasisymmetric with $\eta$ depending only on the data of $Y_{d}$.
\end{restatable}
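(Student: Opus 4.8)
The plan is to deduce \Cref{thm:compact:QS:unif} from \Cref{thm:uniformization:general} and the metric theory of quasiconformal maps between spaces of controlled geometry. First, since $Y_{d}$ is Ahlfors $2$-regular, every disk $D \subset Y_{d}$ satisfies a mass bound of the form \eqref{eq:local:mass:upper:bound}, so by Theorem 1.6 of \cite{uniformization} every disk in $Y_{d}$ is reciprocal; in particular $Y_{d}$ is locally reciprocal and \Cref{thm:uniformization:general} applies. It produces a complete Riemannian surface $Z_{d_{G}}$ of constant curvature $-1$, $0$, or $1$ and a $\frac{\pi}{2}$-quasiconformal homeomorphism $\phi \colon Z_{d_{G}} \rightarrow Y_{d}$ with $K_{O}( \phi ) \leq \frac{4}{\pi}$ and $K_{I}( \phi ) \leq \frac{\pi}{2}$, and since $Y_{d}$ is compact, $Z_{d_{G}}$ is a closed Riemannian surface. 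The strategy is to show that this same $\phi$ is $\eta$-quasisymmetric with $\eta$ depending only on the data of $Y_{d}$, so that the dilatation and distortion bounds of \Cref{thm:uniformization:general} are retained without change. By the metric version of the principle that a quasiconformal map between Ahlfors $Q$-regular spaces whose source is $Q$-Loewner and whose target is linearly locally connected is quantitatively quasisymmetric (Heinonen--Koskela \cite{controlledgeometry}; see also \cite{locallyqs}), it suffices to verify that $Z_{d_{G}}$ is Ahlfors $2$-regular and $2$-Loewner, and that $Y_{d}$ is Ahlfors $2$-regular and linearly locally connected, all with constants controlled by the data of $Y_{d}$.

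For $Y_{d}$ this is clear: Ahlfors $2$-regularity is assumed, and linear local connectivity holds with constant depending only on the data, as a consequence of Ahlfors $2$-regularity, linear local contractibility, and the surface topology. The essential point is the uniform control of the geometry of $Z_{d_{G}}$, which I would obtain by excluding degeneration. Linear local contractibility of $Y_{d}$ bounds the genus of $Y_{d}$, and hence the genus of $Z_{d_{G}}$, in terms of the data. Next, the Ahlfors lower mass bound of $Y_{d}$ prevents $Y_{d}$ from containing essential annuli of large conformal modulus: metrically such an annulus is a long thin tube, so it contains metric balls whose mass is much smaller than the square of their radius, contradicting the lower mass bound once the modulus is large enough. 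Since $\phi$ distorts the conformal modulus of an essential annulus by at most the factor $\frac{\pi}{2}$, the surface $Z_{d_{G}}$ also admits no essential annulus of conformal modulus exceeding a bound determined by the data. As a short closed geodesic in a closed constant-curvature surface is surrounded by a collar that is an essential annulus of arbitrarily large conformal modulus, the injectivity radius of $Z_{d_{G}}$ is bounded below in terms of the data. A closed constant-curvature surface of bounded genus, normalised in diameter and with injectivity radius bounded below, lies in a compact part of its moduli space and hence has uniformly bounded geometry; in particular it is Ahlfors $2$-regular and $2$-Loewner with constants depending only on the data of $Y_{d}$.

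Once the hypotheses of the quasiconformal-implies-quasisymmetric principle have been verified with all constants controlled by the data of $Y_{d}$, it follows that $\phi$ is $\eta$-quasisymmetric with $\eta$ depending only on this data, which is the assertion of \Cref{thm:compact:QS:unif}. I expect the main obstacle to be exactly the control of the geometry of $Z_{d_{G}}$ in the preceding paragraph --- in particular, ruling out the degeneration of the constant-curvature metric $d_{G}$ --- since this is the one step where we cannot merely invoke the invariance of a property under a quasiconformal or quasisymmetric map, but must instead combine the modulus estimates available on the reciprocal surface $Y_{d}$ with the rigidity of closed constant-curvature surfaces; making the passage to explicit, data-dependent constants throughout (including in the cited Loewner and quasisymmetry results) is the other place where care is needed.
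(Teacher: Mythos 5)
Your proposal pursues a genuinely different architecture from the paper's. You want to verify the hypotheses of the Heinonen--Koskela \emph{quasiconformal implies quasisymmetric} criterion globally: show that $Z_{d_{G}}$ is Ahlfors $2$-regular and $2$-Loewner and that $Y_{d}$ is linearly locally connected, all quantitatively, and then conclude in one stroke. The paper instead follows Geyer--Wildrick's local-to-global scheme: it produces an $\eta$-quasisymmetric atlas of isothermal charts with uniform constants (\Cref{uniformlygoodcharts}, \Cref{QS:good:subatlas}), verifies local quasisymmetry of the uniformization map by combining these charts with \Cref{QS:restriction:is:controlled}, and then patches using the Tukia--Väisälä local-to-global theorem, treating the sphere and non-sphere cases separately because of the conformal automorphism group of $\mathbb{S}^{2}$ (\Cref{prop:sphere:isothermalgood:uniformization} and \Cref{prop:compact:generalization}). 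Both approaches must ultimately control the geometry of the constant-curvature surface $Z_{d_G}$ in terms of the data of $Y_d$; you do this upfront, while the paper relegates it to Claim ($\gamma$), which it explicitly calls ``a delicate part'' and cites from \cite{compactqs}. Your global strategy is appealing because it separates the geometric input cleanly from the quasisymmetry conclusion.

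The gap is in your mechanism for the geometric control. You assert that the Ahlfors lower mass bound of $Y_d$ forbids essential annuli of large conformal modulus because ``metrically such an annulus is a long thin tube, so it contains metric balls whose mass is much smaller than the square of their radius.'' This does not hold up. In a general metric surface there is no direct passage from ``large conformal modulus'' to ``metrically a long thin tube''; the conformal modulus is an extremal-length quantity, and without further structure a large-modulus topological annulus can sit inside $Y_d$ without any small-mass balls appearing. A correct route would instead use the linear local contractibility constant $\lambda$: any loop of diameter less than $\diam(Y_d)/\lambda$ is null-homotopic, so essential closed curves in $Y_d$ have diameter bounded below by $C(\lambda)^{-1}\diam(Y_d)$. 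Combined with the upper mass bound and the modulus estimates of the reciprocal surface (e.g.\ the bound \eqref{upper:bound} or the Loewner-type estimates that hold once one knows $Y_d$ is Loewner, which itself requires an argument à la Bonk--Kleiner/Geyer--Wildrick), this gives an upper bound on the modulus of essential annuli, and hence the lower injectivity radius bound on $Z_{d_G}$ via the collar lemma. But as written, the ``lower mass bound $\Rightarrow$ no long thin tubes'' step is not justified, and you yourself flag this as the crux. You should also note that the sphere case needs the normalization of the uniformization map within the Möbius group (the paper does this by fixing three points in \Cref{prop:sphere:isothermalgood}); the Heinonen--Koskela criterion will give you a distortion function, but you must still check that the map it applies to is the one claimed in the theorem.
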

The data of $Y_{d}$ refers to the constants in the definition of Ahlfors $2$-regularity and linear local contractibility. The main point of \Cref{thm:compact:QS:unif} is that we have optimal control on the dilatations of $\phi$, and at the same time, good control on the quasisymmetric distortion function of $\phi$. Also $Y_{d}$ does not have to be orientable, cf. \cite[Theorem 2]{compactqs}, \cite[Theorem 1.1]{bonk-kleinerthm}. Similar dilatation control is achieved in \cite{collapsedisk} when $Y_{d}$ is homeomorphic to the sphere $\mathbb{S}^{2}$. We prove \Cref{thm:compact:QS:unif} in \Cref{sec:recip:qs}.
\subsection{Structure of the paper}
We recall some important results from analysis on metric spaces in \Cref{sec:prelim}. Readers familiar with path modulus and Newtonian--Sobolev spaces \cite{newtonianspacesorigin,metricsobolev} may consider skipping this section.

In \Cref{sec:analysisonQCmaps}, we define pointwise outer and inner dilatations for quasiconformal maps; these pointwise quantities describe the infinitesimal behaviour of quasiconformal maps. We prove that the quantities satisfy useful composition laws. Moreover, we show that the outer and inner dilatations of quasiconformal maps can be recovered from the corresponding pointwise quantities.

We recall some known results for Sobolev maps from planar domains into metric spaces. These results allow us to associate a field of convex bodies to a given quasiconformal map in \Cref{sec:uniformization}, where we also construct the isothermal coordinates and a Riemannian distance $d_{G}$ on a given locally reciprocal surface.

We outline the construction of the isothermal coordinates. Suppose that $V$ is an open subset of a metric surface and that there exists a quasiconformal homeomorphism $\phi \colon \mathbb{R}^{2} \supset U \rightarrow V$. We associate a field of ellipses to the field of convex bodies of $\phi$. The ellipse field is defined using the Banach--Mazur distance and the techniques of \cite{upper:modulus:bound}. We solve the corresponding Beltrami equation and prove that the solution $g \colon U \rightarrow W \subset \mathbb{R}^{2}$ is such that the product of the pointwise outer and inner dilatations of $\phi \circ g^{-1}$ is minimal among all choices of $g$. The map $f = ( \phi \circ g^{-1} )^{-1}$ provides isothermal coordinates for $V$.

We prove that the transition maps between isothermal coordinates are conformal, thus every locally reciprocal surface has a natural atlas. The distance $d_{G}$ is then constructed following a proof of the classical uniformization theorem.

In \Cref{sec:uniformization:main:theorem}, using the results in \Cref{sec:analysisonQCmaps} and \Cref{sec:uniformization}, we associate a field of convex bodies to a given locally reciprocal surface $Y$. We prove that studying quasiconformal maps between locally reciprocal surfaces reduces to understanding these bodies and how they are distorted by such maps. We express the Jacobians, minimal upper gradients, and pointwise dilatations of quasiconformal maps using these bodies.

In \Cref{sec:applications}, we show some applications of the theory of \Cref{sec:uniformization:main:theorem}. In \Cref{sec:isothermal:Riemannian}, we prove that given a locally reciprocal surface $Y_{d}$ and the Riemannian surface $Y_{d_{G}}$ constructed in \Cref{sec:uniformization}, the \emph{uniformization map} $u = \id_{Y} \colon Y_{d_{G}} \rightarrow Y_{d}$ provides a (global) \emph{isothermal parametrization} of $Y$: The product of the pointwise dilatations of $u$ is minimal over all other parametrizations of $Y_{d}$ by Riemannian surfaces. The results of this section imply that the distance $d_{G}$ is unique up to a conformal map. We prove that the uniformization map satisfies the conclusions of \Cref{thm:uniformization:general}.

In \Cref{sec:stuff}, we study the conformal automorphism group of a locally reciprocal surface $Y_{d}$. They turn out to be conformal automorphisms of the Riemannian surface $Y_{d_{G}}$, which improves the regularity of such maps. Also, we show that a locally reciprocal surface $Y_{d}$ is conformally equivalent to a Riemannian surface if and only if the convex bodies associated to $Y_{d}$ are ellipses. Equivalently, the uniformization map $u$ is conformal. We prove that if $Y_{d}$ is an Alexandrov surface, the uniformization map $u$ is conformal and absolutely continuous in measure thereby showing \Cref{thm:uniformization:alexandrov}.

\Cref{sec:recip:qs} is split into two parts. In the first part, we prove that the uniformization map $u$ is locally $\eta$-quasisymmetric if and only if the metric surface $Y_{d}$ has an atlas of $\eta'$-quasisymmetric charts. The quasisymmetric distortions $\eta$ and $\eta'$ depend on each other in a quantitative way. This statement has a qualitative version as well.

In the second part, we prove a quantitative quasisymmetric uniformization result when $Y_{d}$ is an Ahlfors $2$-regular linearly locally contractible compact metric surface. Using the isothermal charts constructed in \Cref{sec:uniformization} and by slightly modifying the proof of Theorem 2 of \cite{compactqs}, we prove that the uniformization map $u$ is $\eta$-quasisymmetric, where $\eta$ depends only on the data of $Y_{d}$. If $Y_{d}$ is homeomorphic to the sphere $\mathbb{S}^{2}$, the precise claim requires some care due to the abundance of Möbius transformations of $\mathbb{S}^{2}$. \Cref{thm:compact:QS:unif} follows from this result. We highlight some open problems in \Cref{sec:concluding}.
\subsection*{Acknowledgements}
Part of this paper was completed when the author was visiting the University of Michigan, Ann Arbor. The author thanks the Department of Mathematics for their hospitality.

This paper is part of the author's PhD thesis. The author thanks his advisor Kai Rajala and Matthew Romney for many helpful conversations.
\section{Newtonian--Sobolev spaces}\label{sec:prelim}
We introduce some terminology and notation. The reader familiar with modulus and Newtonian--Sobolev spaces may consider skipping this section.
\begin{definition}\label{def:metric:measure:space}
A \emph{metric measure space} is a triple $U = (U, \, d, \, \mu_{U})$, where $(U, \, d)$ is a separable metric space and $\mu_{U}$ is a nonnegative Borel regular outer measure on $U$ that is positive on metric balls of $U$ and such that $\mu_{U}$ is locally finite.
\end{definition}
\subsection{Paths}
We recall some basic properties of paths in metric spaces. 
\emph{A path} $\gamma$ in a metric space $U$ is a continuous map from a subinterval $\emptyset \neq I \subset \mathbb{R}$ into $U$. The set $I$ is the \emph{domain} of $\gamma$ and is denoted by $\mathrm{dom}( \gamma )$. A \emph{compact path} is a path whose domain is compact. The length of a compact path is
\begin{equation}
	\label{eq:length:partition}
	L( \gamma )
	=
	\sup
	\sum_{ i = 0 }^{ n }
		d( \gamma(t_{i}), \, \gamma( t_{i+1} ) ),
\end{equation}
where the supremum is taken over finite partitions $\min I = t_{0} \leq t_{1} \leq \ldots \leq t_{n+1} = \max I$.

A path $\gamma$ is \emph{rectifiable} if the path is compact and $L( \gamma ) < \infty$. A path $\gamma'$ is a \emph{subpath} of $\gamma$ if $\gamma' = \restr{ \gamma }{ I' }$ for some subinterval $I'$ of the domain of $\gamma$. A path $\gamma$ is \emph{locally rectifiable} if every compact subpath of $\gamma$ is rectifiable.

Given a path $\gamma \colon I \rightarrow U$, the \emph{metric speed} of $\gamma$ at $t \in I$ is
\begin{equation}
	\label{eq:metric:speed:path}
	v_{\gamma}(t)
	=
	\lim_{ I \ni s \rightarrow t }
		\frac{
			d( \gamma(s), \, \gamma(t) )
		}{
			\abs{ s - t }
		}
\end{equation}
whenever the limit exists.

A path $\gamma \colon I \rightarrow U$ is \emph{absolutely continuous} if $\gamma$ is rectifiable and $\gamma$ maps sets $N \subset I$ of $m_{1}$-measure zero to sets of $\mathcal{H}^{1}$-measure zero in $U$; the measure $m_{1}$ is the Lebesgue measure on $\mathbb{R}$ and $\mathcal{H}^{1}$ the Hausdorff $1$-measure on $U$. The collection of compact absolutely continuous paths of $U$ is denoted by $AC( U )$. The elements of $AC( U )$ that have positive length are denoted by $AC_{+}( U )$.

A path $\gamma' \colon I' \rightarrow U$ is a \emph{reparametrization} of a path $\gamma$ if there exists a non-decreasing, continuous and surjective function $p \colon I \rightarrow I'$ such that $\gamma = \gamma' \circ p$. If $p$ is also absolutely continuous, we say that $\gamma'$ is an \emph{absolutely continuous reparametrization of $\gamma$}.

A path $\gamma_{s}$ is a \emph{unit speed (re)parametrization} of a path $\gamma$ if $\gamma = \gamma_{s} \circ s$ for some non-decreasing, continuous and surjective function $s$ and that $v_{\gamma_s} = 1$ $m_{1}$-almost everywhere in the domain of $\gamma_{s}$. A path has \emph{unit speed} if $\gamma = \gamma_{s}$.

We recall some basic results concerning paths; see for example \cite[Section 5.1]{metricsobolev} or \cite{chainrulepaths} for proofs. Any rectifiable path $\gamma$ has a unit speed parametrization $\gamma_{s}$. Additionally, the unit speed parametrization $\gamma_{s}$ of a rectifiable path is Lipschitz and thus absolutely continuous.

If $\gamma = \gamma_{s} \circ s$ is a unit speed parametrization of a rectifiable path $\gamma$, then $\gamma$ is absolutely continuous if and only if $\gamma_{s}$ is an absolutely continuous reparametrization of $\gamma$ (if and only if $s$ is absolutely continuous). If $\gamma$ is absolutely continuous, then the metric speed $v_{\gamma}$ exists at $m_{1}$-almost every $t \in \mathrm{dom}( \gamma )$.
\begin{definition}\label{def:path:integral}
Suppose that $\rho \colon U \rightarrow \left[0, \, \infty\right]$ is a Borel function and $\gamma$ is a rectifiable path. Then the \emph{path integral of $\rho$ over $\gamma$} is
\begin{equation}
	\label{path:integral}
	\int_{ \gamma }
		\rho
	\D s
	=
	\int_{ \mathrm{dom}( \gamma_{s} ) }
		\rho \circ \gamma_{s}
	\D s,
\end{equation}
where $\gamma_{s}$ is a unit speed parametrization of $\gamma$. 
\end{definition}
The notation $\D s$ refers to the integration over the Lebesgue measure $m_{1}$ of $\mathrm{dom}( \gamma_{s} )$.

Suppose that $\gamma$ is absolutely continuous path and $\gamma'$ is an absolutely continuous reparametrization of $\gamma$. Then there exists a non-decreasing, continuous, surjective and absolutely continuous function $p$ for which $\gamma = \gamma' \circ p$. The metric speeds $v_{\gamma}$ and $v_{\gamma'}$ satisfy the chain rule
\begin{equation}
	\label{eq:metric:speed:chainrule}
	v_{ \gamma }
	=
	\left( v_{ \gamma' } \circ p \right)
	p'
	\in
	L^{1}( \mathrm{dom}( \gamma ) ),
\end{equation}
where the right-hand side is interpreted to be zero whenever $p' = 0$. This is proved by Theorem 3.16 and Remark 3.4 of \cite{chainrulepaths}.

If $\gamma \colon I \rightarrow U$ is absolutely continuous, then via the chain rule \eqref{eq:metric:speed:chainrule},
\begin{equation}
	\label{path:integral:changeofvariables}
	\int_{ \gamma }
		\rho
	\D s
	=
	\int_{ I }
		\rho( \gamma(s) )
		v_{ \gamma }(s)
	\D s.
\end{equation}
If $\gamma$ is absolutely continuous, the measure $v_{ \gamma }m_{1}$ is the \emph{variation measure of $\gamma$}. The measure $v_{ \gamma }m_{1}$ is defined as follows: For Borel sets $A \subset \mathrm{dom}( \gamma )$,
\begin{equation*}
	v_{\gamma}m_{1}( A )
	=
	\int_{ A }
		v_{\gamma}
	\D m_{1}.
\end{equation*}
If $\gamma$ is a locally rectifiable path, the path integral of a Borel function $\rho \colon U \rightarrow \left[0, \, \infty\right]$ over $\gamma$ is defined as
\begin{equation*}
	\int_{ \gamma }
		\rho
	\D s
	=
	\sup
	\int_{ \gamma' }
		\rho
	\D s,
\end{equation*}
where the supremum is taken over compact subpaths $\gamma'$ of $\gamma$.
\subsection{Modulus}
An important tool for this paper is the path modulus, which we defined in \eqref{eq:modulus}. The definition extends to a metric measure space $U = ( U, \, d, \, \mu_{U} )$, where the Hausdorff $2$-measure is replaced by the measure $\mu_{U}$ (recall \Cref{def:metric:measure:space}).

A property holds for \emph{almost every path} if the family of paths where the property fails has zero modulus. If $A$ is a Borel subset of $U$, then $\Gamma^{+}_{A}$ is the collection of locally rectifiable paths $\gamma$ of $U$ for which
\begin{equation*}
	L( \gamma \cap A )
	=
	\int_{ \gamma }
		\chi_{A}
	\D s
	>
	0.
\end{equation*}
A locally rectifiable path $\gamma$ has \emph{positive length in $A$} if $\gamma \in \Gamma^{+}_{A}$ and \emph{zero length in $A$} otherwise.

We recall some basic properties of modulus. See Section 5.2 of \cite{metricsobolev} or Section 2 of \cite{williams} for the proofs.
\begin{enumerate}[label=(\alph*)]
\item\label{mod:1} The family of paths that are not locally rectifiable has zero modulus. If a path family contains a constant path, the modulus of that path family is infinite.
\item\label{mod:2} The modulus is an outer measure.
\item\label{mod:3} If $\rho \in L^{2}_{loc}( U )$ is a Borel function, then $\rho$ is integrable along almost every path.
\item\label{mod:4} A path family $\Gamma$ has zero modulus if and only if there exists a nonnegative Borel function $\rho \in L^{2}( U )$ such that for every locally rectifiable $\gamma \in \Gamma$,
\begin{equation*}
	\int_{ \gamma }
		\rho
	\D s
	=
	\infty.
\end{equation*}
\item\label{mod:5} If $A \subset U$ is a Borel set and $\mu_{U}( A ) = 0$, then $\Gamma^{+}_{A}$ has zero modulus; almost every locally rectifiable path $\gamma$ has zero length in $A$. We sharpen this result in \Cref{identity:minimal:upper:gradient}.
\end{enumerate}
\subsection{Upper gradients}\label{sec:sobolev}
We fix a metric measure space $U = ( U, \, d, \mu_{U} )$ as in \Cref{def:metric:measure:space}. Moreover, suppose that $V$ is a separable metric space and $h \colon U \rightarrow V$ is a map.

A Borel function $\rho \colon U \rightarrow \left[0, \, \infty\right]$ is an \emph{upper gradient of $h$} if for every absolutely continuous path $\gamma \colon \left[a, \, b\right] \rightarrow U$,
\begin{equation}
	\label{eq:uppergradient:inequality}
	d( h \circ \gamma(a), \, h \circ \gamma(b) )
	\leq
	\int_{ \gamma }
		\rho
	\D s.
\end{equation}
The triple $( h, \, \rho, \, \gamma )$ satisfies the \emph{upper gradient inequality} if \eqref{eq:uppergradient:inequality} holds. We say that $\rho$ is a \emph{weak upper gradient} of $h$ if \eqref{eq:uppergradient:inequality} holds for almost every path in $U$. 

A weak upper gradient $\rho \in L^{2}_{loc}( U )$ of $h$ is \emph{minimal} if for every other weak upper gradient $\rho' \in L^{2}_{loc}( U )$ of $h$ we have that $\rho \leq \rho'$ $\mu_{U}$-almost everywhere. Such a function $\rho$ is called a \emph{minimal upper gradient of $h$} and denoted by $\rho_{h}$.

Every map $h \colon U \rightarrow V$ that has a weak upper gradient in $L^{2}_{loc}( U )$ has a minimal one. Moreover, suppose that $\rho$ is a minimal upper gradient of $h$. Then a Borel function $\rho_{1} \colon U \rightarrow \left[0, \, \infty\right]$ is a minimal upper gradient of $h$ if and only if $\rho_{1} = \rho$ $\mu_{U}$-almost everywhere (see for example Section 6 of \cite{metricsobolev}). Therefore it makes sense to talk about \emph{the} minimal upper gradient of $h$ and about its representatives.

Lemmas 3.2 and 3.3 of \cite{williams} establish that a Borel function $\rho \colon U \rightarrow \left[0, \, \infty\right] \in L^{2}_{loc}( U )$ is a weak upper gradient of $h$ if and only if for almost every $\gamma \in AC_{+}( U )$, the composition $h \circ \gamma$ is an absolutely continuous path for which
\begin{equation}
	\label{eq:metric:speed:uppergrad}
	v_{ h \circ \gamma }
	\leq
	(\rho \circ \gamma)
	v_{ \gamma }
	\in
	L^{1}( \mathrm{dom}( \gamma ) )
\end{equation}
$m_{1}$-almost everywhere in $\mathrm{dom}( \gamma )$.

Sometimes \eqref{eq:metric:speed:uppergrad} is stated only when $\gamma$ is rectifiable and when the path $\gamma$ is replaced by the unit speed parametrization $\gamma_{s}$ of $\gamma$. However, if $\gamma = \gamma_{s} \circ s$ is absolutely continuous, the chain rule \eqref{eq:metric:speed:chainrule} shows that \eqref{eq:metric:speed:uppergrad} holds since $v_{ h \circ \gamma }$ equals $\left( v_{ h \circ \gamma_{s} } \circ s \right) s'$ and $v_{ \gamma }$ equals $s'$ $m_{1}$-almost everywhere, where $\left( v_{ h \circ \gamma_{s} } \circ s \right) s'$ is interpreted to be zero in the set $\left\{ s' = 0 \right\}$.
\begin{definition}\label{def:sobolev}
A map $h \colon U \rightarrow V$ is in the \emph{Newtonian--Sobolev space $N^{1, \, 2}_{loc}( U, \, V )$} if $h \in L^{2}_{loc}( U, \, V )$ and $h$ has a minimal upper gradient $\rho_{h} \in L^{2}_{loc}( U )$.

The map $h$ is in the \emph{Newtonian--Sobolev space $N^{1, \, 2}( U, \, V )$} if $h \in N^{1, \, 2}_{loc}( U, \, V )$, $h \in L^{2}( U, \, V )$ and $\rho_{h} \in L^{2}( U )$.
\end{definition}
We only apply the definition of $L^{2}( U, \, V )$ when $\mu_{U}( U ) < \infty$. In this case, $L^{2}( U, \, V )$ consists of measurable maps $u \colon U \rightarrow V$ such that $u_{x}( v ) = d( u(v), \, x ) \in L^{2}( U )$ for some $x \in V$.

The space $L^{2}_{loc}( U, \, V )$ consist of measurable maps $u \colon U \rightarrow V$ for which every point $x \in U$ has a neighbourhood $U_{x}$ with finite $\mu_{U}$-measure and for which $\restr{ u }{ U_{x} } \in L^{2}( U_{x}, \, V )$.
\begin{lemma}\label{absolutecontinuity}
If $h \in N^{1, \, 2}_{loc}( U, \, V )$, then for almost every $\gamma \in AC_{+}( U )$, the map $h \circ \gamma$ is absolutely continuous.
\end{lemma}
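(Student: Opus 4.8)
The statement to prove is \Cref{absolutecontinuity}: if $h \in N^{1,2}_{loc}(U, V)$, then for almost every $\gamma \in AC_{+}(U)$ the composition $h \circ \gamma$ is absolutely continuous. The natural approach is to reduce to the characterization of weak upper gradients recalled just above the lemma, namely the result of Lemmas 3.2 and 3.3 of \cite{williams}: a Borel function $\rho \in L^{2}_{loc}(U)$ is a weak upper gradient of $h$ if and only if for almost every $\gamma \in AC_{+}(U)$ the path $h \circ \gamma$ is absolutely continuous and satisfies the metric-speed inequality \eqref{eq:metric:speed:uppergrad} $m_{1}$-a.e. Since $h \in N^{1,2}_{loc}(U,V)$ means precisely that $h$ has a minimal (in particular, a) weak upper gradient $\rho_{h} \in L^{2}_{loc}(U)$, applying that characterization with $\rho = \rho_{h}$ immediately yields a family $\Gamma_{0} \subset AC_{+}(U)$ of zero modulus such that $h \circ \gamma$ is absolutely continuous for every $\gamma \in AC_{+}(U) \setminus \Gamma_{0}$. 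That is exactly the claim.

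First I would spell out that "locally" causes no trouble: by \Cref{def:sobolev}, each point has a neighbourhood $U_{x}$ of finite measure on which $\rho_{h} \in L^{2}(U_{x})$, and a path in $AC_{+}(U)$ either meets some such $U_{x}$ with positive length, or, if not, one covers $U$ by countably many such neighbourhoods $U_{n}$ (using separability) and notes that the exceptional families over the $U_{n}$ each have zero modulus, hence so does their countable union by subadditivity of modulus (property \ref{mod:2}). Actually the cleanest route is: restrict attention to $\gamma \in AC_{+}(U)$, observe $\chi_{U_{n}} \rho_{h} \in L^{2}(U)$ is then an honest globally $L^{2}$ weak upper gradient of $\restr{h}{U_{n}}$ in the obvious sense, invoke the Williams characterization on each $U_{n}$, and union the exceptional sets. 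A subpath of $\gamma$ lying in some $U_{n}$ then has an absolutely continuous image under $h$, and since absolute continuity of a path is a local property along the parameter interval (it is preserved under gluing finitely — or countably with a length bound — many overlapping absolutely continuous pieces), $h \circ \gamma$ is absolutely continuous on all of $\mathrm{dom}(\gamma)$.

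**Main obstacle.** There is essentially no deep obstacle — the lemma is a bookkeeping consequence of the cited Williams result — but the one point requiring a little care is the passage from "$h \circ \gamma'$ absolutely continuous for every compact subpath $\gamma'$ contained in a single $U_{n}$" to "$h \circ \gamma$ absolutely continuous". One must check that absolute continuity is not lost when patching: given a compact $\gamma \in AC_{+}(U)$, cover its compact image by finitely many of the $U_{n}$, partition $\mathrm{dom}(\gamma)$ into finitely many closed subintervals each of whose $\gamma$-image lies in one $U_{n}$, conclude $h \circ \gamma$ is absolutely continuous on each piece, and hence on the whole interval; then extend to locally rectifiable $\gamma$ by exhausting with compact subpaths. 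I would also note in passing that since $h$ is only assumed measurable a priori, "$h \circ \gamma$" is defined for a.e.\ $\gamma$ via a Borel representative, and changing $h$ on a null set changes $h \circ \gamma$ only for a path family of zero modulus (property \ref{mod:5}), so the statement is well-posed and independent of the representative. The rest is immediate from the Williams characterization applied with the minimal upper gradient $\rho_{h}$.
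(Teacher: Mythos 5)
Your proposal is correct and follows essentially the same route as the paper: apply the Williams characterization \eqref{eq:metric:speed:uppergrad} (Lemmas 3.2 and 3.3 of \cite{williams}) to the minimal upper gradient $\rho_{h}$, and note that $(\rho_{h}\circ\gamma)\,v_{\gamma}\in L^{1}(\mathrm{dom}(\gamma))$ for a.e.\ $\gamma\in AC_{+}(U)$. The extra patching argument you sketch for handling $L^{2}_{loc}$ is unnecessary here, since the paper states the Williams characterization directly for Borel functions in $L^{2}_{loc}(U)$, so it applies to $\rho_{h}$ as-is.
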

Recall that $AC_{+}( U )$ refers to absolutely continuous paths of positive length. \Cref{absolutecontinuity} follows from \eqref{eq:metric:speed:uppergrad} and the fact that $\left( \rho \circ \gamma \right) v_{\gamma} \in L^{1}( \mathrm{dom}( \gamma ) )$ for almost every $\gamma \in AC_{+}( U )$.
\section{Analytic properties of quasiconformal maps}\label{sec:analysisonQCmaps}
We recall some known results of quasiconformal maps in \Cref{sec:qc}. We prove in \Cref{lem:QC:differentequivalent} that quasiconformal maps behave well along almost every path and quasiconformal maps have good measure-theoretic properties outside a particular set (\Cref{lem:Condition(N)}).

Pointwise dilatations of quasiconformal maps are defined and studied in \Cref{sec:pointwise:dilatation}. We prove that they satisfy a composition law (\Cref{lem:outer:inner:characterization}) and global dilatations can be recovered from them (\Cref{cor:global:inner:and:outer:dilatation}). We state these results for metric measure spaces.

We show in \Cref{sec:approximate} that when the domain of the quasiconformal map is an open subset of $\mathbb{R}^{2}$, we can associate a field of norms (of convex bodies) to such maps.
\subsection{Basic properties}\label{sec:qc}
For this section $U = ( U, \, d, \, \mu_{U} )$ and $V = ( V, \, d, \, \mu_{V} )$ are metric measure spaces in the sense of \Cref{def:metric:measure:space}. Also, the map $\phi \colon U \rightarrow V$ is a homeomorphism. We recall some basic terminology from measure theory.

The \emph{$\phi$-pullback measure} of $\mu_{V}$ is the measure $\phi^{*}\mu_{V}$ defined by $\phi^{*}\mu_{V}( B ) = \mu_{V}( \phi(B) )$ for Borel sets $B \subset U$. The \emph{Jacobian} of $\phi$ is the Radon--Nikodym derivative of the absolutely continuous part of $\phi^{*}\mu_{V}$ with respect to $\mu_{U}$; see \cite[Section 3.1]{bogachev1}. The Jacobian of $\phi$ is denoted by $J_{\phi}$.

The measure $\phi_{*}\mu_{U}$ is the $\phi^{-1}$-pullback of $\mu_{U}$. It is also called the \emph{$\phi$-pushforward measure}.

The map $\phi$ satisfies \emph{Condition ($N$)} if $\phi^{*}\mu_{V}$ is absolutely continuous with respect to $\mu_{U}$: that is, for any Borel set $B \subset U$, $\mu_{U}( B ) = 0$ implies that $\mu_{V}( \phi(B) ) = \phi^{*}\mu_{V}( B ) = 0$. The map $\phi$ satisfies \emph{Condition ($N^{-1}$)} if $\phi^{-1}$ satisfies Condition ($N$).

The following statement is the two-dimensional case of Theorem 1.1 of \cite{williams}. Recall that $\rho_{\phi}$ is the minimal upper gradient of $\phi$.
\begin{theorem}\label{thm:QC:differentequivalent}
Let $\phi \colon U \rightarrow V$ be a homeomorphism. The following conditions are equivalent with the same constant $K$:
\begin{enumerate}[label=(\alph*)]
\item\label{QC:analytic} The map $\phi$ is in the Newtonian--Sobolev space $N^{1, \, 2}_{loc}( U, \, V )$ and
\begin{equation}
	\label{eq:analytic:outer}
	\rho_{ \phi }^{2}
	\leq
	K J_{\phi}
\end{equation}
$\mu_{U}$-almost everywhere in $U$.
\item\label{QC:geometric} For every path family $\Gamma$ in $U$,
\begin{equation}
	\label{eq:geometric:outer}
	\modulus( \Gamma )
	\leq
	K \modulus( \phi \Gamma ),
\end{equation}
where $\phi \Gamma = \left\{ \phi \circ \gamma \mid \gamma \in \Gamma \right\}$.
\end{enumerate}
\end{theorem}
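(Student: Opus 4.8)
The implication \ref{QC:analytic}$\Rightarrow$\ref{QC:geometric} is the soft direction and I would prove it by the standard admissible-function computation. Fix a path family $\Gamma$ in $U$ and let $\rho\colon V\to\left[0,\,\infty\right]$, $\rho\in L^{2}(V)$, be admissible for $\phi\Gamma$. Put $\tilde\rho\coloneqq(\rho\circ\phi)\,\rho_{\phi}$. By \Cref{absolutecontinuity} together with the characterisation \eqref{eq:metric:speed:uppergrad} of $\rho_{\phi}$ (Lemmas 3.2--3.3 of \cite{williams}), for $\modulus$-a.e.\ $\gamma\in\Gamma$ the path $\phi\circ\gamma$ is absolutely continuous with $v_{\phi\circ\gamma}\le(\rho_{\phi}\circ\gamma)v_{\gamma}$ $m_{1}$-a.e., so by \eqref{path:integral:changeofvariables}
\begin{equation*}
	\int_{\gamma}\tilde\rho\D s
	=\int_{\mathrm{dom}(\gamma)}(\rho\circ\phi\circ\gamma)(\rho_{\phi}\circ\gamma)\,v_{\gamma}\D m_{1}
	\ge\int_{\mathrm{dom}(\gamma)}(\rho\circ\phi\circ\gamma)\,v_{\phi\circ\gamma}\D m_{1}
	=\int_{\phi\circ\gamma}\rho\D s\ge 1.
\end{equation*}
The exceptional subfamily $\Gamma_{0}$ has zero modulus, so by subadditivity \ref{mod:2} we have $\modulus\Gamma=\modulus(\Gamma\setminus\Gamma_{0})$, and $\tilde\rho$ is admissible for $\Gamma\setminus\Gamma_{0}$. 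Using $\rho_{\phi}^{2}\le KJ_{\phi}$, the fact that $J_{\phi}\,\mu_{U}$ is dominated by $\phi^{*}\mu_{V}$, and the change of variables $\int_{U}f\D(\phi^{*}\mu_{V})=\int_{V}(f\circ\phi^{-1})\D\mu_{V}$ with $f=(\rho\circ\phi)^{2}$,
\begin{equation*}
	\modulus(\Gamma)\le\int_{U}\tilde\rho^{2}\D\mu_{U}
	\le K\int_{U}(\rho\circ\phi)^{2}\D(\phi^{*}\mu_{V})
	=K\int_{V}\rho^{2}\D\mu_{V};
\end{equation*}
taking the infimum over $\rho$ yields \eqref{eq:geometric:outer} with the same $K$.

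For \ref{QC:geometric}$\Rightarrow$\ref{QC:analytic} I would follow the classical scheme of Väisälä and Heinonen--Koskela, adapted to the metric setting. Since $\int_{U'}J_{\phi}\D\mu_{U}\le\mu_{V}(\phi(U'))<\infty$ for every open $U'\subset U$ with $\overline{U'}$ compact, $g\coloneqq\sqrt{KJ_{\phi}}$ lies in $L^{2}_{loc}(U)$; it therefore suffices to show that $g$ is a weak upper gradient of $\phi$, for then minimality gives $\phi\in N^{1,\,2}_{loc}(U,\,V)$ and $\rho_{\phi}\le g$, i.e.\ \eqref{eq:analytic:outer}. By \eqref{eq:metric:speed:uppergrad} this reduces to verifying, for $\modulus$-a.e.\ $\gamma\in AC_{+}(U)$, that $\phi\circ\gamma$ is absolutely continuous and $v_{\phi\circ\gamma}\le(g\circ\gamma)v_{\gamma}$ $m_{1}$-a.e. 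There are two sub-tasks. First, absolute continuity of $\phi$ along $\modulus$-a.e.\ path: here I would exploit that \eqref{eq:geometric:outer} forces $\phi^{-1}$ to carry $\modulus$-null path families to $\modulus$-null path families, so that (via \ref{mod:5}) for every $\mu_{V}$-null set $A$ the curve $\phi\circ\gamma$ has zero length in $A$ for $\modulus$-a.e.\ $\gamma$; combined with a Fuglede-type argument and the length--area interplay this yields the absolute continuity. Second, the infinitesimal speed bound: supposing it failed, the family $\Gamma_{\varepsilon}$ of curves on which $v_{\phi\circ\gamma}>(1+\varepsilon)(g\circ\gamma)v_{\gamma}$ on a set of positive length would have positive modulus for some $\varepsilon>0$; covering the relevant region by small balls $B_{i}$ and using the Lebesgue differentiation theorem together with a Besicovitch/Vitali covering to compare $\mu_{V}(\phi(B_{i}))$ with $\int_{B_{i}}J_{\phi}\D\mu_{U}$, one constructs a Borel $\rho$ on $V$ admissible for a positive-modulus subfamily of $\phi\Gamma_{\varepsilon}$ with $\int_{V}\rho^{2}\D\mu_{V}$ strictly smaller than $K^{-1}$ times the modulus of that subfamily, contradicting \eqref{eq:geometric:outer}. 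Tracking constants shows the equivalence holds with the same $K$.

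The main obstacle is this converse implication, and specifically the passage from the global modulus bound to the pointwise inequality \eqref{eq:analytic:outer}: unlike the Euclidean case there is no a priori differentiability of $\phi$, so the blow-up must be carried out entirely through modulus estimates and the Radon--Nikodym decomposition of $\phi^{*}\mu_{V}$, with particular care at points where $J_{\phi}$ vanishes or where $\phi^{*}\mu_{V}$ is singular, where one must show that $\rho_{\phi}$ vanishes as well. This is precisely the content of Theorem~1.1 of \cite{williams} specialised to dimension two, which I would ultimately invoke; the sketch above indicates the route.
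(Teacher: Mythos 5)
The paper does not prove this theorem; it is stated verbatim as the two-dimensional case of Theorem~1.1 of \cite{williams}, which you correctly identify and ultimately invoke. Your sketch of the easy direction is the standard admissible-function computation, and your sketch of the harder converse matches the strategy in \cite{williams}, so the approaches coincide.
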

The smallest positive constant for which \eqref{eq:geometric:outer} holds for all path families is called the \emph{outer dilatation} of $\phi$ and is denoted by $K_{O}( \phi )$. The \emph{inner dilatation} of $\phi$ is $K_{O}( \phi^{-1} )$ and is denoted by $K_{I}( \phi )$. The \emph{maximal dilatation} $K( \phi )$ is the maximum of $K_{O}( \phi )$ and $K_{I}( \phi )$. We recall from \eqref{eq:outer:dilatation} that we say that $\phi$ is quasiconformal if the maximal dilatation $K( \phi )$ is finite. The map $\phi$ is $K$-quasiconformal if $K( \phi )$ is bounded from above by $K$.

We show in \Cref{sec:qcmaps:surfaces} that if $\phi$ is a quasiconformal map between locally reciprocal surfaces, then $K_{O}( \phi ) \geq 1$, $K_{I}( \phi ) \geq 1$ and $K( \phi ) \geq 1$. Therefore for our purposes, the definition we stated just now coincides with the one in \Cref{sec:intro:definitions}. It is more convenient to not restrict $K( \phi )$, $K_{O}( \phi )$, and $K_{I}( \phi )$ from below in this section.

For the purposes of the next definition, recall that $AC_{+}( U )$ is the collection of absolutely continuous paths on $U$ that have positive length. The collection $AC_{+}( V )$ is defined similarly.
\begin{definition}\label{def:QC:singularset;goodpaths}
Consider a homeomorphism $\phi \colon U \rightarrow V$. The collection of \emph{$\phi$-good paths}, denoted by $\Gamma_{G}( \phi )$, consists of paths with the following properties.
\begin{enumerate}[label=(\alph*)]
\item The paths $\gamma$ and $\phi \circ \gamma$ are elements of $AC_{+}( U )$ and $AC_{+}( V )$, respectively;
\item The variation measures $v_{\gamma} m_{1}$ and $v_{ \phi \circ \gamma } m_{1}$ of $\gamma$ and $\phi \circ \gamma$, respectively, are mutually absolutely continuous.
\end{enumerate}
The collection of locally rectifiable paths for which every compact subpath of positive length is an element of $\Gamma_{G}( \phi )$ is denoted by $\Gamma_{G}^{loc}( \phi )$.

The locally rectifiable paths of $U$ that have positive length and that are not elements of $\Gamma_{G}^{loc}( \phi )$ are called \emph{$\phi$-singular paths} and their collection is denoted by $\Gamma_{S}( \phi )$.
\end{definition}
\begin{lemma}[Singular paths]\label{lem:QC:differentequivalent}
For a quasiconformal map $\phi \colon U \rightarrow V$, the $\phi$-singular paths $\Gamma_{S}( \phi )$ have zero modulus, and for Borel sets $B \subset U$,
\begin{equation*}
	\phi
	\colon
	\Gamma_{G}^{loc}( \phi ) \cap \Gamma^{+}_{B}
	\rightarrow
	\Gamma_{G}^{loc}( \phi^{-1} ) \cap \Gamma^{+}_{ \phi(B) }, \quad
	\gamma
	\mapsto
	\phi \circ \gamma
\end{equation*}
is well-defined and bijective. Moreover, its restriction to $\Gamma_{G}( \phi ) \cap \Gamma^{+}_{B}$ induces a bijection onto $\Gamma_{G}( \phi^{-1} ) \cap \Gamma^{+}_{\phi(B)}$.
\end{lemma}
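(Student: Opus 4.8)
The plan is to show that the map $\gamma \mapsto \phi \circ \gamma$ sets up the claimed bijections once we know that almost every locally rectifiable path is $\phi$-good (equivalently, $\Gamma_S(\phi)$ has zero modulus), because once this is established the only remaining content is a bookkeeping argument about lengths and reparametrizations.

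First I would establish the measure-zero statement for $\Gamma_S(\phi)$. The key tools are \Cref{thm:QC:differentequivalent} (so that $\phi \in N^{1,2}_{loc}(U,V)$ with $\rho_\phi^2 \le K_I(\phi) J_\phi$, and symmetrically for $\phi^{-1}$) together with \Cref{absolutecontinuity} applied to both $\phi$ and $\phi^{-1}$. \Cref{absolutecontinuity} gives that for almost every $\gamma \in AC_+(U)$ the path $\phi \circ \gamma$ is absolutely continuous, and dually, for almost every $\sigma \in AC_+(V)$ the path $\phi^{-1} \circ \sigma$ is absolutely continuous; pulling the latter family back by $\phi$ (using that $\phi$ preserves the null ideal of modulus, which follows since $\phi$ is quasiconformal in the geometric sense) we get that for almost every $\gamma \in AC_+(U)$, both $\phi \circ \gamma \in AC_+(V)$ (positivity of length uses $\mathrm{mod}$-almost-everywhere injectivity of lengths, i.e. property \ref{mod:5} applied to the exceptional set where $\phi\circ\gamma$ is constant, which has measure zero by another application of the geometric definition). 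This gives condition (a) of \Cref{def:QC:singularset;goodpaths}. For condition (b), the bound $\rho_\phi^2 \le K_I(\phi) J_\phi$ combined with \eqref{eq:metric:speed:uppergrad} shows $v_{\phi\circ\gamma} \le (\rho_\phi \circ \gamma) v_\gamma$ $m_1$-a.e., hence $v_{\phi\circ\gamma}m_1 \ll v_\gamma m_1$ for almost every $\gamma$; the reverse absolute continuity follows by running the same argument for $\phi^{-1}$ along $\phi\circ\gamma$ and using $v_{\phi^{-1}\circ(\phi\circ\gamma)} = v_\gamma$. Since a countable exhaustion by compact subpaths shows that the locally-rectifiable upgrade $\Gamma_G^{loc}(\phi)$ also captures almost every locally rectifiable path (property \ref{mod:1} plus subadditivity \ref{mod:2}), we conclude $\mathrm{mod}\,\Gamma_S(\phi) = 0$.

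Next I would verify the map is well-defined and bijective on $\Gamma_G^{loc}(\phi) \cap \Gamma^+_B$. Well-definedness means: if $\gamma \in \Gamma_G^{loc}(\phi) \cap \Gamma^+_B$, then $\phi\circ\gamma \in \Gamma_G^{loc}(\phi^{-1}) \cap \Gamma^+_{\phi(B)}$. That $\phi\circ\gamma \in \Gamma_G^{loc}(\phi^{-1})$ is immediate from the \emph{symmetry} of conditions (a) and (b) in \Cref{def:QC:singularset;goodpaths} under swapping $\phi \leftrightarrow \phi^{-1}$ and $\gamma \leftrightarrow \phi\circ\gamma$; the mutual absolute continuity in (b) is exactly a symmetric condition. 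The membership in $\Gamma^+_{\phi(B)}$ is the crux: I need $L(\phi\circ\gamma \cap \phi(B)) > 0$ given $L(\gamma \cap B) > 0$. Using the change-of-variables formula \eqref{path:integral:changeofvariables} along a compact subpath, $L(\gamma \cap B) = \int_I \chi_B(\gamma(s)) v_\gamma(s)\,ds > 0$ means $v_\gamma > 0$ on a positive-$m_1$-measure subset of $\gamma^{-1}(B)$; by mutual absolute continuity of the variation measures, $v_{\phi\circ\gamma} > 0$ on that same set up to $m_1$-null modification, and since $\phi\circ\gamma$ lands in $\phi(B)$ exactly on $\gamma^{-1}(B)$, we get $L(\phi\circ\gamma \cap \phi(B)) = \int \chi_{\phi(B)}((\phi\circ\gamma)(s)) v_{\phi\circ\gamma}(s)\,ds > 0$. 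Injectivity of $\gamma \mapsto \phi\circ\gamma$ is clear since $\phi$ is a homeomorphism (if $\phi\circ\gamma_1 = \phi\circ\gamma_2$ then $\gamma_1 = \gamma_2$), and surjectivity follows by applying the well-definedness statement to $\phi^{-1}$ and noting $\phi^{-1}\circ(\phi\circ\gamma) = \gamma$. The final sentence — that the restriction to $\Gamma_G(\phi) \cap \Gamma^+_B$ (compact paths, not just locally rectifiable) biject onto $\Gamma_G(\phi^{-1}) \cap \Gamma^+_{\phi(B)}$ — is the same argument, now with $\gamma$ itself compact of positive length so that no passage to subpaths is needed.

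**Main obstacle.** The delicate point is establishing condition (a) of $\phi$-goodness for almost every path — specifically, that $\phi\circ\gamma$ has \emph{positive} length for $\mathrm{mod}$-a.e. $\gamma \in AC_+(U)$. \Cref{absolutecontinuity} only gives absolute continuity, not positivity of length; one must rule out the family of paths with $L(\gamma) > 0$ but $L(\phi\circ\gamma) = 0$, and the cleanest way is to observe that such a family is mapped by $\phi$ into a family containing constant paths, which has infinite modulus, so by the geometric definition \eqref{eq:outer:dilatation} the original family has infinite modulus unless it is empty — wait, that is the wrong direction; rather, one uses that the family of \emph{non-constant} paths $\gamma$ whose image is constant must be $\mathrm{mod}$-null in $U$ because $\phi^{-1}$ applied to any admissible $\rho$ for $\phi\Gamma$ pulls back correctly, or more simply because $\phi$ being a homeomorphism, the only way $\phi\circ\gamma$ is constant is $\gamma$ constant. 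That last observation actually resolves it entirely: a homeomorphism never collapses a non-constant path. So the genuine obstacle reduces to bookkeeping the interplay between the "almost every compact path" statements and their "locally rectifiable" upgrades, which is handled by exhaustion and countable subadditivity of modulus.
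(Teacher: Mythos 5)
Your proposal follows essentially the same route as the paper's proof: combine the inequality \eqref{eq:metric:speed:uppergrad} (via \Cref{absolutecontinuity}) for $\phi$ and for $\phi^{-1}$, pull the exceptional family in $V$ back to $U$ using quasiconformality to get a single null family $\Gamma_{0}$, deduce mutual absolute continuity of $v_{\gamma}m_{1}$ and $v_{\phi\circ\gamma}m_{1}$ for $\gamma\in AC_{+}(U)\setminus\Gamma_{0}$, upgrade to locally rectifiable paths, and read the bijection off the mutual absolute continuity (zero length in $B$ iff zero length in $\phi(B)$). Your detour in the ``main obstacle'' paragraph is harmless: you correctly end up with the right observation that a homeomorphism never collapses a non-constant path, which together with the rectifiability supplied by \Cref{absolutecontinuity} settles positivity of $L(\phi\circ\gamma)$.

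The one place where your write-up should be tightened is the upgrade from $AC_{+}(U)$ to locally rectifiable paths. You cite ``countable exhaustion by compact subpaths'' together with properties~\ref{mod:1} and \ref{mod:2}, but an arbitrary bad compact subpath need not be one of the countably many exhaustion pieces, so this does not directly place $\Gamma_{S}(\phi)$ inside a countable union of null families. The paper instead invokes property~\ref{mod:4}: since $AC_{+}(U)\setminus\Gamma_{G}(\phi)$ has zero modulus, there is a Borel $\rho\in L^{2}(U)$ with $\int_{\theta}\rho\,\D s=\infty$ for every $\theta\in AC_{+}(U)\setminus\Gamma_{G}(\phi)$; any locally rectifiable $\gamma\in\Gamma_{S}(\phi)$ has such a $\theta$ as a compact subpath of positive length, hence $\int_{\gamma}\rho\,\D s=\infty$, and property~\ref{mod:4} again gives $\modulus\Gamma_{S}(\phi)=0$. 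Equivalently, you could use the overflow property of modulus (every path in $\Gamma_{S}(\phi)$ admits a subpath in the null family). Either of these replaces your exhaustion sentence and closes the argument cleanly.
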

\begin{proof}
We start with the claim that the $\phi$-singular paths $\Gamma_{S}( \phi )$ have zero modulus. Fix minimal upper gradients $\rho_{\phi} \in L^{2}_{loc}( U )$ and $\rho_{\phi^{-1}} \in L^{2}_{loc}( V )$ of $\phi$ and $\phi^{-1}$, respectively. Then, by recalling \eqref{eq:metric:speed:uppergrad} and \Cref{absolutecontinuity}, there exist path families $\Gamma_{ \phi }$ and $\Gamma_{ \phi^{-1} }$ of zero modulus such that the following properties hold.

First, for every $\gamma \in AC_{+}(U) \setminus \Gamma_{\phi}$, the path $\phi \circ \gamma$ is in $AC(V)$ and
\begin{equation}
	\label{eq:phi:uppergradient}
	v_{ \phi \circ \gamma }
	\leq
	\left( \rho_{ \phi } \circ \gamma \right)
	v_{\gamma}
	\in
	L^{1}( \mathrm{dom}( \gamma) )
\end{equation}
$m_{1}$-almost everywhere. Secondly, for every $\phi \circ \gamma$ in $AC_{+}(V) \setminus \Gamma_{\phi^{-1}}$, $\gamma$ is in $AC(U)$ and
\begin{equation}
	\label{eq:phi:inverse:uppergradient}
	v_{ \gamma }
	\leq
	\rho_{ \phi^{-1} } \circ ( \phi \circ \gamma )
	v_{ \phi \circ \gamma }
	\in
	L^{1}( \mathrm{dom}( \gamma) )
\end{equation}
$m_{1}$-almost everywhere.

Define $\Gamma_{0} = \Gamma_{\phi} \cup \phi^{-1}( \Gamma_{ \phi^{-1} } )$. The family $\Gamma_{0}$ has zero modulus by construction. Given $\gamma \in AC_{+}( U ) \setminus \Gamma_{0}$, the inequalities \eqref{eq:phi:uppergradient} and \eqref{eq:phi:inverse:uppergradient} hold for $m_{1}$-almost every point in $\mathrm{dom}( \gamma)$. Therefore the variation measures $v_{ \gamma } \cdot m_{1}$ and $v_{ \phi \circ \gamma } \cdot m_{1}$ are absolutely continuous with respect to one another.

We have deduced that $AC_{+}( U ) \setminus \Gamma_{0}$ are $\phi$-good paths. This implies that $AC_{+}( U ) \setminus \Gamma_{G}( \phi )$ has zero modulus. Then Property \ref{mod:4} of modulus shows that there exists an $L^{2}$-integrable Borel function $\rho \colon U \rightarrow \left[0, \, \infty\right]$ whose path integral over every $\theta \in AC_{+}( U ) \setminus \Gamma_{G}( \phi )$ is $\infty$. Thus if $\gamma \in \Gamma_{S}( \phi )$ is locally rectifiable, $\int_{ \gamma } \rho \D s = \infty$. Property \ref{mod:4} of modulus implies that $\Gamma_{S}( \phi )$ has zero modulus.

Fix a Borel set $B \subset U$. The restriction of the map $\gamma \mapsto \phi \circ \gamma$ to $\Gamma^{+}_{ B } \cap \Gamma_{G}( \phi )$ is a bijection onto $\Gamma^{+}_{ \phi(B) } \cap \Gamma_{G}( \phi^{-1} )$ since the measures $v_{\gamma} \cdot m_{1}$ and $v_{ \phi \circ \gamma } \cdot m_{1}$ are mutually absolutely continuous; the path $\gamma \in \Gamma_{G}( \phi )$ has zero length in a Borel set $B$ if only if $\phi \circ \gamma$ has zero length in $\phi(B)$. This implies that the map
\begin{equation*}
	\phi
	\colon
	\Gamma_{G}^{loc}( \phi ) \cap \Gamma^{+}_{B}
	\rightarrow
	\Gamma_{G}^{loc}( \phi^{-1} ) \cap \Gamma^{+}_{ \phi(B) }, \quad
	\gamma
	\mapsto
	\phi \circ \gamma
\end{equation*}
is well-defined and bijective.
\end{proof}
\begin{proposition}[Absolute continuity]\label{lem:Condition(N)}
Suppose that $\phi \colon U \rightarrow V$ is quasiconformal. Then there exists a Borel set $B_{0} \subset U$ such that $\rho_{ \id_{U} } = \chi_{ U \setminus B_{0} }$ and $\rho_{ \id_{V} } = \chi_{ V \setminus \phi(B_{0}) }$ are minimal upper gradients of $\id_{U}$ and $\id_{V}$, respectively.

Additionally, the following four conditions are equivalent for Borel sets $B \subset U$,
\begin{align*}
	\rho_{\id_{U}}\mu_{U}( B )
	&=
	0;
	\quad
	\modulus \Gamma^{+}_{B}
	=
	0;
	\\
	\rho_{\id_{V}}\mu_{V}( \phi(B) )
	&=
	0;
	\quad
	\modulus \Gamma^{+}_{\phi(B)}
	=
	0.
\end{align*}
In particular, the map $\phi \colon ( U, \, \rho_{\id_{U}} \mu_{U} ) \rightarrow ( V, \, \rho_{\id_{V}} \mu_{V} )$ satisfies Conditions ($N$) and ($N^{-1}$).
\end{proposition}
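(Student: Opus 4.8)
The plan is to identify the minimal upper gradient of the identity map with the indicator of the complement of the \emph{largest} Borel set carrying no curves of positive length (with respect to modulus), and then to transport this set between $U$ and $V$ using quasiconformality. Two preliminary facts drive the argument. First, $\id_{U}\colon(U,d,\mu_{U})\to(U,d)$ is $1$-Lipschitz and $d(\cdot,x)$ is locally bounded, so $\id_{U}\in N^{1,2}_{loc}(U,U)$ and $\rho_{\id_{U}}\le 1$ $\mu_{U}$-almost everywhere. Secondly, applying Lemmas~3.2 and~3.3 of \cite{williams} to $h=\id_{U}$, for which $h\circ\gamma=\gamma$ and $v_{h\circ\gamma}=v_{\gamma}$, shows that a Borel function $\rho\in L^{2}_{loc}(U)$ is a weak upper gradient of $\id_{U}$ if and only if $\modulus\Gamma^{+}_{\{\rho<1\}}=0$; here I also use that, since path integrals are reparametrization invariant and every locally rectifiable path of positive length reparametrizes to an element of $AC_{+}(U)$, the value $\modulus\Gamma^{+}_{A}$ is unchanged if one restricts to the absolutely continuous members of $\Gamma^{+}_{A}$.

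Let $\mathcal{Z}_{U}$ denote the Borel sets $A\subset U$ with $\modulus\Gamma^{+}_{A}=0$. From $\chi_{\bigcup_{n}A_{n}}\le\sum_{n}\chi_{A_{n}}$ one gets $\Gamma^{+}_{\bigcup_{n}A_{n}}=\bigcup_{n}\Gamma^{+}_{A_{n}}$, so $\mathcal{Z}_{U}$ is closed under subsets and countable unions, and it contains every $\mu_{U}$-null Borel set by property~\ref{mod:5}. Since $\mu_{U}$ is a locally finite Borel measure on a separable metric space it is $\sigma$-finite, so $\mathcal{Z}_{U}$ has a $\mu_{U}$-essentially largest element $B_{1}$ (a countable union realizing $\sup\{\nu(A):A\in\mathcal{Z}_{U}\}$ for a finite measure $\nu$ equivalent to $\mu_{U}$), and likewise $\mathcal{Z}_{V}$ has an essentially largest element $B_{1}'$. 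I would then prove the transfer lemma: for every Borel $B\subset U$ one has $\modulus\Gamma^{+}_{B}=0$ if and only if $\modulus\Gamma^{+}_{\phi(B)}=0$. Indeed, by \Cref{lem:QC:differentequivalent} the families $\Gamma_{S}(\phi)$ and $\Gamma_{S}(\phi^{-1})$ have zero modulus, so $\modulus\Gamma^{+}_{B}=\modulus(\Gamma^{+}_{B}\cap\Gamma_{G}^{loc}(\phi))$ and $\modulus\Gamma^{+}_{\phi(B)}=\modulus(\Gamma^{+}_{\phi(B)}\cap\Gamma_{G}^{loc}(\phi^{-1}))$, and $\phi$ maps the first family bijectively onto the second; the inequalities of \Cref{thm:QC:differentequivalent} applied to $\phi$ and to $\phi^{-1}$ give $\modulus\Gamma\le K_{O}(\phi)\modulus\phi\Gamma$ and $\modulus\phi\Gamma\le K_{I}(\phi)\modulus\Gamma$, so $\modulus\Gamma=0$ exactly when $\modulus\phi\Gamma=0$. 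Consequently $\phi^{-1}(B_{1}')\in\mathcal{Z}_{U}$ and $\phi(B_{1})\in\mathcal{Z}_{V}$, so $B_{0}:=B_{1}\cup\phi^{-1}(B_{1}')$ is essentially largest in $\mathcal{Z}_{U}$ and $\phi(B_{0})=\phi(B_{1})\cup B_{1}'$ is essentially largest in $\mathcal{Z}_{V}$.

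With this choice of $B_{0}$ I would conclude as follows. Since $\{\chi_{U\setminus B_{0}}<1\}=B_{0}\in\mathcal{Z}_{U}$, the function $\chi_{U\setminus B_{0}}$ is a weak upper gradient of $\id_{U}$; and if $\rho$ is any weak upper gradient then $\{\rho<1\}\in\mathcal{Z}_{U}$, hence $\mu_{U}(\{\rho<1\}\setminus B_{0})=0$ by the essential maximality of $B_{0}$, i.e.\ $\chi_{U\setminus B_{0}}\le\rho$ $\mu_{U}$-almost everywhere; therefore $\rho_{\id_{U}}=\chi_{U\setminus B_{0}}$, and the same argument on $V$ gives $\rho_{\id_{V}}=\chi_{V\setminus\phi(B_{0})}$. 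For the four-fold equivalence, $\rho_{\id_{U}}\mu_{U}(B)=\mu_{U}(B\setminus B_{0})$ vanishes exactly when $B\in\mathcal{Z}_{U}$: if $\mu_{U}(B\setminus B_{0})=0$ then $B$ is the union of $B\cap B_{0}\in\mathcal{Z}_{U}$ and a $\mu_{U}$-null set, hence $B\in\mathcal{Z}_{U}$ by property~\ref{mod:5} and countable subadditivity, while $B\in\mathcal{Z}_{U}$ forces $\mu_{U}(B\setminus B_{0})=0$ by maximality; so $\rho_{\id_{U}}\mu_{U}(B)=0\iff\modulus\Gamma^{+}_{B}=0$, and symmetrically $\rho_{\id_{V}}\mu_{V}(\phi(B))=0\iff\modulus\Gamma^{+}_{\phi(B)}=0$, the two being linked by the transfer lemma. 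Conditions~($N$) and~($N^{-1}$) for $\phi\colon(U,\rho_{\id_{U}}\mu_{U})\to(V,\rho_{\id_{V}}\mu_{V})$ are then precisely the two implications contained in the equivalence of $\rho_{\id_{U}}\mu_{U}(B)=0$ and $\rho_{\id_{V}}\mu_{V}(\phi(B))=0$.

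I expect the main obstacle to be the transfer lemma together with the fact that a single set $B_{0}$ must serve on both sides: $\phi$ need not send $\mu_{U}$-null sets to $\mu_{V}$-null sets --- removing this failure for the weighted measures is exactly what the proposition asserts --- so one cannot compare $B_{1}'$ with $\phi(B_{1})$ directly in measure, and the remedy is to take $B_{0}=B_{1}\cup\phi^{-1}(B_{1}')$ so that $B_{0}$ and $\phi(B_{0})$ are simultaneously essentially maximal. Everything else reduces to countable subadditivity of modulus and reparametrization invariance of path integrals.
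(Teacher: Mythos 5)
Your proof is correct, and its overall architecture matches the paper's: characterize the minimal upper gradient of the identity as the indicator of the complement of the (essentially) largest Borel set whose $\Gamma^{+}$-family has zero modulus, prove the quasiconformal transfer of zero $\Gamma^{+}$-modulus between $U$ and $V$ via \Cref{lem:QC:differentequivalent}, and take the union $B_{1}\cup\phi^{-1}(B_{1}')$ to make both sides maximal simultaneously. The one place you genuinely diverge is in producing the intermediate set: the paper isolates this as \Cref{identity:minimal:upper:gradient} and gets the set as the level set $\{\rho_{\id_{U}}\neq 1\}$ of a representative of the minimal upper gradient, whereas you construct it directly as a $\mu_{U}$-essential supremum of the lattice $\mathcal{Z}_{U}$ of zero-$\Gamma^{+}$-modulus Borel sets and only afterwards verify the minimal upper gradient claim. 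Both routes give the same set up to $\mu_{U}$-null modification; your version makes the essential-maximality (and hence the well-definedness of $B_{0}$) a bit more transparent, at the cost of invoking $\sigma$-finiteness explicitly, while the paper's version exploits the a priori existence of $\rho_{\id_{U}}$ to avoid the supremum construction. Your closing remark correctly identifies why one must take the union on the domain side rather than compare $\phi(B_{1})$ and $B_{1}'$ in measure, which is exactly the subtle point the paper is also navigating.
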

\begin{remark}
We postpone the proof of \Cref{lem:Condition(N)} until the end of \Cref{sec:qc}. Note that if $\mu_{U}( B_{0} ) = 0$, then the map $\phi$ satisfies Condition ($N^{-1}$) as a map
\begin{equation*}
	\phi \colon ( U, \, \mu_{U} ) \rightarrow ( V, \, \rho_{\id_{V}} \mu_{V} ).
\end{equation*}
Several important geometric assumptions imply $\mu_{U}( B_{0} ) = 0$. For example, if $U$ has locally $2$-bounded geometry in the sense of Heinonen--Koskela \cite{controlledgeometry,locallyboundedgeometry} or if $U$ is a PI-space in the sense of \cite{PIspaces}. We do not need these facts, so we omit the proofs.

Recall that a metric surface $U_{d}$ is a metric measure space $( U, \, d, \, \mathcal{H}^{2}_{d} )$, where $U$ is homeomorphic to a surface and $d$ is a distance inducing the surface topology. The measure is the Hausdorff $2$-measure induced by $d$.

Proposition 17.1 of \cite{uniformization} provides an example of a metric surface $Y \subset \mathbb{R}^{3}$ (that is locally reciprocal), where $B_{0}$ has positive $\mathcal{H}^{2}_{d}$-measure. The fact that $B_{0}$ has positive measure follows from \Cref{lem:Condition(N):inverse}. This means that in general we cannot simply take $B_{0} = \emptyset$ in \Cref{lem:Condition(N)}.
\end{remark}
\begin{corollary}\label{lem:Condition(N):inverse}
Suppose that $U_{d}$ is a metric surface biLipschitz homeomorphic to a planar domain.

Then the minimal upper gradient $\rho_{ \id_{U} }$ equals $\chi_{U}$ $\mathcal{H}^{2}_{d}$-almost everywhere, and any quasiconformal map $\phi \colon U_{d} \rightarrow V$ satisfies Condition ($N^{-1}$).

Furthermore, the map $\phi$ satisfies Condition ($N$) if and only if $\rho_{ \id_{V} } = \chi_{V}$ $\mu_{V}$-almost everywhere.
\end{corollary}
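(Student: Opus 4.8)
The plan is to extract everything from \Cref{lem:Condition(N)} once I know that the exceptional Borel set $B_{0}$ produced there is $\mathcal{H}^{2}_{d}$-null; the biLipschitz hypothesis enters only to guarantee this, and after that the argument is formal manipulation of the weighted measures $\rho_{\id_{U}}\mathcal{H}^{2}_{d}$ and $\rho_{\id_{V}}\mu_{V}$ supplied by that proposition.

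So first I would fix an $L$-biLipschitz homeomorphism $\psi \colon U_{d} \to \Omega$ onto a planar domain $\Omega \subset \mathbb{R}^{2}$, carrying the Euclidean distance and Lebesgue measure $m_{2}$. A biLipschitz map distorts modulus by at most the factor $L^{2}$, hence is quasiconformal, so \Cref{lem:Condition(N)} applies to $\phi = \psi$ and yields a Borel set $B_{0} \subset U$ with $\rho_{\id_{U}} = \chi_{U \setminus B_{0}}$ and $\rho_{\id_{\Omega}} = \chi_{\Omega \setminus \psi(B_{0})}$. Since $\rho_{\id_{U}}$ vanishes on $B_{0}$ we have $\rho_{\id_{U}}\mathcal{H}^{2}_{d}(B_{0}) = 0$, so the four-way equivalence in \Cref{lem:Condition(N)} (applied with $B = B_{0}$) forces $\modulus \Gamma^{+}_{\psi(B_{0})} = 0$ in $\Omega$. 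Here I would invoke the classical fact that a Borel set $E$ in a planar domain with $\modulus \Gamma^{+}_{E} = 0$ must satisfy $m_{2}(E) = 0$ — the standard Fubini/Cauchy--Schwarz estimate over the fibres of a coordinate projection, the converse implication being property \ref{mod:5}. Hence $m_{2}(\psi(B_{0})) = 0$, and since $\psi^{-1}$ is $L$-Lipschitz, $\mathcal{H}^{2}_{d}(B_{0}) \le L^{2}\, m_{2}(\psi(B_{0})) = 0$. Therefore $\rho_{\id_{U}} = \chi_{U \setminus B_{0}} = \chi_{U}$ $\mathcal{H}^{2}_{d}$-almost everywhere. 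This is the only place where biLipschitz — rather than merely quasiconformal — regularity is used: it is what makes the $\mathcal{H}^{2}_{d}$-null sets of $U$ correspond to the $m_{2}$-null sets of $\Omega$.

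Next, for an arbitrary quasiconformal $\phi \colon U_{d} \to V$, applying \Cref{lem:Condition(N)} to $\phi$ furnishes a Borel set $B_{0} \subset U$ with $\rho_{\id_{U}} = \chi_{U \setminus B_{0}}$ and $\rho_{\id_{V}} = \chi_{V \setminus \phi(B_{0})}$; by the previous paragraph $\chi_{U \setminus B_{0}} = \chi_{U}$ $\mathcal{H}^{2}_{d}$-almost everywhere, so $\mathcal{H}^{2}_{d}(B_{0}) = 0$ and $\rho_{\id_{U}}\mathcal{H}^{2}_{d} = \mathcal{H}^{2}_{d}$ as measures. By \Cref{lem:Condition(N)}, $\phi \colon (U, \, \rho_{\id_{U}}\mathcal{H}^{2}_{d}) \to (V, \, \rho_{\id_{V}}\mu_{V})$ satisfies Condition $(N^{-1})$; with the previous identification this says that $\rho_{\id_{V}}\mu_{V}(A) = 0$ implies $\mathcal{H}^{2}_{d}(\phi^{-1}(A)) = 0$ for Borel $A \subset V$. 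If $\mu_{V}(A) = 0$ then a fortiori $\rho_{\id_{V}}\mu_{V}(A) = 0$, hence $\mathcal{H}^{2}_{d}(\phi^{-1}(A)) = 0$; that is, $\phi \colon (U, \, \mathcal{H}^{2}_{d}) \to (V, \, \mu_{V})$ satisfies Condition $(N^{-1})$, as claimed.

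Finally I would settle the equivalence. If $\rho_{\id_{V}} = \chi_{V}$ $\mu_{V}$-almost everywhere then $\rho_{\id_{V}}\mu_{V} = \mu_{V}$, and \Cref{lem:Condition(N)} — together with $\rho_{\id_{U}}\mathcal{H}^{2}_{d} = \mathcal{H}^{2}_{d}$ — gives that $\phi \colon (U, \, \mathcal{H}^{2}_{d}) \to (V, \, \mu_{V})$ satisfies Condition $(N)$. Conversely, if $\phi$ satisfies Condition $(N)$, then since $\mathcal{H}^{2}_{d}(B_{0}) = 0$ we get $\mu_{V}(\phi(B_{0})) = 0$, i.e.\ $\rho_{\id_{V}} = \chi_{V \setminus \phi(B_{0})} = \chi_{V}$ $\mu_{V}$-almost everywhere. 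I expect the only genuine obstacle to be the second paragraph — showing $\mathcal{H}^{2}_{d}(B_{0}) = 0$ — which combines the classical planar characterization of zero-modulus path families with transporting that characterization through the biLipschitz map $\psi$; everything after that is bookkeeping with the measures handed to us by \Cref{lem:Condition(N)}.
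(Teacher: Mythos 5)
Your proof is correct and follows essentially the same route as the paper: apply \Cref{lem:Condition(N)} to a biLipschitz chart $\psi \colon U_{d} \to \Omega$, show that the resulting exceptional set $B_{0}$ is $\mathcal{H}^{2}_{d}$-null, and then read off the conclusions for an arbitrary quasiconformal $\phi$ from the weighted-measure equivalences of that proposition. The only difference is cosmetic: where the paper cites as black boxes that biLipschitz maps satisfy Conditions ($N$) and ($N^{-1}$) and that $\chi_{W}$ is a minimal upper gradient of $\id_{W}$ on planar domains, you reprove the latter via the classical Fubini--Cauchy--Schwarz modulus estimate and replace the former with the direct Lipschitz bound $\mathcal{H}^{2}_{d}(B_{0}) \le L^{2}\,m_{2}(\psi(B_{0}))$; both substitutions are exactly the arguments that underlie the paper's cited facts.
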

\begin{proof}[Proof of \Cref{lem:Condition(N):inverse} assuming \Cref{lem:Condition(N)}]
We use the following two facts. A biLipschitz map between metric surfaces is quasiconformal and satisfies Conditions ($N$) and ($N^{-1}$). Also, if $W$ is a planar domain, then $\chi_{W}$ is a minimal upper gradient of $\id_{W}$. Combining these two facts with \Cref{lem:Condition(N)} implies that for any metric surface $U$ biLipschitz homeomorphic to a planar domain, $\rho_{ \id_{U} } = \chi_{U}$ $\mathcal{H}^{2}_{d}$-almost everywhere.

Suppose that $\phi \colon U_{d} \rightarrow V$ is a quasiconformal map and that $U_{d}$ is biLipschitz homeomorphic to a planar domain. Since $\rho_{ \id_{U} } > 0$ $\mathcal{H}^{2}_{d}$-almost everywhere, the Borel set $B_{0}$ in \Cref{lem:Condition(N)} has zero $\mathcal{H}^{2}_{d}$-measure. Since $\mu_{V}( \phi(B) ) = 0$ implies $\rho_{\id_{V}}\mu_{V}( \phi(B) ) = 0$, Condition ($N^{-1}$) of $\phi$ follows from \Cref{lem:Condition(N)}. In this case, the map $\phi$ satisfies Condition ($N$) if and only if $\mu_{V}( \phi(B_{0} ) ) = 0$ but this happens if and only if $\rho_{\id_{V}} = \chi_{V}$ $\mu_{V}$-almost everywhere.
\end{proof}
We pass to the proof of \Cref{lem:Condition(N)}. We start with the following lemma.
\begin{lemma}\label{identity:minimal:upper:gradient}
There exists a Borel set $B \subset U$ such that $\chi_{U \setminus B}$ is a minimal upper gradient of $\id_{U}$. Moreover, the following are equivalent for all Borel sets $\tilde{B} \subset U$: 
\begin{equation*}
	\modulus \Gamma^{+}_{ \tilde{B} } = 0, \, \quad
	\mu_{U}( \tilde{B} \setminus B ) = 0 \quad \text{ and } \quad
	\rho_{ \id_{U} }\mu_{U}( \tilde{B} ) = 0.
\end{equation*}
The weighted measure $\rho_{ \id_{U} }\mu_{U}$ is independent of the representative of the minimal upper gradient $\rho_{ \id_{U} }$ of $\id_{U}$.
\end{lemma}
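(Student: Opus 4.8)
The plan is to produce the Borel set $B$ as the complement of a full-measure set on which a candidate minimal upper gradient concentrates, and then verify the three equivalences by a standard modulus--measure argument. First I would recall that $\id_U \colon U \to U$ is $1$-quasiconformal, hence lies in $N^{1,2}_{loc}(U,U)$ and has a minimal upper gradient $\rho_{\id_U} \in L^2_{loc}(U)$; moreover $\rho_{\id_U}$ satisfies the upper gradient inequality $d(\gamma(a),\gamma(b)) \le \int_\gamma \rho_{\id_U}\D s$, which forces $\rho_{\id_U} \ge \chi_{U}$ $\mu_U$-almost everywhere after comparing with the trivial upper gradient $\chi_U$ of $\id_U$ (since $L(\gamma) = \int_\gamma \chi_U \D s$ always works for the identity). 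The point is that $\rho_{\id_U}$ can be \emph{strictly larger} than $\chi_U$ on a set of positive measure: this is where a nontrivial $B$ enters. So I would set $B = \{ x \in U : \rho_{\id_U}(x) \neq 1 \}$, up to the $\mu_U$-null ambiguity of the representative, and observe that then $\chi_{U \setminus B} = \chi_{\{\rho_{\id_U} = 1\}}$. The delicate point is showing $\chi_{U \setminus B}$ is itself a \emph{minimal} upper gradient, not merely a weak upper gradient; for this I would invoke property (b) of minimal upper gradients (uniqueness up to $\mu_U$-null sets) together with the characterization via \eqref{eq:metric:speed:uppergrad}: along almost every $\gamma \in AC_+(U)$ one has $v_{\id_U \circ \gamma} = v_\gamma \le (\rho_{\id_U}\circ\gamma) v_\gamma$, and I must argue that restricting to $\{\rho_{\id_U} = 1\}$ does not destroy the upper gradient inequality along a.e.\ path — equivalently, that a.e.\ locally rectifiable path has zero length in $B$. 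That last assertion is precisely one of the equivalences to be proved, so the argument must be organized to avoid circularity.

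The way around the circularity is to run the modulus--measure equivalences \emph{first}, for an arbitrary fixed representative of $\rho_{\id_U}$, and only afterwards conclude that $\{\rho_{\id_U} > 1\}$ can serve as $B$. Concretely, for a Borel set $\tilde B \subset U$ I would prove the chain
\[
\rho_{\id_U}\mu_U(\tilde B) = 0
\iff
\mu_U(\tilde B \cap \{\rho_{\id_U} > 0\}) = 0
\iff
\modulus \Gamma^+_{\tilde B} = 0.
\]
The first equivalence is immediate since $\rho_{\id_U} > 0$ $\mu_U$-a.e. For the implication $\mu_U(\tilde B)=0 \Rightarrow \modulus\Gamma^+_{\tilde B}=0$ I would cite property \ref{mod:5} of modulus directly. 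For the reverse implication, suppose $\mu_U(\tilde B) > 0$; then $\chi_{\tilde B} \in L^2_{loc}$ is not $\mu_U$-a.e.\ zero, and I would use property \ref{mod:3} (integrability along a.e.\ path) plus the coarea-type fact that a positive-measure set must be met with positive length by a positive-modulus family of paths — this is where one needs the ambient structure of $U$. Actually, the cleanest route is to note $\int_\gamma \chi_{\tilde B}\D s = L(\gamma \cap \tilde B)$, and if this vanished for almost every locally rectifiable $\gamma$ then $\chi_{\tilde B}\,\mu_U$ would be a zero weak upper gradient-type obstruction; more directly, I would show $\chi_{U \setminus \tilde B}$ would then be a weak upper gradient of $\id_U$, contradicting minimality of $\rho_{\id_U} \ge \chi_U$ unless $\mu_U(\tilde B) = 0$. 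This is the main obstacle: making the "$\modulus\Gamma^+_{\tilde B} = 0 \Rightarrow \mu_U(\tilde B) = 0$" step rigorous, because it is genuinely the statement that the identity map has no "hidden contraction" and it rests on the fact that $\chi_{U}$ plus a modulus-almost-everywhere modification of the complement of $\tilde B$ still dominates path length.

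Finally, with the equivalences in hand, I would take $B = \{\rho_{\id_U} \neq 1\}$ for one fixed representative. Since $\rho_{\id_U} \ge \chi_U$ $\mu_U$-a.e., we have $B = \{\rho_{\id_U} > 1\}$ up to null sets. By the equivalence just proved, $\modulus\Gamma^+_{B} = 0$ would follow if $\mu_U(B) = 0$ — but in general $\mu_U(B) > 0$, so instead I argue: along almost every $\gamma \in AC_+(U)$, the inequality \eqref{eq:metric:speed:uppergrad} for $\id_U$ gives $v_\gamma \le (\rho_{\id_U}\circ\gamma)v_\gamma$, which is automatic, while applying the same to $\id_U^{-1} = \id_U$ gives $v_\gamma \le (\rho_{\id_U}\circ\gamma)v_\gamma$ again; the sharper input is that for the identity the metric speed is literally preserved, $v_{\id_U\circ\gamma} = v_\gamma$, so $(\rho_{\id_U}\circ\gamma - 1)v_\gamma = 0$ $m_1$-a.e., meaning a.e.\ path spends zero length in $\{\rho_{\id_U} > 1\} = B$. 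Hence $\chi_{U\setminus B}$ satisfies the upper gradient inequality along a.e.\ path, so it is a weak upper gradient of $\id_U$ in $L^2_{loc}$; and since $\chi_{U\setminus B} \le \rho_{\id_U}$ with $\rho_{\id_U}$ minimal, equality $\chi_{U\setminus B} = \rho_{\id_U}$ holds $\mu_U$-a.e., so $\chi_{U\setminus B}$ is itself minimal. The independence of $\rho_{\id_U}\mu_U$ from the representative is then immediate since any two representatives agree $\mu_U$-a.e., and the three equivalences transfer from the chosen representative to the weighted measure $\rho_{\id_U}\mu_U$ verbatim.
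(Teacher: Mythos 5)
Your proposal has the key inequality backwards, and this error propagates through the whole argument. You assert that comparing with the trivial upper gradient $\chi_U$ "forces $\rho_{\id_U} \ge \chi_U$ $\mu_U$-almost everywhere," and that $\rho_{\id_U}$ "can be strictly larger than $\chi_U$ on a set of positive measure." In fact, since $\chi_U$ \emph{is} an upper gradient of $\id_U$, the \emph{minimality} of $\rho_{\id_U}$ forces the opposite bound $\rho_{\id_U} \le \chi_U$ $\mu_U$-a.e.; the phenomenon the lemma is tracking is that $\rho_{\id_U}$ can vanish on a set of positive measure (Proposition 17.1 of \cite{uniformization} gives a concrete example, cited in the remark after \Cref{lem:Condition(N)}). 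Consequently your candidate $B = \{\rho_{\id_U} \ne 1\}$ coincides $\mu_U$-a.e.\ with $\{\rho_{\id_U} < 1\}$, not with $\{\rho_{\id_U} > 1\}$ as you claim, and the argument you give — that $v_{\id_U\circ\gamma} = v_\gamma$ combined with \eqref{eq:metric:speed:uppergrad} yields $(\rho_{\id_U}\circ\gamma - 1)v_\gamma = 0$ $m_1$-a.e.\ along a.e.\ path — does not follow: the upper gradient inequality gives only the one-sided bound $(\rho_{\id_U}\circ\gamma - 1)v_\gamma \ge 0$, which shows a.e.\ path has zero length in $\{\rho_{\id_U} < 1\}$ but says nothing about $\{\rho_{\id_U} > 1\}$. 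The latter set is handled by the entirely separate observation that it has $\mu_U$-measure zero (by minimality) and therefore zero modulus by property \ref{mod:5}.

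The same sign confusion undermines your treatment of the equivalences. You reduce matters to showing $\modulus \Gamma^+_{\tilde B} = 0 \Rightarrow \mu_U(\tilde B) = 0$, invoking a "coarea-type fact that a positive-measure set must be met with positive length by a positive-modulus family of paths." That implication is false in an arbitrary metric measure space of the type considered here — the Borel set $B$ in the lemma statement is precisely a (possibly positive-measure) set that a.e.\ path avoids — and it is not what the lemma asserts. The correct equivalence carries the correction term $\mu_U(\tilde B \setminus B) = 0$, which you wrote down correctly at the start of your chain but then dropped. With the inequality direction corrected, the rest of your skeleton (showing $\chi_{U\setminus B}$ is a weak upper gradient bounded above by $\rho_{\id_U}$ and hence a minimal representative, then transferring the three conditions through a fixed representative) is essentially the paper's argument, so the fix is local but essential: replace "$\rho_{\id_U} \ge \chi_U$" by "$\rho_{\id_U} \le \chi_U$," show $\modulus \Gamma^+_{\{\rho_{\id_U} < 1\}} = 0$ from \eqref{eq:metric:speed:uppergrad}, dispose of $\{\rho_{\id_U} > 1\}$ as a $\mu_U$-null set via property \ref{mod:5}, and prove the relative statement $\modulus \Gamma^+_{\tilde B} = 0 \iff \mu_U(\tilde B \setminus B) = 0$ rather than the absolute one.
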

\begin{proof}
Since $\id_{U} \in L^{2}_{loc}( U, \, U )$ and $\chi_{U} \in L^{2}_{loc}( U )$ is an upper gradient of $\id_{U}$, the minimal upper gradient $\rho_{\id_{U}} \in L^{2}_{loc}( U )$ exists. Fix a representative $\rho$ of $\rho_{\id_{U}}$. We prove that $\rho$ equals a characteristic function $\mu_{U}$-almost everywhere.

If $A = \left\{ \rho < 1 \right\}$, then almost any path cannot have positive length in $A$: For almost every path $\gamma$, the inequality $v_{ \gamma } \leq \rho v_{ \gamma }$ holds at $m_{1}$-almost every $t \in \mathrm{dom}( \gamma )$ by \eqref{eq:metric:speed:uppergrad}. This immediately implies that the family $\Gamma_{A}^{+}$ has zero modulus.

The minimality of $\rho$ implies that we must have $\rho \leq \chi_{U}$ $\mu_{U}$-almost everywhere. This means that the paths that have positive length in $A' = \left\{ \rho > 1 \right\}$ have zero modulus (property \ref{mod:5} of modulus). We conclude that $B_{1} = A \cup A'$ satisfies $\modulus \Gamma^{+}_{B_{1}} = 0$.

Suppose that $B$ is a Borel set such that $\modulus \Gamma^{+}_{B} = 0$. We claim that $\rho' = \rho \chi_{ U \setminus B }$ is a minimal upper gradient of $\id_{U}$. To that end, for almost every rectifiable path $\gamma \not\in \Gamma^{+}_{B}$, i.e., almost every rectifiable path $\gamma$, we have that
\begin{equation*}
	\int_{ \gamma }
		\rho'
	\D s
	=
	\int_{ \gamma }
		\chi_{ U \setminus B }
		\rho
	\D s
	=
	\int_{ \gamma }
		\rho
	\D s
	\geq
	d( \gamma(a), \, \gamma(b) );
\end{equation*}
the second equality follows from the fact that $\gamma$ has zero length in $B$. The inequality follows from the upper gradient inequality.

We have deduced that $\rho'$ is a weak upper gradient of $\id_{U}$. Since $\rho' \leq \rho$ everywhere, $\rho'$ is a representative of $\rho_{\id_{U}}$. This means that the equality $\rho' = \rho$ holds $\mu_{U}$-almost everywhere. This happens if and only if the set $B \setminus \left\{ \rho = 0 \right\}$ has zero $\mu_{U}$-measure.

In particular, if $B = B_{1}$, then $\rho' = \chi_{ U \setminus B_{1} }$ equals $\rho$ $\mu_{U}$-almost everywhere, hence $\chi_{ U \setminus B_{1} }$ is a representative of $\rho_{\id_{U}}$. We proved the first part of the claim.

Next we prove the equivalence of the three statements. Since minimal upper gradients are unique up to $\mu_{U}$-measure zero, the measure $\nu = \rho_{ \id_{U} }\mu_{U}$ is independent of the chosen representative of the minimal upper gradient. We still consider the representative $\rho' = \chi_{ U \setminus B_{1} }$ constructed above. We deduce that $\nu( \widetilde{B} ) = 0$ if and only if $\mu_{U}( \widetilde{B} \setminus B_{1} ) = 0$.

We proved during the construction of $\rho'$ above that if $\modulus \Gamma^{+}_{ \widetilde{B} } = 0$, then
\begin{equation*}
	0
	=
	\mu_{U}\left(
		\widetilde{B} \setminus \left\{ \rho' = 0 \right\}
	\right)
	=
	\mu_{U}\left(
		\widetilde{B} \setminus B_{1}
	\right)
	=
	\nu( \widetilde{B} ).
\end{equation*}
Conversely, if $\nu( \widetilde{B} ) = 0$, then $\widetilde{B} \setminus B_{1}$ has zero $\mu_{U}$-measure and thus $\modulus \Gamma^{+}_{ \widetilde{B} \setminus B_{1} } = 0$ (property \ref{mod:5} of modulus). Then the subadditivity and monotonicity of modulus imply that $\modulus \Gamma^{+}_{ \widetilde{B} }
	\leq
	\modulus \Gamma^{+}_{ \widetilde{B} \cap B_{1} }
	\leq
	\modulus \Gamma^{+}_{ B_{1} }
	=
	0$. We have deduced that $\nu( \widetilde{B} ) = 0$ if and only if $\modulus \Gamma^{+}_{ \widetilde{B} } = 0$.
\end{proof}
\begin{proof}[Proof of \Cref{lem:Condition(N)}]
We first show the existence of $B_{0}$. \Cref{identity:minimal:upper:gradient} implies that $\rho_{ \id_{U} } = \chi_{ U \setminus B_{1} }$ and $\rho_{ \id_{V} } = \chi_{ V \setminus B_{2} }$ for some Borel sets $B_{1} \subset U$ and $B_{2} \subset V$. Then the same statement shows that for Borel sets $B \subset U$,
\begin{align}
	\label{eq:condition(n).proof.1}
	&\mu_{U}( B \setminus B_{1} ) = 0
	\quad\text{if and only if}\quad
	\modulus \Gamma^{+}_{ B } = 0 \text{ and}
	\\
	\label{eq:condition(n).proof.2}
	&\modulus \Gamma^{+}_{ \phi(B) } = 0
	\quad\text{if and only if}\quad
	\mu_{V}( \phi(B) \setminus B_{2} ) = 0.
\end{align}
Since the $\phi$- and $\phi^{-1}$-singular paths have zero modulus, \Cref{lem:QC:differentequivalent} and the quasiconformality of $\phi$ establish that
\begin{equation}
	\label{eq:condition(n).proof.3}
	\modulus \Gamma^{+}_{ B } = 0
	\quad\text{if and only if}\quad
	\modulus \Gamma^{+}_{ \phi(B) } = 0.
\end{equation}
We deduce from these statements that the symmetric difference of $B_{1}$ and $\phi^{-1}( B_{2} )$ has zero $\mu_{U}$-measure (similar result holds on the image side). Then $B_{0} = B_{1} \cup \phi^{-1}( B_{2} )$ is the desired Borel set $B_{0}$. The rest of the claim follows from \Cref{identity:minimal:upper:gradient} and the equivalences \eqref{eq:condition(n).proof.1}, \eqref{eq:condition(n).proof.2} and \eqref{eq:condition(n).proof.3}.
\end{proof}
\begin{corollary}\label{cor:minimal:upper:gradient}
Suppose that $\rho_{ \phi }$ and $\rho_{ \phi^{-1} }$ are representatives of minimal upper gradients of $\phi$ and $\phi^{-1}$, respectively. If $B = \rho_{ \phi }^{-1}( 0 ) \cup ( \rho_{ \phi^{-1} } \circ \phi )^{-1}( 0 )$, then we can set $B_{0} = B$ in \Cref{lem:Condition(N)}.

In particular, the restriction of $\phi$ to the complement of $B$ satisfies Conditions ($N$) and ($N^{-1}$).
\end{corollary}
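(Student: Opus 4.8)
The plan is to check that $B$ satisfies the two conditions that characterise a valid choice of $B_{0}$ in \Cref{lem:Condition(N)}: that $\chi_{U \setminus B}$ is a minimal upper gradient of $\id_{U}$ and that $\chi_{V \setminus \phi(B)}$ is a minimal upper gradient of $\id_{V}$. By \Cref{identity:minimal:upper:gradient}, fix Borel sets $B_{1} \subset U$ and $B_{2} \subset V$ with $\rho_{\id_{U}} = \chi_{U \setminus B_{1}}$ and $\rho_{\id_{V}} = \chi_{V \setminus B_{2}}$. Since minimal upper gradients are unique up to sets of measure zero, the two conditions are equivalent to $\mu_{U}( B \triangle B_{1} ) = 0$ and $\mu_{V}( \phi(B) \triangle B_{2} ) = 0$; and once these are verified, the last assertion of \Cref{lem:Condition(N)} gives the ``in particular'' clause immediately, because then $\rho_{\id_{U}}\mu_{U}$ is $\mu_{U}$ restricted to $U \setminus B$ and $\rho_{\id_{V}}\mu_{V}$ is $\mu_{V}$ restricted to $V \setminus \phi(B)$.

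I would prove $\mu_{U}( B \triangle B_{1} ) = 0$ by two inclusions. For $\mu_{U}( B \setminus B_{1} ) = 0$: arguing as in the proof of \Cref{lem:QC:differentequivalent}, for almost every $\gamma \in AC_{+}( U )$, parametrised by arc length, the path $\gamma$ is $\phi$-good and both $v_{\phi \circ \gamma} \leq ( \rho_{\phi} \circ \gamma ) v_{\gamma}$ and $v_{\gamma} \leq ( \rho_{\phi^{-1}} \circ \phi \circ \gamma ) v_{\phi \circ \gamma}$ hold $m_{1}$-almost everywhere; combining these yields $( \rho_{\phi} \circ \gamma )( \rho_{\phi^{-1}} \circ \phi \circ \gamma ) \geq 1$ $m_{1}$-almost everywhere, so $\rho_{\phi} \circ \gamma$ and $\rho_{\phi^{-1}} \circ \phi \circ \gamma$ are positive $m_{1}$-almost everywhere and $\gamma$ has zero length in $B = \rho_{\phi}^{-1}(0) \cup ( \rho_{\phi^{-1}} \circ \phi )^{-1}(0)$. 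Combined with $\modulus \Gamma_{S}( \phi ) = 0$ (\Cref{lem:QC:differentequivalent}) and the reduction to arbitrary locally rectifiable paths performed in the proof of \Cref{identity:minimal:upper:gradient}, this gives $\modulus \Gamma^{+}_{B} = 0$, and then the equivalences in \Cref{identity:minimal:upper:gradient} give $\mu_{U}( B \setminus B_{1} ) = 0$. For $\mu_{U}( B_{1} \setminus B ) = 0$: by \Cref{identity:minimal:upper:gradient} one has $\modulus \Gamma^{+}_{B_{1}} = 0$, so almost every path has zero length in $B_{1}$ and hence $\rho_{\phi}\chi_{U \setminus B_{1}}$ is again a weak upper gradient of $\phi$; minimality of $\rho_{\phi}$ then forces $\rho_{\phi} = 0$ $\mu_{U}$-almost everywhere on $B_{1}$, i.e., $B_{1} \subset \rho_{\phi}^{-1}(0) \subset B$ up to a $\mu_{U}$-null set.

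The image side is symmetric. Using $\phi( \rho_{\phi}^{-1}(0) ) = ( \rho_{\phi} \circ \phi^{-1} )^{-1}(0)$ and $\phi( ( \rho_{\phi^{-1}} \circ \phi )^{-1}(0) ) = \rho_{\phi^{-1}}^{-1}(0)$, one sees that $\phi(B)$ is exactly the set produced by the recipe in the statement applied to the quasiconformal map $\phi^{-1} \colon V \rightarrow U$; running the argument of the previous paragraph with $\phi^{-1}$ in place of $\phi$ therefore yields $\mu_{V}( \phi(B) \triangle B_{2} ) = 0$. This finishes the verification that $B$ can play the role of $B_{0}$ in \Cref{lem:Condition(N)}. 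The main obstacle is the first inclusion, $\mu_{U}( B \setminus B_{1} ) = 0$: it requires combining the upper gradient inequalities for $\phi$ and for $\phi^{-1}$ along $\phi$-good paths and then reducing from $\phi$-good (equivalently, absolutely continuous) paths to arbitrary locally rectifiable paths --- both steps being of the type already handled carefully in the proofs of \Cref{lem:QC:differentequivalent} and \Cref{identity:minimal:upper:gradient}.
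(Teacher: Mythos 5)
Your proof is correct and follows essentially the same strategy as the paper's: both inclusions $\mu_{U}( B \setminus B_{1} ) = 0$ and $\mu_{U}( B_{1} \setminus B ) = 0$ are established by combining the upper gradient inequalities for $\phi$ and $\phi^{-1}$ along almost every path and then appealing to \Cref{identity:minimal:upper:gradient}. The only cosmetic differences are that the paper packages the first inclusion as ``$(\rho_{\phi^{-1}}\circ\phi)\rho_{\phi}$ is a weak upper gradient of $\id_{U}$'' and then compares with $\rho_{\id_U}$ by minimality, whereas you pass directly through $\modulus \Gamma^{+}_{B} = 0$; and for the image side the paper invokes \Cref{lem:Condition(N)} to produce the set $\phi(B')$, whereas you observe that $\phi(B) = \rho_{\phi^{-1}}^{-1}(0) \cup (\rho_{\phi}\circ\phi^{-1})^{-1}(0)$ is exactly the set the recipe yields for $\phi^{-1}$ and re-run the same argument --- both variants are valid and close to the paper's.
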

\begin{proof}
Fix Borel representatives $\rho_{ \id_{U} } = \chi_{ U \setminus B' }$ of the minimal upper gradient of $\id_{U}$ and $\rho_{\phi}$ of $\phi$. By arguing as in the proof of \Cref{lem:QC:differentequivalent}, we deduce that
\begin{equation*}
	\rho_{1} = \left( \rho_{ \phi^{-1} } \circ \phi \right) \rho_{\phi}
\end{equation*}
is a weak upper gradient of $\id_{U}$. The function $\rho_{1}$ is an element of $L^{2}_{loc}( U )$ as a consequence of \eqref{eq:analytic:outer}. The minimality of $\rho_{ \id_{U} }$ implies that $\rho_{ \id_{U} } \leq \rho_{1}$ holds $\mu_{U}$-almost everywhere. From this fact and as $\rho_{1}$ is zero $\mu_{U}$-almost everywhere in $B$, it follows that
\begin{equation*}
	\mu_{U}
	\left(
		B \setminus B'
	\right)
	=
	0.
\end{equation*}
The modulus of $\Gamma^{+}_{B'}$ equals zero as a consequence of \Cref{identity:minimal:upper:gradient}. Then by arguing as in the proof of \Cref{identity:minimal:upper:gradient}, the map
\begin{equation*}
	\rho = \rho_{ \phi } \chi_{ U \setminus B' } \in L^{2}_{loc}( U )
\end{equation*}
is a minimal upper gradient of $\phi$. This implies that
\begin{equation*}
	\mu_{U}
	\left(
		B' \setminus B
	\right)
	\leq
	\mu_{U}\left(
		B' \setminus \rho_{\phi}^{-1}( 0 )
	\right)
	=
	0.
\end{equation*}
We have proved that the symmetric difference $B' \Delta B$ has zero $\mu_{U}$-measure and hence $\chi_{ U \setminus B }$ is a minimal upper gradient of $\id_{U}$.

By applying \Cref{lem:Condition(N)}, we can assume without loss of generality that $\chi_{ V \setminus \phi( B' ) }$ is a minimal upper gradient of $\id_{V}$. Then by arguing as above, we see that $\chi_{ V \setminus \phi(B) }$ is a minimal upper gradient of $\id_{V}$. We have proved that we can set $B_{0} = B$ in \Cref{lem:Condition(N)}.
\end{proof}
\subsection{Pointwise dilatations}\label{sec:pointwise:dilatation}
For this section, we fix a quasiconformal map $\phi \colon U \rightarrow V$.
\begin{definition}\label{def:pointwise:dilatation}
The \emph{pointwise outer dilatation $K_{O}( \phi )$} and \emph{pointwise inner dilatation $K_{I}( \phi )$} of $\phi$ are
\begin{equation*}
	K_{O}( \phi )( x )
	=
	\frac{ \rho_{\phi}^{2}( x ) }{ J_{\phi}(x) }
	\quad
	\text{and}
	\quad
	K_{I}( \phi )( x )
	=
	\rho_{\phi^{-1}}^{2} \circ \phi(x) J_{\phi}(x),
\end{equation*}
respectively, which are defined $\rho_{\id_{U}} \mu_{U}$-almost everywhere. The \emph{pointwise maximal dilatation $K( \phi )$} of $\phi$ is the maximum of the pointwise dilatations $K_{O}( \phi )$ and $K_{I}( \phi )$.
\end{definition}
\begin{remark}
With a slight abuse of notation, we denote by $K_{O}( \phi )$ both the outer dilatation of $\phi$ and the pointwise analog. Similarly for $K_{I}( \phi )$ and $K( \phi )$.
\end{remark}
\begin{lemma}[Composition Laws]\label{lem:outer:inner:characterization}
Suppose that $\phi \colon U \rightarrow V$ is quasiconformal. Then the pointwise dilatations satisfy
\begin{align}
	\label{eq:outer:dilatation:inverse}
	K_{O}( \phi )
	&=
	K_{I}( \phi^{-1} ) \circ \phi \quad \text{and} \quad
	K_{I}( \phi )
	=
	K_{O}( \phi^{-1} ) \circ \phi
\end{align}
$\rho_{\id_{U}} \mu_{U}$-almost everywhere. If $\psi \colon W \rightarrow U$ is quasiconformal, then the pointwise dilatations satisfy
\begin{align}
	\label{eq:outer:dilatation:composition}
	K_{O}( \phi \circ \psi )
	&\leq
	K_{O}( \phi ) \circ \psi \,
	K_{O}( \psi ) \text{ and}
	\\
	\label{eq:inner:dilatation:composition}
	K_{I}( \phi \circ \psi )
	&\leq
	K_{I}( \phi ) \circ \psi \,
	K_{I}( \psi )
\end{align}
$\rho_{\id_{W}} \mu_{W}$-almost everywhere.
\end{lemma}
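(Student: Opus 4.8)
The plan is to deduce \Cref{lem:outer:inner:characterization} from two chain rules — one for minimal upper gradients and one for Jacobians of pullback measures — together with the reciprocity $J_\phi\cdot(J_{\phi^{-1}}\circ\phi)=1$; once these are available, the stated (in)equalities follow by inserting them into \Cref{def:pointwise:dilatation}. Throughout I would fix Borel representatives of the minimal upper gradients and of the Jacobians so that all compositions below are literally defined at every point; since $\psi$, viewed as a map $(W,\rho_{\id_W}\mu_W)\to(U,\rho_{\id_U}\mu_U)$, satisfies Condition $(N^{-1})$ by \Cref{lem:Condition(N)}, the $\psi$-preimage of a $\mu_U$-null set is $\rho_{\id_W}\mu_W$-null, so changing a representative alters the displayed objects only on a $\rho_{\id_W}\mu_W$-null set and the claims are independent of the choices. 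Note also that $\phi\circ\psi$ is quasiconformal by the geometric definition, so $\rho_{\phi\circ\psi}$ and $J_{\phi\circ\psi}$ are defined.

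For the reciprocity $J_\phi\cdot(J_{\phi^{-1}}\circ\phi)=1$ $\rho_{\id_U}\mu_U$-almost everywhere I would pass to the weighted measures $\tilde\mu_U=\rho_{\id_U}\mu_U$ and $\tilde\mu_V=\rho_{\id_V}\mu_V$. By \Cref{lem:Condition(N)} the homeomorphism $\phi\colon(U,\tilde\mu_U)\to(V,\tilde\mu_V)$ satisfies Conditions $(N)$ and $(N^{-1})$, so both $\phi^*\tilde\mu_V$ and $(\phi^{-1})^*\tilde\mu_U$ are purely absolutely continuous; writing $\phi^*\tilde\mu_V=\tilde J_\phi\,\tilde\mu_U$ and $(\phi^{-1})^*\tilde\mu_U=\tilde J_{\phi^{-1}}\,\tilde\mu_V$, the identity $\phi^*\big((\phi^{-1})^*\tilde\mu_U\big)=\tilde\mu_U$ together with the change-of-variables rule $\phi^*(g\,\tilde\mu_V)=(g\circ\phi)\,\phi^*\tilde\mu_V$ gives $(\tilde J_{\phi^{-1}}\circ\phi)\,\tilde J_\phi=1$ $\tilde\mu_U$-almost everywhere. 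Since $\rho_{\id_U}=\chi_{U\setminus B_0}$ and $\rho_{\id_V}=\chi_{V\setminus\phi(B_0)}$ for the Borel set $B_0$ of \Cref{lem:Condition(N)}, one has $\phi^*\tilde\mu_V=\phi^*\mu_V$ on $U\setminus B_0$ and this measure is absolutely continuous there, whence $\tilde J_\phi=J_\phi$, and on the image side $\tilde J_{\phi^{-1}}=J_{\phi^{-1}}$, up to the relevant null sets. Substituting the reciprocity into \Cref{def:pointwise:dilatation} (and recalling $(\phi^{-1})^{-1}=\phi$) yields both identities in \eqref{eq:outer:dilatation:inverse}.

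The upper-gradient chain rule I need is $\rho_{\phi\circ\psi}\le(\rho_\phi\circ\psi)\,\rho_\psi$ $\mu_W$-almost everywhere. The product lies in $L^2_{loc}(W)$ because $\rho_\phi^2$ and $\rho_\psi^2$ are bounded by constant multiples of $J_\phi$ and $J_\psi$ by \Cref{thm:QC:differentequivalent} and, by the Jacobian chain rule below, $(J_\phi\circ\psi)J_\psi=J_{\phi\circ\psi}$ is locally integrable. To see it is a weak upper gradient I would argue as in the proof of \Cref{lem:QC:differentequivalent}: families of zero modulus in $U$ pull back under the quasiconformal $\psi$ to families of zero modulus in $W$, so for almost every $\gamma\in AC_+(W)$ the path $\psi\circ\gamma$ is absolutely continuous, avoids the exceptional family of $\phi$, and satisfies $v_{\psi\circ\gamma}\le(\rho_\psi\circ\gamma)\,v_\gamma$; then $(\phi\circ\psi)\circ\gamma$ is absolutely continuous with $v_{(\phi\circ\psi)\circ\gamma}\le(\rho_\phi\circ\psi\circ\gamma)\,v_{\psi\circ\gamma}\le\big((\rho_\phi\circ\psi)\rho_\psi\big)\circ\gamma\;v_\gamma$ $m_1$-almost everywhere. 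The characterization \eqref{eq:metric:speed:uppergrad} then shows $(\rho_\phi\circ\psi)\rho_\psi$ is a weak upper gradient of $\phi\circ\psi$, and minimality of $\rho_{\phi\circ\psi}$ gives the inequality.

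Finally, the Jacobian chain rule $J_{\phi\circ\psi}=(J_\phi\circ\psi)J_\psi$ $\rho_{\id_W}\mu_W$-almost everywhere. Starting from $(\phi\circ\psi)^*\mu_V=\psi^*(\phi^*\mu_V)$ and the Lebesgue decomposition $\phi^*\mu_V=J_\phi\,\mu_U+\sigma$ with $\sigma\perp\mu_U$ supported on a $\mu_U$-null Borel set $N$, one computes $\psi^*(\phi^*\mu_V)=(J_\phi\circ\psi)J_\psi\,\mu_W+R$, where $R$ is the sum of $(J_\phi\circ\psi)$ times the singular part of $\psi^*\mu_U$, which is singular with respect to $\mu_W$, and of $\psi^*\sigma$, which is supported on $\psi^{-1}(N)$; by Condition $(N^{-1})$ for $\psi\colon(W,\rho_{\id_W}\mu_W)\to(U,\rho_{\id_U}\mu_U)$ the set $\psi^{-1}(N)$ meets $W\setminus B_0$ in a $\mu_W$-null set. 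Restricting to $W\setminus B_0$ therefore identifies $(J_\phi\circ\psi)J_\psi$ with the density of the absolutely continuous part of $(\phi\circ\psi)^*\mu_V$, which is the claim. Combining the two chain rules gives $K_O(\phi\circ\psi)=\rho_{\phi\circ\psi}^2/J_{\phi\circ\psi}\le(K_O(\phi)\circ\psi)\,K_O(\psi)$, i.e.\ \eqref{eq:outer:dilatation:composition}; applying this with $\psi^{-1}$, $\phi^{-1}$ in place of $\phi$, $\psi$ and then invoking \eqref{eq:outer:dilatation:inverse} for $\phi$, $\psi$, and $\phi\circ\psi$ yields \eqref{eq:inner:dilatation:composition}. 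I expect the main obstacle to be the measure-theoretic bookkeeping in the reciprocity and the Jacobian chain rule: Condition $(N)$ need not hold for the unweighted measures, so every computation must be localized to the complement of the sets where $\rho_{\id}$ vanishes and checked to be insensitive to the chosen representatives — which is exactly where \Cref{lem:Condition(N)} enters.
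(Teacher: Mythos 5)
Your proposal is correct and follows essentially the same route as the paper: \Cref{lem:Condition(N)} supplies Conditions $(N)$ and $(N^{-1})$ for the weighted measures, and the three ingredients — reciprocity of Jacobians, the Jacobian chain rule, and the upper-gradient chain rule for compositions — are the same in both treatments. The one genuine (if minor) variation is how you obtain \eqref{eq:inner:dilatation:composition}: you apply the already-proved outer law to the pair $(\psi^{-1},\phi^{-1})$ and transport it back through the inverse identities \eqref{eq:outer:dilatation:inverse}, whereas the paper proves the parallel weak-upper-gradient estimate $\rho_{(\phi\circ\psi)^{-1}}\le(\rho_{\psi^{-1}}\circ\phi^{-1})\,\rho_{\phi^{-1}}$ directly. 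Both are valid; your route avoids repeating the upper-gradient argument on the image side at the cost of an extra composition with $\phi\circ\psi$ (which is harmless because $\phi\circ\psi$ satisfies Conditions $(N)$ and $(N^{-1})$ for the weighted measures). You also write out the Lebesgue-decomposition bookkeeping behind $J_{\phi\circ\psi}=(J_\phi\circ\psi)J_\psi$, which the paper asserts from Conditions $(N)$ and $(N^{-1})$ without giving the details; that is a useful clarification and your accounting of the singular parts on $W\setminus B_0$ is exactly what is needed.
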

\begin{proof}
\Cref{lem:Condition(N)} shows that the maps
\begin{align*}
	\psi
	&\colon
	( W, \, \rho_{\id_{W}} \mu_{W} )
	\rightarrow
	( U, \, \rho_{\id_{U}} \mu_{U} ),
	\\
	\phi
	&\colon
	( U, \, \rho_{\id_{U}} \mu_{U} )
	\rightarrow
	( V, \, \rho_{\id_{V}} \mu_{V} ), \text{ and}
	\\
	\phi \circ \psi
	&\colon
	( W, \, \rho_{\id_{W}} \mu_{W} )
	\rightarrow
	( V, \, \rho_{\id_{V}} \mu_{V} )
\end{align*}
satisfy Conditions ($N$) and ($N^{-1}$). This implies that
\begin{equation*}
	J_{ \phi \circ \psi }
	=
	J_{ \phi } \circ \psi
	J_{ \psi }
\end{equation*}
holds $\rho_{\id_{W}} \mu_{W}$-almost everywhere; here the Jacobians are taken with respect to the unweighted measures not with respect to the weighted measures $\rho_{\id_{W}} \mu_{W}$ et cetera. Nevertheless, we still have the above identity. Then \eqref{eq:outer:dilatation:composition} is equivalent to showing that
\begin{equation*}
	\rho_{ \phi \circ \psi }
	\leq
	\rho_{ \phi }\circ \psi
	\rho_{ \psi }
\end{equation*}
$\rho_{\id_{W}}\mu_{W}$-almost everywhere. But this follows from the fact that $\rho_{\phi}\circ\psi \rho_{\psi} \in L^{2}_{loc}( W )$ (recall \eqref{eq:analytic:outer}) is a weak upper gradient of $\phi \circ \psi$ (which follows by arguing as in the proof of \Cref{cor:minimal:upper:gradient}). Thus we obtain the inequality \eqref{eq:outer:dilatation:composition}.

We only prove the first identity in \eqref{eq:outer:dilatation:inverse} since the latter one is proved in a similar way. The equality $J_{\phi} = \frac{ 1 }{ J_{\phi^{-1}} } \circ \phi$ holds $\rho_{ \id_{U} } \mu_{U}$-almost everywhere, hence
\begin{equation*}
	K_{I}( \phi )
	=
	\rho_{\phi^{-1}}^{2} \circ \phi
	J_{\phi}
	=
	\frac{  \rho_{\phi^{-1}}^{2} \circ \phi }{ J_{\phi^{-1}} \circ \phi }
	=
	K_{O}( \phi^{-1} ) \circ \phi
\end{equation*}
holds $\rho_{\id_{U}} \mu_{U}$-almost everywhere.

Conditions ($N$) and ($N^{-1}$) reduce the inequality \eqref{eq:inner:dilatation:composition} to showing that
\begin{equation*}
	\rho_{ ( \phi \circ \psi )^{-1} }
	\leq
	\rho_{ \psi^{-1} } \circ \phi^{-1}
	\rho_{ \phi^{-1} }
\end{equation*}
$\rho_{\id_{V}}\mu_{V}$-almost everywhere. This follows by arguing as above as in the proof of \eqref{eq:outer:dilatation:composition}.
\end{proof}
\begin{corollary}[Global Dilatations]\label{cor:global:inner:and:outer:dilatation}
The outer and inner dilatations of a quasiconformal map $\phi \colon U \rightarrow V$ satisfy
\begin{align}
	\label{eq:outer:dilatation:global}
	K_{O}( \phi )
	&=
	\esssup\left\{
		K_{O}( \phi )(x)
		\mid
		x \in ( U, \, \rho_{\id_{U}} \mu_{U} )
	\right\} \text{ and}
	\\
	\label{eq:inner:dilatation:global}
	K_{I}( \phi )
	&=
	\esssup\left\{
		K_{I}( \phi )(x)
		\mid
		x \in ( U, \, \rho_{\id_{U}} \mu_{U} )
	\right\}.
\end{align}
\end{corollary}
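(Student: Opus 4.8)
The plan is to prove the identity for $K_{O}(\phi)$; the identity for $K_{I}(\phi)$ then follows by applying the $K_{O}$-identity to $\phi^{-1}$ together with the first composition law in \eqref{eq:outer:dilatation:inverse} and the measure-theoretic dictionary of \Cref{lem:Condition(N)} (namely that $\phi$ pushes the weighted measure $\rho_{\id_{U}}\mu_{U}$ to a measure mutually absolutely continuous with $\rho_{\id_{V}}\mu_{V}$, so an essential supremum on one side transfers to the other). Write $K^{*} = \esssup\{K_{O}(\phi)(x) \mid x \in (U,\rho_{\id_{U}}\mu_{U})\}$. There are two inequalities to establish: $K^{*} \le K_{O}(\phi)$ and $K_{O}(\phi) \le K^{*}$.

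\emph{The inequality $K^{*} \le K_{O}(\phi)$.} This is the routine direction. By \Cref{thm:QC:differentequivalent}\ref{QC:analytic} applied with $K = K_{O}(\phi)$, we have $\rho_{\phi}^{2} \le K_{O}(\phi) J_{\phi}$ holding $\mu_{U}$-almost everywhere. On the set where $J_{\phi} > 0$ this says exactly that the pointwise outer dilatation $K_{O}(\phi)(x) = \rho_{\phi}^{2}(x)/J_{\phi}(x) \le K_{O}(\phi)$. It remains to check that the set where $J_{\phi} = 0$ but $\rho_{\phi} > 0$ is $\rho_{\id_{U}}\mu_{U}$-null, so that the pointwise dilatation is in fact defined (and bounded by $K_{O}(\phi)$) $\rho_{\id_{U}}\mu_{U}$-a.e.; this follows from the inequality $\rho_{\phi}^{2} \le K_{O}(\phi) J_{\phi}$ itself, which forces $\rho_{\phi} = 0$ $\mu_{U}$-a.e.\ on $\{J_{\phi} = 0\}$, hence a fortiori $\rho_{\id_{U}}\mu_{U}$-a.e.\ there. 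Taking the essential supremum gives $K^{*} \le K_{O}(\phi)$.

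\emph{The inequality $K_{O}(\phi) \le K^{*}$.} This is the substantive direction and the main obstacle. The idea is to show that the function $\rho_{\phi}$ satisfies $\rho_{\phi}^{2} \le K^{*} J_{\phi}$ $\mu_{U}$-almost everywhere, and then invoke \Cref{thm:QC:differentequivalent} in the direction \ref{QC:analytic}$\Rightarrow$\ref{QC:geometric} to conclude $K_{O}(\phi) \le K^{*}$ by definition of the outer dilatation as the least such constant. Pointwise, by the definition of $K_{O}(\phi)(x)$ and by $K^{*}$ being an essential supremum with respect to $\rho_{\id_{U}}\mu_{U}$, we have $\rho_{\phi}^{2}(x) \le K^{*} J_{\phi}(x)$ for $\rho_{\id_{U}}\mu_{U}$-almost every $x$. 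The gap to close is that we need this inequality $\mu_{U}$-almost everywhere, not merely $\rho_{\id_{U}}\mu_{U}$-almost everywhere: the two measures differ precisely on the set $B_{0}$ of \Cref{lem:Condition(N)}, where $\rho_{\id_{U}} = \chi_{U\setminus B_{0}}$ vanishes. On $B_{0}$, however, \Cref{cor:minimal:upper:gradient} tells us we may take $B_{0} = \rho_{\phi}^{-1}(0) \cup (\rho_{\phi^{-1}}\circ\phi)^{-1}(0)$, so in particular a representative of $\rho_{\phi}$ vanishes $\mu_{U}$-a.e.\ on $B_{0}$; thus $\rho_{\phi}^{2} = 0 \le K^{*} J_{\phi}$ holds trivially there. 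Combining the two sets, $\rho_{\phi}^{2} \le K^{*} J_{\phi}$ holds $\mu_{U}$-almost everywhere on all of $U$, which is what \Cref{thm:QC:differentequivalent}\ref{QC:analytic} requires. (One should also note $K^{*} \ge 1$ is not needed here; we only need it finite, which it is since $K^{*} \le K_{O}(\phi) < \infty$ by the first inequality, so the appeal to \Cref{thm:QC:differentequivalent} is legitimate.) The only delicate point is the bookkeeping of which null set is used for which measure, and the observation that \Cref{cor:minimal:upper:gradient}'s explicit choice of $B_{0}$ is exactly tailored to make the passage from $\rho_{\id_{U}}\mu_{U}$-a.e.\ to $\mu_{U}$-a.e.\ work for $\rho_{\phi}$.
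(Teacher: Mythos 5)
Your proof follows the paper's approach: reduce to the outer dilatation via the composition laws and the measure dictionary of \Cref{lem:Condition(N)}, then handle the discrepancy between $\mu_U$-almost everywhere and $\rho_{\id_U}\mu_U$-almost everywhere using the explicit Borel set $B_0$ from \Cref{cor:minimal:upper:gradient}. The paper's phrasing is that the $L^\infty$-norms of $\rho_\phi^2/J_\phi$ with respect to $\mu_U$ and $\rho_{\id_U}\mu_U$ coincide; yours is that the pointwise inequality $\rho_\phi^2\le K^*J_\phi$ upgrades from $\rho_{\id_U}\mu_U$-a.e.\ to $\mu_U$-a.e. These are the same content.

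One justification is not right as written, though. You assert that because $B_0 = \rho_\phi^{-1}(0)\cup(\rho_{\phi^{-1}}\circ\phi)^{-1}(0)$, \emph{in particular} a representative of $\rho_\phi$ vanishes $\mu_U$-a.e.\ on $B_0$. That does not follow from the set-theoretic form alone: the second piece of the union is where the \emph{inverse} map's minimal upper gradient vanishes, not where $\rho_\phi$ does, and a priori $\rho_\phi$ could be positive on a $\mu_U$-positive subset of it. The statement is true, but it needs the modulus/minimality argument rather than the set description. Since $\chi_{U\setminus B_0}$ is a minimal upper gradient of $\id_U$ by \Cref{cor:minimal:upper:gradient}, \Cref{identity:minimal:upper:gradient} gives $\modulus\Gamma^{+}_{B_0}=0$; hence $\rho_\phi\chi_{U\setminus B_0}$ is again a weak upper gradient of $\phi$ (almost every path has zero length in $B_0$), and minimality forces $\rho_\phi=\rho_\phi\chi_{U\setminus B_0}$ $\mu_U$-a.e., that is, $\rho_\phi=0$ $\mu_U$-a.e.\ on $B_0$. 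With that step tightened your argument matches the paper's.
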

\begin{proof}
\Cref{lem:Condition(N)} and the latter identity in \eqref{eq:outer:dilatation:inverse} imply that \eqref{eq:inner:dilatation:global} follows from the identity \eqref{eq:outer:dilatation:global}. Therefore we only prove \eqref{eq:outer:dilatation:global}.

Fix representatives of $\rho_{\phi}$ and $J_{\phi}$. Then \Cref{lem:Condition(N)} and \Cref{cor:minimal:upper:gradient} show that for every constant $C > 0$,
\begin{equation*}
	\mu_{U}\left( \left\{ \rho_{ \phi }^{2} > C J_{\phi} \right\} \right)
	=
	\left( \rho_{ \id_{U} } \mu_{U} \right)\left( \left\{ \rho_{ \phi }^{2} > C J_{\phi} \right\} \right).
\end{equation*}
This implies that the $L^{\infty}$-norms of $\frac{ \rho_{ \phi }^{2} }{ J_{\phi} }$ with respect to $\mu_{U}$ and with respect to $\rho_{ \id_{U} }\mu_{U}$ coincide; let $C'$ denote that norm. A simple application of \Cref{thm:QC:differentequivalent} for the cases $C > C'$ and $C' > C$ imply that $C \geq K_{O}( \phi )$ and $K_{O}( \phi ) \geq C$, respectively. We conclude that $C' = K_{O}( \phi )$.
\end{proof}
\subsection{Approximate metric differentials}\label{sec:approximate}
Let $U \subset \mathbb{R}^{2}$ be open and $V$ a metric space. Then a map $\phi \colon U \rightarrow V$ is \emph{approximately metrically differentiable} at $x \in U$ if there exists a seminorm $s$ on $\mathbb{R}^{2}$ such that
\begin{equation}
	\label{eq:approximate:metric:differential}
	\aplim_{ y \rightarrow x }
	\frac{ d( \phi(x), \, \phi(y) ) - s[ x - y ] }{ \norm{ x - y }_{2} }
	=
	0.
\end{equation}
We recall the definition of approximate limits. In the following, the measure $m_{2}$ refers to the Lebesgue $2$-measure on $\mathbb{R}^{2}$.
\begin{definition}
If $f \colon U \rightarrow \left[-\infty, \, \infty\right]$ is an $m_{2}$-measurable function, then $\aplim_{ y \rightarrow x } f(y) = 0$ if there exists a Lebesgue measurable set $L \subset U$ that has $x \in L$ as a Lebesgue density point and
\begin{equation}
	\label{eq:aplim}
	\lim_{ L \ni y \rightarrow x }
		f(y)
	=
	0.
\end{equation}
\end{definition}
Equation \eqref{eq:aplim} implies that if the approximate limit exists, it is unique. If the approximate limit \eqref{eq:approximate:metric:differential} exists, the seminorm $s$ is the \emph{approximate metric differential} of $\phi$ at $x$ and is denoted by $\apmd[\phi]{x}$.

The map $\phi$ has an \emph{approximate metric differential $\apmd{\phi}$} if \eqref{eq:approximate:metric:differential} exists $m_{2}$-almost everywhere. In this case, we say that the approximate metric differential $\apmd{\phi}$ exists.

The space of seminorms on $\mathbb{R}^{2}$ is endowed with the topology of locally uniform convergence. The topology is equivalent to the convergence in the Banach space $\mathcal{C}( \mathbb{S}^{1}, \, \mathbb{R} )$, where the space $\mathcal{C}( \mathbb{S}^{1}, \, \mathbb{R} )$ uses the supremum norm. If $s$ is a seminorm on $\mathbb{R}^{2}$, the \emph{Jacobian} of $s$ is
\begin{equation}
	\label{eq:jacobian:seminorm}
	J_{2}( s )
	=
	\frac{ \pi }{ m_{2}\left( \left\{ s \leq 1 \right\} \right) }.
\end{equation}
The above definition is from \cite{kirchheim,ambrosio2000}. The Jacobian $J_{2}( s )$ is non-zero only if $s$ is a norm. The map $s \mapsto J_{2}( s )$ depends continuously on the seminorm $s$. The following proposition is essentially Proposition 4.3 of \cite{metricderivative}.
\begin{proposition}\label{newtonian:seminorm}
Let $\phi \colon U \rightarrow V$ be quasiconformal, where $U \subset \mathbb{R}^{2}$ is open and $V$ is an open subset of a metric surface. Then the approximate metric differential $\apmd{\phi}$ exists and is an element of $L^{2}_{loc}( U, \, \mathcal{C}( \mathbb{S}^{1}, \, \mathbb{R} ) )$.

Moreover, there exists a Borel set $N \subset U$ of zero $m_{2}$-measure and countably many pairwise disjoint compact sets $K_{i} \subset U$ partitioning $U \setminus N$ in such a way that for every $i = 1, \, 2, \dots$ the following properties hold:
\begin{enumerate}[label=(\alph*)]
\item\label{newtonian:seminorm:existence} the restriction of $\phi$ to $K_{i}$ is Lipschitz, the approximate metric differential $\apmd[\phi]{x}$ exists at every $x \in K_{i}$, and the restriction of $\apmd[\phi]{x}$ to $K_{i}$ is continuous;
\item\label{newtonian:seminorm:localproperties} for every $\epsilon > 0$ there exists a radius $r = r( i, \, \epsilon ) > 0$ such that for every $x \in K_{i}$ and every $x + v, x + w \in K_{i} \cap \overline{B}( x, \, r )$,
\begin{equation}
	\label{eq:approximate:metric:diff}
	\abs{
		d( \phi(x+v), \, \phi(x+w) )
		-
		\apmd[\phi]{x}[v-w]
	}
	\leq
	\epsilon \norm{ v - w }_{2}.
\end{equation}
\end{enumerate}
Additionally, the following properties hold:
\begin{enumerate}[label=(\alph*)]\setcounter{enumi}{2}
\item\label{newtonian:seminorm:changeofvariables} for every Borel measurable function $\rho \colon U \rightarrow \left[0, \, \infty\right]$,
\begin{equation}
	\label{eq:change:of:variables}
	\int_{ U }
		\rho(x)
		J_{2}( \apmd{\phi} )(x)
	\D m_{2}(x)
	=
	\int_{ V \setminus \phi(N) }
		\rho \circ \phi^{-1}(y)
	\D \mathcal{H}^{2}_{d}(y);
\end{equation}
\item\label{newtonian:seminorm:absolutecontinuity} the restriction of $\phi$ to $U \setminus N$ satisfies Condition $(N)$.
\end{enumerate}
\end{proposition}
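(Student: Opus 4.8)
The plan is to reduce the statement to the structure theory of metric-space-valued Newtonian--Sobolev maps on planar domains. By \Cref{thm:QC:differentequivalent}, $\phi \in N^{1,\,2}_{loc}(U,\,V)$ and $\rho_{\phi}^{2} \leq K_{O}(\phi)\,J_{\phi}$ holds $m_{2}$-almost everywhere; in particular $\rho_{\phi} \in L^{2}_{loc}(U)$. The core of the argument is the construction of the null set $N$ and the pieces $K_{i}$ satisfying properties (a) and (b); everything else — the $L^{2}_{loc}$ membership of $\apmd{\phi}$, the change-of-variables identity (c), and Condition $(N)$ in (d) — then follows by combining that decomposition with Kirchheim's metric differentiability and area formula \cite{kirchheim}. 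This is exactly the content of Proposition 4.3 of \cite{metricderivative}, so in the write-up I would invoke that result and merely indicate the adaptations to the present hypotheses; below I describe how one obtains it directly.

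First I would produce a Lusin-type decomposition of $\phi$ into Lipschitz pieces: there is a Borel set $N_{0}$ with $m_{2}(N_{0}) = 0$ and countably many compact sets $E_{j} \subset U$ covering $U \setminus N_{0}$ on which $\restr{\phi}{E_{j}}$ is Lipschitz (standard for metric-valued Sobolev maps, using that the maximal function of $\rho_{\phi}$ is finite $m_{2}$-a.e.\ together with the upper gradient inequality along segments). Applying Kirchheim's theorem \cite{kirchheim} to each Lipschitz restriction, $\restr{\phi}{E_{j}}$ is metrically differentiable at $m_{2}$-a.e.\ point of $E_{j}$; discarding a further null set I may assume this holds at every point of $E_{j}$, and since $m_{2}$-a.e.\ point of $E_{j}$ is a Lebesgue density point of $E_{j}$, the metric differential of $\restr{\phi}{E_{j}}$ at such an $x$ is the approximate metric differential $\apmd[\phi]{x}$ of $\phi$ (take $L = E_{j}$ in the definition of the approximate limit). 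In particular $\apmd{\phi}$ exists $m_{2}$-a.e. Next I would refine each $E_{j}$: the map $x \mapsto \apmd[\phi]{x}$ is $m_{2}$-measurable into $\mathcal{C}(\mathbb{S}^{1},\,\mathbb{R})$, so Lusin's theorem makes it continuous on a compact subset of almost full measure, and an Egorov-type argument applied to the difference quotients $\abs{v - w}^{-1}\bigl(d(\phi(x+v),\,\phi(x+w)) - \apmd[\phi]{x}[v-w]\bigr)$ — which are bounded on a Lipschitz piece — upgrades the pointwise convergence to uniform convergence on a slightly smaller compact set. Exhausting $U \setminus N_{0}$ by such compact pieces, absorbing the leftover null set into $N$, and passing to a pairwise disjoint refinement by inner regularity yields pairwise disjoint compact $K_{i}$ partitioning $U \setminus N$ and satisfying (a) and (b). Securing (b) — the uniform two-point estimate \eqref{eq:approximate:metric:diff}, simultaneously with Lipschitz continuity of $\restr{\phi}{K_{i}}$ and continuity of $\apmd[\phi]{\cdot}$ on the piece — is the main technical obstacle, because metric differentiability at a point a priori controls only one-point distances $d(\phi(x),\,\phi(y))$; it is the combination of uniform metric differentiability on a compact piece with uniform continuity of $x \mapsto \apmd[\phi]{x}$ that produces the two-point bound.

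It remains to read off the three additional properties. For the $L^{2}_{loc}$ membership: $\apmd{\phi}$ is Borel since it is continuous on each $K_{i}$, and the upper gradient inequality \eqref{eq:uppergradient:inequality} applied along Euclidean line segments gives, via Fubini, $\apmd[\phi]{x}[v] \leq \rho_{\phi}(x)\,\norm{v}_{2}$ for $m_{2}$-a.e.\ $x$ and a.e.\ direction $v$, hence for every $v$ since $\apmd[\phi]{x}$ is a seminorm; therefore $\norm{\apmd[\phi]{x}}_{\mathcal{C}(\mathbb{S}^{1},\,\mathbb{R})} \leq \rho_{\phi}(x)$ and $\apmd{\phi} \in L^{2}_{loc}(U,\,\mathcal{C}(\mathbb{S}^{1},\,\mathbb{R}))$. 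For (c), I would apply Kirchheim's area formula \cite{kirchheim} — with $J_{2}$ normalized as in \eqref{eq:jacobian:seminorm}, and with no multiplicity since $\phi$ is a homeomorphism — to each Lipschitz restriction $\restr{\phi}{K_{i}}$, giving $\int_{K_{i}} \rho(x)\,J_{2}(\apmd{\phi})(x)\D m_{2}(x) = \int_{\phi(K_{i})} \rho\circ\phi^{-1}(y)\D\mathcal{H}^{2}_{d}(y)$; summing over $i$ and using that $\{\phi(K_{i})\}$ is a partition of $V \setminus \phi(N)$, that $m_{2}(N) = 0$, and that $J_{2}$ of a degenerate seminorm is $0$, yields \eqref{eq:change:of:variables}. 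Finally, for (d): if $B \subset U \setminus N$ is Borel with $m_{2}(B) = 0$, then each $\restr{\phi}{K_{i}}$ is Lipschitz and hence maps $B \cap K_{i}$ to a set of $\mathcal{H}^{2}_{d}$-measure zero, so $\mathcal{H}^{2}_{d}(\phi(B)) \leq \sum_{i}\mathcal{H}^{2}_{d}(\phi(B \cap K_{i})) = 0$, i.e.\ $\restr{\phi}{U \setminus N}$ satisfies Condition $(N)$.
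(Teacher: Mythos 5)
Your proposal is correct and takes essentially the same route as the paper: both reduce the statement to Proposition 4.3 of \cite{metricderivative}, which packages the Lusin decomposition into Lipschitz pieces together with Kirchheim's metric differentiability theorem and area formula \cite{kirchheim,ambrosio2000}, and then supplement it with Lusin's theorem for the continuity of $x \mapsto \apmd[\phi]{x}$ on each $K_{i}$. The only minor difference is that the paper first identifies $N^{1,\,2}$ with the Haj\l{}asz--Sobolev space $W^{1,\,2}$ on quadrilaterals (Corollary 10.2.9 of \cite{metricsobolev}) so that the hypotheses of \cite{metricderivative} apply as stated, whereas you bypass that step by constructing the Lusin decomposition directly from the upper gradient inequality and the maximal function of $\rho_{\phi}$; both routes are sound and lead to the same structure.
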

\begin{remark}
There are metric surfaces where $\mathcal{H}^{2}_{d}( \phi(N) ) > 0$; see \cite[Proposition 17.1]{uniformization}. That is the main reason why we developed the theory of \Cref{sec:qc,sec:pointwise:dilatation}.
\end{remark}
\begin{proof}
The map $\phi$ is an element of the Newtonian--Sobolev space $N^{1, \, 2}_{loc}( U, \, V )$ as a consequence of \Cref{thm:QC:differentequivalent}. We cover $U$ by a countable union of open quadrilaterals $Q_{i} = \left( a_{i}, \, b_{i} \right) \times \left( c_{i}, \, d_{i} \right)$ in such a way that the closure $\overline{Q_{i}}$ is contained in $U$ for every integer $i = 1, \, 2, \, \dots$. Then for each $i$, the restriction of $\phi$ to $Q_{i}$ is an element of $N^{1, \, 2}( Q_{i}, \, X )$. Recall that quadrilaterals support a $(1, \, 1)$-Poincaré inequality and the Lebesgue measure is Ahlfors $2$-regular on $Q_{i}$. Therefore Corollary 10.2.9 of \cite{metricsobolev} shows that the Newtonian--Sobolev space $N^{1, \, 2}( Q_{i}, \, X )$ coincides with the Haj\l{}asz--Sobolev space $W^{1, \, 2}( Q_{i}, \, X )$.

Fix $i$. Recall that $f \in W^{1, \, 2}( Q_{i}, \, X )$ if $f \in L^{2}( Q_{i}, \, X )$ and there exists $N \subset Q_{i}$ with $m_{2}( N ) = 0$ and $g \in L^{2}( Q_{i} )$ such that
\begin{equation}
	\label{eq:hajlasz}
	d( f(x), \, f(y) )
	\leq
	\abs{ x - y }
	( g(x) + g(y) )
\end{equation}
for all $x$ and $y$ in $Q_{i} \setminus N$.

We apply the machinery developed in \cite{metricderivative}. We note that the definition of $W^{1, \, 2}( Q_{i}, \, X )$ stated here (essentially) coincides with the definition of Sobolev spaces used in \cite{metricderivative}; see \cite[Proposition 3.2]{metricderivative}. Therefore Proposition 4.3 of \cite{metricderivative} shows that $\apmd{\phi}(z)$ exists $m_{2}$-almost everywhere in $Q_{i}$; the fact that the map
\begin{equation*}
	z \mapsto \apmd{\phi}( z )
\end{equation*}
is an element of $L^{2}( Q_{i}, \, \mathcal{C}( \mathbb{S}^{1}, \, \mathbb{R} ) )$; and that there exist countably many pairwise disjoint compact sets $K^{i}_{j}$ which cover $m_{2}$-almost all of $Q_{i}$ in such a way that properties \ref{newtonian:seminorm:existence} and \ref{newtonian:seminorm:localproperties} hold for the compact sets $K^{i}_{j}$ (except the fact that the restriction of $\apmd{\phi}$ to each $K^{i}_{j}$ is continuous).

Since $\mathcal{C}( \mathbb{S}^{1}, \, \mathbb{R} )$ is separable, we apply Lusin's theorem \cite[Theorem 2.3.5]{federer} which implies the following: up to omitting a set of $m_{2}$-measure zero and passing to smaller compact sets, the restriction of $\apmd{\phi}$ to each $K^{i}_{j}$ is continuous. Therefore by passing to smaller compact sets, properties \ref{newtonian:seminorm:existence} and \ref{newtonian:seminorm:localproperties} hold.

By inspecting the proof of Proposition 4.3 of \cite{metricderivative}, we see that the restriction of $\phi$ to each $K^{i}_{j}$ is also Lipschitz. Thus the area formula \eqref{eq:change:of:variables} follows from the area formula for Lipschitz maps \cite{kirchheim,ambrosio2000}. Property \ref{newtonian:seminorm:absolutecontinuity} follows from property \ref{newtonian:seminorm:existence}.

We proved the claim for an arbitrary integer $i$. We repeat the argument for all integers. Then we consider the countable collection of compact sets $\left\{ K_{j}^{i} \right\}_{ i, \, j  \in \mathbb{N} }$. By omitting a set of $m_{2}$-measure zero and passing to smaller sets, we can assume that the compact sets are pairwise disjoint and that properties \ref{newtonian:seminorm:existence} to \ref{newtonian:seminorm:absolutecontinuity} hold.
\end{proof}
The following result is a key result in this paper. The proof is originally from Section 14 of \cite{uniformization}.
\begin{corollary}[Norm Field]\label{norm:almost:everywhere}
Let $\phi \colon \mathbb{R}^{2} \supset  U \rightarrow V$ be a quasiconformal homeomorphism. Then the approximate metric differential $\apmd{ \phi }$ is a norm $m_{2}$-almost everywhere.
\end{corollary}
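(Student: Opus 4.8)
The plan is to argue by contradiction, reducing the claim to a statement about a null set and then transferring that statement across $\phi$. By \Cref{newtonian:seminorm} the approximate metric differential $\apmd{\phi}$ exists $m_{2}$-almost everywhere, and a seminorm $s$ on $\mathbb{R}^{2}$ is a norm exactly when $\left\{ s \leq 1 \right\}$ is bounded. So it suffices to show that
\begin{equation*}
	A
	=
	\left\{
		x \in U
		\mid
		\apmd[\phi]{x} \text{ is defined but is not a norm}
	\right\}
\end{equation*}
has $m_{2}$-measure zero. This set is Borel: by \Cref{newtonian:seminorm}, property \ref{newtonian:seminorm:existence}, the map $\apmd{\phi}$ is continuous on each compact piece $K_{i}$ exhausting $U \setminus N$ up to a null set, and the non-norm seminorms form a closed subset of $\mathcal{C}( \mathbb{S}^{1}, \, \mathbb{R} )$ (the functional $s \mapsto \min_{ \mathbb{S}^{1} } s$ is continuous and vanishes precisely there). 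Assume, for contradiction, that $m_{2}( A ) > 0$.

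First I would observe that $J_{2}( \apmd[\phi]{x} ) = 0$ for every $x \in A$: indeed $\left\{ \apmd[\phi]{x} \leq 1 \right\}$ then contains a line through the origin and so has infinite $m_{2}$-measure, whence $J_{2} = 0$ by \eqref{eq:jacobian:seminorm}. Feeding $\rho = \chi_{A}$ into the change-of-variables formula \eqref{eq:change:of:variables}, and using that $\phi$ is injective and $N$ is Borel, gives
\begin{equation*}
	0
	=
	\int_{ A }
		J_{2}( \apmd{\phi} )
	\D m_{2}
	=
	\int_{ V \setminus \phi(N) }
		\chi_{A} \circ \phi^{-1}
	\D \mathcal{H}^{2}_{d}
	=
	\mathcal{H}^{2}_{d}\bigl( \phi( A \setminus N ) \bigr),
\end{equation*}
where $N \subset U$ is the $m_{2}$-null Borel set provided by \Cref{newtonian:seminorm}.

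The remaining step is to pull this back to $U$, and this is the only real subtlety. One is tempted to use that $\phi$ restricted to $U \setminus N$ satisfies Condition $(N)$ (\Cref{newtonian:seminorm}, property \ref{newtonian:seminorm:absolutecontinuity}), but that implication runs the wrong way, and the forward map $\phi$ need not satisfy Condition $(N)$ since $\phi( N )$ may carry positive $\mathcal{H}^{2}_{d}$-measure. Instead I would exploit that the domain is Euclidean: on each connected component, $U$ is biLipschitz homeomorphic to a planar domain, so \Cref{lem:Condition(N):inverse} shows that $\phi$ satisfies Condition $(N^{-1})$, that is, $\phi^{-1}$ satisfies Condition $(N)$. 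Applying this to the Borel set $\phi( A \setminus N )$, whose $\mathcal{H}^{2}_{d}$-measure we just showed is zero, yields $m_{2}( A \setminus N ) = m_{2}\bigl( \phi^{-1}( \phi( A \setminus N ) ) \bigr) = 0$, and hence $m_{2}( A ) = 0$ because $m_{2}( N ) = 0$ --- contradicting $m_{2}( A ) > 0$.

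The main obstacle is precisely this last point: recognising that the naive absolute-continuity argument for $\phi$ fails, and that one must instead use absolute continuity of $\phi^{-1}$, which is available only because the source is a Euclidean domain. (Alternatively, the final step can be routed through the modulus equivalences of \Cref{lem:Condition(N)}, using $\rho_{ \id_{U} } = \chi_{U}$ on the planar domain and $\rho_{ \id_{V} } \leq \chi_{V}$ to deduce $\mathcal{H}^{2}_{d}( \phi( A \setminus N ) ) \geq \rho_{ \id_{V} }\mu_{V}( \phi( A \setminus N ) ) > 0$; but the argument via Condition $(N^{-1})$ is the most economical.)
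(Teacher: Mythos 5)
Your proposal is correct and follows essentially the same route as the paper: apply the change-of-variables formula \eqref{eq:change:of:variables} to the set where $J_{2}( \apmd{\phi} ) = 0$ to see that its image has zero $\mathcal{H}^{2}_{d}$-measure, then pull back via Condition $(N^{-1})$ from \Cref{lem:Condition(N):inverse}. The paper phrases it directly with the set $\widetilde{N} = N \cup \left\{ J_{2}( \apmd{\phi} ) = 0 \right\}$ rather than by contradiction, but the key steps and the cited lemmas are identical.
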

\begin{proof}
Let $N$ denote the set of $m_{2}$-measure zero obtained from \Cref{newtonian:seminorm}. Let $\widetilde{N}$ denote the union of $N$ and the points for which $J_{2}( \apmd{\phi} ) = 0$. Then $\mathcal{H}^{2}_{d}( \phi( \widetilde{N} \setminus N ) ) = 0$ by the change of variables formula \eqref{eq:change:of:variables}. Since $\phi$ satisfies Condition ($N^{-1}$) (\Cref{lem:Condition(N):inverse}), the set $\widetilde{N} \setminus N$ has zero $m_{2}$-measure. Since $m_{2}( N ) = 0$, we deduce that $\widetilde{N}$ has zero $m_{2}$-measure. As a consequence, $J_{2}( \apmd{\phi} ) > 0$ $m_{2}$-almost everywhere, i.e., $\apmd{ \phi }$ is a norm $m_{2}$-almost everywhere.
\end{proof}
Suppose that $\phi \in N^{1, \, 2}_{loc}( U, \, V )$ is quasiconformal. Let $N \subset U$ and $\left\{ K_{i} \right\}_{ i = 1 }^{ \infty }$ be as in \Cref{newtonian:seminorm}.

Consider an open set $W \subset \mathbb{R}^{2}$ and suppose that $\psi \colon W \rightarrow U$ is a quasiconformal homeomorphism. Let $N_{0}$ denote the union of $\psi^{-1}( N )$, the collection of points where $\psi$ fails to be classically differentiable, and the collection of points of $\psi^{-1}( K_{i} )$ that are not Lebesgue density points of $\psi^{-1}( K_{i} )$ for $i = 1, \, 2, \, \dots$. Recall that $\psi$ satisfies Conditions ($N$) and ($N^{-1}$) and is classically differentiable $m_{2}$-almost everywhere \cite[Section 3.3]{astala}. As a consequence, the set $N_{0}$ has zero $m_{2}$-measure.
\begin{proposition}[Chain Rule]\label{prop:chainrule:QC}
The map $\phi \circ \psi$ is quasiconformal and $m_{2}( N_{0} ) = 0$. Moreover, for every $y \in W \setminus N_{0}$, the approximate metric differential $\apmd{ \phi \circ \psi }( y )$ exists and
\begin{equation}
	\label{eq:chainrule:approximate}
	\apmd{ \phi \circ \psi }(y)
	=
	\apmd{ \phi } \circ D\psi(y).
\end{equation}
\end{proposition}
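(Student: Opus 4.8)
The plan is to dispose of the two soft assertions first and then spend the effort on the pointwise identity \eqref{eq:chainrule:approximate}. That $\phi\circ\psi$ is quasiconformal is immediate from the geometric definition: for every path family $\Gamma$ in $W$ one has $(\phi\circ\psi)\Gamma=\phi(\psi\Gamma)$, so combining the modulus inequalities for $\psi$ and for $\phi$ gives $K(\phi\circ\psi)\le K(\phi)K(\psi)<\infty$. For $m_2(N_0)=0$ I would spell out the sentence preceding the proposition: $\psi^{-1}(N)$ is $m_2$-null because $m_2(N)=0$ and $\psi$, being a classical planar quasiconformal map, satisfies Condition $(N^{-1})$ \cite[Section 3.3]{astala}; the non-differentiability set of $\psi$ is $m_2$-null by the same classical theory; and for each $i$ the Lebesgue density theorem guarantees that $m_2$-almost every point of $\psi^{-1}(K_i)$ is a density point of $\psi^{-1}(K_i)$, so the countable union of the exceptional sets is again $m_2$-null.

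For the substance, fix $y\in W\setminus N_0$ and write $x=\psi(y)$ and $T=D\psi(y)$. Since $y\notin\psi^{-1}(N)$ and the compact sets $K_i$ partition $U\setminus N$, there is a unique index $i$ with $x\in K_i$; set $L=\psi^{-1}(K_i)$, which is Lebesgue measurable and, by the definition of $N_0$, has $y$ as a Lebesgue density point. By \Cref{newtonian:seminorm}\,\ref{newtonian:seminorm:existence} the seminorm $\apmd[\phi]{x}$ exists, and the candidate differential is $s[v]:=\apmd[\phi]{x}[Tv]$, a seminorm on $\mathbb{R}^{2}$ as a linear map followed by a seminorm. The estimate I would carry out at $y$ is this: given $\epsilon>0$, let $r=r(i,\epsilon)>0$ be the radius from \Cref{newtonian:seminorm}\,\ref{newtonian:seminorm:localproperties}; for $z\in L$ close enough to $y$ that $\norm{\psi(z)-x}_2\le r$ (possible by continuity of $\psi$), both $x$ and $\psi(z)$ lie in $K_i\cap\overline{B}(x,r)$, so \ref{newtonian:seminorm:localproperties} applied to the pair $\psi(z),x$ yields
\begin{equation*}
	\abs{ d(\phi\psi(z),\,\phi\psi(y)) - \apmd[\phi]{x}[\psi(z)-x] }
	\le
	\epsilon\,\norm{\psi(z)-x}_2 .
\end{equation*}
On the other hand, classical differentiability of $\psi$ at $y$ gives $\psi(z)-x=T(z-y)+o(\norm{z-y}_2)$ as $z\to y$, and since a seminorm on $\mathbb{R}^{2}$ is $\norm{\cdot}_2$-Lipschitz this produces $\apmd[\phi]{x}[\psi(z)-x]=s[z-y]+o(\norm{z-y}_2)$ together with $\norm{\psi(z)-x}_2\le(\norm{D\psi(y)}_{\mathrm{op}}+1)\norm{z-y}_2$ for $z$ near $y$. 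Combining the two displays,
\begin{equation*}
	\abs{ d(\phi\psi(z),\,\phi\psi(y)) - s[z-y] }
	\le
	\epsilon(\norm{D\psi(y)}_{\mathrm{op}}+1)\,\norm{z-y}_2 + o(\norm{z-y}_2)
\end{equation*}
for $z\in L$ near $y$; dividing by $\norm{z-y}_2$, letting $L\ni z\to y$, and then letting $\epsilon\to0$ shows that this difference quotient tends to $0$ along $L$. Since $y$ is a density point of $L$ and the quotient is continuous in $z$ away from $y$, this is exactly the assertion that $\phi\circ\psi$ is approximately metrically differentiable at $y$ with $\apmd{\phi\circ\psi}(y)=s=\apmd[\phi]{\psi(y)}\circ D\psi(y)$, i.e.\ \eqref{eq:chainrule:approximate}.

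I expect the only delicate point to be the bookkeeping around $L$: \Cref{newtonian:seminorm}\,\ref{newtonian:seminorm:localproperties} controls $d(\phi\,\cdot,\phi\,\cdot)$ only when \emph{both} arguments lie in the \emph{same} $K_i$, so the increment estimate for $\phi\circ\psi$ is available a priori only along $L=\psi^{-1}(K_i)$; it is precisely the exclusion of the non-density points of each $\psi^{-1}(K_i)$ in the definition of $N_0$ that upgrades this to an honest approximate differential, and the classical differentiability of $\psi$ — also built into $N_0$ — that lets one replace the $K_i$-increment $\psi(z)-x$ by its linearization $T(z-y)$. If one additionally wants the right-hand side of \eqref{eq:chainrule:approximate} to be a genuine norm, one notes that $J_\psi>0$ $m_2$-almost everywhere for planar quasiconformal maps, so $T$ is invertible after discarding a further $m_2$-null set, consistently with \Cref{norm:almost:everywhere} applied to $\phi\circ\psi$; this refinement is not needed for the identity as stated.
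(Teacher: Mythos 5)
Your proof is correct and follows essentially the same route as the paper: pick the density point of $L=\psi^{-1}(K_i)$, linearize $\psi$ at $y$ using its classical differentiability, use the Lipschitz property of the seminorm $\apmd[\phi]{x}$ to replace $\apmd[\phi]{x}[\psi(z)-x]$ by $s[z-y]$ up to $o(\norm{z-y}_2)$, and apply the uniform increment estimate from \Cref{newtonian:seminorm}\,\ref{newtonian:seminorm:localproperties} on $K_i$; the paper's proof is nearly word-for-word this argument. The only difference is that you spell out the soft steps ($m_2(N_0)=0$ and quasiconformality of $\phi\circ\psi$) which the paper dispatches in the paragraph preceding the proposition.
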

\begin{remark}\label{diffeo}
If $\psi$ is a diffeomorphism, it can be proved directly using \eqref{eq:aplim} that $\apmd{ \phi \circ \psi }( y ) = \apmd{ \phi } \circ D\psi(y)$ whenever $\apmd{ \phi \circ \psi }( y )$ or $\apmd{ \phi }( \psi(y) )$ exists.
\end{remark}
\begin{proof}[Proof of \Cref{prop:chainrule:QC}]
Suppose that $y \in W \setminus N_{0}$ and define $s = \apmd{\phi} \circ D\psi(y)$. The point $y$ is a density point of some $L = \psi^{-1}( K_{i} )$ by definition of $N_{0}$. Fix $\epsilon > 0$. Since $\apmd{\phi}( \psi(y) )$ is a seminorm and as $\psi$ is classically differentiable at $y$,
\begin{equation}
	\label{eq:triangle:inequality}
	\abs{
		\apmd{\phi}( \psi(y) )[ \psi( y + v ) - \psi( y ) ]
		-
		s[v]
	}
	\leq
	o( \norm{ v }_{2} ).
\end{equation}
Since $\psi$ is classically differentiable at $y$,
\begin{equation}
	\label{eq:classic:differentiability}
	\frac{ \norm{ \psi(y+v) - \psi(y) }_{2} }{ \norm{ v }_{2} }
	\leq
	\norm{ D\psi }( y )
	+
	\frac{ o( \norm{ v }_{2} ) }{ \norm{ v }_{2} },
\end{equation}
where $\norm{ D\psi }$ is the operator norm of $D\psi$. Fix $\epsilon > 0$. By combining the inequalities \eqref{eq:triangle:inequality} and \eqref{eq:classic:differentiability} with Property \ref{newtonian:seminorm:localproperties} of \Cref{newtonian:seminorm}, we find that
\begin{gather*}
	\limsup_{ L \ni z \rightarrow y }
	\abs{
		\frac{ d( \phi \circ \psi(z), \, \phi \circ \psi(y) ) - s[ z - y ] }{ \norm{ z - y }_{2} }
	}
	\leq
	\epsilon\norm{ D\psi }( y ).
\end{gather*}
Since this holds for any $\epsilon > 0$, we obtain the claim.
\end{proof}
\section{Uniformization}\label{sec:uniformization}
From this points onwards, we only consider metric surfaces. We introduce the definition and some results about isothermal parametrizations in \Cref{sec:minimal:chart}. We prove them in \Cref{sec:isothermal:properties}. As an intermediate step, we study more general quasiconformal parametrizations in \Cref{sec:uniformization:charts}.

The isothermal parametrizations are used to construct a conformal atlas on a locally reciprocal surface, and the atlas is used to construct a Riemannian distance on the surface; this is the topic of \Cref{sec:uniformization:structure}.
\subsection{Isothermal parametrizations}\label{sec:minimal:chart}
Suppose that $U$ is an open subset of $\mathbb{R}^{2}$, $V$ is a metric surface, and $\phi \colon U \rightarrow V$ is a quasiconformal homeomorphism.
\begin{definition}\label{def:minimal:chart}
The quasiconformal map $\phi$ is an \emph{isothermal parametrization of $V$} if for every quasiconformal map $\psi \colon W \rightarrow U$, where $W \subset \mathbb{R}^{2}$, the pointwise dilatations satisfy
\begin{equation}
	\label{eq:minimal}
	(K_{O}( \phi )K_{I}( \phi ))(x)
	\leq
	\left[ K_{O}( \phi \circ \psi )K_{I}( \phi \circ \psi ) \right] \circ \psi^{-1}(x)
\end{equation}
$m_{2}$-almost everywhere. Such a $\phi$ is said to be \emph{isothermal}.
\end{definition}
The pointwise dilatations were defined in \Cref{def:pointwise:dilatation}. Since the minimal upper gradient $\rho_{ \id_{U} }$ of $\id_{U}$ equals $\chi_{U}$ $m_{2}$-almost everywhere, the quantities in \eqref{eq:minimal} are well-defined $m_{2}$-almost everywhere in $U$.

\Cref{prop:minimal:TFAE} provides several equivalent definitions of isothermal parametrizations. Isothermal parametrizations appear in the work of Romney in \cite{upper:modulus:bound}, though they are not explicitly defined; see \Cref{prop:minimal:TFAE} and the proof of Theorem 1.2 of \cite{upper:modulus:bound}. The main contribution of \Cref{sec:minimal:chart} (and \Cref{sec:isothermal:properties}) is \Cref{thm:minimal:chart:uniqueness} (and \Cref{prop:minimal:TFAE}).
%
%

\Cref{isothermal:compatible} proves that if $V$ is an open subset of a Riemannian surface, then \Cref{def:minimal:chart} coincides with the classical definition of isothermal parametrizations, i.e., in that case every isothermal parametrization is a diffeomorphism and conformal.

The following proposition establishes the existence of isothermal parametrizations.
\begin{restatable}{proposition}{thmminimalchartexistence}\label{thm:minimal:chart:existence}
Let $V$ be a metric surface and $\phi \colon \mathbb{R}^{2} \supset U \rightarrow V$ be quasiconformal. Then there exists a $\frac{ 4 }{ \pi }K_{I}( \phi )$-quasiconformal map $\psi \colon \mathbb{R}^{2} \supset W \rightarrow U$ such that $\phi \circ \psi$ is isothermal.
\end{restatable}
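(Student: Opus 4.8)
The plan is to read a measurable field of infinitesimal ellipses off the approximate metric differential of $\phi$, to solve the associated Beltrami equation so that these ellipses become round disks, and to check that the resulting reparametrisation of $\phi$ is isothermal with the asserted dilatation bound. The main analytic ingredients --- the norm field (\Cref{norm:almost:everywhere}), the chain rule for approximate metric differentials (\Cref{prop:chainrule:QC}), and the identity that global dilatations are the essential suprema of the pointwise ones (\Cref{cor:global:inner:and:outer:dilatation}, using $\rho_{\id_{U}} = \chi_{U}$ $m_{2}$-a.e.) --- are already available, so the argument mostly assembles these together with the measurable Riemann mapping theorem and one convex--geometric estimate.

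First I would fix, via \Cref{norm:almost:everywhere}, a Borel representative of $\apmd{\phi}$ which is a norm on $U \setminus N$ for a null set $N$, and set $C_{x} = \{\, \apmd[\phi]{x} \leq 1 \,\}$, a symmetric planar convex body with circumradius $R(C_{x})$ and inradius $r(C_{x})$. From the standard description of the pointwise dilatations of a quasiconformal map with planar domain in terms of its approximate metric differential one has $K_{I}(\phi)(x) = \pi R(C_{x})^{2} / m_{2}(C_{x})$, and, for \emph{any} quasiconformal $h$ with planar domain, the pointwise product $(K_{O}(h) K_{I}(h))$ equals the square of the eccentricity $\mathrm{ecc}$ of the unit ball of its approximate metric differential. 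Under a reparametrisation $h \circ \sigma$ this unit ball is transformed by the (a.e.\ defined) linear map $D\sigma^{-1}$, so as $\sigma$ varies the eccentricity runs over all linear images of $C_{x}$, with minimum $\delta(x) := d_{\mathrm{BM}}(C_{x}, \mathbb{D}) \leq \sqrt{2}$, the bound being classical (cf.\ \cite[Chapter~37]{enoughsymmetries}). To $C_{x}$ I would attach its \emph{distance ellipse} $E_{x}$: an ellipse for which the linear map $A_{x}$ normalising $E_{x}$ to a round disk sends $C_{x}$ to a body of least eccentricity $\delta(x)$; see \cite[Chapter~37]{enoughsymmetries} and \cite{upper:modulus:bound}. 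As $x \mapsto \apmd[\phi]{x}$ is measurable into $\mathcal{C}(\mathbb{S}^{1}, \mathbb{R})$ and $E_{x}$ depends continuously --- and, in the plane, uniquely --- on $C_{x}$, the field $x \mapsto E_{x}$ is measurable. The convex--geometric estimate I would invoke, where the constant $\tfrac{4}{\pi}$ enters and which is sharp on long thin rectangles, is $\mathrm{ecc}(E_{x}) \leq 4 R(C_{x})^{2} / m_{2}(C_{x}) = \tfrac{4}{\pi} K_{I}(\phi)(x)$; with \Cref{cor:global:inner:and:outer:dilatation} this gives $\esssup_{x} \mathrm{ecc}(E_{x}) \leq \tfrac{4}{\pi} K_{I}(\phi) < \infty$.

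Next I would encode $x \mapsto E_{x}$ as a measurable Beltrami coefficient $\mu$ on $U$, which has $\|\mu\|_{\infty} < 1$ by the previous step, extend $\mu$ by zero to $\mathbb{C}$, and apply the measurable Riemann mapping theorem (\cite{astala}) to obtain a quasiconformal homeomorphism whose restriction $g \colon U \to W := g(U) \subset \mathbb{R}^{2}$ has complex dilatation $\mu$. Set $\psi := g^{-1} \colon W \to U$. For planar quasiconformal maps the pointwise outer and inner dilatations coincide and equal $\tfrac{1 + |\mu|}{1 - |\mu|} = \mathrm{ecc}(E_{\psi(\cdot)})$, so \Cref{cor:global:inner:and:outer:dilatation} yields $K(\psi) = \esssup_{x} \mathrm{ecc}(E_{x}) \leq \tfrac{4}{\pi} K_{I}(\phi)$; thus $\psi$ is $\tfrac{4}{\pi} K_{I}(\phi)$-quasiconformal. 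By construction $D\psi(y)^{-1}$ coincides with $A_{x}$ up to a conformal linear factor at $m_{2}$-a.e.\ $y$, where $x = \psi(y)$.

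Finally, to see that $\Phi := \phi \circ \psi$ is isothermal, fix an arbitrary quasiconformal $\eta \colon W' \to W$. Applying \Cref{prop:chainrule:QC} to $\phi \circ \psi$ and to $\phi \circ (\psi \circ \eta)$, for $m_{2}$-a.e.\ $z \in W'$ one has $\apmd{\Phi}(\eta(z)) = \apmd[\phi]{x} \circ D\psi(\eta(z))$ and $\apmd{\Phi \circ \eta}(z) = \apmd[\phi]{x} \circ D\psi(\eta(z)) \circ D\eta(z)$, where $x = \psi(\eta(z))$. Hence the unit ball of $\apmd{\Phi}(\eta(z))$ is $D\psi(\eta(z))^{-1}(C_{x})$, a rotated, scaled copy of $A_{x}(C_{x})$, of eccentricity $\delta(x)$, whereas the unit ball of $\apmd{\Phi \circ \eta}(z)$ is $B^{-1}(C_{x})$ for some $B \in \mathrm{GL}_{2}(\mathbb{R})$, of eccentricity $\geq \delta(x)$. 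By the identification of the pointwise product $K_{O} K_{I}$ with the squared eccentricity this reads $(K_{O}(\Phi) K_{I}(\Phi))(\eta(z)) = \delta(x)^{2} \leq (K_{O}(\Phi \circ \eta) K_{I}(\Phi \circ \eta))(z)$, which is precisely the inequality \eqref{eq:minimal} of \Cref{def:minimal:chart} for $\Phi$. The main obstacle is the sharp convex--geometric estimate tying $\mathrm{ecc}(E_{x})$ to $K_{I}(\phi)(x)$, which is where the constant $\tfrac{4}{\pi}$ and its optimality enter; everything else is bookkeeping with \Cref{norm:almost:everywhere}, \Cref{prop:chainrule:QC}, \Cref{cor:global:inner:and:outer:dilatation}, and the measurable Riemann mapping theorem.
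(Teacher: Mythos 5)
Your proof is correct and follows essentially the same route as the paper: both read off the Beltrami coefficient $\mu_\phi$ from the Banach--Mazur minimizing ellipse of the approximate metric differential, bound $\|\mu_\phi\|_\infty$ using the $\tfrac{4}{\pi}$ estimate from \Cref{pointwise:BM:properties} together with the composition law \Cref{lem:outer:inner:characterization} and \Cref{cor:global:inner:and:outer:dilatation}, solve the Beltrami equation via the measurable Riemann mapping theorem, and check isothermality via the chain rule \Cref{prop:chainrule:QC}. The paper packages this construction inside the proof of \Cref{prop:minimal:TFAE} and you argue it directly in the language of eccentricities of convex bodies, but the key lemma, the Beltrami equation, and the pointwise dilatation identities are the same.
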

\Cref{thm:minimal:chart:uniqueness} shows that isothermal parametrizations are unique up to conformal maps. Before stating the theorem we recall some terminology. When we say that a homeomorphism $\psi$ between planar domains is \emph{conformal in the classical sense}, we mean that $\psi$ is a bijective holomorphic or antiholomorphic map.

Recall that a homeomorphism between planar domains is conformal in the geometric sense if and only if it is conformal in the classical sense (Weyl's lemma -- Lemma A.6.10 of \cite{astala}).
\begin{restatable}{theorem}{thmminimalchartuniqueness}\label{thm:minimal:chart:uniqueness}
Suppose that $\phi$ is isothermal and that $\widetilde{\psi} \colon \widetilde{W} \rightarrow V$ is a quasiconformal map for $\widetilde{W} \subset \mathbb{R}^{2}$. Then $\widetilde{\psi}$ is isothermal if and only if $\psi = \phi^{-1} \circ \widetilde{\psi} \colon \widetilde{W} \rightarrow U$ is conformal in the classical sense.
\end{restatable}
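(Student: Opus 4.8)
The plan is to prove the two implications separately. The reverse implication --- if $\psi := \phi^{-1}\circ\widetilde{\psi}$ is conformal in the classical sense then $\widetilde{\psi}$ is isothermal --- follows from the composition laws of \Cref{lem:outer:inner:characterization}. The forward implication --- two isothermal parametrizations of $V$ differ by a classically conformal coordinate change --- is the substantive part; I would extract it from the pointwise description of isothermality in \Cref{prop:minimal:TFAE}, the chain rule for approximate metric differentials (\Cref{prop:chainrule:QC}), and a rigidity property of the distance ellipse.

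\textbf{Reverse implication.} If $\psi$ is conformal in the classical sense, then $\psi$ is everywhere differentiable with $D\psi$ an invertible conformal linear map, so $\rho_{\psi} = \norm{D\psi}$ and $J_{\psi} = \norm{D\psi}^{2}$, hence $K_{O}(\psi) \equiv 1$; applying the same to the classically conformal map $\psi^{-1}$ gives $K_{I}(\psi) \equiv 1$. Inserting these into \eqref{eq:outer:dilatation:composition} and \eqref{eq:inner:dilatation:composition}, once for the factorization $\widetilde{\psi} = \phi\circ\psi$ and once for $\phi = \widetilde{\psi}\circ\psi^{-1}$, yields the pointwise identities $K_{O}(\widetilde{\psi}) = K_{O}(\phi)\circ\psi$ and $K_{I}(\widetilde{\psi}) = K_{I}(\phi)\circ\psi$ $m_{2}$-almost everywhere. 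Now fix any quasiconformal $\beta\colon W'\rightarrow\widetilde{W}$ with $W'\subset\mathbb{R}^{2}$, apply \Cref{def:minimal:chart} for $\phi$ to the quasiconformal map $\psi\circ\beta\colon W'\rightarrow U$, and substitute $x = \psi(y)$ (legitimate $m_{2}$-a.e., as $\psi$ preserves $m_{2}$-null sets in both directions); replacing $(K_{O}K_{I})(\phi)\circ\psi$ by $(K_{O}K_{I})(\widetilde{\psi})$ turns the resulting inequality into exactly \eqref{eq:minimal} for the pair $\widetilde{\psi},\beta$. Hence $\widetilde{\psi}$ is isothermal.

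\textbf{Forward implication.} Assume both $\phi$ and $\widetilde{\psi}$ are isothermal. Applying \Cref{def:minimal:chart} for $\phi$ to $\psi\colon\widetilde{W}\rightarrow U$, and for $\widetilde{\psi}$ to $\psi^{-1}\colon U\rightarrow\widetilde{W}$, and composing with $\psi$, gives $(K_{O}(\phi)K_{I}(\phi))\circ\psi = K_{O}(\widetilde{\psi})K_{I}(\widetilde{\psi})$ $m_{2}$-a.e.; this equality is by itself too weak (it is automatic once the relevant differentials are round ellipses), so I would instead invoke \Cref{prop:minimal:TFAE}: a quasiconformal parametrization is isothermal precisely when, at $m_{2}$-a.e.\ point, the distance ellipse of its approximate metric differential (a norm $m_{2}$-a.e.\ by \Cref{norm:almost:everywhere}) is a Euclidean ball --- equivalently, the product of the pointwise dilatations attains its minimum over the $GL_{2}(\mathbb{R})$-orbit of that norm. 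By the chain rule \eqref{eq:chainrule:approximate} we have $\apmd[\widetilde{\psi}]{y} = \apmd[\phi]{\psi(y)}\circ D\psi(y)$ for $m_{2}$-a.e.\ $y$; thus, for $m_{2}$-a.e.\ $y$, both the norm $\apmd[\phi]{\psi(y)}$ and its pullback under the invertible linear map $D\psi(y)$ are distance-normalized (using also that $\psi^{-1}$ satisfies Lusin's condition $(N)$). The rigidity lemma below then forces $D\psi(y)$ to be a conformal linear map for $m_{2}$-a.e.\ $y$. Since $\psi$ is a quasiconformal homeomorphism of planar domains it lies in $W^{1,2}_{loc}$ and is differentiable $m_{2}$-a.e., so $K_{O}(\psi)(y) = K_{I}(\psi)(y) = 1$ $m_{2}$-a.e., whence \Cref{cor:global:inner:and:outer:dilatation} gives $K(\psi) = 1$; that is, $\psi$ is conformal in the geometric sense, hence, by Weyl's lemma recalled before the statement, conformal in the classical sense.

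\textbf{Main obstacle.} The crux is the rigidity claim: if a symmetric convex body $B\subset\mathbb{R}^{2}$ and its linear image $LB$, with $L\in GL_{2}(\mathbb{R})$, both have a Euclidean ball as distance ellipse, then $L$ is a similarity. I would argue via the polar decomposition $L = OP$ with $O$ orthogonal (hence already a similarity) and $P$ symmetric positive definite, reducing to $L = P = \mathrm{diag}(a,b)$ and proving $a = b$. Being distance-normalized means that the ratio of the Euclidean circumradius to the inradius of $B$ equals the Banach--Mazur distance of $B$ to the round ball, i.e.\ this ratio is minimal over all linear images of $B$; the technical heart is a strict-monotonicity statement showing that an anisotropic dilation $\mathrm{diag}(a,b)$ with $a\neq b$ strictly increases this ratio away from the optimal position, contradicting that $LB$ is again distance-normalized with the same value. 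This is the same mechanism that makes the transition maps of the isothermal atlas conformal, and I expect it to be available from --- or provable in parallel with --- \Cref{prop:minimal:TFAE}.
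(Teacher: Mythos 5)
Your two-implication structure is correct, but it re-derives by hand what the paper extracts in one line from \Cref{prop:minimal:TFAE}. The paper's proof observes that $\phi$ isothermal gives $\mu_{\phi}=0$ (by parts \ref{prop:minimal:TFAE:e} $\Leftrightarrow$ \ref{prop:minimal:TFAE:f}), and then invokes the equivalence \ref{prop:minimal:TFAE:a} $\Leftrightarrow$ \ref{prop:minimal:TFAE:e} for the pair $(\phi,\psi)$: the map $\widetilde{\psi}=\phi\circ\psi$ is isothermal iff $\psi^{-1}$ (or $\overline{\psi^{-1}}$) solves $\mu_f = \mu_{\phi} = 0$, which by Weyl's lemma is precisely the statement that $\psi$ is conformal in the classical sense. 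That settles both directions simultaneously with no fresh work.

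Your route is still sound, but let me flag one point. In the forward direction you reduce to a ``rigidity lemma'': if a norm $M$ and its pullback $M\circ L$ both admit the identity as a Banach--Mazur minimizer to $\norm{\cdot}_{2}$, then $L\in\mathbb{R}_{+}\cdot O_{2}$. You propose to prove this via a strict-monotonicity argument for the circumradius-to-inradius ratio under anisotropic dilation. That is exactly the content of the second half of \Cref{pointwise:BM:properties} (the paper's restatement of Lemma 2.2 of \cite{upper:modulus:bound}, which the paper itself notes ``is harder to show''), so you should cite it rather than reprove it. Concretely: since $L\colon(\mathbb{R}^{2},M\circ L)\to(\mathbb{R}^{2},M)$ is an isometry, and the Banach--Mazur distance is invariant under composition with isometries, the map $L^{-1}\colon(\mathbb{R}^{2},M)\to(\mathbb{R}^{2},\norm{\cdot}_{2})$ has the same distortion as $\id\colon(\mathbb{R}^{2},M\circ L)\to(\mathbb{R}^{2},\norm{\cdot}_{2})$ and is therefore also a Banach--Mazur minimizer from $M$; applying \Cref{pointwise:BM:properties} to the two minimizers $\id$ and $L^{-1}$ gives $L^{-1}\in\mathbb{R}_{+}\cdot O_{2}$, as required. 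With that substitution your argument closes cleanly. Your reverse implication --- deriving the pointwise equality $K_{O}(\widetilde{\psi})=K_{O}(\phi)\circ\psi$ and $K_{I}(\widetilde{\psi})=K_{I}(\phi)\circ\psi$ from both factorizations $\widetilde{\psi}=\phi\circ\psi$ and $\phi=\widetilde{\psi}\circ\psi^{-1}$, then verifying \Cref{def:minimal:chart} directly --- is correct, though a bit more elaborate than the paper's implicit use of \ref{prop:minimal:TFAE:a} $\Rightarrow$ \ref{prop:minimal:TFAE:e}.
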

\begin{definition}\label{def:BM:distance}
Let $M$ and $N$ be norms on $\mathbb{R}^{2}$. Then $GL_{2}[ M, \, N ]$ is the collection of invertible linear maps $S \colon ( \mathbb{R}^{2}, \, M ) \rightarrow ( \mathbb{R}^{2}, \, N )$.

The \emph{multiplicative Banach--Mazur distance from $M$ to $N$} is
\begin{equation}
	\label{eq:BM-distance:definition}
	\rho( M, \, N )
	=
	\inf\left\{
		\left[
			K_{O}( S )
			K_{I}( S )
		\right]^{1/2}
		\mid
		S \in GL_{2}[ M, \, N ]
	\right\}.
\end{equation}
\end{definition}
Recall from \eqref{def:pointwise:dilatation} that the distortion $\left[ K_{O}( S )K_{I}( S ) \right]^{1/2}$ equals the product of the minimal upper gradients of $S$ and $S^{-1}$. The minimal upper gradients coincide with the operator norms of $S$ and $S^{-1}$, respectively. This means that \eqref{eq:BM-distance:definition} agrees with the standard definition of the (multiplicative) Banach--Mazur distance (see \cite[Chapter 37]{enoughsymmetries}). Moreover, by John's theorem, $\rho( M, \, N ) \leq 2$ and by a compactness argument, the infimum in \eqref{eq:BM-distance:definition} is a minimum.
\begin{definition}\label{def:BM:distance:min}
Let $M$ and $N$ be norms on $\mathbb{R}^{2}$. Then $S \in GL_{2}[ M, \, N ]$ is a \emph{Banach--Mazur minimizer from $M$ to $N$} if $S$ attains the infimum in \eqref{eq:BM-distance:definition}. If the domain and codomain of the linear map $S$ is clear from the context, we say that $S$ is a \emph{Banach--Mazur minimizer}.
\end{definition}
\begin{restatable}{proposition}{propdilatationbounds}\label{prop:dilatationbounds}
Suppose that $\phi \colon U \rightarrow V$ is quasiconformal. Then $\phi$ is isothermal if and only if 
\begin{align}
	\label{eq:pointwise:outer:inner:optimal}
	\left[
		K_{O}( \phi )
		K_{I}( \phi )
	\right](x)
	=
	\rho^{2}( \norm{ \cdot }_{2}, \, \apmd{\phi} )(x)
\end{align}
$m_{2}$-almost everywhere.
\end{restatable}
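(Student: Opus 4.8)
The plan is to translate both sides of \eqref{eq:pointwise:outer:inner:optimal} into pointwise statements about the approximate metric differential $\apmd{\phi}$ --- which is a norm $m_{2}$-a.e.\ by \Cref{norm:almost:everywhere} --- and then to compare them using the chain rule of \Cref{prop:chainrule:QC}. For a norm $M$ on $\mathbb{R}^{2}$, write $e(M)$ for the ratio of $\max_{v\in\mathbb{S}^{1}}M[v]$ to $\min_{v\in\mathbb{S}^{1}}M[v]$, and for $S\in GL_{2}[\norm{\cdot}_{2},M]$ let $M\circ S$ be the norm $v\mapsto M[Sv]$. \emph{Step 1: two pointwise identities.} I would first record that, for \emph{every} quasiconformal $\phi\colon U\to V$ with $U\subset\mathbb{R}^{2}$ open,
\begin{equation}\label{eq:sketch:A}
	\bigl[K_{O}(\phi)K_{I}(\phi)\bigr](x)=e\bigl(\apmd[\phi]{x}\bigr)^{2}\qquad\text{for }m_{2}\text{-a.e.\ }x .
\end{equation}
This follows from \Cref{def:pointwise:dilatation} and the facts that $\rho_{\phi}(x)=\max_{v\in\mathbb{S}^{1}}\apmd[\phi]{x}[v]$ and $\rho_{\phi^{-1}}(\phi(x))=\bigl(\min_{v\in\mathbb{S}^{1}}\apmd[\phi]{x}[v]\bigr)^{-1}$ for $m_{2}$-a.e.\ $x$, which make the Jacobian cancel in $K_{O}(\phi)K_{I}(\phi)=\rho_{\phi}^{2}\cdot(\rho_{\phi^{-1}}\circ\phi)^{2}$. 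On the other side, \eqref{eq:BM-distance:definition} together with the identification of the distortion of a linear map with the product of the operator norms of the map and its inverse gives, for every norm $M$ on $\mathbb{R}^{2}$,
\begin{equation}\label{eq:sketch:B}
	\rho(\norm{\cdot}_{2},M)=\min_{S\in GL_{2}[\norm{\cdot}_{2},M]}e(M\circ S)\ \le\ e(M),
\end{equation}
with equality in the last step exactly when the identity map is a Banach--Mazur minimizer from $\norm{\cdot}_{2}$ to $M$ (\Cref{def:BM:distance:min}). Taking $M=\apmd[\phi]{x}$ and comparing \eqref{eq:sketch:A}--\eqref{eq:sketch:B}: the inequality $\bigl[K_{O}(\phi)K_{I}(\phi)\bigr](x)\ge\rho^{2}(\norm{\cdot}_{2},\apmd[\phi]{x})$ holds for every quasiconformal $\phi$, and \eqref{eq:pointwise:outer:inner:optimal} is equivalent to equality here for $m_{2}$-a.e.\ $x$.

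\emph{Step 2: the effect of reparametrization.} Let $\psi\colon W\to U$ be quasiconformal with $W\subset\mathbb{R}^{2}$. Then $\psi$ is classically differentiable with invertible derivative $m_{2}$-a.e., the maps $\psi$ and $\psi^{-1}$ satisfy Condition $(N)$, and $\phi\circ\psi$ is quasiconformal. The chain rule \eqref{eq:chainrule:approximate} reads $\apmd[\phi\circ\psi]{y}=\apmd[\phi]{\psi(y)}\circ D\psi(y)$; feeding this into \eqref{eq:sketch:A} applied to $\phi\circ\psi$ and transporting along $\psi$ yields, for $m_{2}$-a.e.\ $x$,
\begin{equation}\label{eq:sketch:C}
	\bigl[K_{O}(\phi\circ\psi)K_{I}(\phi\circ\psi)\bigr]\circ\psi^{-1}(x)=e\bigl(\apmd[\phi]{x}\circ S_{x}\bigr)^{2},\qquad S_{x}:=D\psi(\psi^{-1}(x))\in GL_{2}(\mathbb{R}).
\end{equation}
In view of \eqref{eq:sketch:A}, then, $\phi$ is isothermal precisely when $e(\apmd[\phi]{x})\le e(\apmd[\phi]{x}\circ S_{x})$ holds $m_{2}$-a.e.\ for every quasiconformal $\psi\colon W\to U$.

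\emph{Step 3: the two implications.} If \eqref{eq:pointwise:outer:inner:optimal} holds, then by Step~1 the identity is a Banach--Mazur minimizer from $\norm{\cdot}_{2}$ to $\apmd[\phi]{x}$ for a.e.\ $x$, so by \eqref{eq:sketch:B} the right-hand side of \eqref{eq:sketch:C} is at least $\rho^{2}(\norm{\cdot}_{2},\apmd[\phi]{x})=e(\apmd[\phi]{x})^{2}=\bigl[K_{O}(\phi)K_{I}(\phi)\bigr](x)$ for every $\psi$; hence \eqref{eq:minimal} holds and $\phi$ is isothermal. For the converse, assume $\phi$ is isothermal; it suffices to exhibit \emph{one} quasiconformal $\psi\colon W\to U$ for which the right-hand side of \eqref{eq:sketch:C} equals $\rho^{2}(\norm{\cdot}_{2},\apmd[\phi]{x})$ a.e., since then isothermality forces $\bigl[K_{O}(\phi)K_{I}(\phi)\bigr](x)\le\rho^{2}(\norm{\cdot}_{2},\apmd[\phi]{x})$ a.e., which with the reverse inequality of Step~1 gives \eqref{eq:pointwise:outer:inner:optimal}. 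To construct $\psi$, let $T_{x}\in GL_{2}[\norm{\cdot}_{2},\apmd[\phi]{x}]$ be the Banach--Mazur minimizer determined by the \emph{distance ellipse} of the convex body $\{\apmd[\phi]{x}\le1\}$ (cf.\ \cite[Chapter 37]{enoughsymmetries} and \cite{upper:modulus:bound}). This is a canonical choice depending continuously on the norm, so $x\mapsto T_{x}$ is measurable because $\apmd{\phi}\in L^{2}_{loc}(U,\mathcal{C}(\mathbb{S}^{1},\mathbb{R}))$ by \Cref{newtonian:seminorm}; moreover the ellipse $T_{x}(\mathbb{D})$ has eccentricity comparable to $e(\apmd[\phi]{x})$, hence essentially bounded by a multiple of the distortion of $\phi$. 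Thus this ellipse field corresponds to a measurable Beltrami coefficient $\mu$ with $\norm{\mu}_{\infty}<1$, and the measurable Riemann mapping theorem \cite{ahlfors-bers,astala} provides a quasiconformal $g\colon U\to W\subset\mathbb{R}^{2}$ whose derivative $Dg(x)$ equals $T_{x}^{-1}$ up to composition on the left with a linear similarity, for $m_{2}$-a.e.\ $x$. Put $\psi=g^{-1}$; then $S_{x}=D\psi(\psi^{-1}(x))=Dg(x)^{-1}$ equals $T_{x}$ up to composition on the right with a linear similarity, and since $e(\apmd[\phi]{x}\circ S)$ is unchanged when $S$ is replaced by $SR$ for a linear similarity $R$ (such $R$ carries the Euclidean unit circle onto a Euclidean circle), we get $e(\apmd[\phi]{x}\circ S_{x})=e(\apmd[\phi]{x}\circ T_{x})=\rho(\norm{\cdot}_{2},\apmd[\phi]{x})$ by the defining property of $T_{x}$, as required. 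This $\psi$ is exactly the reparametrization furnished by \Cref{thm:minimal:chart:existence}, and in the write-up the two arguments should share the construction.

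\emph{Main obstacle.} Steps~1 and~2 are routine manipulation of the definitions and the chain rule, once the pointwise formulas for $\rho_{\phi}$ and $\rho_{\phi^{-1}}\circ\phi$ behind \eqref{eq:sketch:A} are in place. The substantive work is the construction in Step~3: producing the distance-ellipse minimizer as a measurable field, verifying that the associated Beltrami coefficient satisfies $\norm{\mu}_{\infty}<1$, invoking the measurable Riemann mapping theorem, and --- the delicate bookkeeping point --- checking that $e(\apmd[\phi]{x}\circ S)$ depends on $S$ only modulo the linear-similarity ambiguity left by the Beltrami equation, so that the $\psi$ produced really attains the pointwise Banach--Mazur distance everywhere.
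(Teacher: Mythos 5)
Your proposal is correct and follows essentially the same route the paper takes. The paper's own proof of \Cref{prop:dilatationbounds} is a one-line reference to \Cref{prop:minimal:TFAE} (taking $\psi=\id_U$ in the equivalence of parts (d) and (e)), and the proof of \Cref{prop:minimal:TFAE} in turn rests on precisely the ingredients you use: the identification of $\rho_\phi$ and $\rho_{\phi^{-1}}\circ\phi$ with the operator norms of $D\id\colon(TU,\norm{\cdot}_2)\to(TU,\apmd\phi)$ and its inverse (\Cref{prop:upper:gradient}, folded into \Cref{prop:phi:characterization}), the chain rule $\apmd{\phi\circ\psi}=\apmd\phi\circ D\psi$ (\Cref{prop:chainrule:QC}), the characterization of Banach--Mazur minimizers up to post-composition by $\mathbb{R}_+\cdot O_2$ (\Cref{pointwise:BM:properties}), and the measurable Riemann mapping theorem applied to the Beltrami coefficient of $T_{\apmd\phi}$ (\Cref{beltrami:diff:norm}, used in the proof of \Cref{prop:minimal:TFAE} to produce the optimizing reparametrization). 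What you do differently is organizational: you unpack \Cref{prop:minimal:TFAE} and aim directly at the stated equivalence, recording the pointwise inequality $K_O(\phi)K_I(\phi)\geq\rho^2(\norm{\cdot}_2,\apmd\phi)$ first and then constructing a single $\psi$ that attains it. Two minor remarks: (i) your comparison between the eccentricity of $T_x(\mathbb{D})$ and $e(\apmd[\phi]{x})$ is correct with a universal factor of $\sqrt{2}$ in both directions (one can see this by composing through $(\mathbb{R}^2,\apmd[\phi]{x})$ and using John's theorem $\rho\leq\sqrt{2}$), but the paper instead bounds $\norm{\mu_\phi}_\infty$ directly via $K_I(L_1)\leq\frac{4}{\pi}K_I(\phi)$ for $L_1=T_{\apmd\phi}$ viewed as a plain Euclidean linear map, which is slightly cleaner for producing an explicit $C<1$; (ii) when invoking the measurable Riemann mapping theorem you should note, as the paper does, that the ambiguity in $Dg$ is by an \emph{orientation-preserving} similarity in $\mathbb{R}_+\cdot SO_2$, which still preserves the eccentricity of $\apmd[\phi]{x}\circ S_x$, so the argument closes. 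Neither point is a gap; your proof and the paper's share the same core construction, as you yourself observe at the end of Step~3.
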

Our method of proving \eqref{eq:pointwise:outer:inner:optimal} actually shows that
\begin{equation*}
	D\id
	\colon
	( TU, \, \norm{ \cdot }_{2} )
	\rightarrow
	( TU, \, \apmd{\phi} )
\end{equation*}
is a Banach--Mazur minimizer $m_{2}$-almost everywhere. Here $TU$ is the tangent bundle of $U$, where the notation $( TU, \, \apmd{\phi} )$ is used to emphasize that the approximate metric differential $\apmd{\phi}$ of $\phi$ depends on the basepoint. We show the following.
\begin{restatable}{corollary}{corminimalcharts}\label{cor:minimal:charts}
Suppose that $\phi \colon U \rightarrow V$ is isothermal. Then $\phi$ satisfies
\begin{align}
	\label{eq:pointwise:dilatation:outer:optimal}
	\frac{ 2 }{ \pi }
	\rho^{2}( \norm{ \cdot }_{2}, \, \apmd{\phi} )( x )
	&\leq
	K_{O}( \phi )( x )
	\leq
	\frac{ 4 }{ \pi } \text{ and }
	\\
	\label{eq:pointwise:dilatation:inner:optimal}
	\frac{ \pi }{ 4 }
	\rho^{2}(  \norm{ \cdot }_{2}, \, \apmd{\phi} )( x )
	&\leq
	K_{I}( \phi )( x )
	\leq
	\frac{ \pi }{ 2 }
\end{align}
for $m_{2}$-almost every $x \in U$. In particular, $K_{O}( \phi )(x) \leq \frac{ 4 }{ \pi }$, $K_{I}( \phi )(x) \leq \frac{ \pi }{ 2 }$ and $K_{O}( \phi )(x)K_{I}( \phi )(x) \leq 2$ for $m_{2}$-almost every $x \in U$.
\end{restatable}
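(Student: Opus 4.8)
The plan is to reduce the corollary to a pointwise statement about the approximate metric differential $\apmd{\phi}$ and then to a single fact of planar convex geometry. By \Cref{newtonian:seminorm} and \Cref{norm:almost:everywhere}, for $m_{2}$-almost every $x \in U$ the seminorm $N_{x} := \apmd[\phi]{x}$ is a norm on $\mathbb{R}^{2}$; set $C_{x} = \{ v \in \mathbb{R}^{2} \mid N_{x}[v] \le 1 \}$ for its unit ball and let $0 < r_{x} \le R_{x}$ be the Euclidean inradius and circumradius of $C_{x}$, i.e.\ the largest, resp.\ smallest, $t > 0$ with $t\mathbb{D} \subseteq C_{x}$, resp.\ $C_{x} \subseteq t\mathbb{D}$. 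Combining \Cref{def:pointwise:dilatation} with the area formula \eqref{eq:change:of:variables} (so that $J_{\phi}(x) = J_{2}(\apmd[\phi]{x}) = \pi / m_{2}(C_{x})$ a.e.) and the standard identification of the minimal upper gradients with maximal infinitesimal stretching factors, $\rho_{\phi}(x) = \max\{ N_{x}[v] \mid \norm{v}_{2} = 1 \} = 1/r_{x}$ and $\rho_{\phi^{-1}}(\phi(x)) = \max\{ \norm{w}_{2} \mid N_{x}[w] \le 1 \} = R_{x}$ (this is standard; cf.\ \cite{metricderivative} and a metric-speed computation as in the proof of \Cref{prop:chainrule:QC}), one obtains
\[
	K_{O}(\phi)(x) = \frac{ m_{2}(C_{x}) }{ \pi r_{x}^{2} }, \qquad
	K_{I}(\phi)(x) = \frac{ \pi R_{x}^{2} }{ m_{2}(C_{x}) }, \qquad
	K_{O}(\phi)(x)\, K_{I}(\phi)(x) = \Bigl( \frac{ R_{x} }{ r_{x} } \Bigr)^{2}
\]
for $m_{2}$-almost every $x$.

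Since $\phi$ is isothermal, \Cref{prop:dilatationbounds} yields $(R_{x}/r_{x})^{2} = \rho^{2}(\norm{\cdot}_{2}, N_{x})$ for $m_{2}$-almost every $x$; as $\rho(\norm{\cdot}_{2}, N_{x})$ is the infimum of $R_{C'}/r_{C'}$ over all linear images $C'$ of $C_{x}$, this says precisely that $C_{x}$ already sits in a position realizing the Banach--Mazur distance (equivalently, that $\id \colon (\mathbb{R}^{2}, \norm{\cdot}_{2}) \to (\mathbb{R}^{2}, N_{x})$ is a Banach--Mazur minimizer, cf.\ the remark after \Cref{prop:dilatationbounds}). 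The corollary then follows from the planar estimate: \emph{if $C \subset \mathbb{R}^{2}$ is a symmetric convex body whose Euclidean inradius $r$ and circumradius $R$ satisfy $R/r = \rho(\norm{\cdot}_{2}, \norm{\cdot}_{C})$, then $2R^{2} \le m_{2}(C) \le 4r^{2}$.} Indeed, this gives $K_{O}(\phi)(x) = m_{2}(C_{x})/(\pi r_{x}^{2}) \le 4/\pi$ and $K_{I}(\phi)(x) = \pi R_{x}^{2}/m_{2}(C_{x}) \le \pi/2$ a.e.; dividing the product identity by these two bounds produces the matching lower bounds $\tfrac{2}{\pi}\rho^{2}(\norm{\cdot}_{2}, N_{x}) \le K_{O}(\phi)(x)$ and $\tfrac{\pi}{4}\rho^{2}(\norm{\cdot}_{2}, N_{x}) \le K_{I}(\phi)(x)$; and $K_{O}(\phi)(x)K_{I}(\phi)(x) \le \tfrac{4}{\pi}\cdot\tfrac{\pi}{2} = 2$ (which also equals $\rho^{2}(\norm{\cdot}_{2}, N_{x})$, bounded by $2$ via John's theorem in dimension two).

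What remains, and what I expect to be the main obstacle, is the planar estimate for bodies in Banach--Mazur-optimal position. The upper bound $m_{2}(C) \le 4r^{2}$ is the substantive half: the trivial $m_{2}(C) \le \pi R^{2} \le 2\pi r^{2}$ is too weak, so one genuinely has to use $R/r = \rho(\norm{\cdot}_{2}, \norm{\cdot}_{C})$ to exclude bodies that are nearly round except for a thin flat facet. This is the distance-ellipse estimate underlying Romney's sharp form of \Cref{thm:Kai}, and I would cite \cite{upper:modulus:bound} for it. The lower bound is then formal: applying $m_{2}(\,\cdot\,) \le 4\,r(\,\cdot\,)^{2}$ to the polar body $C^{\circ}$ --- which is again in a distance-realizing position, since $(\mathbb{R}^{2}, \norm{\cdot}_{2})$ is self-dual for the Banach--Mazur distance and polarity interchanges the inscribed and circumscribed disks, so $r(C^{\circ}) = 1/R$ --- gives $m_{2}(C^{\circ}) \le 4/R^{2}$, whence $m_{2}(C) \ge 8/m_{2}(C^{\circ}) \ge 2R^{2}$ by the two-dimensional Mahler inequality $m_{2}(C)\,m_{2}(C^{\circ}) \ge 8$. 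Both bounds are attained by squares, which is why $\tfrac{4}{\pi}$, $\tfrac{\pi}{2}$ and $\sqrt{2}$ are the constants that reappear in \Cref{thm:uniformization:general}; everything outside the planar estimate is bookkeeping with \Cref{def:pointwise:dilatation}, \Cref{newtonian:seminorm} and \Cref{prop:dilatationbounds}.
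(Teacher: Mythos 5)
Your proof is correct and is in essence the paper's proof, with the intermediate machinery unpacked into explicit convex geometry. The paper's argument is three lines: \Cref{prop:minimal:TFAE} shows $D\id_U \colon (TU, \apmd{\phi}) \to (TU, \norm{\cdot}_2)$ is a pointwise Banach--Mazur minimizer, \Cref{prop:phi:characterization} converts the pointwise dilatations of $\phi$ into those of $D\id$, and \Cref{pointwise:BM:properties} (a restatement of Lemma~2.1 of \cite{upper:modulus:bound}) supplies all four inequalities at once. Your formulas $K_O(\phi)(x) = m_2(C_x)/(\pi r_x^2)$, $K_I(\phi)(x) = \pi R_x^2/m_2(C_x)$ are exactly \Cref{prop:phi:characterization} (via \Cref{prop:upper:gradient} and \Cref{change:of:variables:formula}) written out for the unit ball $C_x$ of $\apmd[\phi]{x}$, the isothermal input from \Cref{prop:dilatationbounds} is the same as the paper's appeal to \Cref{prop:minimal:TFAE}, and your ``planar estimate'' $2R^2 \le m_2(C) \le 4r^2$ for bodies in Banach--Mazur-optimal position is precisely the content of \Cref{pointwise:BM:properties}, cited from the same source. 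Two small differences in routing are worth noting: you recover the lower bounds on $K_O(\phi)$ and $K_I(\phi)$ from the upper bounds via the product identity $K_O K_I = \rho^2$ rather than invoking the lower estimates in \Cref{pointwise:BM:properties} directly (these are logically equivalent, since a Banach--Mazur minimizer has $K_O K_I = \rho^2$ by definition); and your polarity--plus--Mahler deduction of $m_2(C)\ge 2R^2$ from $m_2(C)\le 4r^2$ is a pleasant observation not made in the paper or in \cite{upper:modulus:bound}, though it still leans on the nontrivial half of Romney's lemma. You correctly identify that $m_2(C)\le 4r^2$ is the substantive input and that the crude bound $m_2(C)\le \pi R^2 \le 2\pi r^2$ does not suffice.
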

\Cref{cor:minimal:charts} is a reformulation of \cite[Theorem 1.2]{upper:modulus:bound}. \Cref{thm:minimal:chart:uniqueness} is essential for the rest of the paper.

Before proving the results stated in \Cref{sec:minimal:chart}, we need to understand the pointwise outer and inner dilatations appearing in \Cref{def:minimal:chart}. This is the content of \Cref{sec:uniformization:charts}.
\subsection{Analytic properties of parametrizations}\label{sec:uniformization:charts}
Let $\apmd{\phi}$ denote the approximate metric differential of a quasiconformal map $\phi \colon U \rightarrow V$ (as defined in \Cref{sec:approximate}). We study the map
\begin{equation}
	\label{eq:identitymap:differential}
	D\id
	\colon
	( TU, \, \norm{ \cdot }_{2} )
	\rightarrow
	( TU, \, \apmd{\phi} )
\end{equation}
in this section. For each basepoint $x \in U \subset \mathbb{R}^{2}$, we identify $T_{x}U$ with $\mathbb{R}^{2}$ in the standard way.

Since $\phi$ may fail to satisfy Condition ($N$), we have to be very careful with measurability considerations. For this reason, let $\widetilde{N}$ be a Borel set of $m_{2}$-measure zero for which $\apmd{\phi}\chi_{ U \setminus \widetilde{N} }$ is a norm everywhere in $U \setminus \widetilde{N}$ and such that
\begin{equation*}
	x
	\mapsto
	\apmd{\phi}\chi_{ U \setminus \widetilde{N} }(x)
\end{equation*}
is Borel measurable. \Cref{newtonian:seminorm} Property \ref{newtonian:seminorm:existence} and \Cref{norm:almost:everywhere} imply that such a set $\widetilde{N}$ exists. Let $N$ denote the Borel set of zero $m_{2}$-measure appearing in \Cref{newtonian:seminorm}. Then by \Cref{newtonian:seminorm} Property \ref{newtonian:seminorm:absolutecontinuity}, the set $\phi( \widetilde{N} ) \setminus \phi( N )$ has $\mathcal{H}^{2}_{d}$-measure zero. We assume without loss of generality that $\widetilde{N} \supset N$.

A crucial point about the set $\widetilde{N}$ is that $\phi$ satisfies Conditions ($N$) and ($N^{-1}$) in the complement of $\widetilde{N}$ (\Cref{newtonian:seminorm} and \Cref{lem:Condition(N):inverse}, respectively). Fix such a Borel set $\widetilde{N}$.

Let $I_{\phi} \colon U \rightarrow \left[0, \, \infty\right]$ denote the operator norm of \eqref{eq:identitymap:differential} in $U \setminus \widetilde{N}$ and zero in $\widetilde{N}$. Let
\begin{equation*}
	I_{\phi^{-1}} \colon V \rightarrow \left[0, \, \infty\right]
\end{equation*}
be defined similarly: In the complement of $\widetilde{N}$, pointwise $I_{\phi^{-1}} \circ \phi$ equals the operator norm of the inverse of \eqref{eq:identitymap:differential}. In $\widetilde{N}$, $I_{\phi^{-1}} \circ \phi$ equals zero.

The functions $I_{\phi}$ and $I_{\phi^{-1}}$ are Borel measurable, since the operator norm depends continuously on the norms on the domain and codomain of $D\id$. The Jacobian $J_{2}( D\id )$ of $D\id$ from \eqref{eq:identitymap:differential} is (defined to be) the Jacobian $J_{2}( \apmd{\phi} )$ of the norm $\apmd{\phi}$ (recall \eqref{eq:jacobian:seminorm}). The dilatations of $D\id$ from \eqref{eq:identitymap:differential} are defined pointwise.
\begin{proposition}\label{prop:phi:characterization}
The Borel functions $I_{\phi}$ and $I_{\phi^{-1}}$ are minimal upper gradients of $\phi$ and $\phi^{-1}$, respectively. The Jacobian of $\phi$ equals the Jacobian of $D\id$ from \eqref{eq:identitymap:differential} $m_{2}$-almost everywhere.

Moreover, the pointwise outer and inner dilatations of $\phi$ satisfy
\begin{align*}
	K_{O}( \phi )
	&=
	K_{O}( D\id )
	\quad
	\text{ and }
	\quad
	K_{I}( \phi )
	=
	K_{I}( D\id )
\end{align*}
$m_{2}$-almost everywhere in $U$, where the domain and codomain of $D\id$ are determined by \eqref{eq:identitymap:differential}.
\end{proposition}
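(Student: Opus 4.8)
The plan is to establish the three assertions in turn and then deduce the two dilatation identities. Throughout I would use the $m_{2}$-null set $N$ and the compact pieces $K_{i}$ partitioning $U\setminus N$ supplied by \Cref{newtonian:seminorm}.

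First I would show that $I_{\phi}$ is the minimal upper gradient of $\phi$. For the upper gradient inequality, note that $\phi\circ\gamma$ is absolutely continuous for $\modulus$-a.e.\ $\gamma\in AC_{+}(U)$ by \Cref{absolutecontinuity}, and by property \ref{mod:5} of modulus such a $\gamma$ has zero length in the $m_{2}$-null set $\widetilde N\supset N$; hence at $m_{1}$-a.e.\ parameter $t$ the point $\gamma(t)$ lies in some $K_{i}$, is a Lebesgue density point of $\gamma^{-1}(K_{i})$, and $\gamma$ is differentiable at $t$. Letting $s\to t$ along $\gamma^{-1}(K_{i})$ in \eqref{eq:approximate:metric:diff}, and using that the metric speed $v_{\phi\circ\gamma}(t)$, being a genuine limit, equals its limit along the density-one set $\gamma^{-1}(K_{i})$, one gets $v_{\phi\circ\gamma}(t)=\apmd{\phi}(\gamma(t))[\dot\gamma(t)]$. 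Since $\apmd{\phi}(x)[v]\le I_{\phi}(x)\norm{v}_{2}$ by definition of the operator norm, the upper gradient inequality along $\modulus$-a.e.\ path follows, and $I_{\phi}\in L^{2}_{loc}(U)$ because $I_{\phi}=\norm{\apmd{\phi}(\cdot)}_{\mathcal{C}(\mathbb{S}^{1},\mathbb{R})}$ and $\apmd{\phi}\in L^{2}_{loc}(U,\mathcal{C}(\mathbb{S}^{1},\mathbb{R}))$ by \Cref{newtonian:seminorm}. For minimality, let $\rho$ be any weak upper gradient of $\phi$, fix $e\in\mathbb{S}^{1}$, and slice $U$ by unit-speed segments parallel to $e$: a family of such parallel segments has positive modulus once the set of base points has positive $\mathcal{H}^{1}$-measure, so for $\mathcal{H}^{1}$-a.e.\ base point the segment $\gamma$ avoids the exceptional family of $\rho$ and satisfies $v_{\phi\circ\gamma}=\apmd{\phi}(\gamma(\cdot))[e]$ a.e.; Fubini then gives $\apmd{\phi}(x)[e]\le\rho(x)$ for $m_{2}$-a.e.\ $x$. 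Running $e$ over a countable dense subset of $\mathbb{S}^{1}$ and using the continuity of $\apmd{\phi}$ off $\widetilde N$ yields $I_{\phi}\le\rho$ $m_{2}$-a.e., so $I_{\phi}=\rho_{\phi}$ $m_{2}$-a.e.

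Next I would treat $\phi^{-1}$. Every $\modulus$-a.e.\ path in $V$ is of the form $\phi\circ\gamma$ with $\gamma$ a $\phi$-good path (\Cref{lem:QC:differentequivalent} together with quasiconformality of $\phi$), and along such a path $v_{\phi^{-1}\circ(\phi\circ\gamma)}=v_{\gamma}=\norm{\dot\gamma}_{2}$ while $v_{\phi\circ\gamma}=\apmd{\phi}(\gamma(\cdot))[\dot\gamma]$ by the previous step. Since $\norm{v}_{2}\le I_{\phi^{-1}}(\phi(x))\,\apmd{\phi}(x)[v]$ for all $v$ and all $x\notin\widetilde N$ (by definition of $I_{\phi^{-1}}$ as the operator norm of the inverse of \eqref{eq:identitymap:differential}), this exhibits $I_{\phi^{-1}}$ as a weak upper gradient of $\phi^{-1}$, lying in $L^{2}_{loc}(V)$ as before. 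Minimality is the slicing argument of the previous step transported through $\phi$: for fixed $e$ and $\mathcal{H}^{1}$-a.e.\ base point, applying the upper gradient inequality of $\phi^{-1}$ to the $\phi$-image of the segment parallel to $e$ gives $1\le\rho_{\phi^{-1}}(\phi(x))\,\apmd{\phi}(x)[e]$ for $m_{2}$-a.e.\ $x$, so running over dense $e$ and using continuity of $\apmd{\phi}$ yields $\rho_{\phi^{-1}}\circ\phi\ge I_{\phi^{-1}}\circ\phi$ $m_{2}$-a.e.\ on $U\setminus\widetilde N$. Because $\phi$ satisfies Conditions $(N)$ and $(N^{-1})$ off $\widetilde N$ and $I_{\phi^{-1}}$ vanishes on $\phi(\widetilde N)$ (which forces $\rho_{\phi^{-1}}=0$ there too, since $\rho_{\phi^{-1}}\le I_{\phi^{-1}}$), this upgrades to $I_{\phi^{-1}}=\rho_{\phi^{-1}}$ $\mathcal{H}^{2}_{d}$-a.e.\ on $V$.

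Finally, for the Jacobian, applying property \ref{newtonian:seminorm:changeofvariables} of \Cref{newtonian:seminorm} with $\rho=\chi_{B}$ for Borel $B\subset U$ gives $\int_{B}J_{2}(\apmd{\phi})\D m_{2}=\mathcal{H}^{2}_{d}(\phi(B\setminus N))$, while $\phi^{*}\mathcal{H}^{2}_{d}(B)=\mathcal{H}^{2}_{d}(\phi(B\setminus N))+\mathcal{H}^{2}_{d}(\phi(B\cap N))$ since $\phi$ is a homeomorphism; the second summand is a Borel measure concentrated on the $m_{2}$-null set $N$, hence singular with respect to $m_{2}$, so uniqueness of the Lebesgue decomposition of $\phi^{*}\mathcal{H}^{2}_{d}$ identifies its absolutely continuous part as $J_{2}(\apmd{\phi})\,m_{2}$, i.e.\ $J_{\phi}=J_{2}(\apmd{\phi})=J_{2}(D\id)$ $m_{2}$-a.e. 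Combining this with the two minimal-upper-gradient identities, with $\rho_{\id_{U}}=\chi_{U}$ $m_{2}$-a.e.\ (\Cref{lem:Condition(N):inverse}), with Condition $(N^{-1})$ of $\phi$ (to transport the previous equality to $\rho_{\phi^{-1}}\circ\phi=I_{\phi^{-1}}\circ\phi$ $m_{2}$-a.e.), and with the definitions in \Cref{def:pointwise:dilatation}, one obtains $K_{O}(\phi)=\rho_{\phi}^{2}/J_{\phi}=I_{\phi}^{2}/J_{2}(D\id)=K_{O}(D\id)$ and $K_{I}(\phi)=(\rho_{\phi^{-1}}\circ\phi)^{2}J_{\phi}=(I_{\phi^{-1}}\circ\phi)^{2}J_{2}(D\id)=K_{I}(D\id)$ $m_{2}$-a.e., as required. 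I expect the main obstacle to be the identity $v_{\phi\circ\gamma}(t)=\apmd{\phi}(\gamma(t))[\dot\gamma(t)]$ for $\modulus$-a.e.\ path, i.e.\ that the approximate metric differential genuinely computes the metric speed of $\phi$ along typical curves; this rests on the Lebesgue-density/Lusin-type decomposition of \Cref{newtonian:seminorm} and on the fact that, once $\phi\circ\gamma$ is known to be absolutely continuous, its metric speed may be evaluated along the density-one subset $\gamma^{-1}(K_{i})$ on which \eqref{eq:approximate:metric:diff} is available. The secondary difficulty is bookkeeping on the $\phi^{-1}$ side: since $\phi$ need not satisfy Condition $(N)$ on $\widetilde N$, one must track which "almost everywhere" refers to $m_{2}$ and which to $\mathcal{H}^{2}_{d}$, which is exactly why $\widetilde N$ was chosen so that $\phi$ satisfies $(N)$ and $(N^{-1})$ in its complement and so that $I_{\phi}$ and $I_{\phi^{-1}}$ vanish on it.
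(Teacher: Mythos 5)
Your proposal is correct and follows essentially the same strategy as the paper: you first establish that the metric speed $v_{\phi\circ\gamma}(t)$ equals $\apmd{\phi}(\gamma(t))[\dot\gamma(t)]$ along almost every curve by exploiting the Lusin-type decomposition of \Cref{newtonian:seminorm} at Lebesgue density points of the pieces $K_i$ (the paper's \Cref{metric:speed:almosteverycurve}), then use a Fubini slicing argument over directions $e$ to show the operator norms dominate any competing upper gradient (the paper's \Cref{prop:upper:gradient}), and finally identify the Jacobian via the change-of-variables formula (the paper's \Cref{change:of:variables:formula}). Two small organizational differences: for the minimality of $\rho_{\phi^{-1}}$, you go directly from Fubini-for-sets to the $m_2$-a.e.\ inequality, whereas the paper multiplies by $J_2(\apmd{\phi})$ and routes through an integral bound plus Lebesgue differentiation --- both work, and yours is marginally more economical; and you identify $J_\phi$ via uniqueness of the Lebesgue decomposition of $\phi^*\mathcal{H}^2_d$ rather than via Lebesgue differentiation, which is again equivalent. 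One small point of care: your claim that $I_{\phi^{-1}}\in L^2_{loc}(V)$ ``as before'' isn't quite as immediate as for $I_\phi$ (which follows directly from $\apmd{\phi}\in L^2_{loc}(U,\mathcal{C}(\mathbb{S}^1,\mathbb{R}))$); it is cleanest to first derive $\rho_{\phi^{-1}}\geq I_{\phi^{-1}}$ $\mathcal{H}^2_d$-a.e.\ from the slicing argument, which yields $I_{\phi^{-1}}\in L^2_{loc}(V)$, and only then invoke the minimality of $\rho_{\phi^{-1}}$ against the weak upper gradient $I_{\phi^{-1}}$ to close the loop.
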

We express the minimal upper gradients of $\phi$ and $\phi^{-1}$ in \Cref{prop:upper:gradient} and the corresponding Jacobians in \Cref{change:of:variables:formula} using the norm field $\apmd{\phi}$. \Cref{prop:phi:characterization} follows immediately from these two results, \Cref{cor:prop:upper:gradient}, and from \Cref{lem:outer:inner:characterization}.
\subsubsection{Minimal upper gradients}\label{sec:uniformization:charts:gradient}
Recall that $\widetilde{N}$ is a Borel set of zero $m_{2}$-measure and the approximate metric differential $\apmd{\phi}$ is well-defined and a norm everywhere in the complement of $\widetilde{N}$.

Let $\Gamma_{0}$ denote the family of paths that are $\phi$-singular (recall \Cref{def:QC:singularset;goodpaths}) or paths that have positive length in $\widetilde{N}$. \Cref{lem:QC:differentequivalent,identity:minimal:upper:gradient} imply that $\Gamma_{0}$ and $\phi( \Gamma_{0} )$ have zero modulus.

By construction, the absolutely continuous paths of $AC_{+}(U) \setminus \Gamma_{0}$ are $\phi$-good (recall \Cref{def:QC:singularset;goodpaths}) and they have zero length in the Borel set $\widetilde{N}$. The key property for the paths in $AC_{+}( U ) \setminus \Gamma_{0}$ is that the variation measures $v_{ \gamma } \cdot m_{1}$ and $v_{ \phi \circ \gamma } \cdot m_{1}$ are absolutely continous with respect to one another. Therefore the path $\phi \circ \gamma$ has zero length in $\phi( \widetilde{N} )$ and it makes sense to talk about the differential $D\gamma$ of $\gamma$ for $v_{ \phi \circ \gamma} \cdot m_{1}$-almost every point. This means that the expression $\apmd{ \phi } \circ D\gamma$ makes sense $v_{ \phi \circ \gamma} \cdot m_{1}$-almost everywhere.
\begin{proposition}\label{prop:upper:gradient}
The maps $I_{\phi}$ and $I_{\phi^{-1}}$ are minimal upper gradients of $\phi$ and $\phi^{-1}$, respectively.

Additionally, if $\gamma \in AC( U ) \setminus \Gamma_{0}$, the triple $( \phi, \,  I_{\phi}, \, \gamma )$ satisfies the upper gradient inequality. Moreover, for any such $\gamma$, $\phi \circ \gamma \in AC( V ) \setminus \phi( \Gamma_{0} )$ and the triple $( \phi^{-1}, \, I_{\phi^{-1}}, \, \phi \circ \gamma )$ satisfies the upper gradient inequality.
\end{proposition}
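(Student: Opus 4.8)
The plan is to verify the upper gradient inequality for $( \phi, \, I_\phi, \, \gamma )$ along every path $\gamma \in AC(U) \setminus \Gamma_0$ by comparing the metric speed of $\phi \circ \gamma$ with $( I_\phi \circ \gamma ) v_\gamma$, and then to invoke the characterization of weak upper gradients via \eqref{eq:metric:speed:uppergrad} together with property \ref{mod:4} to upgrade this to minimality. First I would recall that, by the construction of $\Gamma_0$, any $\gamma \in AC_+(U) \setminus \Gamma_0$ is a $\phi$-good path (\Cref{def:QC:singularset;goodpaths}), so $v_\gamma m_1$ and $v_{\phi \circ \gamma} m_1$ are mutually absolutely continuous and $\gamma$ has zero length in $\widetilde{N}$; consequently $\phi \circ \gamma$ has zero length in $\phi(\widetilde{N})$ and $\phi \circ \gamma \in AC(V) \setminus \phi(\Gamma_0)$. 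The constant paths in $AC(U) \setminus \Gamma_0$ are harmless since both sides of the upper gradient inequality vanish, so we may restrict to $AC_+(U) \setminus \Gamma_0$.

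The core estimate is pointwise: for $m_1$-almost every $t \in \mathrm{dom}(\gamma)$ (more precisely, for $v_{\phi \circ \gamma} m_1$-almost every $t$, which by absolute continuity covers $m_1$-a.e.\ $t$ where $v_\gamma > 0$), both $\gamma$ is classically differentiable with derivative $D\gamma(t)$ and the approximate metric differential $\apmd{\phi}$ is realized at $\gamma(t)$ in the sense of \Cref{newtonian:seminorm}, using that the $\phi$-good path avoids $\widetilde{N}$. At such $t$ I would show
\begin{equation*}
	v_{\phi \circ \gamma}(t)
	=
	\apmd{\phi}_{\gamma(t)}[ D\gamma(t) ]
	\leq
	I_\phi(\gamma(t)) \, \norm{ D\gamma(t) }_2
	=
	I_\phi(\gamma(t)) \, v_\gamma(t),
\end{equation*}
where the first equality is the infinitesimal identification of the metric speed of $\phi \circ \gamma$ with the approximate metric differential applied to the tangent vector of $\gamma$ (this is where the local Lipschitz estimate \eqref{eq:approximate:metric:diff} on the partition sets $K_i$, combined with $\gamma$ having positive length only off $\widetilde{N}$, is used — one must pass from the approximate-limit statement to a statement along the specific path $\gamma$, exploiting that $\gamma$ spends full $v_{\phi\circ\gamma} m_1$-measure inside the union of the $K_i$), and the inequality is just the definition of the operator norm $I_\phi$ of $D\id \colon (T_xU, \norm{\cdot}_2) \to (T_xU, \apmd{\phi})$. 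Integrating this over $\mathrm{dom}(\gamma)$ and using \eqref{path:integral:changeofvariables} gives
\begin{equation*}
	d( \phi \circ \gamma(a), \, \phi \circ \gamma(b) )
	\leq
	L(\phi \circ \gamma)
	=
	\int_{\mathrm{dom}(\gamma)} v_{\phi \circ \gamma} \D m_1
	\leq
	\int_\gamma I_\phi \D s,
\end{equation*}
which is the upper gradient inequality for $(\phi, I_\phi, \gamma)$. The symmetric statement for $(\phi^{-1}, I_{\phi^{-1}}, \phi \circ \gamma)$ follows by applying the same argument to $\phi^{-1}$, noting that $\phi(\Gamma_0)$ also has zero modulus and that the inverse of \eqref{eq:identitymap:differential} has operator norm $I_{\phi^{-1}} \circ \phi$ on $U \setminus \widetilde{N}$.

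To conclude that $I_\phi$ is a \emph{minimal} upper gradient (not merely a weak one), I would argue as follows. Since $\Gamma_0$ and $\phi(\Gamma_0)$ have zero modulus, the inequality just proved holds for almost every path, so $I_\phi \in L^2_{loc}(U)$ — which holds because $I_\phi^2 = K_O(\phi) J_\phi \leq K J_\phi$ pointwise and $J_\phi \in L^1_{loc}$ — is a weak upper gradient of $\phi$. For minimality I would show $I_\phi \leq \rho_\phi$ $m_2$-a.e.\ and $\rho_\phi \leq I_\phi$ $m_2$-a.e.: the first because, by \eqref{eq:metric:speed:uppergrad} applied to the genuine minimal upper gradient $\rho_\phi$ along almost every path, one gets $\apmd{\phi}_{x}[v] \leq \rho_\phi(x) \norm{v}_2$ for $m_2$-a.e.\ $x$ and all $v$ (testing with straight-line segments in various directions and using Fubini), hence $I_\phi(x) \leq \rho_\phi(x)$; the second because $I_\phi$ is a weak upper gradient in $L^2_{loc}$ and $\rho_\phi$ is minimal. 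Thus $I_\phi = \rho_\phi$ $m_2$-a.e.\ and $I_\phi$ is a representative of the minimal upper gradient; the same for $I_{\phi^{-1}}$. The main obstacle I anticipate is the careful measure-theoretic bookkeeping in the pointwise identity $v_{\phi \circ \gamma}(t) = \apmd{\phi}_{\gamma(t)}[D\gamma(t)]$: one must transfer the approximate-differentiability information of $\phi$ (which is an $m_2$-a.e.\ statement in the plane) to an $m_1$-a.e.\ statement along the one-dimensional path $\gamma$, and the bridge is precisely that $\gamma$ is $\phi$-good and avoids $\widetilde{N}$, so that $\gamma$ lies $v_{\phi\circ\gamma}m_1$-almost entirely in $\bigcup_i K_i$ where the uniform estimate \eqref{eq:approximate:metric:diff} is available; the direction-by-direction comparison for the reverse inequality $I_\phi \leq \rho_\phi$ requires a similar Fubini-type argument over a family of parallel segments.
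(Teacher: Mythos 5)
Your proposal matches the paper's strategy for most of the argument: the pointwise identity $v_{\phi\circ\gamma}(t) = \apmd{\phi}(\gamma(t))[D\gamma(t)]$ for $m_1$-a.e.\ $t$ along every $\gamma \in AC(U)\setminus\Gamma_0$ (the paper isolates this as \Cref{metric:speed:almosteverycurve}), the resulting upper-gradient inequality, and the minimality of $I_\phi$ via a Fubini argument over parallel segments followed by Lebesgue differentiation at Lebesgue points. That part is sound, though your claim that $I_\phi \in L^2_{loc}(U)$ \textquotedblleft because $I_\phi^2 = K_O(\phi) J_\phi$\textquotedblright{} is circular ($K_O(\phi)$ is defined through $\rho_\phi$, which is the object you are trying to identify); the clean route is simply that $I_\phi \leq \rho_\phi$ a.e.\ together with $\rho_\phi \in L^2_{loc}$ gives $I_\phi \in L^2_{loc}$ automatically.

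However, the last step — \textquotedblleft Thus $I_\phi = \rho_\phi$ \ldots the same for $I_{\phi^{-1}}$\textquotedblright{} — hides a genuine gap. Running your Part-1 argument verbatim for $\phi^{-1}$ would require parallel Euclidean segments in $V$, which is a metric surface, not a planar domain. Staying in $U$, the relevant pointwise estimate along a.e.\ segment $\gamma_s$ is $1 \leq \rho_{\phi^{-1}}(\phi(\gamma_s(t))) \cdot \apmd{\phi}(\gamma_s(t))[\Exp{i\theta}]$, but you cannot directly Fubini and apply Lebesgue differentiation to $\rho_{\phi^{-1}}\circ\phi$: that function need not lie in $L^1_{loc}(U)$, since $\phi$ is not volume-preserving and may even fail Condition ($N$). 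The paper's Part 2 circumvents this by multiplying both sides by the Jacobian $J_2(\apmd{\phi})$; the product $\left(\rho_{\phi^{-1}}\circ\phi\right) J_2(\apmd{\phi})$ \emph{is} in $L^1_{loc}(U)$ by the change of variables formula \eqref{eq:change:of:variables}, so Lebesgue differentiation applies, and afterward one divides by $J_2(\apmd{\phi}(x)) \in (0,\infty)$ at almost every $x \in U \setminus \widetilde{N}$. This Jacobian weighting is the essential trick and your proposal does not supply it.
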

\begin{corollary}\label{cor:prop:upper:gradient}
The characteristic functions $\chi_{ U \setminus \widetilde{N} }$ and $\chi_{ V \setminus \phi( \widetilde{N} ) }$ are minimal upper gradients of $\id_{U}$ and $\id_{V}$, respectively.
\end{corollary}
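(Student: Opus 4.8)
The plan is to read off \Cref{cor:prop:upper:gradient} from \Cref{prop:upper:gradient} and \Cref{cor:minimal:upper:gradient}. Recall that \Cref{cor:minimal:upper:gradient} identifies, for an arbitrary quasiconformal $\phi \colon U \rightarrow V$, an admissible choice for the exceptional set $B_{0}$ of \Cref{lem:Condition(N)}: having fixed \emph{any} Borel representatives $\rho_{\phi}$, $\rho_{\phi^{-1}}$ of the minimal upper gradients of $\phi$, $\phi^{-1}$, one may take $B_{0} = \rho_{\phi}^{-1}(0) \cup ( \rho_{\phi^{-1}} \circ \phi )^{-1}(0)$, and then $\chi_{U \setminus B_{0}}$ and $\chi_{V \setminus \phi( B_{0} )}$ are minimal upper gradients of $\id_{U}$ and $\id_{V}$. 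By \Cref{prop:upper:gradient}, the Borel functions $I_{\phi}$ and $I_{\phi^{-1}}$ are themselves minimal upper gradients of $\phi$ and $\phi^{-1}$, so we are entitled to use precisely these representatives.

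First I would substitute $\rho_{\phi} = I_{\phi}$ and $\rho_{\phi^{-1}} = I_{\phi^{-1}}$ into \Cref{cor:minimal:upper:gradient}. The corollary then reduces to the single identity
\begin{equation*}
	I_{\phi}^{-1}(0)
	\cup
	( I_{\phi^{-1}} \circ \phi )^{-1}(0)
	=
	\widetilde{N}.
\end{equation*}
To prove this, recall that by construction both $I_{\phi}$ and $I_{\phi^{-1}} \circ \phi$ are set equal to $0$ on $\widetilde{N}$, whereas on $U \setminus \widetilde{N}$ the number $I_{\phi}(x)$ is the operator norm of $D\id \colon ( T_{x}U, \, \norm{ \cdot }_{2} ) \rightarrow ( T_{x}U, \, \apmd[\phi]{x} )$ and $I_{\phi^{-1}}( \phi(x) )$ is the operator norm of its inverse. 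Since $\widetilde{N}$ was chosen so that $\apmd[\phi]{x}$ is a genuine norm on $\mathbb{R}^{2}$ for every $x \in U \setminus \widetilde{N}$, and any norm on $\mathbb{R}^{2}$ is comparable to $\norm{ \cdot }_{2}$, both of these operator norms lie in $( 0, \, \infty )$ there. Hence $I_{\phi}$ and $I_{\phi^{-1}} \circ \phi$ vanish exactly on $\widetilde{N}$, which gives the displayed identity, so $B_{0} = \widetilde{N}$ and \Cref{cor:minimal:upper:gradient} together with \Cref{lem:Condition(N)} yield that $\chi_{U \setminus \widetilde{N}}$ and $\chi_{V \setminus \phi( \widetilde{N} )}$ are minimal upper gradients of $\id_{U}$ and $\id_{V}$.

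This argument has essentially no hard step: the only thing to watch is that the definition of $\widetilde{N}$ already absorbs the exceptional sets of \Cref{newtonian:seminorm} and \Cref{norm:almost:everywhere}, so that $\apmd{\phi}$ is a norm rather than merely a seminorm off $\widetilde{N}$ — this is exactly what forces $I_{\phi} > 0$ on $U \setminus \widetilde{N}$ and thus keeps its zero set no larger than $\widetilde{N}$. Granting that, the corollary is immediate.
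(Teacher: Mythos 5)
Your proof is correct and follows essentially the same route as the paper, which simply cites \Cref{cor:minimal:upper:gradient} together with \Cref{prop:upper:gradient} and calls the conclusion immediate. You usefully spell out the one step the paper leaves implicit, namely that $I_{\phi}^{-1}(0) \cup ( I_{\phi^{-1}} \circ \phi )^{-1}(0) = \widetilde{N}$ because the operator norms are positive and finite wherever $\apmd{\phi}$ is a genuine norm.
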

\begin{proof}
This is an immediate consequence of \Cref{cor:minimal:upper:gradient} and the fact that $I_{\phi}$ and $I_{\phi^{-1}}$ are minimal upper gradients of $\phi$ and $\phi^{-1}$, respectively.
\end{proof}
We prove \Cref{prop:upper:gradient} using the following lemma.
\begin{lemma}\label{metric:speed:almosteverycurve}
For every path $\gamma \in AC(U) \setminus \Gamma_{0}$, in particular, almost every absolutely continuous path, the metric speed $v_{ \phi \circ \gamma }$ of $\phi \circ \gamma$ satisfies 
\begin{equation}
	\label{eq:metric:speed}
	v_{ \phi \circ \gamma }
	=
	\apmd{\phi} \circ D\gamma
	\in
	L^{1}( \mathrm{dom}( \gamma ) )
\end{equation}
$m_{1}$-almost everywhere in $\mathrm{dom}( \gamma )$.
\end{lemma}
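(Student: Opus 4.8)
The plan is to read off \eqref{eq:metric:speed} from the infinitesimal description of $\phi$ in \Cref{newtonian:seminorm}, after splitting $\mathrm{dom}( \gamma )$ according to the partition $U \setminus N = \bigsqcup_{i} K_{i}$. A compact path of length zero is constant, so \eqref{eq:metric:speed} is trivial in that case, and I may assume $\gamma \in AC_{+}( U ) \setminus \Gamma_{0}$; by the discussion preceding \Cref{prop:upper:gradient} such a $\gamma$ is $\phi$-good and has zero length in $\widetilde{N}$. Writing $I = \mathrm{dom}( \gamma )$ and $A_{i} = \gamma^{-1}( K_{i} )$ we have $I = \gamma^{-1}( \widetilde{N} ) \cup \bigcup_{i} \left( A_{i} \setminus \gamma^{-1}( \widetilde{N} ) \right)$. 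On $\gamma^{-1}( \widetilde{N} )$: applying the change of variables \eqref{path:integral:changeofvariables} to $\int_{\gamma} \chi_{\widetilde{N}} \D s = 0$ gives $v_{\gamma} = 0$ $m_{1}$-almost everywhere there, hence $D\gamma = 0$ $m_{1}$-almost everywhere there, while $\phi$-goodness (mutual absolute continuity of $v_{\gamma} m_{1}$ and $v_{\phi \circ \gamma} m_{1}$) forces $v_{\phi \circ \gamma} = 0$ $m_{1}$-almost everywhere on the same set; so both sides of \eqref{eq:metric:speed} vanish $m_{1}$-almost everywhere on $\gamma^{-1}( \widetilde{N} )$, reading $\apmd{\phi} \circ D\gamma$ through the convention $\apmd{\phi}\chi_{ U \setminus \widetilde{N} }$.

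The heart of the matter is the estimate on each $A_{i}$. Fix $i$ and let $t_{0} \in A_{i}$ be a Lebesgue density point of $A_{i}$ at which $\gamma$ is differentiable and at which the metric speed $v_{\phi \circ \gamma}( t_{0} )$ exists; $m_{1}$-almost every point of $A_{i}$ has these three properties. Applying Property \ref{newtonian:seminorm:localproperties} of \Cref{newtonian:seminorm} with $x = \gamma( t_{0} ) \in K_{i}$, $w = 0$ and $v = \gamma( t ) - \gamma( t_{0} )$ --- admissible once $t \in A_{i}$ is so near $t_{0}$ that $\norm{ \gamma( t ) - \gamma( t_{0} ) }_{2} \leq r( i, \, \epsilon )$, by continuity of $\gamma$ --- yields
\begin{equation*}
	\abs{
		d( \phi \circ \gamma( t ), \, \phi \circ \gamma( t_{0} ) )
		-
		\apmd[\phi]{\gamma( t_{0} )}[ \gamma( t ) - \gamma( t_{0} ) ]
	}
	\leq
	\epsilon \norm{ \gamma( t ) - \gamma( t_{0} ) }_{2}.
\end{equation*}
Dividing by $\abs{ t - t_{0} }$, letting $A_{i} \ni t \to t_{0}$ and using the differentiability of $\gamma$ together with the homogeneity, evenness and continuity of the seminorm $\apmd[\phi]{\gamma( t_{0} )}$, the middle quantity tends to $\apmd[\phi]{\gamma( t_{0} )}[ \gamma'( t_{0} ) ]$ while the right-hand side tends to $\epsilon \norm{ \gamma'( t_{0} ) }_{2}$; as $\epsilon > 0$ was arbitrary this gives $\lim_{ A_{i} \ni t \to t_{0} } d( \phi \circ \gamma( t ), \, \phi \circ \gamma( t_{0} ) ) / \abs{ t - t_{0} } = \apmd[\phi]{\gamma( t_{0} )}[ \gamma'( t_{0} ) ]$. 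Since $t_{0}$ is a density point, $A_{i} \setminus \{ t_{0} \}$ accumulates at $t_{0}$, so the limit just computed is a sublimit of the two-sided limit defining $v_{\phi \circ \gamma}( t_{0} )$, which exists; hence the two coincide and $v_{\phi \circ \gamma}( t_{0} ) = \apmd[\phi]{\gamma( t_{0} )}[ \gamma'( t_{0} ) ] = \apmd{\phi} \circ D\gamma( t_{0} )$ (when $\gamma( t_{0} ) \in \widetilde{N}$ this is consistent with the first paragraph, since then $\gamma'( t_{0} ) = 0$ $m_{1}$-almost everywhere).

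Combining the two cases gives \eqref{eq:metric:speed} $m_{1}$-almost everywhere on $I$, and since $\phi \circ \gamma$ lies in $AC_{+}( V )$ it is rectifiable, whence $v_{\phi \circ \gamma} \in L^{1}( \mathrm{dom}( \gamma ) )$, which is the asserted integrability. I expect the one delicate point to be the last step of the previous paragraph: Property \ref{newtonian:seminorm:localproperties} controls $d( \phi( x + v ), \, \phi( x + w ) )$ only when \emph{both} $x + v$ and $x + w$ lie in the same piece $K_{i}$, so it cannot be used to compute a metric speed directly; the way around is that the metric speed of the absolutely continuous path $\phi \circ \gamma$ exists as a genuine two-sided limit $m_{1}$-almost everywhere, and at a density point of $A_{i}$ that limit is already realised along $A_{i}$, where the estimate does apply.
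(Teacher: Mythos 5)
Your proof is correct and follows the same underlying strategy as the paper's: use the local estimate from \Cref{newtonian:seminorm}~\ref{newtonian:seminorm:localproperties} at Lebesgue density points of the preimages of the compact pieces, combine with differentiability of $\gamma$ and existence of $v_{\phi\circ\gamma}$ $m_1$-a.e., and let $\epsilon\to 0$.

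The one point where you diverge from the paper's proof is the treatment of $\gamma^{-1}(\widetilde N)$. The paper first reduces to the unit-speed parametrization (permissible since the $\phi$-good property and both sides of \eqref{eq:metric:speed} transform consistently under an absolutely continuous reparametrization, via the chain rule \eqref{eq:metric:speed:chainrule}); in unit speed, zero length in $\widetilde N$ forces $m_1(\gamma^{-1}(\widetilde N))=0$, so that case simply disappears, and the density argument is run against a partition $\{B_i\}$ of $U\setminus\widetilde N$ on which $\apmd{\phi}$ is a norm everywhere. You instead stay in the given parametrization and handle $\gamma^{-1}(\widetilde N)$ head on, using zero length to get $D\gamma=0$ there and $\phi$-goodness to get $v_{\phi\circ\gamma}=0$ there, so both sides of the identity vanish. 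Both routes are valid. The paper's reduction is a little slicker (one fewer case, and the seminorm in play is always a norm), while your version is arguably more transparent about why the possibly positive-measure set $\gamma^{-1}(\widetilde N)$ cannot cause trouble, and it avoids invoking the reparametrization invariance. Your closing remark correctly identifies the delicate point --- that Property \ref{newtonian:seminorm:localproperties} only controls increments within the same piece, so one must know $v_{\phi\circ\gamma}(t_0)$ exists as a two-sided limit and then realise it along $A_i$ at a density point.
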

\begin{proof}
If $\gamma$ is a constant path, then there is nothing to prove. Therefore it suffices to consider paths $\gamma \in AC_{+}( U ) \setminus \Gamma_{0}$. Since every path $\gamma \in AC_{+}( U ) \setminus \Gamma_{0}$ is $\phi$-good, it suffices to show \eqref{eq:metric:speed} when $\gamma$ has unit speed with respect to the Euclidean norm.

By \Cref{newtonian:seminorm} Property \ref{newtonian:seminorm:localproperties} and the construction of $\widetilde{N}$, there exists a partition $\left\{ B_{i} \right\}_{ i = 1 }^{ \infty }$ of $U \setminus \widetilde{N}$ by Borel sets with the following property: For every $\epsilon > 0$, there exists a radius $r = r( i, \, \epsilon ) > 0$ such that for every $x \in B_{i}$ and every $x + v \in B_{i} \cap \overline{B}( x, \, r )$,
\begin{equation}
	\label{eq:proof:approximate:metric:diff}
	\abs{
		d( \phi(x+v), \, \phi(x) )
		-
		\apmd{\phi}(x)[v]
	}
	\leq
	\epsilon \norm{ v }_{2}.
\end{equation}
Recall that $\apmd{\phi}(x)$ is a norm for every $x \in U \setminus \widetilde{N}$.

The set $\gamma^{-1}( \widetilde{N} )$ has zero $m_{1}$-measure since the path $\gamma$ has zero length in $\widetilde{N}$ and $\gamma$ has unit speed. As a consequence, $m_{1}$-almost every $t$ in the domain of $\gamma$ is a density point of some $\gamma^{-1}( B_{i} )$.

The absolute continuity of $\gamma$ yields that for $m_{1}$-almost every $t \in \mathrm{dom}( \gamma )$, the differential $D\gamma(t)$ exists and
\begin{align}
	\label{eq:taylor:series}
	\gamma( s )
	&=
	\gamma( t )
	+
	D\gamma(t)( s - t )
	+
	o( \abs{s-t} ).
\end{align}
Also, the path $\phi \circ \gamma$ is absolutely continuous hence at $m_{1}$-almost every $t \in \mathrm{dom}( \gamma )$, the metric speed $v_{ \phi \circ \gamma }(t)$ exists and
\begin{align}
	\label{eq:metric:speed:good:point}
	d( \phi(\gamma(s)), \, \phi(\gamma(t)) )
	&=
	v_{ \phi \circ \gamma }(t) \abs{ s - t }
	+
	o( \abs{s-t} ).
\end{align}
In conclusion, $m_{1}$-almost every $t \in \mathrm{dom}( \gamma )$ is a Lebesgue density point of some $\gamma^{-1}( B_{i} )$ for which \eqref{eq:taylor:series} and \eqref{eq:metric:speed:good:point} hold. Fix such a $t$. By the continuity of $\gamma$, the Lebesgue density of $\gamma^{-1}( B_{i} )$ at $t$, and \eqref{eq:proof:approximate:metric:diff}, for every positive integer $n$ there exists $t_{n} \in \gamma^{-1}( B_{i} )$ with $0 < \abs{ t_{n} - t } \leq 2^{-n}$ and
\begin{gather}
	\label{eq:approximate:metric:diff:alongpath}
	\abs{
		d( \phi( \gamma(t_{n}), \, \phi( \gamma(t) ) )
		-
		\apmd{\phi}(\gamma(t))[ \gamma(t_{n}) - \gamma(t) ]
	}
	\\ \notag
	\leq \,
	2^{-n} \norm{ \gamma(t_{n}) - \gamma(t) }_{2}.
\end{gather}
By combining \eqref{eq:taylor:series}, \eqref{eq:metric:speed:good:point}, \eqref{eq:approximate:metric:diff:alongpath}, and using the fact that $\apmd{\phi}( \gamma(t) )$ is a norm, we obtain that
\begin{equation}
	\abs{
		v_{ \phi \circ \gamma }(t)
		-
		\apmd{\phi} \circ D\gamma(t)
	}
	\leq
	\frac{ o( \abs{ t_{n} - t } ) }{ \abs{ t_{n} - t } }
	+
	2^{-n}\norm{ D\gamma(t) }_{2}
\end{equation}
for every $n$. The identity \eqref{eq:metric:speed} at $t$ follows by passing to the limit $n \rightarrow \infty$. Since $m_{1}$-almost every $t$ in $\mathrm{dom}( \gamma )$ is of this form, the proof is complete.
\end{proof}
\begin{proof}[Proof of \Cref{prop:upper:gradient}]
\Cref{metric:speed:almosteverycurve} and the definition of the operator norm of linear maps yield that for every $\gamma \in AC( U ) \setminus \Gamma_{0}$,
\begin{align}
	\label{eq:phi:upper}
	v_{ \phi \circ \gamma }
	&\leq
	(I_{\phi} \circ \gamma)
	v_{ \gamma }
	\text{ and }
	v_{ \gamma }
	\leq
	I_{ \phi^{-1} } \circ ( \phi \circ \gamma )
	v_{ \phi \circ \gamma }
\end{align}
for $m_{1}$-almost every $t \in \mathrm{dom}( \gamma )$; here we use the fact that $\gamma$ has zero length in $\widetilde{N}$. Integrating \eqref{eq:phi:upper} over the domain of $\gamma$ implies the claimed upper gradient inequalities. Since $\Gamma_{0}$ has zero modulus in $U$, the map $I_{\phi}$ is a weak upper gradient of $\phi$. Similar conclusions hold for the path family $\phi( \Gamma_{0} )$ in $V$ and the maps $I_{\phi^{-1}}$ and $\phi^{-1}$.

We fix representatives of the minimal upper gradients $\rho_{\phi}$ and $\rho_{\phi^{-1}}$ for the rest of the proof.

Part (1): We show that $\rho_{\phi} \geq I_{\phi}$ $m_{2}$-almost everywhere. This immediately yields that $\rho_{\phi} = I_{\phi}$ $m_{2}$-almost everywhere by the minimality of $\rho_{\phi}$.

Proof of Part (1): Let $\Gamma_{1}$ denote the family of absolutely continuous paths $\gamma \in AC_{+}( U ) \setminus \Gamma_{0}$ for which
\begin{equation*}
	m_{1}\left(
		\left\{
			v_{ \phi \circ \gamma }
			>
			\left( \rho_{ \phi } \circ \gamma \right)
			v_{ \gamma }
		\right\}
	\right)
	>
	0.
\end{equation*}
Then the path family $\Gamma = \Gamma_{0} \cup \Gamma_{1}$ has zero modulus.

In the following, we use the exponential map $\mathbb{R} \ni \theta \mapsto \Exp{ i \theta } \in \mathbb{S}^{1}$. Fix $\theta \in \mathbb{R}$, $x \in U$ and a small $r > 0$. Consider the square
\begin{equation*}
	Q_{r}
	=
	\left\{
		x
		+
		\Exp{ i \theta } t
		+
		i \Exp{ i \theta } s
		\mid
		t, \, s \in \left[ -r, \, r \right]
	\right\}.
\end{equation*}
For $-r \leq s \leq r$, consider the arcs
\begin{equation*}
	\left[-r, \, r\right]
	\ni
	t
	\mapsto
	\gamma_{s}( t )
	=
	x
	+
	\Exp{ i \theta } t
	+
	i \Exp{ i \theta } s.
\end{equation*}
Observe that $D\gamma_{s}(t) = \Exp{ i \theta }$ and for $m_{1}$-almost every $-r \leq s \leq r$, the path $\gamma_{s}$ is in the complement of $\Gamma$. Therefore for $m_{1}$-almost every $-r \leq s \leq r$ for $m_{1}$-almost every $-r \leq t \leq r$, we have that
\begin{align}
	\label{eq:proof:upper:gradient:phi}
	\apmd{\phi}( \gamma_{s}(t) )[ \Exp{ i \theta } ]
	&=
	v_{ \phi \circ \gamma_{s} }(t)
	\leq
	\rho_{\phi} \circ \gamma_{s}(t)
	v_{ \gamma_{s} }(t)
	\\ \notag
	&=
	\rho_{\phi} \circ \gamma_{s}(t).
\end{align}
Then Fubini's theorem implies that for $m_{2}$-almost every $y \in Q_{r}$,
\begin{equation}
	\label{eq:the:end:result}
	\apmd{\phi}( y )\left[ \Exp{ i \theta } \right]
	\leq
	\rho_{\phi}( y ).
\end{equation}
Consider a countable dense subset $D \subset \mathbb{R}$. Let $G$ denote the intersection of the Lebesgue points of $\rho_{\phi} \in L^{2}_{loc}( U )$ and the functions 
\begin{equation*}
	\apmd{\phi}\left[ \Exp{ i \theta } \right] \in L^{2}_{loc}( U )
\end{equation*}
as $\theta \in D$ varies. The complement of $G$ has zero $m_{2}$-measure. For every $( x, \, \theta ) \in G \times D$, we have that
\begin{align}
	\label{eq:density}
	\apmd{\phi}(x)\left[ \Exp{ i \theta } \right]
	&=
	\lim_{ r \rightarrow 0^{+} }
	\aint{ Q_{r} }
		\apmd{\phi}( y )\left[ \Exp{ i \theta } \right]
	\D m_{2}(y) \text{ and }
	\\
	\label{eq:density:phi}
	\rho_{\phi}(x)
	&=
	\lim_{ r \rightarrow 0^{+} }
	\aint{ Q_{r} }
		\rho_{\phi}(y)
	\D m_{2}(y).
\end{align}
The horizontal bar in the integral sign refers to the integral average over the domain of integration. We combine \eqref{eq:the:end:result} with the identities \eqref{eq:density} and \eqref{eq:density:phi} in order to deduce that for every $( x, \, \theta ) \in G \times D$,
\begin{equation}
	\label{eq:essential:estimate}
	\apmd{\phi}( x )\left[ \Exp{ i \theta } \right]
	\leq
	\rho_{ \phi }( x ).
\end{equation}
Fix $x \in G \setminus \widetilde{N}$. Since \eqref{eq:essential:estimate} holds independently of $\theta \in D$, the $\rho_{\phi}(x)$ must be bounded from below by the number
\begin{equation*}
	I_{\phi}(x)
	=
	\sup\left\{
		\apmd{\phi}(x)[ v ]
		\mid
		\norm{ v }_{2} = 1
	\right\}.
\end{equation*}
We have deduced that $I_{\phi} \leq \rho_{\phi}$ $m_{2}$-almost everywhere and hence Part (1) is proved.

Part (2): We show that $\rho_{ \phi^{-1} } \geq I_{\phi^{-1}}$ for $\mathcal{H}^{2}_{d}$-almost every point in $V$. Since $I_{ \phi^{-1} }$ equals zero in $\phi\left( \widetilde{N} \right)$, it is sufficient to show the claimed inequality in $V \setminus \phi( \widetilde{N} )$, or equivalently that
\begin{equation}
	\label{eq:proof:essential:estimate:upper:inverse}
	\rho_{ \phi^{-1} } \circ \phi
	\geq
	I_{ \phi^{-1} } \circ \phi
\end{equation}
for $m_{2}$-almost every point in $U \setminus \widetilde{N}$; this equivalence follows from the fact that $\phi$ satisfies Conditions ($N$) and ($N^{-1}$) in the complement of $\widetilde{N}$ (\Cref{newtonian:seminorm} and \Cref{lem:Condition(N):inverse}, respectively). Once again, after this is shown, we conclude that $I_{\phi^{-1}} = \rho_{\phi^{-1}}$ $\mathcal{H}^{2}_{d}$-almost everywhere in $V$ by the minimality of $\rho_{\phi^{-1}}$.

The definition of $I_{ \phi^{-1} }$ yields that for every $x \in U \setminus \widetilde{N}$, we have that
\begin{equation*}
	I_{ \phi^{-1} }( \phi(x) )
	=
	\frac{ 1 }{ \inf\left\{ \apmd{\phi}(x)[ v ] \mid \norm{ v }_{2} = 1 \right\} }.
\end{equation*}
Proof of Part (2): Fix $\theta \in \mathbb{R}$, $x \in U$, and a small $r > 0$. We define
\begin{equation*}
	Q_{r}
	=
	\left\{
		x
		+
		\Exp{ i \theta } t
		+
		i \Exp{ i \theta } s
		\mid
		t, \, s \in \left[ -r, \, r \right]
	\right\}.
\end{equation*}
Let $\gamma_{s}$ be as in Part (1). Then for $m_{1}$-almost every $-r \leq s \leq r$ for $m_{1}$-almost every $-r \leq t \leq r$, we have that
\begin{equation*}
	1
	=
	v_{ \gamma_{s} }(t)
	\leq
	\rho_{ \phi^{-1} }( \phi \circ \gamma_{s}(t) )
	\apmd{\phi}( \gamma_{s}(t) )[ \Exp{ i \theta } ].
\end{equation*}
We divide both sides with $\apmd{\phi}( \gamma_{s}(t) )[ \Exp{ i \theta } ]$ and multiply the inequality with the Jacobian $J_{2}( \apmd{\phi}( \gamma_{s}(t) ) )$. Then we apply Fubini's theorem and the change of variables formula \eqref{eq:change:of:variables}: we obtain that
\begin{align}
	\label{eq:proof:change:of:variables}
	\infty
	&>
	\aint{ \phi( Q_{r} ) }
		\rho_{ \phi^{-1} }
	\D \mathcal{H}^{2}_{d}
	\geq
	\aint{ Q_{r} }
		\rho_{ \phi^{-1} } \circ \phi
		J_{2}( \apmd{\phi} )
	\D m_{2}
	\\ \notag
	&\geq
	\aint{ Q_{r} }
		\frac{ 1 }{ \apmd{\phi}\left[ \Exp{ i \theta } \right] }
		J_{2}( \apmd{\phi} )
	\D m_{2}.
\end{align}
We deduce from \eqref{eq:proof:change:of:variables} that $\rho_{ \phi^{-1} } \circ \phi J_{2}( \apmd{\phi} )$ and $\frac{ 1 }{ \apmd{\phi}[ \Exp{ i \theta } ] } J_{2}( \apmd{\phi} )$ are elements of $L^{2}_{ loc }( U )$.

Fix a countable dense subset $D \subset \mathbb{R}$. By arguing as in Part (1), we deduce that for $m_{2}$-almost every $x \in U \setminus \widetilde{N}$, for every $\theta \in D \subset \mathbb{R}$,
\begin{equation}
	\label{eq:inverse:uppergradient:last:part}
	\rho_{ \phi^{-1} }( \phi(x) ) J_{2}( \apmd{ \phi }(x) )
	\geq
	\frac{ 1 }{ \apmd{ \phi }( x )[ \Exp{ i \theta } ] }
	J_{2}( \apmd{\phi}(x) ).
\end{equation}
Fix such an $x \in U \setminus \widetilde{N}$. Since $x \in U \setminus \widetilde{N}$, we can divide both sides of \eqref{eq:inverse:uppergradient:last:part} with the Jacobian $\infty > J_{2}( \apmd{\phi}(x) ) > 0$. We take the supremum over $\theta \in D$ and deduce that
\begin{equation*}
	\rho_{ \phi^{-1} }( \phi(x) )
	\geq
	\frac{ 1 }{ \inf\left\{ \apmd{\phi}(x)[ v ] \mid \norm{ v }_{2} = 1 \right\} }.
\end{equation*}
The right-hand side coincides with $I_{ \phi^{-1}}( \phi(x) )$ in $U \setminus \widetilde{N}$. The proof is complete.
\end{proof}
\subsubsection{Jacobians}\label{sec:uniformization:charts:jacobian}
Recall that $\phi \colon \mathbb{R}^{2} \supset U \rightarrow V$ is a quasiconformal map, where $V$ is a metric surface. We express the Jacobians $J_{\phi}$ and $J_{\phi^{-1}}$ in the complement of the Borel set $\widetilde{N} \subset U$ of $m_{2}$-measure zero.

The approximate metric differential of $\phi$ is denoted by $\apmd{\phi}$. The Borel set $\widetilde{N} \subset U$ has $m_{2}$-measure zero in such a way that the approximate metric differential $\apmd{\phi}$ exists and is a norm everywhere in $U \setminus \widetilde{N}$ (see the beginning of \Cref{sec:uniformization:charts}).

Recall that the Jacobian of $\apmd{\phi}$ is
\begin{equation*}
	J_{2}( \apmd{\phi} )
	=
	\frac{ \pi }{ m_{2}\left( \left\{ \apmd{\phi} \leq 1 \right\} \right) }
\end{equation*}
whenever $\apmd{\phi}$ is well-defined. At every $x \in U \setminus \widetilde{N}$, the number $J_{2}( \apmd{\phi} )(x)$ is the Jacobian of the map $D\id \colon ( TU, \, \norm{ \cdot }_{2} ) \rightarrow ( TU, \, \apmd{\phi} )$ from \eqref{eq:identitymap:differential}.
\begin{proposition}\label{change:of:variables:formula}
The Jacobian $J_{\phi}$ of the quasiconformal map $\phi$ coincides with $J_{2}( D\id )$ $m_{2}$-almost everywhere. Moreover,
\begin{equation}
	J_{ \phi^{-1} }
	=
	\frac{ 1 }{ J_{2}( D\id ) } \circ \phi^{-1}
	=
	J_{2}( (D\id)^{-1} ) \circ \phi^{-1}
\end{equation}
$\mathcal{H}^{2}_{d}$-almost everywhere in $V \setminus \phi( \widetilde{N} )$.
\end{proposition}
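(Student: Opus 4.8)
The plan is to read off both identities from the area formula \eqref{eq:change:of:variables} of \Cref{newtonian:seminorm} together with the uniqueness of the Lebesgue decomposition of a Radon measure. Recall that $J_{\phi}$ is by definition the Radon--Nikodym density of the absolutely continuous part of the Radon measure $B \mapsto \phi^{*}\mathcal{H}^{2}_{d}( B ) = \mathcal{H}^{2}_{d}( \phi(B) )$ with respect to $m_{2}$, and that $J_{2}( D\id ) = J_{2}( \apmd{\phi} )$ is a nonnegative Borel function on $U$ (represented by $J_{2}( \apmd{\phi} )\chi_{ U \setminus \widetilde{N} }$) lying in $L^{1}_{loc}( U )$, since applying \eqref{eq:change:of:variables} to $\rho = \chi_{K}$ for compact $K \subset U$ gives $\int_{K} J_{2}( \apmd{\phi} ) \D m_{2} = \mathcal{H}^{2}_{d}( \phi( K \setminus N ) ) \leq \mathcal{H}^{2}_{d}( \phi(K) ) < \infty$; moreover, as $\widetilde{N} \supset N$ differ by an $m_{2}$-null set and $\mathcal{H}^{2}_{d}( \phi( \widetilde{N} ) \setminus \phi( N ) ) = 0$, formula \eqref{eq:change:of:variables} remains valid after replacing $N$ by $\widetilde{N}$. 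For the first identity I would fix a Borel set $B \subset U$ and feed $\rho = \chi_{B}$ into \eqref{eq:change:of:variables}; since $\phi$ is a homeomorphism this reads
\begin{equation*}
	\int_{B} J_{2}( \apmd{\phi} ) \D m_{2}
	=
	\mathcal{H}^{2}_{d}\left( \phi(B) \setminus \phi(N) \right)
	=
	\mathcal{H}^{2}_{d}\left( \phi( B \setminus N ) \right),
\end{equation*}
and splitting the disjoint Borel union $\phi(B) = \phi( B \setminus N ) \sqcup \phi( B \cap N )$ gives
\begin{equation*}
	\phi^{*}\mathcal{H}^{2}_{d}( B )
	=
	\int_{B} J_{2}( \apmd{\phi} ) \D m_{2}
	+
	\mathcal{H}^{2}_{d}\left( \phi( B \cap N ) \right) .
\end{equation*}
The first summand defines a measure absolutely continuous with respect to $m_{2}$, while the second defines a measure concentrated on the $m_{2}$-null set $N$ and hence singular with respect to $m_{2}$; by uniqueness of the Lebesgue decomposition the absolutely continuous part of $\phi^{*}\mathcal{H}^{2}_{d}$ has density $J_{2}( \apmd{\phi} ) = J_{2}( D\id )$, so $J_{\phi} = J_{2}( D\id )$ $m_{2}$-almost everywhere.

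For the second identity I would first observe that $\phi^{-1}$ maps $\phi( \widetilde{N} )$ onto the $m_{2}$-null set $\widetilde{N}$, so the Radon measure $A \mapsto ( \phi^{-1} )^{*}m_{2}( A ) = m_{2}( \phi^{-1}( A ) )$ on $V$ gives no mass to $\phi( \widetilde{N} )$, and it suffices to describe it on $V \setminus \phi( \widetilde{N} )$. On that set $\apmd{\phi} \circ \phi^{-1}$ is a norm, hence $J_{2}( \apmd{\phi} ) \circ \phi^{-1}$ is finite and strictly positive. For a Borel set $A \subset V \setminus \phi( \widetilde{N} )$ I would apply \eqref{eq:change:of:variables} (with $\widetilde{N}$ in place of $N$) to the Borel function $\rho = \chi_{ \phi^{-1}( A ) } \, J_{2}( \apmd{\phi} )^{-1}$, set equal to $0$ on $\widetilde{N}$; since $\phi^{-1}( A ) \subset U \setminus \widetilde{N}$ we have $\rho \, J_{2}( \apmd{\phi} ) = \chi_{ \phi^{-1}( A ) }$, so the formula becomes
\begin{equation*}
	m_{2}( \phi^{-1}( A ) )
	=
	\int_{A}
		\frac{ 1 }{ J_{2}( \apmd{\phi} ) \circ \phi^{-1} }
	\D \mathcal{H}^{2}_{d} .
\end{equation*}
Thus $( \phi^{-1} )^{*}m_{2}$ restricted to $V \setminus \phi( \widetilde{N} )$ is absolutely continuous with respect to $\mathcal{H}^{2}_{d}$ with the displayed density, which is locally $\mathcal{H}^{2}_{d}$-integrable by the same identity, and uniqueness of Radon--Nikodym derivatives yields $J_{\phi^{-1}} = \frac{ 1 }{ J_{2}( D\id ) } \circ \phi^{-1}$ $\mathcal{H}^{2}_{d}$-almost everywhere on $V \setminus \phi( \widetilde{N} )$. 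The remaining equality $\frac{ 1 }{ J_{2}( D\id ) } = J_{2}( ( D\id )^{-1} )$ is immediate from \eqref{eq:jacobian:seminorm} and the relation $J_{2}( D\id ) \, J_{2}( ( D\id )^{-1} ) = 1$, since $J_{2}( D\id ) = \pi / m_{2}( \{ \apmd{\phi} \leq 1 \} )$.

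I do not expect any step to require a genuinely new estimate; the only point needing care is the measure-theoretic bookkeeping, in particular keeping the sets $N$ and $\widetilde{N}$ apart. Because $\phi$ need not satisfy Condition $(N)$, the set $\phi(N)$ can carry positive $\mathcal{H}^{2}_{d}$-measure, which is exactly why the singular term in the first identity need not vanish; one also has to verify Borel measurability and local integrability of the integrands fed into \eqref{eq:change:of:variables}, and that $( \phi^{-1} )^{*}m_{2}$ is concentrated on $V \setminus \phi( \widetilde{N} )$ so that no singular part is overlooked.
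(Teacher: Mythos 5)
Your argument is correct. The first identity is established exactly in the spirit of the paper's proof: both hinge on feeding test functions into the area formula \eqref{eq:change:of:variables}; you make the Lebesgue decomposition explicit (writing $\phi^{*}\mathcal{H}^{2}_{d}(B) = \int_{B} J_{2}(\apmd{\phi}) \D m_{2} + \mathcal{H}^{2}_{d}(\phi(B\cap N))$ and invoking uniqueness of the decomposition), whereas the paper phrases it as an application of the Lebesgue differentiation theorem; these are two formulations of the same computation. For the second identity you take a genuinely more direct route: you plug the weight $\rho = \chi_{\phi^{-1}(A)} J_{2}(\apmd{\phi})^{-1}$ into the area formula to read off that $(\phi^{-1})^{*}m_{2}$, which you correctly note is concentrated on $V \setminus \phi(\widetilde{N})$, is absolutely continuous with density $J_{2}(\apmd{\phi})^{-1}\circ\phi^{-1}$ there. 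The paper instead appeals to \Cref{lem:Condition(N)} and \Cref{cor:prop:upper:gradient} to conclude that $\phi^{*}\mathcal{H}^{2}_{d}$ and $m_{2}$ are mutually absolutely continuous off $\widetilde{N}$, so that $(J_{\phi}\circ\phi^{-1})J_{\phi^{-1}}=1$ $\mathcal{H}^{2}_{d}$-a.e.\ on $V\setminus\phi(\widetilde{N})$, then multiplies by the first identity. Your derivation is more self-contained (it only uses \eqref{eq:change:of:variables} and elementary measure theory), while the paper's is more compact because it amortizes the work into the earlier Condition-$(N)$ machinery; both establish the same statement, and your bookkeeping around $N$, $\widetilde{N}$, and the possibly positive measure of $\phi(\widetilde{N})$ is the right thing to worry about and is handled correctly.
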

\begin{proof}
The change of variables formula \eqref{eq:change:of:variables} and Lebesgue differentiation theorem yield that $J_{\phi} = J_{2}( \apmd{\phi} )$ $m_{2}$-almost everywhere in $U \setminus \widetilde{N}$.

\Cref{lem:Condition(N)} and \Cref{cor:prop:upper:gradient} show that the set where the $\phi$-pullback measure $\phi^{*}\mathcal{H}^{2}_{d}$ and $m_{2}$ fail to be absolutely continuous with respect to one another is concentrated on the set $\widetilde{N}$. This implies that $\left( J_{\phi} \circ \phi^{-1} \right)
	J_{\phi^{-1}}
	=
	1$ for $\mathcal{H}^{2}_{d}$-almost every point in $V \setminus \phi( \widetilde{N} )$. The proof is complete.
\end{proof}
\subsection{Existence of isothermal parametrizations}\label{sec:isothermal:properties}
In this section, we study a quasiconformal map $\phi \colon \mathbb{R}^{2} \supset U \rightarrow V$, where $V$ is a subset of a metric surface $Y_{d}$. Let $\apmd{\phi}$ denote the approximate metric differential of $\phi$ obtained from \Cref{norm:almost:everywhere}. Following \cite{upper:modulus:bound}, we show that we can associate a Beltrami differential $\mu_{\phi}$ to $\apmd{\phi}$ and thus to $\phi$ in a natural way. Instead of defining $\mu_{\phi}$ using the John ellipses of the norms of $\apmd{\phi}$ as in \cite{uniformization}, the approach of \cite{upper:modulus:bound} uses the Banach--Mazur distance (\Cref{def:BM:distance}) and the associated Beltrami differential (see \Cref{beltrami:diff:norm}).
%

Some of the results of this section are proved in \cite{upper:modulus:bound}. Our main contribution is \Cref{prop:minimal:TFAE} and its corollaries (excluding \Cref{cor:minimal:charts} which corresponds to Theorem 1.2 of \cite{upper:modulus:bound}).

We recall some notations. The group $O_{2}$ is the group of isometries of $\mathbb{R}^{2}$ and $\mathbb{R}_{+} \cdot O_{2}$ refers to the invertible linear maps $L = \lambda \cdot S$, where $\lambda > 0$ and $S \in O_{2}$. The group $SO_{2}$ are the elements of $O_{2}$ with determinant equal to $1$. The group $\mathbb{R}_{+} \cdot O_{2}$ is the conformal automorphism group of $\mathbb{R}^{2}$ and $\mathbb{R}_{+} \cdot SO_{2}$ the subgroup of $\mathbb{R}_{+} \cdot O_{2}$ whose elements have positive determinant.

The result we state next corresponds to Lemmas 2.1 and 2.2 of \cite{upper:modulus:bound}. Recall the definition of Banach--Mazur minimizer from \Cref{def:BM:distance:min}.
\begin{lemma}\label{pointwise:BM:properties}
Let $M$ be a norm on $\mathbb{R}^{2}$ and $L \colon \left( \mathbb{R}^{2}, \, M \right) \rightarrow \left( \mathbb{R}^{2}, \, \norm{ \cdot }_{2} \right)$ a Banach--Mazur minimizer. Then
\begin{align}
	\label{eq:BM:minimizer:outer}
	\frac{ \pi }{ 4 }
	\rho^{2}( M, \, \norm{ \cdot }_{2} )
	&\leq
	K_{O}\left( L \right)
	\leq
	\frac{ \pi }{ 2 } \text{ and}
	\\
	\label{eq:BM:minimizer:inner}
	\frac{ 2 }{ \pi }
	\rho^{2}( M, \, \norm{ \cdot }_{2} )
	&\leq
	K_{I}\left( L \right)
	\leq
	\frac{ 4 }{ \pi }.
\end{align}
Moreover, $L' \in GL_{2}[ M, \, \norm{ \cdot }_{2} ]$ is a Banach--Mazur minimizer if and only if $L' \circ L^{-1} \in \mathbb{R}_{+} \cdot O_{2}$.
\end{lemma}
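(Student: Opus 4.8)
The plan is to linearise the pointwise dilatations of $L$, reduce the four estimates to two geometric inequalities for the image body $C:=L(B_M)$, and extract those from the optimality of $L$; here $B_M=\{M\le1\}$ and $B_2$ denotes the closed Euclidean unit disk. As recalled after \eqref{eq:BM-distance:definition}, for $S\in GL_2[M,\norm{\cdot}_2]$ the minimal upper gradients of $S$ and $S^{-1}$ are the constant operator norms $\norm{S}$ and $\norm{S^{-1}}$, so $\rho(M,\norm{\cdot}_2)$ is the classical multiplicative Banach--Mazur distance. In addition I need the Jacobian $J_S=\tfrac1\pi m_2(S(B_M))$ (which follows from $\mathcal H^2_M=\tfrac{\pi}{m_2(B_M)}m_2$); together these give
\[
	K_O(S)=\frac{\pi\,\norm{S}^2}{m_2(S(B_M))},\qquad
	K_I(S)=\frac{\norm{S^{-1}}^2\,m_2(S(B_M))}{\pi},
\]
so that a minimizer $L$ satisfies $K_O(L)K_I(L)=\rho^2(M,\norm{\cdot}_2)=:\delta^2$. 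Post-composing $L$ with a conformal linear map $R\in\mathbb R_+\cdot O_2$ changes neither $K_O(L)$ nor $K_I(L)$ and keeps $L$ a minimizer, so it suffices to treat one normalised minimizer; for such a one the four inequalities reduce, via $K_O(L)K_I(L)=\delta^2$, to the two upper bounds $K_O(L)\le\tfrac\pi2$ and $K_I(L)\le\tfrac4\pi$.

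For the geometric reduction, note that if $S(B_M)=C$ then $\norm{S}$ is the Euclidean circumradius of $C$ and $\norm{S^{-1}}^{-1}$ its inradius, so $\norm{S}\,\norm{S^{-1}}$ is the circumradius-to-inradius ratio of $C$. Choosing $R$ suitably, I may therefore assume $C$ is a centrally symmetric convex body with $B_2\subseteq C\subseteq\delta B_2$, both circles meeting $\partial C$, this being the position realising the minimal such ratio $\delta$. Then $K_O(L)=\pi\delta^2/m_2(C)$ and $K_I(L)=m_2(C)/\pi$, so what remains is the purely geometric claim $2\delta^2\le m_2(C)\le 4$, equality throughout holding exactly for a rotated and rescaled square.

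These area bounds are the heart of the matter, and the step where one genuinely uses that $\delta$ is \emph{minimal} over the $GL_2$-orbit, not merely that $\delta\le\sqrt2$. Minimality forces a two-sided John-type balance between the inner contact set $\partial C\cap\partial B_2$ and the outer contact set $\partial C\cap\partial(\delta B_2)$: in an unbalanced configuration an explicit diagonal perturbation of $C$ would strictly decrease the ratio, contradicting minimality. Exploiting this — bounding $C$ from outside by the slabs cut out by the tangent lines to $\delta B_2$ at the outer contact points, which are supporting lines of $C$, while $\partial C$ must still turn inward to meet $\partial B_2$ at the inner contact points interlaced between them, and bounding $C$ from inside by the convex hull of $B_2$ together with the outer contact points — one obtains $m_2(C)\le 4$ and $m_2(C)\ge 2\delta^2$ respectively. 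I expect this to be the main obstacle; it is essentially the content of Lemmas~2.1 and 2.2 of \cite{upper:modulus:bound}, whose optimality argument I would reproduce.

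For the final assertion, suppose $L'\in GL_2[M,\norm{\cdot}_2]$ is another minimizer. Then $L'\circ L^{-1}$ carries one minimal-position body onto another with the same minimal circumradius-to-inradius ratio $\delta$, so by the uniqueness part of the minimal-position analysis (analogous to uniqueness of John's ellipse, and part of the equality discussion above) the target is a rotation and rescaling of the source; hence $L'\circ L^{-1}\in\mathbb R_+\cdot O_2$. Conversely, if $L'=R\circ L$ with $R\in\mathbb R_+\cdot O_2$, then $\norm{L'}\,\norm{L'^{-1}}=\norm{R}\,\norm{R^{-1}}\,\norm{L}\,\norm{L^{-1}}=\norm{L}\,\norm{L^{-1}}=\delta$, so $L'$ is again a minimizer.
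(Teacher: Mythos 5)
Your proposal is correct and is ultimately the same argument as the paper's: the paper's own proof of the four inequalities and of the hard direction of the final equivalence is just a citation of Lemmas~2.1 and~2.2 of \cite{upper:modulus:bound}, and your proof also ends by invoking those same lemmas. What you add, correctly, is the explicit unpacking that the paper waves away as a ``slight reformulation'': writing $K_O(L)=\pi\norm{L}^2/m_2(C)$ and $K_I(L)=\norm{L^{-1}}^2 m_2(C)/\pi$ for $C=L(B_M)$, using $K_O(L)K_I(L)=\norm{L}^2\norm{L^{-1}}^2=\rho^2(M,\norm{\cdot}_2)$ for a minimizer to reduce the two lower bounds to the two upper ones, normalising by $\mathbb{R}_+\cdot O_2$ so that $B_2\subseteq C\subseteq\rho\,B_2$ with both boundaries touched, and identifying the remaining content as the two area inequalities $2\rho^2\le m_2(C)\le 4$ for a centrally symmetric body in Banach--Mazur minimal position. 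That translation is accurate and makes the dependence on the reference transparent. Your sketch of why minimality (interlaced contact points, slab and convex-hull bounds) yields those area estimates and the rigidity statement is plausible but not a complete argument; you flag this honestly and defer to Romney's Lemmas~2.1 and~2.2, which is precisely where the paper defers as well, so there is no gap relative to the paper's own proof.
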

\begin{proof}
The inequalities \eqref{eq:BM:minimizer:outer} and \eqref{eq:BM:minimizer:inner} are slight reformulations of Lemma 2.1 of \cite{upper:modulus:bound}. Let $L$ be the minimizer as in the claim. We claim that
\begin{equation*}
	L' \circ L^{-1} \in \mathbb{R}_{+} \cdot O_{2}
\end{equation*}
if and only if $K_{O}( L )K_{I}( L ) = K_{O}( L' )K_{I}( L' )$. If $L' \circ L^{-1} \in \mathbb{R}_{+} \cdot O_{2}$, then $K_{O}( L )K_{I}( L ) = K_{O}( L' )K_{I}( L' )$ since the group $\mathbb{R}_{+} \cdot O_{2}$ is the conformal automorphism group of $\mathbb{R}^{2}$ (recall the composition laws of dilatations from \Cref{lem:outer:inner:characterization}). The converse is harder to show and corresponds to Lemma 2.2 of \cite{upper:modulus:bound}.
\end{proof}
\begin{lemma}\label{beltrami:diff:norm}
Suppose that $M$ is a norm on $\mathbb{R}^{2}$. Then there exists a unique complex number $\mu_{M}$ in the Euclidean ball $\mathbb{D}$ such that
\begin{equation}
	T_{M}
	=
	\id
	+
	\mu_{M}
	\cdot
	\overline{\id}
	\colon
	\left( \mathbb{R}^{2}, \, M \right)
	\rightarrow
	\left( \mathbb{R}^{2}, \, \norm{ \cdot }_{2} \right)
\end{equation}
is a Banach--Mazur minimizer from $M$ to $\norm{ \cdot }_{2}$. Moreover, $\mu_{M}$ and $T_{M}$ depend continuously on the norm $M$.
\end{lemma}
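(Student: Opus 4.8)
The plan is to manufacture $T_{M}$ from an arbitrary Banach--Mazur minimizer and then read off uniqueness and continuity from the orbit description of minimizers in \Cref{pointwise:BM:properties}. Throughout I identify $\mathbb{R}^{2}$ with $\mathbb{C}$, so that every $\mathbb{R}$-linear endomorphism has the form $z \mapsto az + b\overline{z}$ with $a, b \in \mathbb{C}$, is invertible exactly when $\abs{a} \neq \abs{b}$, and is orientation preserving exactly when $\abs{a} > \abs{b}$.

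\emph{Existence.} By John's theorem $\rho(M, \norm{\cdot}_{2}) \leq 2$, and the compactness argument recorded after \Cref{def:BM:distance} produces a Banach--Mazur minimizer $L_{0} \in GL_{2}[M, \norm{\cdot}_{2}]$. First I would make $L_{0}$ orientation preserving: if $\det L_{0} < 0$, replace $L_{0}$ by $R \circ L_{0}$ for a reflection $R \in O_{2}$; this is still a minimizer, since $(R \circ L_{0}) \circ L_{0}^{-1} = R \in \mathbb{R}_{+} \cdot O_{2}$ and \Cref{pointwise:BM:properties} applies, and now $\det(R \circ L_{0}) > 0$. Writing $L_{0}(z) = az + b\overline{z}$, invertibility together with $\det L_{0} = \abs{a}^{2} - \abs{b}^{2} > 0$ gives $a \neq 0$, so I set $\mu_{M} = b/a \in \mathbb{D}$ and $T_{M}(z) = z + \mu_{M}\overline{z}$, noting that $L_{0} = a\cdot T_{M}$, where $a\cdot$ is multiplication by the complex number $a$, an element of $\mathbb{R}_{+}\cdot SO_{2}$. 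Then $T_{M} \circ L_{0}^{-1} = (a\cdot)^{-1} \in \mathbb{R}_{+}\cdot O_{2}$, so \Cref{pointwise:BM:properties} shows $T_{M}$ is a Banach--Mazur minimizer from $M$ to $\norm{\cdot}_{2}$ of the prescribed form.

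\emph{Uniqueness.} If $\mu, \mu' \in \mathbb{D}$ both make $T_{\mu}, T_{\mu'}$ minimizers, then $T_{\mu'} \circ T_{\mu}^{-1} \in \mathbb{R}_{+}\cdot O_{2}$ by \Cref{pointwise:BM:properties}. Since each $T_{\nu}$ with $\abs{\nu} < 1$ is orientation preserving, this composition is orientation preserving, hence equals multiplication by some $c \in \mathbb{C}\setminus\{0\}$; comparing the $z$- and $\overline{z}$-coefficients in $T_{\mu'} = c\cdot T_{\mu}$ forces $c = 1$ and then $\mu' = \mu$. So $\mu_{M}$, and with it $T_{M}$, is uniquely determined.

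\emph{Continuity, and the main obstacle.} Given $M_{n} \to M$ in $\mathcal{C}(\mathbb{S}^{1}, \mathbb{R})$, I would first note that since $M$ is a norm the $M_{n}$ are, for large $n$, two-sidedly comparable to $\norm{\cdot}_{2}$ with uniform constants. Comparing the distortion of $T_{M_{n}}$ as a map out of $(\mathbb{R}^{2}, M_{n})$ --- which equals $\rho(M_{n}, \norm{\cdot}_{2}) \leq 2$ --- with its distortion out of $(\mathbb{R}^{2}, \norm{\cdot}_{2})$, which equals $\tfrac{1 + \abs{\mu_{M_{n}}}}{1 - \abs{\mu_{M_{n}}}}$, these two differ by at most a uniform factor, so $\abs{\mu_{M_{n}}} \leq r$ for some $r < 1$ independent of $n$. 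Along any subsequence with $\mu_{M_{n_{k}}} \to \mu_{*}$ we then have $\abs{\mu_{*}} \leq r < 1$, and passing to the limit in the minimality inequality $[K_{O}(S)K_{I}(S)]^{1/2} \geq [K_{O}(T_{M_{n_{k}}})K_{I}(T_{M_{n_{k}}})]^{1/2}$ for each fixed $S \in GL_{2}[M, \norm{\cdot}_{2}]$ --- using that the distortion of a fixed invertible linear map depends continuously on the norms on domain and target --- shows $T_{\mu_{*}}$ is a Banach--Mazur minimizer from $M$ to $\norm{\cdot}_{2}$; by the uniqueness part $\mu_{*} = \mu_{M}$, and since the bounded sequence $(\mu_{M_{n}})$ has $\mu_{M}$ as its only subsequential limit, $\mu_{M_{n}} \to \mu_{M}$ and hence $T_{M_{n}} \to T_{M}$. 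I expect the only genuinely delicate point to be keeping the limiting coefficient $\mu_{*}$ inside the \emph{open} disk rather than in $\overline{\mathbb{D}}$: this is exactly where the a priori dilatation bound $\rho(M_{n}, \norm{\cdot}_{2}) \leq 2$ of John's theorem (equivalently \Cref{pointwise:BM:properties}) is used, since for $\abs{\mu_{*}} = 1$ the map $T_{\mu_{*}}$ would not even be invertible.
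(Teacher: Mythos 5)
Your proof is correct and follows essentially the same route as the paper: both rely on the orbit description of Banach--Mazur minimizers from \Cref{pointwise:BM:properties} to produce $T_{M}$ by normalizing a generic minimizer with an element of $\mathbb{R}_{+}\cdot SO_{2}$, deduce uniqueness from the fact that any two minimizers differ by an element of $\mathbb{R}_{+}\cdot O_{2}$, and obtain continuity via a compactness/subsequence argument combined with uniqueness. The only stylistic difference is that the paper phrases the continuity step in terms of Arzelà--Ascoli and locally uniform convergence of the maps $T_{M_{n}}$, whereas you argue directly on the scalar coefficients $\mu_{M_{n}} \in \mathbb{D}$; your version is, if anything, slightly more careful on the ``delicate point'' you flag --- that the limit stays in the open disk --- which the paper's sketch treats implicitly via the a priori bound $\rho \leq 2$.
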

\begin{proof}
Since the group $\mathbb{R}_{+} \cdot O_{2}$ contains the conjugate map $z \mapsto \overline{z}$, there are orientation-preserving Banach--Mazur minimizers $L \in GL_{2}[ M, \, \norm{ \cdot }_{2} ]$ (\Cref{pointwise:BM:properties}). Then for any orientation-preserving Banach--Mazur minimizer $L$, there exists $S \in \mathbb{R}_{+} \cdot SO_{2}$ such that $L' = S \circ L$ is of the form $T_{M}$. Any such $L'$ is also a Banach--Mazur minimizer by \Cref{pointwise:BM:properties} and $\mu_{L'} = \mu_{L}$. The existence and uniqueness of $T_{M}$ and $\mu_{M}$ follow immediately.

The continuity of $M \mapsto \mu_{M}$ and $M \mapsto T_{M}$ are equivalent, therefore it suffices to focus on the continuity of $M \mapsto T_{M}$. We only sketch the argument. If $\left( M_{n} \right)$ is a sequence of norms converging to a norm $M$, then every subsequence of $\left( T_{M_{n}} \right)$ has a convergent subsequence due to Arzelà--Ascoli theorem. It suffices to show that an arbitrary convergent subsequence $\left( T_{M_{n_{j}}} \right)_{j}$ converges to $T_{M}$. To that end, suppose that
\begin{equation*}
	T_{j} = T_{M_{n_{j}}} \rightarrow T
\end{equation*}
locally uniformly. Such a limit is of the form $T = \id + \mu \cdot \overline{\id}$ for some $\mu$ in the Euclidean unit ball.

Let $\rho_{j}$ denote the Banach--Mazur distance $\rho( M_{n_{j}}, \, \norm{ \cdot }_{2} )$ and $D$ the distortion of $T \in GL_{2}\left[ M, \, \norm{ \cdot }_{2} \right]$ (the distortion is the product of the operator norms of $T$ and $T^{-1}$). Since the norms $M_{n_{j}}$ converge to $M$ and the minimizers $T_{j}$ to $T$ locally uniformly, the distortions $\rho_{j}$ of $T_{j}$ converge to $D$ (since the $\rho_{j}$ are the distortions of $T_{j}$). Let $\rho$ denote the Banach--Mazur distance $\rho( M, \, \norm{ \cdot }_{2} )$. The distances $\rho_{j}$ must converge to $\rho$ since the $M_{n_{j}}$ converge to $M$ locally uniformly. In conclusion, $D = \rho$. We deduce that $T$ is a Banach--Mazur minimizer from $M$ to $\norm{ \cdot }_{2}$, and by uniqueness of $\mu_{M}$, $T = T_{M}$.
\end{proof}
\begin{definition}\label{Beltrami:differential}
The Beltrami differential of $\phi$ is $\mu_{ \apmd{\phi} }$ and it is denoted by $\mu_{\phi}$.
\end{definition}
The measurability of $x \mapsto \mu_{\phi}(x)$ follows from the measurability of $x \mapsto \apmd{\phi}(x)$ and the continuity of $M \mapsto \mu_{M}$ shown in \Cref{beltrami:diff:norm}. We observe in the proof of \Cref{prop:minimal:TFAE} that the $L^{\infty}$-norm of $\mu_{\phi}$ is bounded from above by a constant $C = C( K_{I}( \phi ) ) < 1$. The following theorem is the main result of this section.
\begin{theorem}\label{prop:minimal:TFAE}
Let $\psi \colon \mathbb{R}^{2} \supset W \rightarrow U$ be a quasiconformal map, possibly orientation-reversing. Then the following are equivalent:
\begin{enumerate}[label=(\alph*)]
\item\label{prop:minimal:TFAE:a} Either $\psi^{-1}$ or $\overline{ \psi^{-1} }$ is an orientation-preserving solution of the Beltrami equation $\mu_{f} = \mu_{\phi}$;
\item\label{prop:minimal:TFAE:b} The map $D( \psi^{-1} ) \colon ( TU, \, \apmd{\phi} ) \rightarrow ( TW, \, \norm{ \cdot }_{2} )$ is a Banach--Mazur minimizer pointwise $m_{2}$-almost everywhere;
\item\label{prop:minimal:TFAE:c} The map $D \id_{W} \colon ( TW, \, \apmd{\phi \circ \psi} ) \rightarrow ( TW, \, \norm{ \cdot }_{2} )$ is a Banach--Mazur minimizer pointwise $m_{2}$-almost everywhere;
\item\label{prop:minimal:TFAE:d} The pointwise dilatations satisfy the equality
\begin{equation}
	\label{eq:outerinnerproduct:optimal}
	K_{O}( \phi \circ \psi )K_{I}( \phi \circ \psi )
	=
	\rho^{2}( \norm{ \cdot }_{2}, \, \apmd{\phi \circ \psi} )
\end{equation}
$m_{2}$-almost everywhere in $W$;
\item\label{prop:minimal:TFAE:e} The composition $\phi \circ \psi$ is isothermal;
\item\label{prop:minimal:TFAE:f} The equality $\mu_{ \phi \circ \psi } = 0$ holds $m_{2}$-almost everywhere.
\end{enumerate}
\end{theorem}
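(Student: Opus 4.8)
The plan is to prove (a)$\Leftrightarrow$(b)$\Leftrightarrow$(c)$\Leftrightarrow$(d)$\Leftrightarrow$(f) as essentially pointwise linear-algebra statements, and then to close the loop through the extremal condition (e) by showing (a)$\Rightarrow$(e)$\Rightarrow$(d). Two preliminary observations drive everything. First, I would record the bound $\norm{\mu_{\phi}}_{\infty} < 1$, depending only on the dilatation of $\phi$: by \Cref{prop:phi:characterization} the eccentricity of the norm $\apmd{\phi}(x)$ equals $\left[ K_{O}(\phi)K_{I}(\phi) \right]^{1/2}(x) \leq K(\phi)$, and the construction of $T_{M}$ in \Cref{beltrami:diff:norm} keeps $\lvert \mu_{\phi}(x) \rvert$ away from $1$ once this eccentricity is bounded; the same bound applies to $\apmd{\phi\circ\psi}$. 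This is exactly what makes the measurable Riemann mapping theorem applicable to $\mu_{\phi}$. Second, the chain rule $\apmd{\phi\circ\psi}(x) = \apmd{\phi}(\psi(x)) \circ D\psi(x)$ from \Cref{prop:chainrule:QC} says precisely that $D\psi(x) \colon (\mathbb{R}^{2}, \apmd{\phi\circ\psi}(x)) \to (\mathbb{R}^{2}, \apmd{\phi}(\psi(x)))$ is a linear isometry for $m_{2}$-almost every $x \in W$.

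Given these, the equivalences (b)$\Leftrightarrow$(c)$\Leftrightarrow$(d)$\Leftrightarrow$(f) are bookkeeping. Since the Banach--Mazur distance is a linear-isometry invariant and a linear map is a Banach--Mazur minimizer if and only if its composition with an isometry is (\Cref{pointwise:BM:properties}), factoring $D(\psi^{-1})$ at $\psi(x)$ through the isometry given by $D\psi(x)$ shows that $D(\psi^{-1})\colon (\mathbb{R}^{2}, \apmd{\phi}(\psi(x))) \to (\mathbb{R}^{2}, \norm{\cdot}_{2})$ is a minimizer if and only if $D\id_{W}\colon (\mathbb{R}^{2}, \apmd{\phi\circ\psi}(x)) \to (\mathbb{R}^{2}, \norm{\cdot}_{2})$ is; that is (b)$\Leftrightarrow$(c). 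By \Cref{beltrami:diff:norm}, $\mu_{\phi\circ\psi}(x)$ is the coefficient of the distinguished minimizer $\id + \mu_{\phi\circ\psi}(x)\,\overline{\id}$, and by \Cref{pointwise:BM:properties} it vanishes exactly when the identity map $D\id_{W}$ above is itself a minimizer; that is (c)$\Leftrightarrow$(f). For (c)$\Leftrightarrow$(d), apply \Cref{prop:phi:characterization} to $\phi\circ\psi$ to get $K_{O}(\phi\circ\psi) = K_{O}(D\id_{W})$ and $K_{I}(\phi\circ\psi) = K_{I}(D\id_{W})$ pointwise, where $D\id_{W}\colon (TW, \norm{\cdot}_{2}) \to (TW, \apmd{\phi\circ\psi})$; every competitor $S$ in \eqref{eq:BM-distance:definition} satisfies $K_{O}(S)K_{I}(S) \geq \rho^{2}$ with equality precisely for minimizers, and minimizer-ness is unaffected by inversion, so the equality in (d) is equivalent to $D\id_{W}$ being a minimizer, i.e.\ to (c).

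For (a)$\Leftrightarrow$(b), write $f$ for whichever of $\psi^{-1}, \overline{\psi^{-1}}$ is orientation-preserving; the two differ by a conjugation, which lies in $\mathbb{R}_{+}\cdot O_{2}$, so (b) does not distinguish them. At $m_{2}$-almost every point the Beltrami equation $\mu_{f} = \mu_{\phi}$ reads $Df(x) = \partial f(x)\,(\id + \mu_{\phi}(x)\,\overline{\id}) = \partial f(x)\circ T_{\apmd{\phi}(x)}$ with $\partial f(x) \in \mathbb{R}_{+}\cdot SO_{2}$, so \Cref{pointwise:BM:properties} makes $Df(x)$ a minimizer; conversely, if $Df(x)$ is a minimizer then $Df(x)\circ T_{\apmd{\phi}(x)}^{-1} \in \mathbb{R}_{+}\cdot O_{2}$, and since both $Df(x)$ and $T_{\apmd{\phi}(x)}$ have positive determinant this factor lies in $\mathbb{R}_{+}\cdot SO_{2}$, i.e.\ is multiplication by a complex scalar, giving $\mu_{f}(x) = \mu_{\phi}(x)$. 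To bring in isothermality, combine the chain rule with the isometry invariance to obtain, for every quasiconformal $\sigma\colon W'\to U$, the universal pointwise bound $\left[ K_{O}(\phi\circ\sigma)K_{I}(\phi\circ\sigma) \right](x') \geq \rho^{2}(\norm{\cdot}_{2}, \apmd{\phi})(\sigma(x'))$, using \Cref{prop:phi:characterization} and $K_{O}(S)K_{I}(S)\geq\rho^{2}$. Granting (a), hence (d), for $\psi$: the left side of the estimate in \Cref{def:minimal:chart} (for the candidate $\phi\circ\psi$) equals $\rho^{2}(\norm{\cdot}_{2},\apmd{\phi})\circ\psi$, while the right side is $\geq$ the same quantity for every reparametrization by the universal bound; hence $\phi\circ\psi$ is isothermal, proving (a)$\Rightarrow$(e) with no use of an existence theorem. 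Finally, for (e)$\Rightarrow$(d), apply the measurable Riemann mapping theorem to $\mu_{\phi}$ (legitimate by the preliminary bound) to obtain a quasiconformal $\psi^{*}\colon W^{*}\to U$ satisfying (a), hence (d); testing isothermality of $\phi\circ\psi$ against the reparametrization $\psi^{-1}\circ\psi^{*}$ bounds $K_{O}(\phi\circ\psi)K_{I}(\phi\circ\psi)$ above by $\rho^{2}(\norm{\cdot}_{2}, \apmd{\phi\circ\psi})$, and the universal lower bound upgrades this to the equality (d).

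The step I expect to be the main obstacle is precisely this passage between the pointwise conditions and the extremal condition (e): one must establish the universal lower bound $K_{O}(\phi\circ\sigma)K_{I}(\phi\circ\sigma) \geq \rho^{2}(\norm{\cdot}_{2}, \apmd{\phi\circ\sigma})$ over all reparametrizations $\sigma$, which forces the chain rule and the invariance of the Banach--Mazur distance through \Cref{prop:phi:characterization} while handling the null set where $\phi$ may fail Condition $(N)$ and the attendant measure-theoretic checks; and one must produce a reparametrization attaining it, which is where the measurable Riemann mapping theorem and the bound $\norm{\mu_{\phi}}_{\infty}<1$ are genuinely used. The remaining arguments are routine manipulations with the structure of $\mathbb{R}_{+}\cdot O_{2}$ and the composition laws of \Cref{lem:outer:inner:characterization}.
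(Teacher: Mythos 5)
Your argument is correct and, at its core, follows the same route as the paper: the chain rule $\apmd{\phi\circ\psi} = \apmd{\phi}\circ D\psi$ reduces (b), (c), (d), (f) to pointwise linear-algebra over $\mathbb{R}_{+}\cdot O_{2}$ via \Cref{pointwise:BM:properties} and \Cref{prop:phi:characterization}, while the genuine content is the extremal characterization (e), settled by the pointwise lower bound $K_{O}(\phi\circ\sigma)K_{I}(\phi\circ\sigma) \geq \rho^{2}(\norm{\cdot}_{2},\apmd{\phi\circ\sigma})$ together with the measurable Riemann mapping theorem applied to $\mu_{\phi}$, for which one needs $\norm{\mu_{\phi}}_{\infty}<1$. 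You correctly identify this as the main obstacle. The small differences from the paper are organizational. First, you close (c)$\Leftrightarrow$(f) by a direct pointwise argument (if $\id$ is a Banach--Mazur minimizer then $T_{\apmd{\phi\circ\psi}}^{-1}\in\mathbb{R}_{+}\cdot O_{2}$ forces $\mu_{\phi\circ\psi}=0$), whereas the paper handles (f) by a short self-referential remark --- apply the already-proved equivalence of (a) and (e) with $\phi$ replaced by $\phi\circ\psi$ and $\psi$ by $\id_{W}$. Second, for the $L^{\infty}$-bound on $\mu_{\phi}$ the paper computes an explicit constant via the composition law $K_{I}(L_{1}) \leq \frac{4}{\pi}K_{I}(\phi)$ (this tighter estimate is what gives the $\frac{4}{\pi}K_{I}(\phi)$-quasiconformality in \Cref{thm:minimal:chart:existence}), while you argue more coarsely from the aspect ratio $[K_{O}(\phi)K_{I}(\phi)]^{1/2}$ of the unit ball of $\apmd{\phi}$; your phrasing that this ratio is \emph{the eccentricity} conflates the bounding-box ratio of the convex body with the aspect ratio of the distance ellipse, which only controls it up to a John-type factor, but the qualitative conclusion $\norm{\mu_{\phi}}_{\infty}<1$ is still correct and that is all the TFAE statement itself needs. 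Third, your split of the extremal part into (a)$\Rightarrow$(e) (existence-free) and (e)$\Rightarrow$(d) (using MRMT to produce a competitor) is a clean repackaging of the paper's argument via \eqref{eq:pointwise:maximal:dilatation} and \eqref{eq:two:minimal:existence}.
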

\begin{proof}[Proof of \Cref{prop:minimal:TFAE}]
Consider the equivalence of Claims \ref{prop:minimal:TFAE:a} and \ref{prop:minimal:TFAE:b}. \Cref{pointwise:BM:properties} yields that
\begin{equation*}
	D( \psi^{-1} ) \colon ( TU, \, \apmd{\phi} ) \rightarrow ( TW, \, \norm{ \cdot }_{2} )
\end{equation*}
is a Banach--Mazur minimizer $m_{2}$-almost everywhere if and only if there exists a measurable map $x \mapsto S(x) \in \mathbb{R}_{+} \cdot O_{2}$ such that $D( \psi^{-1} ) = S \circ T_{ \apmd{\phi} }$ pointwise $m_{2}$-almost everywhere. The map $\psi$ is orientation-preserving if and only if $S$ is orientation-preserving $m_{2}$-almost everywhere. In that case $\mu_{ \psi^{-1} } = \mu_{ \phi }$ holds $m_{2}$-almost everywhere. Otherwise $\overline{ \psi^{-1} }$ is orientation-preserving and $\mu_{ \overline{ \psi^{-1} } } = \mu_{ \phi }$ holds $m_{2}$-almost everywhere. We conclude that Claims \ref{prop:minimal:TFAE:a} and \ref{prop:minimal:TFAE:b} are equivalent.

The equivalence of Claims \ref{prop:minimal:TFAE:b} and \ref{prop:minimal:TFAE:c} follows from the fact that $D\psi \colon ( TW, \, \apmd{\phi \circ \psi} ) \rightarrow ( TU, \, \apmd{\phi} )$ is an isometry pointwise $m_{2}$-almost everywhere due to the chain rule \Cref{prop:chainrule:QC}.

\Cref{prop:phi:characterization} yields that if
\begin{equation*}
	D\id_{W} \colon ( TW, \, \norm{ \cdot }_{2} ) \rightarrow ( TW, \, \apmd{ \phi \circ \psi } ),
\end{equation*}
then the pointwise dilatations satisfy
\begin{align*}
	K_{O}( \phi \circ \psi )
	&=
	K_{O}( D\id_{W}  )
	\quad \text{ and}
	\quad
	K_{I}( \phi \circ \psi )
	=
	K_{I}( D\id_{W} )
\end{align*}
$m_{2}$-almost everywhere. The equivalence of \ref{prop:minimal:TFAE:c} and \ref{prop:minimal:TFAE:d} is immediate. We also deduce from these identities that $m_{2}$-almost everywhere, by the definition of the Banach--Mazur distance \eqref{eq:BM-distance:definition}, that the pointwise dilatations satisfy
\begin{align}
	\label{eq:pointwise:maximal:dilatation}
	K_{O}( \phi \circ \psi ) K_{I}( \phi \circ \psi )
	\geq
	\rho^{2}( \norm{ \cdot }_{2}, \, \apmd{\phi \circ \psi} ).
\end{align}
Also if $\psi_{1}$ and $\psi_{2}$ are two maps for which $\phi \circ \psi_{1}$ and $\phi \circ \psi_{2}$ are isothermal (\Cref{def:minimal:chart}), then $m_{2}$-almost everywhere
\begin{align}
	\label{eq:two:minimal:existence}
	K_{O}( \phi \circ \psi_{1} )K_{ I }( \phi \circ \psi_{1} )
	=
	\left[
		K_{O}( \phi \circ \psi_{2} )K_{I}( \phi \circ \psi_{2} )
	\right]
	\circ 
	( \psi_{2}^{-1} \circ \psi_{1} ).
\end{align}
By applying \eqref{eq:pointwise:maximal:dilatation} and \eqref{eq:two:minimal:existence}, the equivalence of Claims \ref{prop:minimal:TFAE:a} to \ref{prop:minimal:TFAE:d} and Claim \ref{prop:minimal:TFAE:e} follows if it can be shown that there exists a quasiconformal map $\psi$ such that the equality in \eqref{eq:pointwise:maximal:dilatation} holds $m_{2}$-almost everywhere. By Claim \ref{prop:minimal:TFAE:a}, it suffices to solve the Beltrami equation $\mu_{f} = \mu_{\phi}$ induced by $\phi$.

Suppose that we know that the $L^{\infty}$-norm of $\mu_{\phi}$ is bounded from above by some constant $C < 1$ (depending only on the inner dilatation of $\phi$). Then we extend $\mu_{\phi}$ as zero to the Euclidean plane and let $f$ be the normalized solution to the corresponding Beltrami equation. The existence of $f$ is guaranteed by the measurable Riemann mapping theorem; see for example \cite{astala}. The restriction of $f^{-1}$ to the appropriate domain is the desired map $\psi$.

Thus we only need to find such a bound $C$. To that end, consider the maps
\begin{align*}
	L
	&=
	T_{\apmd{\phi}} \colon ( TU, \, \apmd{\phi} ) \rightarrow ( TU, \, \norm{ \cdot }_{2} ),
	\\
	L_{1}
	&=
	T_{ \apmd{\phi} } \colon ( TU, \,\norm{ \cdot }_{2} ) \rightarrow ( TU, \, \norm{ \cdot }_{2} ), \text{ and}
	\\
	L_{2}
	&=
	D\id \colon ( TU, \, \norm{ \cdot }_{2} ) \rightarrow ( TU, \, \apmd{\phi} ).
\end{align*}
By construction of $T_{\apmd{\phi}}$ and $\mu_{\phi}$,
\begin{equation*}
	\norm{ \mu_{\phi} }_{ L^{\infty}( U ) }
	=
	\esssup_{ x \in U }
		\frac{ K_{I}( L_{1} )(x) - 1 }{ K_{I}( L_{1} )(x) + 1 }.
\end{equation*}
Therefore it suffices to bound the pointwise dilatation $K_{I}( L_{1} )$ from above. Also $m_{2}$-almost everywhere
\begin{align*}
	K_{I}( \phi )
	=
	K_{I}( L_2 ).
\end{align*}
The composition laws for dilatations (\Cref{lem:outer:inner:characterization}) yield that
\begin{align*}
	K_{I}( L_{1} )
	=
	K_{I}( L \circ L_2 )
	\leq
	K_{I}( L ) K_{I}( \phi )
\end{align*}
$m_{2}$-almost everywhere. \Cref{pointwise:BM:properties} shows that $K_{I}( L ) \leq \frac{ 4 }{ \pi }$ at $m_{2}$-almost every point. We conclude that $m_{2}$-almost everywhere
\begin{align*}
	K_{I}( L_{1} )
	\leq
	\frac{ 4 }{ \pi }
	K_{I}( \phi ).
\end{align*}
By \Cref{cor:global:inner:and:outer:dilatation}, the pointwise dilatations $K_{I}( \phi )$ are less than the (global) inner dilatation of $\phi$ $m_{2}$-almost everywhere, therefore
\begin{equation*}
	C
	=
	\frac{ \frac{ 4 }{ \pi } K_{I}( \phi ) - 1 }{ \frac{ 4 }{ \pi } K_{I}( \phi ) + 1 }
\end{equation*}
is a bound of the desired form.

The equivalence of \ref{prop:minimal:TFAE:f} to the rest of the claims follows from the observation that $\phi \circ \psi$ is isothermal if and only if $( \phi \circ \psi ) \circ \id_{W}$ is isothermal.
\end{proof}
Recall \Cref{thm:minimal:chart:existence}. \thmminimalchartexistence*
\begin{proof}[Proof of \Cref{thm:minimal:chart:existence}]
Such a $\psi$ was constructed during the proof of \Cref{prop:minimal:TFAE}.
\end{proof}
Recall \Cref{thm:minimal:chart:uniqueness}. \thmminimalchartuniqueness*
\begin{proof}[Proof of \Cref{thm:minimal:chart:uniqueness}]
Since $\phi$ is isothermal, we know that $\mu_{ \phi } = 0$ by \Cref{prop:minimal:TFAE}. Moreover, since $\widetilde{\psi} = \phi \circ \psi$, we know that $\widetilde{\psi}$ is isothermal if and only if $\psi^{-1}$ or $\overline{ \psi^{-1} }$ solves the Beltrami equation $\mu_{f} = \mu_{\phi} = 0$ (\Cref{prop:minimal:TFAE}) if and only if $\psi^{-1}$ is conformal in the classical sense (Weyl's lemma -- Lemma A.6.10 of \cite{astala}).
\end{proof}
Recall \Cref{prop:dilatationbounds}. \propdilatationbounds*
\begin{proof}[Proof of \Cref{prop:dilatationbounds}]
This is a special case of \Cref{prop:minimal:TFAE}.
\end{proof}
Recall \Cref{cor:minimal:charts}. \corminimalcharts*
\begin{proof}[Proof of \Cref{cor:minimal:charts}]
\Cref{prop:minimal:TFAE} shows that if $\phi$ is isothermal, then $L = D\id_{U} \colon ( TU, \, \apmd{\phi} ) \rightarrow ( TU, \, \norm{ \cdot }_{2} )$ is a Banach--Mazur minimizer pointwise $m_{2}$-almost everywhere. Recall from \Cref{prop:phi:characterization} that the pointwise dilatations satisfy
\begin{align*}
	K_{O}( \phi )
	&=
	K_{I}( L  )
	\quad \text{ and}
	\quad
	K_{I}( \phi )
	=
	K_{O}( L ).
\end{align*}
Then the claim follows from \Cref{pointwise:BM:properties}.
\end{proof}
\subsection{Conformal surfaces}\label{sec:uniformization:structure}
We study a locally reciprocal surface $Y_{d}$ (\Cref{def:local:reciprocality}) in this section. Recall that a locally reciprocal surface can be covered by quasiconformal images of Euclidean disks.

Let $\mathcal{I}_{d} = \left\{ (V_{i}, \, f_{i}) \right\}_{ i \in I }$ denote the collection of \emph{isothermal charts} (also called \emph{isothermal coordinates}). This means that $\phi_{i} = f_{i}^{-1} \colon U_{i} \rightarrow V_{i}$ is an isothermal parametrization of $V_{i}$. The subscript $d$ refers to the dependence of the atlas on the distance of $Y$.

In \Cref{maximal:atlas}, we prove that the pair $( Y, \, \mathcal{I}_{d} )$ is a \emph{conformal surface} (\Cref{def:conformal:atlas}). Then following along a proof of the classical uniformization theorem, we construct a Riemannian metric on $( Y, \, \mathcal{I}_{d} )$ (\Cref{coveringspaces} and \Cref{maximal:atlas:metric}). The main content of this section is \Cref{maximal:atlas,maximal:atlas:metric}.
\begin{definition}\label{def:conformal:atlas}
A \emph{conformal atlas} $\mathcal{D}$ is an atlas whose transition maps are conformal in the classical sense.

A conformal atlas $\mathcal{D}$ is \emph{maximal} if for every other conformal atlas $\mathcal{D}'$ with $\mathcal{D} \cap \mathcal{D}' \neq \emptyset$, we have $\mathcal{D}' \subset \mathcal{D}$.

If $\mathcal{D}$ is a maximal conformal atlas, the pair $( Y, \, \mathcal{D} )$ is a \emph{conformal surface}. A smooth surface is defined analogously.
\end{definition}
The transition maps of a conformal atlas are either holomorphic or antiholomorphic maps. Conformal surfaces are sometimes called Klein or dianalytic surfaces.
\begin{proposition}\label{maximal:atlas}
The pair $( Y, \, \mathcal{I}_{d} )$ is a conformal surface.
\end{proposition}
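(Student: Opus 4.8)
The plan is to establish the three ingredients in the definition of a conformal surface: that $\mathcal{I}_{d}$ is an atlas, that its transition maps are conformal in the classical sense, and that it is maximal. The first is essentially already available. Since $Y_{d}$ is locally reciprocal, \Cref{thm:Kai} provides, around a given point, an open neighbourhood $V$ together with a quasiconformal homeomorphism $\phi \colon \mathbb{R}^{2} \supset U \rightarrow V$, and \Cref{thm:minimal:chart:existence} then yields a quasiconformal $\psi \colon \mathbb{R}^{2} \supset W \rightarrow U$ for which $\phi \circ \psi$ is isothermal; the pair $\bigl( V, \, ( \phi \circ \psi )^{-1} \bigr)$ is then an isothermal chart about the point. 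Hence the chart domains in $\mathcal{I}_{d}$ cover $Y$.

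For the transition maps, the decisive point is that being isothermal is a local, pointwise condition. Indeed, by \Cref{prop:dilatationbounds} a quasiconformal $\phi \colon U \rightarrow V$ is isothermal precisely when $\bigl[ K_{O}( \phi ) K_{I}( \phi ) \bigr](x) = \rho^{2}( \norm{ \cdot }_{2}, \, \apmd{\phi} )(x)$ for $m_{2}$-almost every $x$, an identity involving only the approximate metric differential and the pointwise dilatations of $\phi$, all of which are unchanged by restricting $\phi$ to an open subset of $U$. Consequently the restriction of an isothermal parametrization to an open set is again isothermal, and the isothermal condition on $g^{-1}$ for a chart $(V, g)$ may be checked on the pieces of any open cover of $V$. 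Now take $(V_{i}, \, f_{i}), \, (V_{j}, \, f_{j}) \in \mathcal{I}_{d}$ with $V_{i} \cap V_{j} \neq \emptyset$ and put $\phi_{i} = f_{i}^{-1}$, $\phi_{j} = f_{j}^{-1}$. Restricting both parametrizations to $V_{i} \cap V_{j}$, the map $\phi_{i} \colon \phi_{i}^{-1}( V_{i} \cap V_{j} ) \rightarrow V_{i} \cap V_{j}$ is isothermal while $\phi_{j} \colon \phi_{j}^{-1}( V_{i} \cap V_{j} ) \rightarrow V_{i} \cap V_{j}$ is a quasiconformal map which is also isothermal; \Cref{thm:minimal:chart:uniqueness} then forces $\phi_{i}^{-1} \circ \phi_{j} = f_{i} \circ f_{j}^{-1}$, restricted to $f_{j}( V_{i} \cap V_{j} )$, to be conformal in the classical sense. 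Its inverse $f_{j} \circ f_{i}^{-1}$ is conformal as well, so $\mathcal{I}_{d}$ is a conformal atlas.

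It remains to prove maximality, which amounts to showing that any chart conformally compatible with $\mathcal{I}_{d}$ already belongs to $\mathcal{I}_{d}$; in particular $\mathcal{I}_{d}$ is not properly contained in any conformal atlas. So suppose $(V, \, g)$ is a chart such that $g \circ f_{k}^{-1}$ is conformal in the classical sense on $f_{k}( V \cap V_{k} )$ for every $(V_{k}, \, f_{k}) \in \mathcal{I}_{d}$ with $V \cap V_{k} \neq \emptyset$; I claim $g^{-1}$ is an isothermal parametrization of $V$. Fix $p \in V$ and choose, using the first step, an isothermal chart $(V_{k}, \, f_{k}) \in \mathcal{I}_{d}$ with $p \in V_{k}$. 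Then $f_{k} \circ g^{-1}$ is conformal on $g( V \cap V_{k} )$, hence $g^{-1}|_{g( V \cap V_{k} )} = f_{k}^{-1} \circ ( f_{k} \circ g^{-1} )$ is quasiconformal, being the composition of the quasiconformal map $f_{k}^{-1}$ with a conformal one. Applying \Cref{thm:minimal:chart:uniqueness} with the isothermal map $f_{k}^{-1}|_{f_{k}(V \cap V_{k})}$ in the role of $\phi$ and $g^{-1}|_{g( V \cap V_{k} )}$ in the role of $\widetilde{\psi}$, the conformality of $f_{k} \circ g^{-1}$ gives that $g^{-1}|_{g( V \cap V_{k} )}$ is isothermal. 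As $p$ ranges over $V$, the sets $g(V \cap V_{k})$ cover $g(V)$, so $g^{-1}$ is isothermal near every point of $g(V)$ and therefore, by the locality of the isothermal condition, isothermal on all of $g(V)$; that is, $(V, \, g) \in \mathcal{I}_{d}$.

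The step I expect to require the most care is the locality bookkeeping underpinning both the second and third paragraphs: one must check carefully that the approximate metric differential $\apmd{\phi}$, the pointwise dilatations $K_{O}( \phi )$ and $K_{I}( \phi )$, and hence the property of being isothermal all restrict well to open subsets and can be verified chart by chart (so that \Cref{prop:dilatationbounds} may be applied piecewise), and that the restricted transition-type maps $f_{i} \circ f_{j}^{-1}$ and $f_{k} \circ g^{-1}$ appearing above are genuinely quasiconformal, so that \Cref{thm:minimal:chart:uniqueness} is applicable. Once these points are in place, the remaining arguments are routine compositions of charts.
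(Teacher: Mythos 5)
Your proof is correct and follows the same approach as the paper's: cover $Y$ by isothermal charts, exploit locality of the isothermal condition to justify restricting parametrizations to overlaps, and apply \Cref{thm:minimal:chart:uniqueness} both to obtain conformal transition maps and to show that every conformally compatible chart is itself isothermal. The only difference is cosmetic: you derive locality from the pointwise characterization in \Cref{prop:dilatationbounds}, whereas the paper invokes the equivalent condition $\mu_{\phi}=0$ from \Cref{prop:minimal:TFAE}.
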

\begin{proof}
Since $Y_{d}$ is locally reciprocal, it can be covered by disks from which there are quasiconformal charts into $\mathbb{R}^{2}$. \Cref{thm:minimal:chart:existence} implies that without loss of generality the charts are isothermal charts. This means that the open sets $V_{i}$, $i \in I$ cover $Y_{d}$.

\Cref{prop:minimal:TFAE} Part \ref{prop:minimal:TFAE:f} shows that the restrictions of isothermal charts to open subsets of their domains are isothermal charts. \Cref{thm:minimal:chart:uniqueness} implies that $\mathcal{I}_{d}$ is a conformal atlas of $Y_{d}$ and that $\mathcal{I}_{d}$ is maximal.
\end{proof}
We define and recall some terminology from Riemannian geometry. A \emph{Riemannian norm (field)} $G$ on a conformal (or a smooth) surface $( Y, \, \mathcal{A} )$ is a map $G \colon TY \rightarrow \mathbb{R}$ for which there exists a smooth Riemannian metric $g$ such that $G( v ) = \left[ g(v, \, v) \right]^{1/2}$ for $v \in TY$. Here $TY$ is the tangent bundle of $Y$.

The length distance induced by $g$ is denoted by $d_{G}$. We say that $d_{G}$ is the \emph{Riemannian distance induced by $G$}. The metric space $Y_{d_{G}}$ has \emph{constant curvature $k$} if the corresponding Riemannian metric $g$ has constant curvature $k$. The curvature refers to Gaussian curvature.

A \emph{Riemannian surface} is a conformal (or smooth) surface with a Riemannian norm field.
\begin{definition}\label{def:riemanniansense}
A map $\nu \colon ( Y_{1}, \, G_{1} ) \rightarrow ( Y_{2}, \, G_{2} )$ between Riemannian surfaces is \emph{conformal in the Riemannian sense} if $\nu$ is a diffeomorphism and there exists a positive smooth function $h \colon Y_{2} \rightarrow \left( 0, \, \infty \right)$ such that the pushforward Riemannian norm field $\nu_{*}G_{1}$ equals $h \cdot G_{2}$.

A Riemannian norm $G$ is \emph{compatible} with a conformal atlas $\mathcal{I}$ if every chart $( V, \, f ) \in \mathcal{I}$ is conformal in the Riemannian sense.
\end{definition}
\begin{proposition}\label{maximal:atlas:metric}
The conformal surface $( Y, \, \mathcal{I}_{d} )$ has a Riemannian distance $d_{G}$ such that $G$ is compatible with the isothermal charts $\mathcal{I}_{d}$ of $Y_{d}$ and $Y_{d_{G}}$ is complete and has constant curvature $-1$, $0$ or $1$. Additionally, $\mathcal{I}_{d} = \mathcal{I}_{d_{G}}$ and the charts $( V, \, f ) \in \mathcal{I}_{d_{G}}$ are conformal in the Riemannian sense.
\end{proposition}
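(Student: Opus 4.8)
The plan is to recognize $( Y, \, \mathcal{I}_{d} )$ as a (possibly non-orientable) conformal surface and to transport the classical uniformization theorem to it, and then to carry out the bookkeeping that identifies $\mathcal{I}_{d}$ with $\mathcal{I}_{d_{G}}$. First I would reduce to the orientable case. If $Y$ is orientable, the transition maps of $\mathcal{I}_{d}$ may be taken holomorphic and $( Y, \, \mathcal{I}_{d} )$ is a Riemann surface. If $Y$ is non-orientable, I pass to the orientation double cover $\pi \colon \widehat{Y} \rightarrow Y$, which inherits a Riemann surface structure $\widehat{\mathcal{I}}$ by pulling back $\mathcal{I}_{d}$ and carries a fixed-point-free anti-holomorphic deck involution $\sigma$. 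It then suffices to produce on $\widehat{Y}$ a complete Riemannian norm field $\widehat{G}$ of constant curvature in $\left\{ -1, \, 0, \, 1 \right\}$, compatible with $\widehat{\mathcal{I}}$, that is $\sigma$-invariant; any such $\widehat{G}$ descends through $\pi$ to the desired $G$ on $Y$, and in a holomorphic coordinate a constant-curvature metric has the form $\lambda\lvert dz\rvert^{2}$ with $\lambda > 0$, so compatibility with the charts of $\mathcal{I}_{d}$ is automatic.

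Next I would invoke the classical uniformization theorem for the Riemann surface $\widehat{Y}$: its universal cover $\widetilde{Y}$ is conformally one of $\widehat{\mathbb{C}}$, $\mathbb{C}$, or $\mathbb{D}$, each of which carries its standard complete metric of constant curvature $1$, $0$, $-1$ (round, Euclidean, hyperbolic). The covering group $\Gamma$ of $\widetilde{Y} \rightarrow \widehat{Y}$ acts freely and properly discontinuously by conformal automorphisms; in the three model cases $\Gamma$ is trivial, a group of translations, or a Fuchsian group, and in every case consists of isometries of the chosen model metric, so that metric descends to a complete constant-curvature Riemannian norm field $\widehat{G}$ on $\widehat{Y}$ compatible with $\widehat{\mathcal{I}}$. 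For the $\sigma$-invariance I would lift $\sigma$ to an anti-holomorphic automorphism $\widetilde{\sigma}$ of $\widetilde{Y}$ normalizing $\Gamma$ and check that it, too, is an isometry of the model metric: immediate for $\mathbb{D}$ (anti-M\"obius maps of the disk are hyperbolic isometries); for $\mathbb{C}$ the relation $\widetilde{\sigma}^{2} \in \Gamma$ forces the conformal factor of $\widetilde{\sigma}$ to have modulus $1$, so $\widetilde{\sigma}$ is Euclidean; and for $\widehat{\mathbb{C}}$ a fixed-point-free anti-holomorphic involution is conjugate to the antipodal map $z \mapsto -1/\overline{z}$, which is a round isometry, so after normalizing the coordinate the round metric descends. (Alternatively, one may cite uniqueness of the complete constant-curvature metric in a conformal class — unconditional for curvature $-1$, and up to a positive constant in the other two cases, the constant being forced to $1$ by $\sigma^{2} = \mathrm{id}$.) Thus $\widehat{G}$ is $\sigma$-invariant and descends to $G$ on $Y$, so $Y_{d_{G}}$ is complete of constant curvature $-1$, $0$, or $1$, and $G$ is compatible with $\mathcal{I}_{d}$; in particular each $( V, \, f ) \in \mathcal{I}_{d}$ is conformal in the Riemannian sense.

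Finally I would identify the atlases. Since $G$ is compatible with $\mathcal{I}_{d}$, the underlying conformal surface of the Riemannian surface $( Y, \, G )$ is exactly $( Y, \, \mathcal{I}_{d} )$, and each $\phi_{i} = f_{i}^{-1}$ is a conformal diffeomorphism from a Euclidean domain onto $( V_{i}, \, G|_{V_{i}} )$. By \Cref{isothermal:compatible} such conformal diffeomorphisms are precisely the isothermal parametrizations of a Riemannian surface, so $( V_{i}, \, f_{i} ) \in \mathcal{I}_{d_{G}}$; hence $\mathcal{I}_{d} \subset \mathcal{I}_{d_{G}}$. Applying the theory of \Cref{sec:minimal:chart} to the locally reciprocal metric surface $Y_{d_{G}}$ (Riemannian surfaces are locally reciprocal) exactly as in the proof of \Cref{maximal:atlas} shows that $\mathcal{I}_{d_{G}}$ is a conformal atlas; since $\mathcal{I}_{d}$ is maximal by \Cref{maximal:atlas} and $\mathcal{I}_{d} \cap \mathcal{I}_{d_{G}} = \mathcal{I}_{d} \neq \emptyset$, maximality forces $\mathcal{I}_{d_{G}} \subset \mathcal{I}_{d}$, so $\mathcal{I}_{d} = \mathcal{I}_{d_{G}}$, and by \Cref{isothermal:compatible} the charts of $\mathcal{I}_{d_{G}}$ are conformal in the Riemannian sense.

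I expect the main obstacle to be the non-orientable case — guaranteeing that the uniformizing metric on the double cover can be chosen invariant under the deck involution $\sigma$ — which is where the model-space case analysis (or the uniqueness input for the uniformizing metric) is genuinely needed; the remaining steps are either a direct appeal to the classical uniformization theorem or routine compatibility checks against the definitions of \Cref{sec:uniformization:structure}.
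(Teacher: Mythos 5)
Your proof is correct and follows the same overall strategy as the paper's: uniformize via a locally conformal covering whose deck group acts by isometries on the model geometry, push down the constant-curvature metric, and then invoke \Cref{isothermal:compatible} together with maximality of the conformal atlas to identify $\mathcal{I}_{d}$ with $\mathcal{I}_{d_{G}}$. The main organizational difference is that you factor the covering through the orientation double cover $\widehat{Y}$ and then lift the deck involution $\sigma$ to the universal cover, whereas the paper's \Cref{coveringspaces} goes directly to the universal cover of the (possibly non-orientable) conformal surface and shows that the full covering group --- including its orientation-reversing elements --- acts by isometries; your route trades this direct check for a standard appeal to the orientable uniformization theorem plus a single invariance check for $\widetilde{\sigma}$. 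The hard content is the same in both: in the spherical case one must show a fixed-point-free anti-holomorphic involution of $\widehat{\mathbb{C}}$ is conjugate (by a M\"obius map) to the antipodal map, which the paper verifies by explicit computation and which you cite as a known fact. One small caveat worth flagging: the parenthetical alternative you offer --- $\sigma$-invariance via uniqueness of the complete constant-curvature metric up to a positive scalar --- does not quite work on $\widehat{\mathbb{C}}$, since curvature-$1$ metrics in the standard conformal class are unique up to M\"obius conjugation, not up to scaling, so $\sigma^{*}\widehat{G}$ need not be a constant multiple of $\widehat{G}$. But your primary argument by case analysis on the model handles that case correctly, so the remark is harmless.
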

The equality $\mathcal{I}_{d} = \mathcal{I}_{d_{G}}$ means that $( V, \, f )$ is an isothermal chart of $Y_{d}$ if and only if it is an isothermal chart of $Y_{d_{G}}$.

Before proving \Cref{maximal:atlas:metric}, we show a couple of auxiliary results. \Cref{isothermal:compatible} proves that on Riemannian surfaces, our isothermal charts coincides with the classical ones.
\begin{lemma}\label{isothermal:compatible}
Let $Y_{d_{G}}$ be a smooth Riemannian surface and $\mathcal{I}$ a maximal conformal atlas on $Y$.

The Riemannian norm field $G$ is compatible with the conformal atlas $\mathcal{I}$ if and only if $\mathcal{I} = \mathcal{I}_{d_{G}}$. This happens if and only if $\mathcal{I} \cap \mathcal{I}_{ d_{G} }$ contains a conformal atlas of $Y$.

In particular, the isothermal charts of $Y_{d_{G}}$ are conformal in the Riemannian sense.
\end{lemma}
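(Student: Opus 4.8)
The plan is to identify the atlas $\mathcal{I}_{d_{G}}$ of isothermal charts of the metric surface $Y_{d_{G}}$ with the classical conformal atlas of the Riemannian surface $(Y, G)$, and then to read off all three conditions from the maximality of the atlases involved.

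The first step is to show that every chart $(V, f)$ which is conformal in the Riemannian sense belongs to $\mathcal{I}_{d_{G}}$. For such a chart, $\phi := f^{-1} \colon U \to V$ is a smooth diffeomorphism with $f_{*}G = h\norm{\cdot}_{2}$ for a positive smooth $h$ on $U$; equivalently $G_{\phi(x)}(D\phi(x)v) = h(x)\norm{v}_{2}$. Since $\phi$ is smooth, $\apmd[\phi]{x}$ equals the classical pull-back norm $v \mapsto G_{\phi(x)}(D\phi(x)v) = h(x)\norm{v}_{2}$ for every $x$. As $\phi \in N^{1, \, 2}_{loc}(U, V)$, a direct computation with \Cref{prop:phi:characterization} (using $J_{2}(h(x)\norm{\cdot}_{2}) = h(x)^{2}$ and $I_{\phi}(x) = h(x)$) gives $K_{O}(\phi)(x) = K_{I}(\phi)(x) = 1$ for $m_{2}$-a.e. $x$; in particular $\phi$ is $1$-quasiconformal by \Cref{thm:QC:differentequivalent}. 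Since scaling by $1/h(x)$ is a linear isometry, $\rho(\norm{\cdot}_{2}, h(x)\norm{\cdot}_{2}) = 1$, hence $(K_{O}(\phi)K_{I}(\phi))(x) = 1 = \rho^{2}(\norm{\cdot}_{2}, \apmd{\phi})(x)$ a.e., so \Cref{prop:dilatationbounds} shows $\phi$ is isothermal, i.e. $(V, f) \in \mathcal{I}_{d_{G}}$.

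The second step is to show that $G$ is compatible with all of $\mathcal{I}_{d_{G}}$; this is the last assertion of the lemma and also shows that every isothermal parametrization of $Y_{d_{G}}$ is a smooth conformal diffeomorphism. By the classical theory of isothermal coordinates --- solve the Beltrami equation for the smooth Beltrami coefficient of $G$ on a coordinate patch and invoke interior regularity, cf. \cite{astala} --- every point of $Y$ has a classical isothermal chart contained in any prescribed neighbourhood, and by the first step these charts form a conformal subatlas $\mathcal{A} \subset \mathcal{I}_{d_{G}}$ covering $Y$. Given an arbitrary $(V, f) \in \mathcal{I}_{d_{G}}$, cover $V$ by charts $(V_{p}, f_{p}) \in \mathcal{A}$; since $\mathcal{I}_{d_{G}}$ is a conformal atlas (\Cref{maximal:atlas}), each transition $f \circ f_{p}^{-1}$ is conformal in the classical sense, hence also in the Riemannian sense for the Euclidean norms, and composing it with the Riemannian-conformal chart $f_{p}$ shows that $f$ is conformal in the Riemannian sense on $V_{p}$. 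As this property is local and the $V_{p}$ cover $V$, the chart $(V, f)$ is conformal in the Riemannian sense.

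The last step assembles the equivalences. Write $(i)$, $(ii)$, $(iii)$ for the three conditions. $(ii)\Rightarrow(i)$ is the second step; $(i)\Rightarrow(iii)$ holds because if $G$ is compatible with $\mathcal{I}$ then every chart of $\mathcal{I}$ is Riemannian-conformal, so lies in $\mathcal{I}_{d_{G}}$ by the first step, whence $\mathcal{I} = \mathcal{I} \cap \mathcal{I}_{d_{G}}$ is a conformal atlas; and $(iii)\Rightarrow(ii)$ holds because a conformal atlas $\mathcal{A}' \subset \mathcal{I} \cap \mathcal{I}_{d_{G}}$ is a nonempty conformal subatlas of both, so maximality of $\mathcal{I}_{d_{G}}$ (\Cref{maximal:atlas}) gives $\mathcal{I} \subset \mathcal{I}_{d_{G}}$ and maximality of $\mathcal{I}$ gives $\mathcal{I}_{d_{G}} \subset \mathcal{I}$. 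The main obstacle is the identification in the first two steps: recognising that a smooth Riemannian-conformal chart has approximate metric differential a pointwise scalar multiple of $\norm{\cdot}_{2}$, so that it sits at the equality case of \Cref{prop:dilatationbounds}, and conversely upgrading an a priori merely quasiconformal isothermal parametrization to a smooth conformal diffeomorphism by gluing it against the classical isothermal atlas; the concluding bookkeeping with maximality is then routine.
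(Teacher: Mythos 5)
Your proof is correct and follows essentially the same route as the paper's: both arguments rest on the classical existence of smooth isothermal coordinates, on the observation that a Riemannian-conformal chart has approximate metric differential a pointwise scalar multiple of $\norm{\cdot}_{2}$ and hence sits at the equality case of \Cref{prop:dilatationbounds}, and on the rigidity of maximal conformal atlases, with your decomposition a slightly cleaner repackaging of the paper's two intermediate claims. One small ordering point: \Cref{prop:phi:characterization} presupposes that $\phi$ is quasiconformal, which should be established up front --- immediate here since a Riemannian-conformal chart is a smooth diffeomorphism between Riemannian surfaces and hence locally biLipschitz.
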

\begin{proof}
The intersection property follows from the fact that every conformal atlas is contained in a unique maximal conformal atlas. The rest of the proof is split into two subclaims.

Claim (1): The Riemannian surface $( Y, \, G )$ has a maximal atlas $\mathcal{A}$ of charts that are conformal in the Riemannian sense. Also $\mathcal{A} \subset \mathcal{I}_{d_{G}}$.

Proof of Claim (1): It suffices to construct an atlas $\mathcal{A}'$ of charts that are conformal in the Riemannian sense and isothermal. To that end, we consider a chart $( V, \, f )$ compatible with the smooth structure of $Y$. By restricting to a small domain $V' \subset V$, we can and do assume that $f$ is quasiconformal.

Let $f_{*}G$ denote the pushforward Riemannian norm $G \circ D( f^{-1} )$, $T_{ f_{*}G }$ the map defined in \Cref{beltrami:diff:norm}, and $W = f(V)$. Then
\begin{equation}
	\label{eq:relevant:norm}
	T_{ f_{*}G }
	=
	\id
	+
	\mu_{f_{*}G}
	\overline{\id}
	\colon
	( TW, \, f_{*}G )
	\rightarrow
	( TW, \, \norm{ \cdot }_{2} )
\end{equation}
is conformal since $G$ is a Riemannian norm. Consequently, the inner product
\begin{equation*}
	H
	=
	\frac{ 1 }{ \left( \det T_{ f_{*}G } \right)^{2} }
	T_{ f_{*}G }^{T} T_{ f_{*}G } 
\end{equation*}
is comparable to the pushforward metric $f_{*}g$. Section 2.3 of \cite{astala} shows that there is a smooth map
\begin{equation*}
	W
	\ni
	z
	\mapsto
	\mu_{f_{*}g}(z)
	\in
	\mathbb{D}
\end{equation*}
The explicit expression (2.19) of \cite{astala} for $\mu_{f_{*}g}$ and a direct computation (using the fact that the determinant of $H$ equals one and that $H$ is comparable to $f_{*}g$) shows that
\begin{equation*}
	\mu_{ f_{*}g }
	=
	\frac{ 2 \overline{ \mu_{f_{*}G} } }{ 2 - \norm{ \mu_{f_{*}G} }^{2} + \norm{ \mu_{f_{*}G} }^{4} }.
\end{equation*}
The map
\begin{equation*}
	\mathbb{D}
	\ni
	z
	\mapsto
	\frac{ 2 \overline{z} }{ 2 - \norm{ z }_{2}^{2} + \norm{ z }_{2}^{4} }
	\in
	\mathbb{D}
\end{equation*}
is a diffeomorphism of the Euclidean ball onto itself. Therefore $W \ni z \mapsto \mu_{ f_{*}G }(z)$ is smooth.

By considering a subdomain of $V$, we can and will assume that $z \mapsto \mu_{ f_{*}G }(z)$ is the restriction of a Beltrami differential $\mu' \in \mathcal{C}^{\infty}_{0}( \mathbb{R}^{2} )$ that satisfies $\norm{ \mu' }_{ L^{\infty}( \mathbb{R}^{2} ) } = k < 1$. Let $\nu$ denote the normalized solution to the Beltrami equation induced by $\mu'$ whose existence is guaranteed by Theorem 5.2.4 of \cite{astala}. The same theorem guarantees that $\nu$ is a diffeomorphism mapping $\mathbb{R}^{2}$ onto itself.

The map $\widetilde{f} = \nu \circ f \colon V \rightarrow \widetilde{W} \subset \mathbb{R}^{2}$ is a diffeomorphism that is isothermal (as $\apmd{ f^{-1} } = f_{*}G$ everywhere and since \Cref{prop:minimal:TFAE} holds). The Banach--Mazur distance between $f_{*}G$ and $\norm{ \cdot }_{2}$ equals one, therefore \Cref{prop:minimal:TFAE} implies that $\widetilde{f}$ is conformal in the geometric sense. Since $\widetilde{f}$ is also a diffeomorphism, the map $\widetilde{f}$ is conformal in the Riemannian sense. The existence of the atlas $\mathcal{A'}$ follows by considering the charts of the form $( V, \, \widetilde{f} )$.

Claim (2): The atlas $\mathcal{I}_{d_{G}}$ coincides with the maximal atlas $\mathcal{A}$ of charts that are conformal in the Riemannian sense.

Proof of Claim (2): Claim (1) shows that the atlas $\mathcal{A}$ is a subset of $\mathcal{I}_{d_{G}}$. Since the transition maps between $\mathcal{A}$ and $\mathcal{I}_{d_{G}}$ are conformal in the classical sense (\Cref{thm:minimal:chart:uniqueness}), every chart $( V, \, f ) \in \mathcal{I}_{d_{G}}$ is a diffeomorphism and conformal in the geometric sense. The proof of Claim (2) is complete.

Claim (2) proves that the atlas $\mathcal{I}_{d_{G}}$ of isothermal charts coincide with the atlas $\mathcal{A}$ of charts that are conformal in the Riemannian sense. As a consequence, a maximal conformal atlas $\mathcal{I}$ is compatible with the Riemannian norm $G$ if and only if $\mathcal{I} = \mathcal{I}_{d_{G}}$.
\end{proof}
\begin{lemma}\label{coveringspaces}
Suppose that $( Y, \, \mathcal{I} )$ is a conformal surface. Then there exists a universal cover $( \Omega, \, \pi )$ of $Y$, where $\Omega$ is either the hyperbolic plane $\mathbb{H}^{2}$, the Riemann sphere $\mathbb{S}^{2}$, or the Euclidean plane $\mathbb{R}^{2}$; the map $\pi \colon \Omega \rightarrow ( Y, \, \mathcal{I} )$ is locally conformal; and the covering group of $\pi$ acts by isometries on $\Omega$.
\end{lemma}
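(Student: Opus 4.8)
The plan is to reduce \Cref{coveringspaces} to the classical uniformization theorem for simply connected Riemann surfaces and then to identify the deck group explicitly in the three resulting cases. First I would let $\pi_{0}\colon \Omega_{0}\to Y$ be the topological universal covering and pull back the conformal atlas $\mathcal{I}$: for each sheet $U$ lying over a chart domain $V$ with $(V,f)\in\mathcal{I}$, the map $f\circ\pi_{0}|_{U}$ is a chart on $\Omega_{0}$, and its transition maps to the other such charts are exactly the transition maps of $\mathcal{I}$, hence conformal in the classical sense. Thus $\Omega_{0}$ is again a conformal surface. Since $\Omega_{0}$ is simply connected it is orientable, so after fixing an orientation one keeps the charts whose transitions to a reference chart are orientation-preserving and replaces the remaining ones by their complex conjugates; simple connectivity removes any monodromy obstruction, so the resulting subatlas has holomorphic transition maps and $\Omega_{0}$ becomes a simply connected Riemann surface, with $\pi_{0}$ locally conformal with respect to $\mathcal{I}$ (holomorphic or antiholomorphic in $\mathcal{I}$-charts). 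By the classical uniformization theorem $\Omega_{0}$ is biholomorphic to exactly one of the Euclidean disk $\mathbb{D}=\mathbb{H}^{2}$, the plane $\mathbb{C}=\mathbb{R}^{2}$, or the Riemann sphere $\widehat{\mathbb{C}}=\mathbb{S}^{2}$; transporting $\pi_{0}$ along this biholomorphism produces $\Omega$ and the locally conformal covering $\pi\colon\Omega\to(Y,\mathcal{I})$.

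Next I would analyse the covering group $\Gamma$ of $\pi$. If $g\in\Gamma$ then $\pi\circ g=\pi$, so in the $\mathcal{I}$-pulled-back charts $g$ is the identity; in particular $g$ is $1$-quasiconformal, hence conformal in the geometric sense, and by Weyl's lemma \cite{astala} (for $\mathbb{D}$ and $\mathbb{C}$) and the analogous extension statement on $\widehat{\mathbb{C}}$ the map $g$ is holomorphic or antiholomorphic on $\Omega$. It then remains to verify, case by case, that a group of such maps acting freely and properly discontinuously consists of isometries of the standard metric. For $\Omega=\mathbb{H}^{2}$ this is immediate: every holomorphic or antiholomorphic automorphism of $\mathbb{D}$ is a hyperbolic isometry, so $\Gamma$ acts by isometries. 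For $\Omega=\mathbb{C}$ I would note that a holomorphic automorphism $z\mapsto az+b$ with $a\neq 1$ has the fixed point $b/(1-a)$, so freeness forces $a=1$ and $g$ to be a translation; if $g(z)=a\bar z+b$ is antiholomorphic then $g^{2}(z)=|a|^{2}z+(a\bar b+b)$ is a holomorphic element of $\Gamma$, hence a translation, so $|a|=1$ and $g$ is a Euclidean reflection or glide reflection. In either case $g$ is a Euclidean isometry. For $\Omega=\mathbb{S}^{2}$, compactness forces $\Gamma$ to be finite, and $2=\chi(\mathbb{S}^{2})=|\Gamma|\cdot\chi(Y)$ forces $|\Gamma|\in\{1,2\}$; if $|\Gamma|=2$ its nontrivial element $\sigma$ is a fixed-point-free involution, and since every Möbius transformation has a fixed point $\sigma$ must be antiholomorphic, and a fixed-point-free antiholomorphic involution of $\widehat{\mathbb{C}}$ is conjugate by a Möbius transformation to the antipodal map $z\mapsto -1/\bar z$. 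Conjugating the identification $\Omega\cong\mathbb{S}^{2}$ by that Möbius map (still a biholomorphism), we may take $\sigma$ to be the antipodal map, which is a spherical isometry.

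Combining the three cases, $\Omega$ is $\mathbb{H}^{2}$, $\mathbb{R}^{2}$, or $\mathbb{S}^{2}$ with its standard complete metric of constant curvature $-1$, $0$, or $1$, the map $\pi$ is locally conformal, and $\Gamma$ acts by isometries of $\Omega$; this proves the lemma. The genuine work is not the uniformization theorem, which I would quote as a black box, but reconciling the possibly orientation-reversing (non-orientable) setting with standard orientable uniformization: the subtle points are the orientation refinement of $\Omega_{0}$ to a Riemann surface, and the flat and spherical cases of Step two, where one must invoke freeness of the action and, for $\mathbb{S}^{2}$, the Euler characteristic together with the classification of fixed-point-free antiholomorphic involutions to exclude non-isometric automorphisms from $\Gamma$.
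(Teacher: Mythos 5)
Your proposal is correct and follows essentially the same route as the paper: pull back $\mathcal{I}$ to the topological universal cover, apply the classical uniformization theorem to identify $\Omega$ with $\mathbb{H}^{2}$, $\mathbb{R}^{2}$, or $\mathbb{S}^{2}$, and then argue case by case that the (conformal) deck transformations are isometries using freeness of the action, with the sphere case reducing to a single orientation-reversing involution that is conjugate to the antipodal map. The only substantive divergence is in how that last conjugation is obtained: you invoke the known classification of fixed-point-free antiholomorphic involutions of $\widehat{\mathbb{C}}$ as a black box, whereas the paper derives the conjugating linear map $L(z)=C^{-1}z+\mu_{1}$ by an explicit computation with the normalized coefficients of $\sigma$; you also reach $\lvert\Gamma\rvert\le 2$ on the sphere via the Euler characteristic, while the paper gets it directly from the fact that orientation-preserving Möbius transformations have fixed points.
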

\begin{proof}
First of all, the existence of a universal cover of $( Y, \, \mathcal{I} )$ follows by pulling back the conformal structure $\mathcal{I}$ to a topological universal cover. The classical uniformization theorem allows us to assume that the universal cover is $\mu_{1} \colon \Omega \rightarrow ( Y, \, \mathcal{I} )$, where $\Omega$ is one of the listed spaces. Clearly, the covering group $G_{1}$ of $\mu_{1}$ acts by conformal maps on $\Omega$. In the following, we identify $\mathbb{S}^{2}$ with the extended plane $\mathbb{R}^{2} \cup \left\{ \infty \right\}$ by using the stereographic projection that fixes the unit circle $\mathbb{S}^{1} = \mathbb{S}^{1} \times \left\{ 0 \right\} \subset \mathbb{R}^{3}$ and that maps the south pole $( 0, \, 0, \, -1 )$ to the origin $0 \in \mathbb{R}^{2}$. The claim splits into three cases.
\begin{enumerate}[label=(\alph*)]
\item\label{coveringspaces:case1} $\Omega = \mathbb{H}^{2}$ or $\mathbb{R}^{2}$ and we can take $\pi = \mu_{1}$.
\item\label{coveringspaces:case2} $\Omega = \mathbb{S}^{2}$ and the covering group $G_{1}$ only contains the identity element. Then we can take $\pi = \mu_{1}$.
\item\label{coveringspaces:case3} $\Omega = \mathbb{S}^{2}$ and the covering group $G_{1}$ has two elements. Then there exists a(n orientation-preserving linear) conformal map $L \colon \Omega \rightarrow \Omega$ such that the covering group of $\mu_{2} = \mu_{1} \circ L$ is generated by the antipodal map
\begin{equation*}
	z
	\mapsto
	-
	\frac{ 1 }{ \overline{z} }.
\end{equation*}
In particular, we can set $\pi = \mu_{2}$.
\end{enumerate}
Proof of Case \ref{coveringspaces:case1}: If $\Omega = \mathbb{H}^{2}$, then every conformal map from $\Omega$ onto itself is an isometry of $\Omega$ (by the standard construction of the Riemannian structure on $\Omega$) therefore $G$ consists of isometries of $\Omega$.

Consider the case $\Omega = \mathbb{R}^{2}$. Then for every $h \in G$, we have two cases. If $h$ is orientation-preserving, then it is a translation or the identity map (otherwise it would have fixed points). If $h$ is orientation-reversing, then $h^{2}$ is orientation-preserving and hence it is either a translation or the identity map. This fact implies that $h(z) = a \overline{z} + c$ for some $a \in \mathbb{S}^{1}$ and $c \in \mathbb{R}^{2}$. The proof of Case \ref{coveringspaces:case1} is complete.

If $\Omega = \mathbb{S}^{2}$, the group of orientation-preserving elements of $G_{1}$ is a singleton; recall that orientation-preserving Möbius transformations have fixed points. Thus we are in the setting of Case \ref{coveringspaces:case2} or \ref{coveringspaces:case3}. In Case \ref{coveringspaces:case2}, the covering group is trivial and so is the claim. Otherwise the group $G_{1}$ is generated by an orientation-reversing Möbius involution $\sigma \colon \Omega \rightarrow \Omega$ which has no fixed points.

Proof of Case \ref{coveringspaces:case3}: The characterization of Möbius transformations of $\mathbb{S}^{2}$ implies that
\begin{equation*}
	\sigma(z)
	=
	\frac{ A \overline{z} + B }{ C \overline{z} + D },
\end{equation*}
where $A$, $B$, $C$ and $D$ are complex numbers. We assume without loss of generality that $A D - B C = 1$. Since $\sigma$ does not have any fixed points, we observe that $B \neq 0 \neq C$. Consider the constants $\mu_{1} = \frac{A}{C}$, $\mu_{2} = \frac{B}{C} \neq 0$, and $\mu_{3} = \frac{D}{C}$. Since $\sigma^{2} = \id$, a direct computation shows that
\begin{enumerate}[label=(\alph*)]
\item $\mu_{3} = - \overline{\mu_{1}}$ ($\sigma^{2}$ fixes $\infty$);
\item $\mu_{1}\overline{\mu_{2}} + \mu_{2}\overline{\mu_{3}} = 0$ ($\sigma^{2}$ fixes $0$);
\item $\norm{ \mu_{1} }^{2}_2 + \mu_{2} = \overline{\mu_{2}} + \norm{ \mu_{3} }^{2}_2$ ($\sigma^{2}$ fixes $0$, $1$, and $\infty$);
\item $C^{-2} = \mu_{1}\mu_{3} - \mu_{2}$ (the normalization $AD - BC = 1$).
\end{enumerate}
The first and third conditions imply that $\mu_{2} \in \mathbb{R} \setminus \left\{ 0 \right\}$, and hence $C^{-2} = - \mu_{2} - \norm{ \mu_{1} }^{2}_2 \in \mathbb{R}$ by the fourth condition.

We show that $C \in \mathbb{R} \setminus \left\{ 0 \right\}$. First, the fact that $C^{-2}$ is real implies that either the real or imaginary part of $C$ equals zero. Secondly, for $z \neq \infty$, the equation $z = \sigma(z)$ is equivalent to $\norm{ z - \mu_{1} }_{2}^{2} = - C^{-2}$. Since $\sigma$ has no fixed points, $C$ cannot be imaginary. Therefore $C \in \mathbb{R} \setminus \left\{ 0 \right\}$.

A direct computation shows that given the linear map
\begin{equation*}
	L(z)
	=
	C^{-1} z + \mu_{1},
\end{equation*}
the Möbius transformation $\tilde{\sigma} = L^{-1} \circ \sigma \circ L$ is the antipodal map. Then the covering group of $\mu_{2} = \mu_{1} \circ L$ is generated by the antipodal map $\tilde{\sigma}$ which is an isometry of $\mathbb{S}^{2}$.
\end{proof}
\begin{proof}[Proof of \Cref{maximal:atlas:metric}]
For convenience, we suppress the notation for $\mathcal{I} = \mathcal{I}_{d}$ in the following argument. Fix a universal cover
\begin{equation*}
	\pi
	\colon
	\Omega
	\rightarrow
	Y,
\end{equation*}
where $\pi$ is locally conformal, its covering group $G$ acts by isometries on $\Omega$, and $\Omega$ is either the hyperbolic plane $\mathbb{H}^{2}$, the Euclidean plane $\mathbb{R}^{2}$, or the Riemann sphere $\mathbb{S}^{2}$. The existence of $\pi$ follows from \Cref{coveringspaces}. As the covering group $G$ acts by isometries on $\Omega$, the Riemannian inner product $g_{\Omega}$ of $\Omega$ can be pushed forward to an inner product $g$ on $Y$ in such a way that
\begin{equation*}
	\pi
	\colon
	( \Omega, \, g_{\Omega} )
	\rightarrow
	( Y, \, g )
\end{equation*}
is a local isometry in the Riemannian sense; see for example \cite[Chapter 2]{leeRiemannian}. Let $G$ denote the corresponding Riemannian norm on $Y$. Then the metric surface $Y_{d_{G}}$ has the same curvature as $\Omega$ (since $\pi$ is a local isometry) and complete (as a consequence of Hopf--Rinow theorem and completeness of $\Omega$).

Let $\mathcal{K}$ denote the atlas consisting of the inverses of the restrictions of $\pi$ to small open subsets of $\mathbb{R}^{2} \cap \Omega$. Since the elements of $\mathcal{K}$ are local isometries and $g_{\Omega}$ is comparable to the Euclidean inner product in $\mathbb{R}^{2} \cap \Omega$, the elements of $\mathcal{K}$ are isothermal charts that are compatible with $G$. Therefore \Cref{isothermal:compatible} implies that $G$ is compatible with $\mathcal{I}$.
\end{proof}
\section{Quasiconformal maps between reciprocal surfaces}\label{sec:uniformization:main:theorem}
We generalize the analysis of quasiconformal parametrizations (\Cref{sec:uniformization:charts}) to analysis of quasiconformal maps between locally reciprocal surfaces. Let $d_{G}$ denote the Riemannian distance obtained from \Cref{maximal:atlas:metric}. Given two locally reciprocal surfaces $Y_{d_{1}}$ and $Y_{d_{2}}$, we make the following simplifications for $i = 1, \, 2$.
\begin{enumerate}[label=(\alph*)]
\item\label{simple:surface} The triple $Y_{d_{i}} = ( Y_{i}, \, d_{i}, \, \mathcal{H}^{2}_{d_{i}} )$ is denoted by $Y_{i} = ( Y_{i}, \, d_{i}, \, \mathcal{H}^{2}_{i} )$.
\item\label{simple:surface:riemannian} The triple $Y_{G_{i}} = ( Y_{i}, \, d_{G_{i}}, \, \mathcal{H}^{2}_{G_{i}} )$ refers to the triple $Y_{d_{G_{i}}} = ( Y_{i}, \, d_{ G_{i} }, \, \mathcal{H}^{2}_{ d_{ G_{i} } } )$.
\item\label{simple:surface:id} The minimal upper gradient of $\id_{Y_{d_{i}}}$ is denoted by $\rho_{i}$.
\item\label{simple:surface:weight} The weighted measure $\rho_{i} \mathcal{H}^{2}_{i}$ (introduced in \Cref{sec:analysisonQCmaps}) is denoted by $\nu_{i}$.
\item\label{simple:surface:uniformizationmap} The map $u_{i} = \id_{Y_{i}} \colon Y_{G_{i}} \rightarrow Y_{i}$ is called the \emph{uniformization map}.
\item\label{simple:surface:isothermal} \Cref{maximal:atlas:metric} implies that every isothermal parametrization of a subdomain of $Y_{i}$ (the metric surface $Y_{d_{i}}$) is of the following form: There exists a homeomorphism
\begin{equation*}
	\phi \colon \mathbb{R}^{2} \supset U \rightarrow V \subset Y_{G_{i}}
\end{equation*}
conformal in the Riemannian sense for which $u_{i} \circ \phi$ is an isothermal parametrization of $u_{i}( V ) \subset Y_{i}$. Unless it is otherwise stated, we express isothermal parametrizations of $Y_{i}$ in this way.
\end{enumerate}
If we are studying a single locally reciprocal surface, we omit the subscript $i$.
\subsection{Norm fields}\label{sec:normfield:surface}
\begin{lemma}\label{def:derivative}
Let $Y$ be a locally reciprocal surface. Then there is a measurable section $\apmd{} = \apmd{d}$ of the norm bundle of $Y$, where for every isothermal parametrization
\begin{equation*}
	\label{eq:isothermal:parametrization}
	u \circ \phi
	\colon
	\mathbb{R}^{2} \supset U
	\rightarrow
	V \subset Y,
\end{equation*}
and its approximate metric differential $\apmd{u \circ \phi}$, we have that
\begin{equation}
	\label{eq:chain:rule:surface}
	\apmd{ u \circ \phi }
	=
	\apmd{d} \circ D\phi.
\end{equation}
\end{lemma}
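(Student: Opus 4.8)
The plan is to build $\apmd{d}$ by transporting, through the isothermal charts, the approximate metric differentials of a countable family of isothermal parametrizations covering $Y$, and then to verify that these local descriptions agree on overlaps. For the setup, recall that by \Cref{maximal:atlas} and \Cref{maximal:atlas:metric}, $( Y, \, \mathcal{I}_{d} ) = ( Y, \, \mathcal{I}_{d_{G}} )$ is a conformal surface with a compatible smooth structure, so the tangent bundle $TY$ and the bundle of norms on its fibres are well-defined. Every transition map of $\mathcal{I}_{d}$ is conformal in the classical sense (\Cref{thm:minimal:chart:uniqueness}), hence a diffeomorphism, in particular locally biLipschitz; so all isothermal parametrizations push the planar Lebesgue measure $m_{2}$ forward to mutually absolutely continuous Borel measures, and I fix this common measure class as the meaning of ``almost everywhere'' on $Y$. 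Since $Y$ is separable (\Cref{def:metric:measure:space}), it is covered by countably many isothermal parametrizations $u \circ \phi_{k} \colon \mathbb{R}^{2} \supset U_{k} \rightarrow V_{k} \subset Y$, $k = 1, 2, \dots$ (restrictions of the charts of $\mathcal{I}_{d}$, which are again isothermal by \Cref{prop:minimal:TFAE} Part \ref{prop:minimal:TFAE:f}). Each $u \circ \phi_{k}$ is a quasiconformal homeomorphism of an open subset of $\mathbb{R}^{2}$ onto an open subset of a metric surface, so by \Cref{norm:almost:everywhere} its approximate metric differential is a norm $m_{2}$-almost everywhere, and by \Cref{newtonian:seminorm} the map $x \mapsto \apmd[u \circ \phi_{k}]{x}$ is measurable. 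As $\phi_{k}$ is a diffeomorphism, $D\phi_{k}(x) \colon T_{x}U_{k} \rightarrow T_{\phi_{k}(x)}Y$ is a linear isomorphism depending continuously on $x$, and I define, for almost every $y \in V_{k}$ with $x = \phi_{k}^{-1}(y)$,
\begin{equation*}
	\apmd{d}(y)
	=
	\apmd[u \circ \phi_{k}]{x} \circ \left( D\phi_{k}(x) \right)^{-1},
\end{equation*}
a norm on $T_{y}Y$; in the trivialization of the norm bundle induced by the chart $\phi_{k}$ this section is simply $x \mapsto \apmd[u \circ \phi_{k}]{x}$, so it is measurable over $V_{k}$.

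Next I would check that the local definitions agree on overlaps. If $V_{j} \cap V_{k} \neq \emptyset$, then restricting $u \circ \phi_{j}$ and $u \circ \phi_{k}$ to the preimages of $V_{j} \cap V_{k}$ gives two isothermal parametrizations of the same open subset of $Y$, so by \Cref{thm:minimal:chart:uniqueness} the transition map $\tau = \phi_{j}^{-1} \circ \phi_{k}$ is conformal in the classical sense, hence a diffeomorphism that preserves $m_{2}$-null sets. Combining the chain rule for approximate metric differentials under diffeomorphisms (\Cref{diffeo}), applied to $u \circ \phi_{k} = ( u \circ \phi_{j} ) \circ \tau$, with the classical chain rule $D\phi_{k}(x) = D\phi_{j}(\tau(x)) \circ D\tau(x)$, one obtains at $m_{2}$-almost every $x$ on the overlap (wherever both approximate metric differentials exist, which is a conull set since $\tau$ is a diffeomorphism)
\begin{align*}
	\apmd[u \circ \phi_{k}]{x} \circ \left( D\phi_{k}(x) \right)^{-1}
	&=
	\apmd[u \circ \phi_{j}]{\tau(x)} \circ D\tau(x) \circ \left( D\phi_{k}(x) \right)^{-1}
	\\
	&=
	\apmd[u \circ \phi_{j}]{\tau(x)} \circ \left( D\phi_{j}(\tau(x)) \right)^{-1},
\end{align*}
so the two definitions agree at $y = \phi_{k}(x) = \phi_{j}(\tau(x))$. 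Gluing along a Borel partition $A_{k} = V_{k} \setminus \bigcup_{l < k} V_{l}$ of $Y$ then yields a globally defined measurable section $\apmd{d}$ of the norm bundle of $Y$ that is a norm almost everywhere.

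Finally, for an arbitrary isothermal parametrization $u \circ \phi \colon \mathbb{R}^{2} \supset U \rightarrow V \subset Y$ (with $\phi$, being conformal in the Riemannian sense, a diffeomorphism), the same argument on each $\phi^{-1}(V \cap V_{k})$ --- where $\phi_{k}^{-1} \circ \phi$ is conformal in the classical sense by \Cref{thm:minimal:chart:uniqueness}, hence a diffeomorphism, so \Cref{diffeo} and the classical chain rule apply --- gives $\apmd[u \circ \phi]{x} = \apmd{d}(\phi(x)) \circ D\phi(x)$ for $m_{2}$-almost every $x$ there; since the $V_{k}$ cover $Y$, this holds $m_{2}$-almost everywhere on $U$, i.e.\ $\apmd{ u \circ \phi } = \apmd{d} \circ D\phi$. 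I expect the overlap-consistency step to be the main obstacle: one must splice the merely almost-everywhere chain rule for approximate metric differentials --- usable only because the transition maps are diffeomorphisms, which is exactly what the uniqueness statement \Cref{thm:minimal:chart:uniqueness} supplies --- together with the smooth chain rule for the differentials, while keeping the ambient measure class on $Y$ unambiguous (which it is, all isothermal charts being related by locally biLipschitz maps).
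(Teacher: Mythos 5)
Your proof is correct and follows essentially the same path as the paper's: the paper also defines $\apmd{d}$ as $\apmd{u \circ \phi} \circ D(\phi^{-1})$, invokes the diffeomorphism chain rule (\Cref{diffeo}) to see this is independent of the isothermal parametrization, and cites \Cref{newtonian:seminorm} and \Cref{norm:almost:everywhere} for measurability and the a.e.\ norm property. Your countable cover and explicit overlap-consistency check are merely a more detailed unpacking of the paper's terser ``depends only on the image point of $\phi$'' observation.
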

\begin{proof}
Since the transition maps of $\mathcal{I}_{d_{G}}$ are diffeomorphisms, the expression
\begin{equation*}
	\apmd{ u \circ \phi } \circ D( \phi^{-1} )
\end{equation*}
depends only on the image point of $\phi$, not on the isothermal parametrization $\phi$ (recall the chain rule \Cref{diffeo}). \Cref{norm:almost:everywhere} and \Cref{newtonian:seminorm} show that $\apmd{ u \circ \phi }$ exists and is a norm $m_{2}$-almost everywhere in $U$. Thus $\apmd{d} \colon TY \rightarrow \mathbb{R}$ is well-defined $\mathcal{H}^{2}_{G}$-almost everywhere in $V$.

The norm bundle of $Y$ is defined using the isothermal charts of $Y_{G}$: A real-valued map $N \colon TY \rightarrow \mathbb{R}$ is a measurable section of the norm bundle of $Y$ if for any isothermal parametrization $\phi$ of $V \subset Y_{G}$, we have that $N \circ D\phi$ is measurable as a map into $\mathcal{C}( \mathbb{S}^{1}, \, \mathbb{R} )$ that is also a norm at $m_{2}$-almost every point. This holds for $\apmd{}$ as a consequence of \Cref{newtonian:seminorm} and \Cref{norm:almost:everywhere}.
\end{proof}
\begin{lemma}\label{lem:borelrepresentative}
There exists a Borel set $B \subset Y$ of $\mathcal{H}^{2}_{G}$-measure zero for which $y \mapsto ( \apmd{} \chi_{ Y \setminus B } + G \chi_{ B } )( y )$ is a Borel measurable section of the norm bundle of $Y$.
\end{lemma}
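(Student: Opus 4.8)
The plan is to repair $\apmd{}$ to a Borel section one chart at a time and then glue the exceptional sets. Because $( Y, \, d )$ is separable, the conformal surface $( Y, \, \mathcal{I}_{d_{G}} )$ (\Cref{maximal:atlas} and \Cref{maximal:atlas:metric}) admits a countable subatlas, so I would first fix countably many maps $\phi_{j} \colon \mathbb{R}^{2} \supset U_{j} \rightarrow V_{j} \subset Y_{G}$, conformal in the Riemannian sense, whose images $V_{j}$ cover $Y$; each composition $u \circ \phi_{j}$ is then an isothermal parametrization, so \Cref{def:derivative} applies. In each chart the map $x \mapsto \apmd{} \circ D\phi_{j}( x )$ is $m_{2}$-measurable into the separable Banach space $\mathcal{C}( \mathbb{S}^{1}, \, \mathbb{R} )$ and a norm at $m_{2}$-almost every $x$. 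Using that an $m_{2}$-measurable map into a separable metric space agrees $m_{2}$-almost everywhere with a Borel map, I would choose a Borel set $A_{j} \subset U_{j}$ with $m_{2}( A_{j} ) = 0$ on whose complement the restriction of $\apmd{} \circ D\phi_{j}$ is Borel measurable; enlarging $A_{j}$ by a Borel $m_{2}$-null set (the locus where $\apmd{} \circ D\phi_{j}$ fails to be a norm is $m_{2}$-null), I may also assume $\apmd{} \circ D\phi_{j}$ is a norm at every point of $U_{j} \setminus A_{j}$.

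The second step is to set $B = \bigcup_{j} \phi_{j}( A_{j} )$. Since each $\phi_{j}$ is a homeomorphism onto an open set, $\phi_{j}( A_{j} )$ is Borel in $Y$; since each $\phi_{j}$ is moreover a smooth diffeomorphism onto $V_{j} \subset Y_{G}$, it carries $m_{2}$-null sets to $\mathcal{H}^{2}_{G}$-null sets (the pullback of the Riemannian area measure is absolutely continuous with respect to $m_{2}$). Hence $B$ is Borel with $\mathcal{H}^{2}_{G}( B ) = 0$, and the candidate section is $\widetilde{N} = \apmd{} \chi_{ Y \setminus B } + G \chi_{ B }$. To verify that $\widetilde{N}$ is a Borel measurable section of the norm bundle it is enough to check the defining property for the charts $\phi_{j}$: for an arbitrary isothermal parametrization $\phi$ the conclusion then follows from $D\phi = D\phi_{j} \circ D( \phi_{j}^{-1} \circ \phi )$ on $\phi^{-1}( V \cap V_{j} )$, the fact that transition maps $\phi_{j}^{-1} \circ \phi$ are conformal, hence smooth (\Cref{thm:minimal:chart:uniqueness}), and the continuity of $( s, \, L ) \mapsto s \circ L$ on $\mathcal{C}( \mathbb{S}^{1}, \, \mathbb{R} ) \times GL_{2}( \mathbb{R} )$. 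For a fixed $j$, injectivity of $\phi_{j}$ gives $A_{j} \subset \phi_{j}^{-1}( B )$; on the Borel set $U_{j} \setminus \phi_{j}^{-1}( B ) \subset U_{j} \setminus A_{j}$ the map $\widetilde{N} \circ D\phi_{j}$ restricts to the Borel map $\apmd{} \circ D\phi_{j}$ and is a pointwise norm, while on the Borel set $U_{j} \cap \phi_{j}^{-1}( B )$ it equals $G \circ D\phi_{j}$, which is continuous (as $G$ is a smooth Riemannian norm and $\phi_{j}$ is smooth) and everywhere a norm. Thus $\widetilde{N} \circ D\phi_{j}$ is Borel on $U_{j}$ and a norm at every point.

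I expect the only real difficulty to be the global coherence: a priori the chartwise Borel representatives of $\apmd{}$ agree with it only off \emph{different} null sets, so one must ensure that a single Borel $\mathcal{H}^{2}_{G}$-null set $B \subset Y$ works simultaneously in all charts, and that replacing $\apmd{}$ by the (continuous) Riemannian norm $G$ on $B$ does not spoil the pointwise norm property anywhere. Defining $B$ as the union of the images $\phi_{j}( A_{j} )$ — rather than patching the representatives directly — resolves this, since then for every chart $\phi_{k}$ one automatically has $U_{k} \setminus \phi_{k}^{-1}( B ) \subset U_{k} \setminus A_{k}$, where $\apmd{} \circ D\phi_{k}$ is already known to be Borel and a norm. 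With this choice the verification above is routine.
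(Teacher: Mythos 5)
Your proposal follows essentially the same construction as the paper: fix a countable subatlas of isothermal charts, obtain a Borel $m_{2}$-null exceptional set $A_{j}$ in each chart domain where $\apmd{}\circ D\phi_{j}$ (equivalently $\apmd{u\circ\phi_{j}}$) has a Borel, everywhere-norm representative (the paper cites \Cref{newtonian:seminorm} and \Cref{norm:almost:everywhere} for this, while you invoke the general fact that a measurable map into a separable metric space agrees a.e.\ with a Borel one — both give the same conclusion), push these sets forward, and take $B$ to be the union of the images. The argument is correct and matches the paper's; the only additions are the explicit chart-by-chart verification at the end, which the paper leaves implicit.
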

\begin{proof}
We fix a countable subatlas $\mathcal{J} \subset \mathcal{I}_{d}$. By applying \Cref{newtonian:seminorm} and \Cref{norm:almost:everywhere}, given any parametrization $\phi \colon U \rightarrow V$ with $( V, \, f = \phi^{-1} ) \in \mathcal{J}$, there exists a Borel set $B' \subset U$ such that $x \mapsto \left( \apmd{u\circ\phi} \chi_{ U \setminus B' } + G \circ D\phi \chi_{ B' } \right)(x)$ is Borel measurable as a map into $\mathcal{C}( \mathbb{S}^{1}, \, \mathbb{R} )$ and a norm everywhere. Define $B_{f} = \phi( B' )$ and
\begin{equation*}
	B
	=
	\bigcup_{ (V, \, f) \in \mathcal{J} }
		B_{f}.
\end{equation*}
Then $B$ has zero $\mathcal{H}^{2}_{G}$-measure. Also, the map
\begin{equation*}
	y \mapsto \left( \apmd{} \chi_{ Y \setminus B } + G \chi_{ B } \right)(y)
\end{equation*}
is Borel measurable (section of the norm bundle) and a norm everywhere.
\end{proof}
The uniformization map $u = \id_{Y} \colon Y_{G} \rightarrow Y$ is defined on \ref{simple:surface:uniformizationmap} at the beginning of this section.
\begin{lemma}\label{lem:uniformization:pointwisedilatation}
The uniformization map $u \colon Y_{G} \rightarrow Y$ is $\frac{ \pi }{ 2 }$-quasiconformal and if $\phi \colon U \rightarrow V \subset Y_{G}$ is an isothermal parametrization of $V \subset Y_{G}$, the pointwise dilatations of $u$ satisfy
\begin{equation}
	\label{eq:pointwise:dilatations}
	K_{O}( u )
	=
	K_{O}( u \circ \phi ) \circ \phi^{-1}
	\text{ and }
	K_{I}( u )
	=
	K_{I}( u \circ \phi ) \circ \phi^{-1}.
\end{equation}
\end{lemma}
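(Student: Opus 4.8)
\emph{The plan} is to pass to the companion map $u \circ \phi$, which will be an isothermal parametrization of a subdomain of $Y_d$ and hence is controlled by \Cref{cor:minimal:charts}, and then to transport that control to $u = (u \circ \phi) \circ \phi^{-1}$ using that $\phi$ is conformal. First I would record the relevant properties of $\phi$: by \Cref{maximal:atlas:metric} we have $\mathcal{I}_d = \mathcal{I}_{d_G}$, and since $Y_{d_G}$ is a smooth Riemannian surface \Cref{isothermal:compatible} shows that $\phi \colon U \to V \subset Y_G$ is a diffeomorphism that is conformal in the Riemannian sense, hence conformal in the geometric sense. Consequently $\phi$ and $\phi^{-1}$ satisfy Conditions $(N)$ and $(N^{-1})$ and, since their global outer and inner dilatations equal $1$, their pointwise dilatations are at most $1$ almost everywhere by \Cref{cor:global:inner:and:outer:dilatation}. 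Furthermore $Y_{d_G}$, being Riemannian, is locally biLipschitz equivalent to a planar domain, so \Cref{lem:Condition(N):inverse} applied in local biLipschitz charts gives $\rho_{\id_{Y_G}} = \chi_{Y_G}$ $\mathcal{H}^2_G$-almost everywhere; thus $\rho_{\id_{Y_G}} \mathcal{H}^2_G = \mathcal{H}^2_G$, and once $u$ is known to be quasiconformal its pointwise dilatations are defined $\mathcal{H}^2_G$-almost everywhere.

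Next I would check that $u \circ \phi \colon U \to V \subset Y_d$ is an isothermal parametrization. Since $\phi$ is conformal in the Riemannian sense, $u \circ \phi$ has the normal form for isothermal parametrizations of $Y_d$ recalled at the beginning of this section; that every such map is indeed isothermal for $Y_d$ follows, locally and hence globally (isothermality being a pointwise condition by \Cref{prop:dilatationbounds}), by comparing $u \circ \phi$ on a disk-shaped subdomain with an isothermal parametrization of that subdomain furnished by \Cref{thm:Kai} and \Cref{thm:minimal:chart:existence}, the transition map being conformal in the classical sense so that \Cref{thm:minimal:chart:uniqueness} applies. With $u \circ \phi$ isothermal, \Cref{cor:minimal:charts} gives $K_O(u \circ \phi)(x) \le \frac{4}{\pi}$ and $K_I(u \circ \phi)(x) \le \frac{\pi}{2}$ for $m_2$-almost every $x \in U$.

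To conclude that $u$ is $\frac{\pi}{2}$-quasiconformal I would argue by localization. Since $u|_V = (u \circ \phi) \circ \phi^{-1}$ is a composition of quasiconformal homeomorphisms it is quasiconformal, and composing the defining modulus inequalities (with $\phi^{-1}$ conformal) gives $K_O(u|_V) \le \frac{4}{\pi}$ and $K_I(u|_V) \le \frac{\pi}{2}$; equivalently, by \Cref{thm:QC:differentequivalent}, $u|_V \in N^{1,2}_{loc}$ with $\rho_{u|_V}^2 \le \frac{4}{\pi} J_{u|_V}$ and $\rho_{(u|_V)^{-1}}^2 \le \frac{\pi}{2} J_{(u|_V)^{-1}}$ almost everywhere. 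The images of isothermal parametrizations cover $Y_G$ (they cover $Y$ by \Cref{maximal:atlas}, and $\mathcal{I}_d = \mathcal{I}_{d_G}$), and membership in $N^{1,2}_{loc}$, minimal upper gradients, and Jacobians are local, so these bounds glue to $\rho_u^2 \le \frac{4}{\pi} J_u$ and $\rho_{u^{-1}}^2 \le \frac{\pi}{2} J_{u^{-1}}$ almost everywhere; \Cref{thm:QC:differentequivalent} then yields $K_O(u) \le \frac{4}{\pi}$ and $K_I(u) \le \frac{\pi}{2}$, so $u$ is $\frac{\pi}{2}$-quasiconformal.

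Finally I would obtain the pointwise identity \eqref{eq:pointwise:dilatations} from the composition law \Cref{lem:outer:inner:characterization} applied in both directions. Writing $u|_V = (u \circ \phi) \circ \phi^{-1}$ and using $K_O(\phi^{-1}) \le 1$ gives $K_O(u) \le K_O(u \circ \phi) \circ \phi^{-1}$ $\mathcal{H}^2_G$-almost everywhere on $V$; writing $u \circ \phi = (u|_V) \circ \phi$ and using $K_O(\phi) \le 1$ gives $K_O(u \circ \phi) \le K_O(u) \circ \phi$ $m_2$-almost everywhere on $U$, whence---precomposing with $\phi^{-1}$ and using Condition $(N)$ of $\phi$ to carry the exceptional $m_2$-null set to an $\mathcal{H}^2_G$-null set---$K_O(u \circ \phi) \circ \phi^{-1} \le K_O(u)$ $\mathcal{H}^2_G$-almost everywhere on $V$. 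Combining the two inequalities gives $K_O(u) = K_O(u \circ \phi) \circ \phi^{-1}$, and the identity for $K_I$ is obtained in exactly the same way. The steps requiring genuine care---rather than routine invocations of the composition laws and \Cref{cor:minimal:charts}---are the verification that $u \circ \phi$ is isothermal for the metric surface $Y_d$ and not merely a conformal chart of the Riemannian surface $Y_G$, together with the bookkeeping of which measure each ``almost everywhere'' refers to when passing between $U$ and $V$.
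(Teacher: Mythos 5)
Your proposal is correct and takes essentially the same route as the paper: the paper likewise obtains quasiconformality of $u$ from the facts that $\phi$ is conformal (\Cref{maximal:atlas:metric}) and $u\circ\phi$ is $\tfrac{\pi}{2}$-quasiconformal (\Cref{cor:minimal:charts}), and derives \eqref{eq:pointwise:dilatations} from the composition laws of \Cref{lem:outer:inner:characterization} applied in both directions using the conformality of $\phi$. Your extra care in verifying that $u\circ\phi$ is isothermal for $Y_d$ is sound but can be replaced by directly invoking $\mathcal{I}_d = \mathcal{I}_{d_G}$ from \Cref{maximal:atlas:metric}.
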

\begin{proof}
To see that $u$ is $\frac{\pi}{2}$-quasiconformal, we recall that $u \circ \phi$ is $\frac{ \pi }{ 2 }$-quasiconformal (\Cref{cor:minimal:charts}) and $\phi$ is conformal (\Cref{maximal:atlas:metric}). The pointwise identities for dilatations of $u$ follow from the composition laws for dilatations (\Cref{lem:outer:inner:characterization}) and the fact that $\phi$ is conformal.
\end{proof}
\begin{corollary}\label{cor:uniformization:pointwisedilatation:minimal}
Let $u$ be the uniformization map. Then $\mathcal{H}^{2}_{G}$-almost everywhere the pointwise dilatations of $u$ satisfy
\begin{equation}
	\label{eq:pointwise:dilatation:uniformization}
	\rho^{2}( G, \, \apmd{} )
	=
	K_{O}( u )K_{I}( u ),
\end{equation}
where $\rho( G, \, \apmd{} )$ is the Banach--Mazur distance between the Banach spaces $( TY, \, G )$ and $( TY, \, \apmd{} )$.
\end{corollary}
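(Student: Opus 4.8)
The plan is to reduce the claim to a pointwise computation inside a single isothermal chart, where it follows directly from \Cref{prop:dilatationbounds}.

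First I would fix an isothermal parametrization $\phi \colon \mathbb{R}^{2} \supset U \to V \subset Y_{G}$ as in item \ref{simple:surface:isothermal}, so that $u \circ \phi$ is an isothermal parametrization of $u(V) \subset Y$ and $\phi$ is conformal in the Riemannian sense. By \Cref{lem:uniformization:pointwisedilatation}, the identities $K_{O}(u) \circ \phi = K_{O}(u \circ \phi)$ and $K_{I}(u) \circ \phi = K_{I}(u \circ \phi)$ hold $m_{2}$-almost everywhere on $U$. Since $u \circ \phi$ is isothermal, \Cref{prop:dilatationbounds} gives $K_{O}(u \circ \phi) K_{I}(u \circ \phi) = \rho^{2}( \norm{ \cdot }_{2}, \, \apmd{u \circ \phi} )$ $m_{2}$-almost everywhere, and \Cref{def:derivative} identifies $\apmd{u \circ \phi} = \apmd{} \circ D\phi$ $m_{2}$-almost everywhere.

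The remaining point is to prove that $\rho( \norm{ \cdot }_{2}, \, \apmd{} \circ D\phi(x) ) = \rho( G, \, \apmd{} )(\phi(x))$ for $m_{2}$-almost every $x \in U$. Here I would first record two elementary invariances of the multiplicative Banach--Mazur distance of \Cref{def:BM:distance}: (i) for any invertible linear $L$, the map $S \mapsto L S L^{-1}$ is a distortion-preserving bijection from $GL_{2}[ M \circ L, \, N \circ L ]$ onto $GL_{2}[ M, \, N ]$, so that $\rho( M \circ L, \, N \circ L ) = \rho( M, \, N )$ (this also shows that $\rho( G, \, \apmd{} )(y)$ is well defined independently of the linear identification of $T_{y}Y$ with $\mathbb{R}^{2}$); and (ii) $\rho$ is unchanged when either norm is multiplied by a positive scalar. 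Taking $L = D\phi(x)$ and identifying $T_{\phi(x)}Y$ with $\mathbb{R}^{2}$ through $L$, invariance (i) gives $\rho( G, \, \apmd{} )(\phi(x)) = \rho( G \circ L, \, \apmd{} \circ L )$. Because $\phi$ is conformal in the Riemannian sense there is a positive smooth $h$ with $\phi_{*} \norm{ \cdot }_{2} = h \cdot G$, which unwinds to $G \circ L = h(\phi(x))^{-1} \norm{ \cdot }_{2}$; hence by (ii), $\rho( G \circ L, \, \apmd{} \circ L ) = \rho( \norm{ \cdot }_{2}, \, \apmd{} \circ L ) = \rho( \norm{ \cdot }_{2}, \, \apmd{u \circ \phi} )(x)$. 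Chaining these equalities with the previous paragraph yields $(K_{O}(u) K_{I}(u)) \circ \phi = \rho^{2}( G, \, \apmd{} ) \circ \phi$ $m_{2}$-almost everywhere on $U$.

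Finally, since $\phi$ is a diffeomorphism of $U$ onto $V$ with smooth positive Jacobian, $\mathcal{H}^{2}_{G}$ restricted to $V$ is mutually absolutely continuous with the pushforward $\phi_{*} m_{2}$, so the identity $K_{O}(u) K_{I}(u) = \rho^{2}( G, \, \apmd{} )$ holds $\mathcal{H}^{2}_{G}$-almost everywhere on $V$ as well. Covering the second countable space $Y_{G}$ by countably many such $V$ and discarding the union of the exceptional sets completes the proof. The argument is essentially bookkeeping; the only mildly delicate points are keeping track of the ``$m_{2}$-almost everywhere'' versus ``$\mathcal{H}^{2}_{G}$-almost everywhere'' qualifiers under the chart maps --- which is clean because each $\phi$ is a diffeomorphism --- and checking the two linear-algebra invariances of $\rho$. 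No analytic input beyond \Cref{prop:dilatationbounds}, \Cref{lem:uniformization:pointwisedilatation}, and \Cref{def:derivative} is required.
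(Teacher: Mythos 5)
Your proof is correct and follows essentially the same route as the paper: pass to an isothermal parametrization $\phi$ (so that $u \circ \phi$ is isothermal and $G \circ D\phi$ is comparable to $\norm{\cdot}_2$), invoke \Cref{prop:dilatationbounds} for $u\circ\phi$, and identify $\apmd{u\circ\phi}=\apmd{}\circ D\phi$ via \Cref{def:derivative}. Your only additions are making explicit the conjugation/scaling invariance of the Banach--Mazur distance and the almost-everywhere bookkeeping between $m_2$ on $U$ and $\mathcal{H}^2_G$ on $V$, both of which the paper treats as immediate.
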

The Banach--Mazur distance in \Cref{def:BM:distance} generalizes to distances between Banach spaces, however, we can also apply \Cref{def:BM:distance} directly in the following equivalent way: Given an isothermal parametrization $\phi \colon U \rightarrow V \subset Y_{G}$, we define $\rho( \apmd{d}, \, G ) \circ \phi$ to be equal to $\rho( \apmd{d} \circ D\phi, \, G \circ D\phi )$.
\begin{proof}
Since $\apmd{d} \circ D\phi = \apmd{ u \circ \phi }$ (\Cref{def:derivative}) and $G \circ D\phi$ is comparable to the Euclidean norm (\Cref{maximal:atlas:metric}), the corresponding claim about $u \circ \phi$ (\Cref{prop:dilatationbounds}) establishes \eqref{eq:pointwise:dilatation:uniformization}.
\end{proof}
\begin{lemma}\label{lem:abscontinuity}
The map $u \colon ( Y, \, \mathcal{H}^{2}_{G} ) \rightarrow ( Y, \, \nu )$ satisfies Conditions ($N$) and ($N^{-1}$).
\end{lemma}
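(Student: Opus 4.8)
The plan is to read off the statement from \Cref{lem:Condition(N)} applied to the uniformization map $u = \id_{Y} \colon Y_{G} \rightarrow Y$, which is quasiconformal by \Cref{lem:uniformization:pointwisedilatation}. That proposition, used with $( U, \, \mu_{U} ) = ( Y, \, \mathcal{H}^{2}_{G} )$ and $( V, \, \mu_{V} ) = ( Y, \, \mathcal{H}^{2}_{d} )$, and with $u( B_{0} ) = B_{0}$ because $u$ is the identity, produces a Borel set $B_{0} \subset Y$ for which $\rho_{ \id_{Y_{G}} } = \chi_{ Y \setminus B_{0} }$ and $\rho = \chi_{ Y \setminus B_{0} }$ (for suitable representatives) and for which
\[
	u
	\colon
	\left( Y, \, \rho_{ \id_{Y_{G}} } \mathcal{H}^{2}_{G} \right)
	\rightarrow
	\left( Y, \, \rho \, \mathcal{H}^{2}_{d} \right)
	=
	\left( Y, \, \nu \right)
\]
satisfies Conditions ($N$) and ($N^{-1}$). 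Since $\nu = \rho \, \mathcal{H}^{2}_{d}$ by definition, the lemma reduces to proving $\mathcal{H}^{2}_{G}( B_{0} ) = 0$, equivalently $\rho_{ \id_{Y_{G}} } = \chi_{Y}$ $\mathcal{H}^{2}_{G}$-almost everywhere: once this holds, $\rho_{ \id_{Y_{G}} } \mathcal{H}^{2}_{G} = \mathcal{H}^{2}_{G}$ as measures, and the displayed map is literally $u \colon ( Y, \, \mathcal{H}^{2}_{G} ) \rightarrow ( Y, \, \nu )$.

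To prove $\mathcal{H}^{2}_{G}( B_{0} ) = 0$ I would argue locally. The surface $Y_{G} = Y_{d_{G}}$ is Riemannian (\Cref{maximal:atlas:metric}), and by \Cref{isothermal:compatible} every isothermal parametrization $\phi \colon \mathbb{R}^{2} \supset U \rightarrow V \subset Y_{G}$ is a diffeomorphism that is conformal in the Riemannian sense, so on $V$ the Riemannian norm field $G$ and $\phi_{*} \norm{ \cdot }_{2}$ differ by a smooth positive factor. Restricting $\phi$ to a small, relatively compact, geodesically convex coordinate ball $U'$ makes this factor bounded above and below, hence makes $\phi$ biLipschitz onto $V' = \phi( U' )$; thus $Y_{G}$ is covered by countably many open sets $V'$ each biLipschitz homeomorphic to a planar domain. \Cref{lem:Condition(N):inverse} then gives $\rho_{ \id_{V'} } = \chi_{V'}$ $\mathcal{H}^{2}_{G}$-almost everywhere on each $V'$. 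Because the restriction of a weak upper gradient of $\id_{Y_{G}}$ to the open set $V'$ is again a weak upper gradient of $\id_{V'}$, we get $\rho_{ \id_{Y_{G}} } \geq \rho_{ \id_{V'} } = \chi_{V'}$ $\mathcal{H}^{2}_{G}$-almost everywhere on $V'$; covering $Y_{G}$ by the $V'$ yields $\rho_{ \id_{Y_{G}} } \geq \chi_{Y}$, and combined with $\rho_{ \id_{Y_{G}} } = \chi_{ Y \setminus B_{0} } \leq \chi_{Y}$ we conclude $\mathcal{H}^{2}_{G}( B_{0} ) = 0$.

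The only genuine content is this local step; the rest is bookkeeping layered on \Cref{lem:Condition(N)}. The minor technicality that must be handled with care is the passage from ``$G$ and $\phi_{*} \norm{ \cdot }_{2}$ are comparable Riemannian norm fields on a relatively compact set'' to ``$\phi$ is biLipschitz for the associated length distances'', which is routine once $V'$ is taken to be a convex ball so that the intrinsic distance agrees with the restricted one. One may also sidestep \Cref{lem:Condition(N):inverse} and argue straight from \Cref{cor:prop:upper:gradient}: for such a $\phi$ the exceptional Borel set $\widetilde{N} \subset U$ of \Cref{sec:uniformization:charts} has $m_{2}$-measure zero, and $\phi$, being a diffeomorphism, is locally Lipschitz and so maps $\widetilde{N}$ to an $\mathcal{H}^{2}_{G}$-null set; hence $\chi_{ V \setminus \phi( \widetilde{N} ) } = \chi_{V}$ $\mathcal{H}^{2}_{G}$-almost everywhere is a minimal upper gradient of $\id_{V}$, and one concludes exactly as above.
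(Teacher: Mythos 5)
Your argument is correct, and it takes the same route as the paper's own one-line proof: combine \Cref{lem:uniformization:pointwisedilatation} (quasiconformality of $u$), the fact that $\rho_{\id_{Y_{G}}} = \chi_{Y_{G}}$ $\mathcal{H}^{2}_{G}$-a.e., and \Cref{lem:Condition(N)}. The paper cites \Cref{lem:Condition(N):inverse} for the second fact, but that corollary as literally stated requires the whole space to be biLipschitz to a \emph{planar domain}, which a complete constant-curvature surface such as $Y_{G}$ need not be (e.g.\ $\mathbb{S}^{2}$). Your explicit local patch-up — cover $Y_{G}$ by sets $V'$ biLipschitz to planar domains via convex isothermal coordinate balls, apply \Cref{lem:Condition(N):inverse} on each $V'$, and exploit the locality of minimal upper gradients (restriction of a weak upper gradient of $\id_{Y_{G}}$ to an open set $V'$ is a weak upper gradient of $\id_{V'}$, since moduli of path families contained in $V'$ agree when computed in $V'$ or in $Y_{G}$) — is exactly the bookkeeping the paper's reference to \Cref{lem:Condition(N):inverse} is suppressing, and your reduction of the lemma to $\mathcal{H}^{2}_{G}(B_{0}) = 0$ is the right way to unpack \Cref{lem:Condition(N)} here. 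Your alternative route via \Cref{cor:prop:upper:gradient} and the $m_{2}$-null set $\widetilde{N}$ is also valid; it makes no difference which of the two you use.
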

\begin{proof}
The claim is an immediate consequence of the fact that $u$ is quasiconformal (\Cref{lem:uniformization:pointwisedilatation}), the fact that the minimal upper gradient of $\id_{Y_{G}}$ equals $\chi_{Y_{G}}$ (\Cref{lem:Condition(N):inverse}), and \Cref{lem:Condition(N)}.
\end{proof}
In the following sections, it is sometimes convenient to consider the differential
\begin{equation*}
	Du
	\colon
	( TY, \, G )
	\rightarrow
	( TY, \, \apmd{} ),
\end{equation*}
where we consider the norm field $\apmd{}$ to be well-defined $\nu$-almost everywhere in $Y$. This makes sense due to \Cref{lem:abscontinuity}.
\subsection{Quasiconformal maps}\label{sec:qcmaps:surfaces}
Our goal is to understand the analogs of \Cref{lem:uniformization:pointwisedilatation} and \Cref{cor:uniformization:pointwisedilatation:minimal} for two locally reciprocal surfaces $Y_{1}$ and $Y_{2}$ and for an arbitrary quasiconformal map
\begin{equation}
	\label{eq:arbitrary:QC:map}
	\Psi
	\colon
	Y_{1}
	\rightarrow
	Y_{2}.
\end{equation}
To that end, since the map
\begin{equation*}
	\widetilde{\Psi}
	=
	u_{2}^{-1} \circ \Psi \circ u_{1}
	\colon
	Y_{G_{1}}
	\rightarrow
	Y_{G_{2}}
\end{equation*}
is quasiconformal as a map between two Riemannian surfaces, it is classically differentiable $\mathcal{H}^{2}_{G_{1}}$-almost everywhere and it satisfies Conditions ($N$) and ($N^{-1}$); see Section 3.3 of \cite{astala}.
\begin{lemma}\label{lem:differential:QC:map}
The differential
\begin{equation}
	\label{eq:differential:QC:map}
	D\Psi
	\colon
	( TY_{1}, \, \apmd{1} )
	\rightarrow
	( TY_{2}, \, \apmd{2} )
\end{equation}
is well-defined $\nu_{1}$-almost everywhere. Moreover,
\begin{equation*}
	D( \Psi^{-1} ) \circ D\Psi
	=
	D\id_{Y_{1}}
\end{equation*}
$\nu_{1}$-almost everywhere.
\end{lemma}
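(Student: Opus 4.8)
The plan is to push the whole statement to the Riemannian side. Set $\widetilde{\Psi} = u_2^{-1}\circ\Psi\circ u_1\colon Y_{G_1}\to Y_{G_2}$; as noted just above the lemma, $\widetilde{\Psi}$ is quasiconformal between smooth Riemannian surfaces, hence classically differentiable $\mathcal{H}^2_{G_1}$-almost everywhere and it satisfies Conditions $(N)$ and $(N^{-1})$ by Section 3.3 of \cite{astala}. Since $u_i = \id_{Y_i}$ on the underlying spaces, the bundles $TY_1 = TY_{G_1}$ and $TY_2 = TY_{G_2}$ literally coincide, so I would \emph{define} $D\Psi$ to be the classical differential $D\widetilde{\Psi}$, read fibrewise as a linear isomorphism $(T_yY_1,\apmd{1}(y))\to(T_{\Psi(y)}Y_2,\apmd{2}(\Psi(y)))$. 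Two things then have to be verified: that this object is genuinely defined, as a measurable norm-bundle morphism, at $\nu_1$-almost every point, and that it obeys the inverse composition law.

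The measure-theoretic bookkeeping is the only genuinely delicate point, because the norm fields $\apmd{i}$ are honest norms only off $\mathcal{H}^2_{G_i}$-null sets, whereas $\Psi$ and the $u_i$ naturally interact with the possibly strictly smaller weighted measures $\nu_i = \rho_i\mathcal{H}^2_i$. The ingredients are: by \Cref{lem:abscontinuity}, $u_i\colon(Y_i,\mathcal{H}^2_{G_i})\to(Y_i,\nu_i)$ satisfies $(N)$ and $(N^{-1})$, so $\mathcal{H}^2_{G_i}$ and $\nu_i$ have the same null sets, whence $D\widetilde{\Psi}$ exists $\nu_1$-almost everywhere; by \Cref{lem:borelrepresentative}, $\apmd{1}$ and $\apmd{2}$ are norms off Borel sets that are $\mathcal{H}^2_{G_1}$- respectively $\mathcal{H}^2_{G_2}$-null, hence $\nu_1$- respectively $\nu_2$-null; and \Cref{lem:Condition(N)} applied to $\Psi$ itself, whose weighted domain measure is exactly $\nu_1 = \rho_{\id_{Y_1}}\mathcal{H}^2_1$, shows that $\Psi\colon(Y_1,\nu_1)\to(Y_2,\nu_2)$ satisfies $(N)$ and $(N^{-1})$, so the $\Psi$-preimage of a $\nu_2$-null set is $\nu_1$-null. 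Intersecting the corresponding full-$\nu_1$-measure sets makes $\apmd{1}(y)$, $D\widetilde{\Psi}(y)$ and $\apmd{2}(\Psi(y))$ simultaneously meaningful, which is precisely what ``well-defined $\nu_1$-almost everywhere'' requires; measurability of $y\mapsto D\widetilde{\Psi}(y)$ is the standard fact that an almost-everywhere differentiable Sobolev map has measurable derivative.

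To confirm that $D\widetilde{\Psi}$ deserves the name $D\Psi$, I would localize through isothermal parametrizations $\psi_i = u_i\circ\phi_i\colon U_i\to V_i\subset Y_i$, with $\phi_i$ conformal in the Riemannian sense and $V_1$ shrunk so that $\Psi(V_1)\subset V_2$, and factor $\Psi\circ\psi_1 = \psi_2\circ g$ with $g = \psi_2^{-1}\circ\Psi\circ\psi_1$ quasiconformal between planar domains. The chain rule \Cref{prop:chainrule:QC} gives $\apmd{\Psi\circ\psi_1} = \apmd{\psi_2}\circ Dg$ almost everywhere, \Cref{def:derivative} gives $\apmd{\psi_i} = \apmd{i}\circ D\phi_i$, and $\phi_2\circ g\circ\phi_1^{-1} = \widetilde{\Psi}$; chaining these identities yields $\apmd{\Psi\circ\psi_1} = (\apmd{2}\circ D\widetilde{\Psi})\circ D\phi_1$ almost everywhere. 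In particular $D\widetilde{\Psi}$ does not depend on the chosen charts and transforms approximate metric differentials the way a differential must.

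For the inverse law, observe that $\widetilde{\Psi^{-1}} = u_1^{-1}\circ\Psi^{-1}\circ u_2 = \widetilde{\Psi}^{-1}$ and, by the same recipe, $\widetilde{\id_{Y_1}} = \id_{Y_{G_1}}$, so $D(\Psi^{-1}) = D(\widetilde{\Psi}^{-1})$ and $D\id_{Y_1} = \id$; it remains to see that $D(\widetilde{\Psi}^{-1})\circ D\widetilde{\Psi} = \id$ holds $\nu_1$-almost everywhere. Both $\widetilde{\Psi}$ and $\widetilde{\Psi}^{-1}$ are classically differentiable almost everywhere, and $\widetilde{\Psi}$ satisfies $(N^{-1})$, so the set where $\widetilde{\Psi}^{-1}$ fails to be differentiable is $\mathcal{H}^2_{G_2}$-null and its $\widetilde{\Psi}$-preimage is $\mathcal{H}^2_{G_1}$-null; hence at $\mathcal{H}^2_{G_1}$-almost every $y$ both $D\widetilde{\Psi}(y)$ and $D(\widetilde{\Psi}^{-1})(\widetilde{\Psi}(y))$ exist, and the elementary chain rule applied to the identity $\widetilde{\Psi}^{-1}\circ\widetilde{\Psi} = \id_{Y_{G_1}}$ (cf.\ the composition argument in the proof of \Cref{lem:outer:inner:characterization}) closes the argument; finally $\mathcal{H}^2_{G_1}$-almost everywhere coincides with $\nu_1$-almost everywhere by \Cref{lem:abscontinuity}. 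Apart from the bookkeeping of the second paragraph, every step here is routine.
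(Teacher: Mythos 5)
Your proposal is correct and follows essentially the same route as the paper: define $D\Psi$ as the classical differential of $\widetilde{\Psi} = u_2^{-1}\circ\Psi\circ u_1$, then use \Cref{lem:abscontinuity}, \Cref{lem:borelrepresentative}, and the fact that $\widetilde{\Psi}$ satisfies Conditions $(N)$ and $(N^{-1})$ to verify that the norm fields and the differential are simultaneously defined off a $\nu_1$-null set, with the inverse law coming from the classical chain rule for the differentiable homeomorphism $\widetilde{\Psi}$. The only presentational difference is that you invoke \Cref{lem:Condition(N)} directly for $\Psi$ to get $(N^{-1})$ with respect to $\nu_1$ and $\nu_2$, whereas the paper chains the same conclusion through $u_2$, $\widetilde{\Psi}$, and $u_1$; these are logically equivalent. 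Your third paragraph on chart-independence and the transformation of the norm fields goes a bit beyond what the lemma asserts, but it is a correct sanity check and not a gap.
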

\begin{proof}
Since \Cref{lem:abscontinuity} holds and $\widetilde{\Psi}$ satisfies Conditions ($N$) and ($N^{-1}$), the claim follows from \Cref{lem:borelrepresentative} and the fact that $D( \widetilde{\Psi}^{-1} ) \circ D\widetilde{\Psi} = D\id_{G_{1}}$ $\mathcal{H}^{2}_{G_{1}}$-almost everywhere.
\end{proof}
Fix a point $x$, where $\apmd{1}( x )$ and $\apmd{2}( \Psi(x) )$ are norms and where $D\Psi(x)$ is an invertible linear map, i.e., $\nu_{1}$-almost any point $x \in Y_{1}$ (\Cref{lem:differential:QC:map}). Then the operator norm of
\begin{equation}
	\label{eq:differential:QC:map:pointwise}
	D\Psi(x)
	\colon
	( T_{x}Y_{1}, \, \apmd{1}(x) )
	\rightarrow
	( T_{\Psi(x)}Y_{2}, \, \apmd{2}( \Psi(x) ) ).
\end{equation}
is a minimal upper gradient of $D\Psi(x)$. We denote it by $\norm{ D\Psi }( x )$. The Jacobian of $D\Psi(x)$ is defined as in Definition 4.1 of \cite{ambrosio2000} and is denoted by $J_{2}( D\Psi )(x)$. The operator norm of the inverse of $D\Psi(x)$ is denoted by $\norm{ D\Psi^{-1} }( \Psi(x) )$. It is the minimal upper gradient of $D( \Psi^{-1} )( \Psi(x) )$. The outer and inner dilatation of $D\Psi(x)$ are denoted by $K_{O}( D\Psi )(x)$ and $K_{I}( D\Psi )(x)$, respectively. The maximal dilatation of $D\Psi(x)$ is denoted by $K( D\Psi )(x)$. The following result is a consequence of \Cref{lem:differential:QC:map}.
\begin{lemma}\label{def:differential}
The operator norm $\norm{ D\Psi }$ of $D\Psi$ and its Jacobian $J_{2}( D\Psi )$ are well-defined $\nu_{1}$-almost everywhere. Also the pointwise outer dilatation $K_{O}( D\Psi )$, inner dilatation $K_{I}( D\Psi )$ and maximal dilatation $K( D\Psi )$ of $D\Psi$ are well-defined $\nu_{1}$-almost everywhere.
\end{lemma}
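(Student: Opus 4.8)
The plan is to derive everything from \Cref{lem:differential:QC:map} together with the absolute-continuity statements already available, the only real work being to keep careful track of which null sets matter. First I would fix once and for all Borel representatives of the norm fields $\apmd{1}$ and $\apmd{2}$ by invoking \Cref{lem:borelrepresentative}: there are Borel sets $B_{i} \subset Y_{i}$ with $\mathcal{H}^{2}_{G_{i}}( B_{i} ) = 0$ outside of which $\apmd{i}$ is a genuine norm at every point. Since $u_{i} = \id_{Y_{i}}$ satisfies Conditions $(N)$ and $(N^{-1})$ as a map $( Y_{i}, \, \mathcal{H}^{2}_{G_{i}} ) \rightarrow ( Y_{i}, \, \nu_{i} )$ by \Cref{lem:abscontinuity}, each $B_{i}$ is also $\nu_{i}$-null. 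Moreover, because $\widetilde{\Psi} = u_{2}^{-1} \circ \Psi \circ u_{1}$ is quasiconformal between Riemannian surfaces, it satisfies Conditions $(N)$ and $(N^{-1})$ with respect to $\mathcal{H}^{2}_{G_{1}}$ and $\mathcal{H}^{2}_{G_{2}}$ (Section 3.3 of \cite{astala}); chaining this with the Conditions $(N)$, $(N^{-1})$ of $u_{1}$ and $u_{2}$ shows that $\Psi$ carries $\nu_{1}$-null sets to $\nu_{2}$-null sets. In particular $\Psi^{-1}( B_{2} )$ is $\nu_{1}$-null, so $\apmd{2}( \Psi(x) )$ is a norm for $\nu_{1}$-almost every $x$.

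Next I would combine this with \Cref{lem:differential:QC:map}, which already guarantees that the linear map $D\Psi(x) \colon T_{x}Y_{1} \rightarrow T_{\Psi(x)}Y_{2}$ is defined and invertible for $\nu_{1}$-almost every $x$, with inverse $D( \Psi^{-1} )( \Psi(x) )$. Hence for $\nu_{1}$-almost every $x$ the map \eqref{eq:differential:QC:map:pointwise} is an invertible linear map between two genuinely normed two-dimensional spaces, so its operator norm $\norm{ D\Psi }( x )$ and the operator norm $\norm{ D\Psi^{-1} }( \Psi(x) )$ of its inverse are finite positive numbers, and the Jacobian $J_{2}( D\Psi )(x)$ — defined as in Definition 4.1 of \cite{ambrosio2000} — is a well-defined positive real number (positivity using invertibility of $D\Psi(x)$ and that $\apmd{2}( \Psi(x) )$ is a norm, not merely a seminorm). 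Consequently
\[
	K_{O}( D\Psi )(x) = \frac{ \norm{ D\Psi }^{2}( x ) }{ J_{2}( D\Psi )(x) }, \qquad
	K_{I}( D\Psi )(x) = \norm{ D\Psi^{-1} }^{2}( \Psi(x) )\, J_{2}( D\Psi )(x),
\]
and $K( D\Psi )(x) = \max\{ K_{O}( D\Psi )(x), \, K_{I}( D\Psi )(x) \}$ are defined for $\nu_{1}$-almost every $x$. Measurability is then automatic: on the complement of $B_{1} \cup \Psi^{-1}( B_{2} )$ the maps $x \mapsto \apmd{1}( x )$ and $x \mapsto \apmd{2}( \Psi(x) )$ are Borel maps into $\cont{ \mathbb{S}^{1}, \, \mathbb{R} }$, $x \mapsto D\Psi(x)$ is measurable (it is the classical differential of the Riemannian quasiconformal map $\widetilde{\Psi}$, transported through the smooth structures), and the operator norm, the operator norm of the inverse, and $J_{2}$ all depend continuously on the pair of norms and on the linear map (recall that $s \mapsto J_{2}( s )$ is continuous, as noted after \eqref{eq:jacobian:seminorm}).

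I would then observe that none of these quantities depends on the chosen Borel representatives: altering $\apmd{1}$ changes it only on an $\mathcal{H}^{2}_{G_{1}}$-null, hence $\nu_{1}$-null, set, while altering $\apmd{2}$ changes the target norm only on an $\mathcal{H}^{2}_{G_{2}}$-null set whose $\Psi$-preimage is $\nu_{1}$-null by the first paragraph; so $\norm{ D\Psi }$, $\norm{ D\Psi^{-1} }$, $J_{2}( D\Psi )$ and the three dilatations are unchanged $\nu_{1}$-almost everywhere. This completes the argument. The only genuinely delicate point — and the step I expect to require the most care to state cleanly — is the null-set bookkeeping of the first paragraph: one must be sure that the exceptional sets living on $Y_{1}$ (where $\apmd{1}$ fails to be a norm, where $D\Psi$ or $D\Psi^{-1}$ fails to exist, where the density/differentiability arguments behind \Cref{lem:differential:QC:map} break down) together with the $\Psi$-pullbacks of the analogous exceptional sets on $Y_{2}$ form a $\nu_{1}$-null set. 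This is handled entirely by chaining the Conditions $(N)$ and $(N^{-1})$ of $u_{1}$, $u_{2}$ (\Cref{lem:abscontinuity}) and of the intermediary Riemannian quasiconformal map $\widetilde{\Psi}$, exactly as in the proof of \Cref{lem:differential:QC:map}; no new estimate is needed.
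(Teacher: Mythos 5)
Your proposal is correct and takes essentially the same route the paper does: the paper simply records the lemma as an immediate consequence of \Cref{lem:differential:QC:map} together with the null-set bookkeeping carried out in the surrounding text (via \Cref{lem:borelrepresentative}, \Cref{lem:abscontinuity}, and Conditions $(N)$, $(N^{-1})$ of $\widetilde{\Psi}$), and you have spelled out exactly those details. One very minor slip in exposition: you state that ``$\Psi$ carries $\nu_{1}$-null sets to $\nu_{2}$-null sets'' and then say ``in particular $\Psi^{-1}(B_{2})$ is $\nu_{1}$-null,'' which is really the reverse implication (Condition $(N^{-1})$ rather than $(N)$ in the chain), but since every map in the chain satisfies both conditions the conclusion is of course still valid.
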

\begin{definition}\label{def:conformality}
We say that the differential $D\Psi$ is \emph{conformal $\nu_{1}$-almost everywhere} if $K_{O}( D\Psi ) K_{I}( D\Psi ) = \chi_{ Y_{1} }$ $\nu_{1}$-almost everywhere. Equivalently, the maximal dilatation $K( D\Psi ) = \chi_{ Y_{1} }$ $\nu_{1}$-almost everywhere.
\end{definition}
\begin{remark}\label{def:conformality:remark}
Note that $K_{O}( D\Psi )$ and $K_{I}( D\Psi )$ are greater than $\chi_{ Y_{1} }$ $\nu_{1}$-almost everywhere. This is a consequence of the following argument. Given two two-dimensional Banach spaces $E_{1}$ and $E_{2}$ and an invertible linear map $L \colon E_{1} \rightarrow E_{2}$, the metric balls satisfy the following inclusions:
\begin{equation*}
	B_{ E_{2} }
	\left(
		0, \, \frac{ 1 }{ \norm{ L^{-1} } }
	\right)
	\subset
	L( B_{ E_{1} }( 0, \, 1 ) )
	\subset
	B_{ E_{2} }
	\left(
		0, \, \norm{ L }
	\right).
\end{equation*}
Given these inclusions, using the monotonicity of the Hausdorff $2$-measure, and by applying the change of variables formula for $L$ (see \cite{ambrosio2000}), we deduce that $\min\left\{ K_{O}( L ), \, K_{I}( L ) \right\} \geq 1$. We conclude from this fact that $K( L ) = 1$ if and only if $K_{O}( L ) K_{I}( L ) = 1$. As a consequence, the conformality of $D\Psi$ is well-defined.
\end{remark}
Let $B_{0}$ be a Borel set of $\mathcal{H}^{2}_{G_{1}}$-measure zero such that the restrictions of $u_{1}$ and $u_{2} \circ \widetilde{\Psi}$ to $Y_{G_{1}} \setminus B_{0}$ satisfy Conditions ($N$) and ($N^{-1}$). The existence of such a set is guaranteed by \Cref{lem:abscontinuity}, by the fact that $\widetilde{\Psi}$ satisfies Conditions ($N$) and ($N^{-1}$), and by \Cref{lem:Condition(N)}. We fix such a set for the rest of this section.
\begin{lemma}\label{jacobian:surfaces}
The Jacobian $J_{\Psi}$ of $\Psi$ equals $J_{2}( D\Psi )$ $\mathcal{H}^{2}_{1}$-almost everywhere in $Y_{1} \setminus u_{1}( B_{0} )$. In particular, this identity holds $\nu_{1}$-almost everywhere.
\end{lemma}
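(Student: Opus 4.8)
The plan is to transport the identity through the uniformization maps and apply the known Jacobian composition rule together with the chain rule for the approximate metric differential. First I would recall that $\Psi = u_2 \circ \widetilde{\Psi} \circ u_1^{-1}$, so $\Psi^{*}\mathcal{H}^{2}_{2} = (u_1^{-1})^{*}\bigl(\widetilde{\Psi}^{*}(u_2^{*}\mathcal{H}^{2}_{2})\bigr)$ as measures on $Y_1$. By \Cref{lem:abscontinuity}, $u_1$ and $u_2$ satisfy Conditions $(N)$ and $(N^{-1})$ as maps between the weighted measures $\mathcal{H}^{2}_{G_i}$ and $\nu_i$, and $\widetilde{\Psi}$ satisfies Conditions $(N)$ and $(N^{-1})$ between the Riemannian Hausdorff measures (Section 3.3 of \cite{astala}); outside the exceptional set $B_0$ all three maps satisfy these conditions with respect to the unweighted measures, so the standard multiplicativity of Jacobians gives
\begin{equation*}
	J_{\Psi} \circ u_1
	=
	\bigl( J_{u_2} \circ \widetilde{\Psi} \bigr)\,
	J_{\widetilde{\Psi}}\,
	\frac{1}{J_{u_1}}
\end{equation*}
$\mathcal{H}^{2}_{G_1}$-almost everywhere in $Y_{G_1} \setminus B_0$, exactly as in the proof of the composition identity inside \Cref{lem:outer:inner:characterization}.

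Next I would identify each factor on the right with a seminorm Jacobian. By \Cref{change:of:variables:formula} applied to the isothermal parametrizations underlying $u_1$ and $u_2$, we have $J_{u_i} = J_{2}(\apmd{u_i \circ \phi_i}) \circ \phi_i^{-1} = J_{2}(\apmd{i})$ pointwise $\mathcal{H}^2_{G_i}$-a.e., after unwinding the conformal change of coordinates $\phi_i$ (which has Euclidean-comparable, hence here irrelevant, Jacobian because $G_i$ is comparable to $\norm{\cdot}_2$ in charts and $\phi_i$ is conformal in the Riemannian sense by \Cref{maximal:atlas:metric}); more precisely one uses \Cref{def:derivative} to write $\apmd{i} \circ D\phi_i = \apmd{u_i\circ\phi_i}$ and the fact that $J_2$ scales correctly under the conformal linear map $D\phi_i$. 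Similarly $J_{\widetilde{\Psi}} = J_2(D\widetilde{\Psi})$ since $\widetilde{\Psi}$ is differentiable a.e. between Riemannian surfaces. Combining these with the pointwise determinant multiplicativity of $J_2$ under linear composition — which is just the area formula for the composite linear map, using $D\Psi = D u_2 \circ D\widetilde{\Psi} \circ D(u_1^{-1})$ and the definition of $J_2(D\Psi)$ from Definition 4.1 of \cite{ambrosio2000} as in \Cref{lem:differential:QC:map} — yields $J_{\Psi} = J_2(D\Psi)$ $\mathcal{H}^2_1$-a.e. on $Y_1 \setminus u_1(B_0)$. Since $\mathcal{H}^2_{G_1}(B_0) = 0$ and $u_1$ satisfies Condition $(N)$, the set $u_1(B_0)$ is $\mathcal{H}^2_1$-null, giving the "in particular" clause; and $\nu_1 = \rho_1 \mathcal{H}^2_1$ is absolutely continuous with respect to $\mathcal{H}^2_1$, so the identity also holds $\nu_1$-a.e.

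The main obstacle I expect is the bookkeeping of null sets and the passage between weighted and unweighted measures: $\Psi$ need not satisfy Condition $(N)$ globally, so the composition identity for Jacobians is only valid after restricting to the complement of $B_0$ and its images, and one must check carefully — using \Cref{lem:Condition(N)} and \Cref{lem:abscontinuity} — that the Radon–Nikodym derivatives behave multiplicatively precisely on that good set. The genuinely new content is small; most of the work is assembling \Cref{change:of:variables:formula}, \Cref{def:derivative}, and the differentiability of $\widetilde{\Psi}$ into the single displayed equation above and then discarding the null sets.
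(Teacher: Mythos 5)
Your proposal follows the same strategy as the paper's proof: decompose via the uniformization maps (the paper phrases this as localizing to isothermal charts, but the decompositions are equivalent once you peel off the diffeomorphisms $\phi_i$), invoke Conditions $(N)$ and $(N^{-1})$ on the complement of the null set to justify multiplicativity of Radon--Nikodym derivatives, and then identify the local Jacobians with seminorm Jacobians via \Cref{change:of:variables:formula}, the chain rule \Cref{prop:chainrule:QC}, and the multiplicativity of $J_{2}$ for linear maps between Banach spaces.

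One slip in the bookkeeping: the displayed identity $J_{u_i}=J_{2}(\apmd{u_i\circ\phi_i})\circ\phi_i^{-1}$ is not correct as written, because $J_{2}(\apmd{u_i\circ\phi_i})\circ\phi_i^{-1}$ equals $J_{u_i\circ\phi_i}\circ\phi_i^{-1}$, and by multiplicativity $J_{u_i\circ\phi_i}=\left(J_{u_i}\circ\phi_i\right)J_{\phi_i}$, so the factor $J_{\phi_i}\circ\phi_i^{-1}$ is missing. The parenthetical remark that the chart Jacobian is ``Euclidean-comparable, hence here irrelevant'' does not justify dropping it --- $\phi_i$ is conformal in the Riemannian sense, which constrains angles, not the Jacobian. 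The repair is exactly what you gesture at in the second clause: use $\apmd{i}\circ D\phi_i=\apmd{u_i\circ\phi_i}$ together with $J_{2}(\apmd{u_i\circ\phi_i})=J_{2}(Du_i)\circ\phi_i\cdot J_{\phi_i}$ (multiplicativity of $J_{2}$ for the factorization $D(u_i\circ\phi_i)=Du_i\circ D\phi_i$), so the $J_{\phi_i}$ factors cancel and $J_{u_i}=J_{2}(Du_i)$. With that correction the argument is complete.
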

\begin{proof}
The claim is local so it suffices to consider the claim using isothermal charts of $Y_{1}$ and of $Y_{2}$. The isothermal charts satisfy Conditions ($N$) and ($N^{-1}$) when restricted to the complement of $u_{1}( B_{0} )$ and $\Psi \circ u_{1}( B_{0} )$, respectively. Then the claim follows from the change of variables formula stated in \Cref{newtonian:seminorm}, the chain rule of approximate metric differentials (\Cref{prop:chainrule:QC}), and the chain rule of Jacobians of linear maps between Banach spaces \cite[Lemma 4.2]{ambrosio2000}.
\end{proof}
We fix a Borel set $B_{1} \supset B_{0}$ of zero $\mathcal{H}^{2}_{G_{1}}$-measure for which the following properties hold:
\begin{enumerate}[label=(\alph*)]
\item\label{necessaryevil:1} The maps $Y_{1} \setminus u_{1}( B_{1} ) \ni y \mapsto \apmd{1}( y )$ and $Y_{2} \setminus \Psi( u_{1}( B_{1} ) ) \ni y \mapsto \apmd{2}( y )$ are norms everywhere and also Borel measurable;
\item\label{necessaryevil:2} The maps $Y_{1} \setminus u_{1}( B_{1} ) \ni y \mapsto D\Psi( y )$ and $Y_{2} \setminus \Psi( u_{1}( B_{1} ) ) \ni y \mapsto D( \Psi^{-1} )( y )$ are Borel measurable and the chain rule $D( \Psi^{-1} ) \circ D\Psi = D\id_{Y_{1}}$ holds everywhere in $Y_{1} \setminus u_{1}( B_{1} )$.
\end{enumerate}
Recall that $\widetilde{\Psi}$ satisfies Conditions ($N$) and ($N^{-1}$). The existence of $B_{1}$ satisfying property \ref{necessaryevil:1} is guaranteed by \Cref{lem:borelrepresentative}. The existence of $B_{1}$ satisfying property \ref{necessaryevil:2} follows by a similar reasoning.

The main point of $B_{1}$ is that the operator norm of $D\Psi$ from \eqref{eq:differential:QC:map:pointwise} and of its inverse $D( \Psi^{-1} )$ are well-defined everywhere in the complement of $u_{1}( B_{1} )$ and $\Psi( u_{1}( B_{1} ) )$, respectively. The restriction of $\Psi$ to the complement of $u_{1}( B_{1} )$ satisfies Conditions ($N$) and ($N^{-1}$).
\begin{definition}\label{def:minimaluppergradient:representative}
We define $I_{\Psi}$ as the operator norm $\norm{ D\Psi }$ in $Y_{1} \setminus u_{1}( B_{1} )$ and zero otherwise. The function $I_{\Psi^{-1}}$ is defined as the operator norm $\norm{ D( \Psi^{-1} ) }$ in $Y_{2} \setminus \Psi( u_{1}( B_{1} ) )$ and zero otherwise.
\end{definition}
\begin{proposition}\label{prop:characterizations}
The Borel functions $I_{\Psi}$ and $I_{\Psi^{-1}}$ are minimal upper gradients of $\Psi$ and $\Psi^{-1}$, respectively.
\end{proposition}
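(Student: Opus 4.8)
The plan is to mirror the proof of \Cref{prop:upper:gradient} --- the planar-to-metric-surface case --- and transport it to the present setting by means of isothermal charts, treating $I_{\Psi}$ and $I_{\Psi^{-1}}$ symmetrically. The statement is local, so it suffices to fix a point $p \in Y_{1}$ and argue in a neighbourhood $V_{1} \ni p$ that is the image of an isothermal parametrization $u_{1} \circ \phi_{1} \colon \mathbb{R}^{2} \supset U_{1} \to V_{1}$, where $\phi_{1} \colon U_{1} \to Y_{G_{1}}$ is conformal in the Riemannian sense (recall \ref{simple:surface:isothermal}), together with an isothermal parametrization $u_{2} \circ \phi_{2}$ of a neighbourhood of $\Psi(p)$. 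The coordinate representative $g = ( u_{2} \circ \phi_{2} )^{-1} \circ \Psi \circ ( u_{1} \circ \phi_{1} ) = \phi_{2}^{-1} \circ \widetilde{\Psi} \circ \phi_{1}$ is quasiconformal between planar domains, so $F = \Psi \circ u_{1} \circ \phi_{1} = ( u_{2} \circ \phi_{2} ) \circ g$ is a quasiconformal map from a planar domain into the metric surface $Y_{2}$. Combining the chain rule \Cref{prop:chainrule:QC} with \Cref{def:derivative} (and $\phi_{2} \circ g = \widetilde{\Psi} \circ \phi_{1}$) gives $\apmd{F} = \apmd{u_{2} \circ \phi_{2}} \circ Dg = \apmd{2} \circ D( \widetilde{\Psi} \circ \phi_{1} )$, while \Cref{def:derivative} applied to $u_{1} \circ \phi_{1}$ gives $\apmd{u_{1} \circ \phi_{1}} = \apmd{1} \circ D\phi_{1}$; moreover $D\widetilde{\Psi}$ and $D\Psi$ agree as linear maps of the underlying tangent spaces.

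First I would check that $I_{\Psi}$ is a weak upper gradient of $\Psi$. Let $\Gamma_{\mathrm{bad}}$ collect the paths in $V_{1}$ that are not carried to good paths of $U_{1}$ by $( u_{1} \circ \phi_{1} )^{-1}$, those whose $U_{1}$-preimages lie in the exceptional families of \Cref{metric:speed:almosteverycurve} for $u_{1} \circ \phi_{1}$ or for $F$, and those with positive length in $u_{1}( B_{1} )$; by \Cref{lem:QC:differentequivalent,identity:minimal:upper:gradient,metric:speed:almosteverycurve} this family has zero modulus. For $\gamma \in AC_{+}( V_{1} ) \setminus \Gamma_{\mathrm{bad}}$, set $\eta = ( u_{1} \circ \phi_{1} )^{-1} \circ \gamma$ and $\zeta = D( \phi_{1} \circ \eta )$; two applications of \Cref{metric:speed:almosteverycurve} (to $F$ and to $u_{1} \circ \phi_{1}$) and the chain-rule identities above yield
\begin{equation*}
	v_{ \Psi \circ \gamma }(t)
	=
	\apmd{2}\bigl[ D\Psi( \gamma(t) )\, \zeta(t) \bigr]
	\leq
	I_{\Psi}( \gamma(t) )\, \apmd{1}\bigl[ \zeta(t) \bigr]
	=
	I_{\Psi}( \gamma(t) )\, v_{ \gamma }(t)
\end{equation*}
for $m_{1}$-almost every $t \in \mathrm{dom}( \gamma )$. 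Integrating over $\mathrm{dom}( \gamma )$ gives the upper gradient inequality for $( \Psi, \, I_{\Psi}, \, \gamma )$; since $\Gamma_{\mathrm{bad}}$ has zero modulus, $I_{\Psi}$ is a weak upper gradient of $\Psi$, and the same reasoning shows $I_{\Psi^{-1}}$ is a weak upper gradient of $\Psi^{-1}$.

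For minimality I would fix a representative of $\rho_{\Psi}$ and prove $I_{\Psi} \leq \rho_{\Psi}$ holds $\nu_{1}$-almost everywhere in $V_{1}$. For each $\theta$ in a countable dense subset of $\mathbb{R}$, push forward via $u_{1} \circ \phi_{1}$ the family of Euclidean unit-speed segments in $U_{1}$ of constant direction $\Exp{ i \theta }$; almost every such segment avoids $\Gamma_{\mathrm{bad}}$, so the displayed identity and the upper gradient inequality $v_{ \Psi \circ \gamma } \leq ( \rho_{\Psi} \circ \gamma ) v_{ \gamma }$ give $\apmd{2}[ D\Psi \circ D\phi_{1} \Exp{ i \theta } ] \leq ( \rho_{\Psi} \circ u_{1} \circ \phi_{1} )\, \apmd{1}[ D\phi_{1} \Exp{ i \theta } ]$ along the segment. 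Fubini over small squares and the Lebesgue differentiation theorem (exactly as in \Cref{prop:upper:gradient}) promote this to an inequality valid at $m_{2}$-almost every point of $U_{1}$ simultaneously for all $\theta$ in the dense set; since $D\phi_{1}$ is a linear isomorphism, the vectors $D\phi_{1} \Exp{ i \theta }$ are dense in every tangent space, and by continuity of $\apmd{1}$, $\apmd{2}$ and $D\Psi$ the supremum over $\theta$ recovers $\norm{ D\Psi }_{ \apmd{1} \to \apmd{2} } = I_{\Psi}$, so $I_{\Psi} \leq \rho_{\Psi} \circ u_{1} \circ \phi_{1}$ $m_{2}$-a.e.\ on $U_{1}$. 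As $u_{1} \circ \phi_{1}$ maps $m_{2}$-null sets to $\nu_{1}$-null sets (by \Cref{lem:abscontinuity} and conformality of $\phi_{1}$), this gives $I_{\Psi} \leq \rho_{\Psi}$ $\nu_{1}$-a.e.\ on $V_{1}$. In particular $I_{\Psi} \in L^{2}_{loc}$, so, being a weak upper gradient of $\Psi$, it also satisfies $\rho_{\Psi} \leq I_{\Psi}$ $\mathcal{H}^{2}_{1}$-a.e.\ by the minimality of $\rho_{\Psi}$; hence $I_{\Psi} = \rho_{\Psi}$ $\nu_{1}$-a.e. The claim for $I_{\Psi^{-1}}$ follows by the same argument with $Y_{1}$, $Y_{2}$ and $\Psi$, $\Psi^{-1}$ interchanged (and $B_{1}$ replaced by $\Psi( u_{1}( B_{1} ) )$).

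The main obstacle I anticipate is not a single estimate but the measure-theoretic bookkeeping: one juggles the Lebesgue measure $m_{2}$ in the chart, the Riemannian area $\mathcal{H}^{2}_{G_{1}}$, and the Hausdorff measure $\mathcal{H}^{2}_{1}$ with its weighted companion $\nu_{1}$, and one must verify that all the exceptional path families are genuinely $\modulus$-null and that the distinguished set $B_{1}$ --- where $\apmd{1}$ may degenerate and Condition $(N)$ may fail --- essentially contains the set on which $\rho_{\Psi}$ vanishes, so that the $\nu_{1}$-a.e.\ identity upgrades to an $\mathcal{H}^{2}_{1}$-a.e.\ identity after modifying $\rho_{\Psi}$ on a $\modulus$-null set. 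This last point is handled by \Cref{lem:Condition(N),cor:minimal:upper:gradient,lem:abscontinuity,identity:minimal:upper:gradient} together with the defining properties \ref{necessaryevil:1}--\ref{necessaryevil:2} of $B_{1}$.
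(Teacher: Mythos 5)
Your proposal follows the same two-claim structure as the paper's proof (weak upper gradient via pullback to a chart and two applications of \Cref{metric:speed:almosteverycurve}; minimality via directional segments, Fubini over small squares, and Lebesgue differentiation, as in \Cref{prop:upper:gradient} Part (2)), and the chain-rule calculations and display in Part (1) are exactly right. (One small remark: the Fubini step should be carried out as in Part (2) of \Cref{prop:upper:gradient}, with the Jacobian weight $J_{2}( \apmd{ u_{1} \circ \phi_{1} } )$, because $\rho_{\Psi} \circ ( u_{1} \circ \phi_{1} )$ need not lie in $L^{1}_{loc}( U_{1}, m_{2} )$ without it; you gesture at ``exactly as in \Cref{prop:upper:gradient}'' without saying which part, so this is worth flagging explicitly.)

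The step that genuinely needs tightening is the very end of the minimality argument. You push the $m_{2}$-a.e.\ inequality $I_{\Psi} \leq \rho_{\Psi} \circ ( u_{1} \circ \phi_{1} )$ forward and conclude $I_{\Psi} \leq \rho_{\Psi}$ only $\nu_{1}$-a.e.; but the proposition requires an $\mathcal{H}^{2}_{1}$-a.e.\ identity (minimal upper gradients are determined up to $\mathcal{H}^{2}_{1}$-null sets, and $\nu_{1}$-a.e.\ is strictly weaker whenever $\rho_{\id_{Y_{1}}}$ is not identically one, which the paper's Proposition 17.1 example shows can happen). Your closing paragraph recognises this, but the proposed fix --- showing $u_{1}( B_{1} )$ essentially contains $\{ \rho_{\Psi} = 0 \}$ and ``modifying $\rho_{\Psi}$ on a $\modulus$-null set'' --- is not quite the right move. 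The cleaner and correct resolution, which the paper uses, is two-fold: by \Cref{def:minimaluppergradient:representative}, $I_{\Psi}$ is defined to vanish identically on $u_{1}( B_{1} )$, so the inequality $I_{\Psi} \leq \rho_{\Psi}$ holds there trivially; and the restriction of $u_{1} \circ \phi_{1}$ to $U_{1} \setminus \phi_{1}^{-1}( B_{1} )$ satisfies Conditions ($N$) and ($N^{-1}$), which upgrades the $m_{2}$-a.e.\ inequality to an $\mathcal{H}^{2}_{1}$-a.e.\ inequality on $V_{1} \setminus u_{1}( B_{1} )$. Combining the two gives $I_{\Psi} \leq \rho_{\Psi}$ $\mathcal{H}^{2}_{1}$-a.e.\ on $V_{1}$, after which minimality of $\rho_{\Psi}$ yields the reverse inequality and hence the equality $\mathcal{H}^{2}_{1}$-a.e.
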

We state some consequences of \Cref{prop:characterizations} before proving the statement itself. Recall the definition of Banach--Mazur minimizers from \Cref{def:BM:distance:min}.
\begin{corollary}\label{cor:dilatations}
The equalities $K_{O}( \Psi ) = K_{O}( D\Psi )$ and $K_{I}( \Psi ) = K_{I}( D\Psi )$ hold $\nu_{1}$-almost everywhere. In particular, the pointwise dilatations satisfy
\begin{equation}
	\label{eq:BM:distance:generalQC}
	K_{O}( D\Psi )K_{I}( D\Psi )
	\geq
	\rho^{2}( \apmd{1}, \, \apmd{2} \circ D\Psi )
\end{equation}
$\nu_{1}$-almost everywhere. The equality \eqref{eq:BM:distance:generalQC} holds $\nu_{1}$-almost everywhere if and only if the differential
\begin{equation*}
	D\Psi
	\colon
	( TY_{1}, \, \apmd{1} )
	\rightarrow
	( TY_{2}, \, \apmd{2} )
\end{equation*}
is a Banach--Mazur minimizer $\nu_{1}$-almost everywhere.
\end{corollary}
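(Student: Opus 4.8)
The plan is to reduce both halves of the statement to the pointwise analytic formulas of \Cref{def:pointwise:dilatation} and to the one-line definition \eqref{eq:BM-distance:definition} of the multiplicative Banach--Mazur distance, using \Cref{prop:characterizations} and \Cref{jacobian:surfaces} to identify the infinitesimal data of $\Psi$ with that of $D\Psi$. Everything below is carried out $\nu_{1}$-almost everywhere.

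First I would establish the two dilatation identities. Fix the Borel representatives $I_{\Psi}$ of $\rho_{\Psi}$ and $I_{\Psi^{-1}}$ of $\rho_{\Psi^{-1}}$ supplied by \Cref{prop:characterizations}; by \Cref{def:minimaluppergradient:representative} these equal the operator norms $\norm{ D\Psi }$ and $\norm{ D(\Psi^{-1}) }$ off $u_{1}(B_{1})$ and $\Psi(u_{1}(B_{1}))$, respectively. \Cref{jacobian:surfaces} gives $J_{\Psi} = J_{2}(D\Psi)$ $\nu_{1}$-almost everywhere. Substituting these into $K_{O}(\Psi) = \rho_{\Psi}^{2}/J_{\Psi}$ yields $K_{O}(\Psi) = \norm{ D\Psi }^{2}/J_{2}(D\Psi) = K_{O}(D\Psi)$ $\nu_{1}$-almost everywhere. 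For the inner dilatation, $K_{I}(\Psi) = (\rho_{\Psi^{-1}}^{2}\circ\Psi)\,J_{\Psi}$, and I would first transfer the $\nu_{2}$-almost-everywhere identity $\rho_{\Psi^{-1}} = \norm{ D(\Psi^{-1}) }$ on $Y_{2}$ to a $\nu_{1}$-almost-everywhere identity after precomposing with $\Psi$; this is legitimate because $\widetilde{\Psi}$ satisfies Conditions $(N)$ and $(N^{-1})$ and hence, by \Cref{lem:abscontinuity} and \Cref{lem:Condition(N)}, $\Psi$ carries $\nu_{1}$-null subsets of $Y_{1}\setminus u_{1}(B_{1})$ onto $\nu_{2}$-null sets and back. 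Combining, $K_{I}(\Psi) = (\norm{ D(\Psi^{-1}) }^{2}\circ\Psi)\,J_{2}(D\Psi) = K_{I}(D\Psi)$ $\nu_{1}$-almost everywhere.

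Next I would prove the pointwise Banach--Mazur inequality together with the equality case. By \Cref{lem:differential:QC:map}, at $\nu_{1}$-almost every $x$ the map $D\Psi(x)\colon (T_{x}Y_{1},\apmd{1}(x))\to(T_{\Psi(x)}Y_{2},\apmd{2}(\Psi(x)))$ is invertible and both $\apmd{1}(x)$ and $\apmd{2}(\Psi(x))$ are norms. Writing $(\apmd{2}\circ D\Psi)(x)$ for the pullback norm $v\mapsto\apmd{2}(\Psi(x))[D\Psi(x)v]$ on $T_{x}Y_{1}$, the linear map $D\Psi(x)$ is an isometry from $(T_{x}Y_{1},(\apmd{2}\circ D\Psi)(x))$ onto $(T_{\Psi(x)}Y_{2},\apmd{2}(\Psi(x)))$; hence the identity on $T_{x}Y_{1}$ lies in $GL_{2}[\apmd{1}(x),(\apmd{2}\circ D\Psi)(x)]$ and has distortion squared equal to $K_{O}(D\Psi)(x)K_{I}(D\Psi)(x)$ (the product of the pointwise dilatations of a linear map is the square of its distortion, the Jacobian cancelling). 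The definition \eqref{eq:BM-distance:definition} of $\rho(\apmd{1}(x),(\apmd{2}\circ D\Psi)(x))$ as an infimum over precisely that set of linear maps then gives $K_{O}(D\Psi)(x)K_{I}(D\Psi)(x)\geq\rho^{2}(\apmd{1},\apmd{2}\circ D\Psi)(x)$, with equality exactly when the identity map — equivalently $D\Psi(x)$, via the distortion-preserving bijection $S\mapsto D\Psi(x)\circ S$ — is a Banach--Mazur minimizer. Since properties \ref{necessaryevil:1} and \ref{necessaryevil:2} make $x\mapsto\apmd{1}(x)$, $x\mapsto\apmd{2}(\Psi(x))$ and $x\mapsto D\Psi(x)$ Borel off $u_{1}(B_{1})$, all the quantities in sight are measurable, and asserting the pointwise statement at $\nu_{1}$-almost every $x$ delivers \eqref{eq:BM:distance:generalQC} and the claimed characterisation of equality.

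The algebraic heart is this single application of \eqref{eq:BM-distance:definition} to the invertible linear map $D\Psi(x)$; the rest is bookkeeping. The only point requiring genuine care will be keeping the $\nu_{1}$- and $\nu_{2}$-null sets and the chosen Borel representatives consistent across $\Psi$ — in particular that a $\nu_{2}$-null statement on $Y_{2}$ really does pull back to a $\nu_{1}$-null statement on $Y_{1}$ — which is exactly what the Conditions $(N)$ and $(N^{-1})$ recorded before \Cref{def:minimaluppergradient:representative}, together with \Cref{lem:abscontinuity} and \Cref{lem:Condition(N)}, are set up to guarantee.
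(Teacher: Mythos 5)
Your proof is correct and follows the same route as the paper's: identify $\rho_{\Psi}$, $\rho_{\Psi^{-1}}$, and $J_{\Psi}$ with $\norm{D\Psi}$, $\norm{D(\Psi^{-1})}$, and $J_{2}(D\Psi)$ via \Cref{prop:characterizations} and \Cref{jacobian:surfaces}, then read off the dilatation identities from \Cref{def:pointwise:dilatation} and \Cref{def:differential}, and finally obtain the Banach--Mazur inequality and its equality case directly from \Cref{def:BM:distance:min}. The paper's proof is just more laconic; you have made explicit the pullback-of-null-sets step (via Conditions $(N)$/$(N^{-1})$) and the distortion-preserving bijection $S \mapsto D\Psi(x)\circ S$ between $GL_{2}[\apmd{1},\apmd{2}\circ D\Psi]$ and $GL_{2}[\apmd{1},\apmd{2}]$, both of which the paper leaves implicit.
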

\begin{proof}
\Cref{lem:abscontinuity} implies that $u_{1}( B_{1} )$ has zero $\nu_{1}$-measure. Therefore it suffices to show the claimed identities in the complement of $u_{1}( B_{1} )$.

The identities of the pointwise dilatations of $\Psi$ are immediate consequences of the expression of the Jacobian $J_{\Psi}$ of $\Psi$ (\Cref{jacobian:surfaces}), the expressions of minimal upper gradients of $\Psi$ and $\Psi^{-1}$ (\Cref{prop:characterizations}), and the definitions of the pointwise dilatations of $\Psi$ (\Cref{def:pointwise:dilatation}) and of $D\Psi$ (\Cref{def:differential}).

The inequality \eqref{eq:BM:distance:generalQC} follows from the definition of Banach--Mazur minimizers (\Cref{def:BM:distance:min}). The same definition implies that the equality holds as claimed.
\end{proof}
\begin{corollary}\label{cor:conformality}
A quasiconformal map $\Psi \colon Y_{1} \rightarrow Y_{2}$ between locally reciprocal surfaces is conformal if and only if $D\Psi \colon ( TY_{1}, \, \apmd{1} ) \rightarrow ( TY_{2}, \, \apmd{2} )$ is conformal $\nu_{1}$-almost everywhere. This happens if and only if
\begin{equation*}
	K_{O}( D\Psi )K_{I}( D\Psi )
	=
	\rho^{2}( \apmd{1}, \, \apmd{2} \circ D\Psi )
	=
	\chi_{ Y_{1} }
\end{equation*}
$\nu_{1}$-almost everywhere.
\end{corollary}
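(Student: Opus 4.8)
The plan is to deduce the corollary directly from \Cref{cor:dilatations} and \Cref{cor:global:inner:and:outer:dilatation}, together with the fact that all the pointwise dilatations in play are at least one $\nu_{1}$-almost everywhere, which is the inclusion argument recorded in \Cref{def:conformality:remark}. First I would translate the global statement ``$\Psi$ is conformal'' into a pointwise one. By \Cref{cor:dilatations} we have $K_{O}( \Psi )( x ) = K_{O}( D\Psi )( x )$ and $K_{I}( \Psi )( x ) = K_{I}( D\Psi )( x )$ for $\nu_{1}$-almost every $x$, and both right-hand sides are $\geq 1$ $\nu_{1}$-almost everywhere by \Cref{def:conformality:remark}. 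Combining this with \Cref{cor:global:inner:and:outer:dilatation} gives $K_{O}( \Psi ) = \esssup\{ K_{O}( D\Psi )( x ) \mid x \in ( Y_{1}, \, \nu_{1} ) \} \geq 1$ and similarly $K_{I}( \Psi ) \geq 1$, so $\Psi$ is conformal precisely when $K_{O}( \Psi ) = K_{I}( \Psi ) = 1$. Applying \Cref{cor:global:inner:and:outer:dilatation} once more, this holds if and only if $K_{O}( D\Psi )( x ) = K_{I}( D\Psi )( x ) = 1$ for $\nu_{1}$-almost every $x$, which, since each factor is at least one, is equivalent to $K_{O}( D\Psi )K_{I}( D\Psi ) = \chi_{ Y_{1} }$ $\nu_{1}$-almost everywhere, i.e.\ to conformality of $D\Psi$ in the sense of \Cref{def:conformality}. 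This yields the first equivalence.

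For the displayed chain of identities I would use that the multiplicative Banach--Mazur distance between any two norms on $\mathbb{R}^{2}$ is at least one, again by the inclusion argument of \Cref{def:conformality:remark} applied to any linear map attaining the distance; hence $\rho^{2}( \apmd{1}, \, \apmd{2} \circ D\Psi ) \geq \chi_{ Y_{1} }$ $\nu_{1}$-almost everywhere. Together with the inequality $K_{O}( D\Psi )K_{I}( D\Psi ) \geq \rho^{2}( \apmd{1}, \, \apmd{2} \circ D\Psi )$ from \Cref{cor:dilatations}, this sandwiches $\rho^{2}( \apmd{1}, \, \apmd{2} \circ D\Psi )$ between $\chi_{ Y_{1} }$ and $K_{O}( D\Psi )K_{I}( D\Psi )$ pointwise $\nu_{1}$-almost everywhere. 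If $D\Psi$ is conformal, the upper bound collapses to $\chi_{ Y_{1} }$ and forces $K_{O}( D\Psi )K_{I}( D\Psi ) = \rho^{2}( \apmd{1}, \, \apmd{2} \circ D\Psi ) = \chi_{ Y_{1} }$ $\nu_{1}$-almost everywhere; conversely this triple identity trivially gives conformality of $D\Psi$. This closes the last equivalence.

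I do not expect a genuine obstacle here, since the analytic content is already carried by \Cref{prop:characterizations}, \Cref{cor:dilatations} and \Cref{cor:global:inner:and:outer:dilatation}. The only point that requires a little care is the bookkeeping that every pointwise dilatation appearing above is $\geq 1$ $\nu_{1}$-almost everywhere: this is what makes ``the product equals one'' and ``both factors equal one'' coincide, and what lets the global essential suprema be pinned down by pointwise equalities rather than mere inequalities.
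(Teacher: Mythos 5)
Your argument is correct and follows essentially the same route as the paper's own proof: express the pointwise dilatations of $\Psi$ via those of $D\Psi$ using \Cref{cor:dilatations}, observe from \Cref{def:conformality:remark} that every pointwise dilatation is at least one $\nu_{1}$-almost everywhere, pass between global dilatations and pointwise essential suprema, and then squeeze the Banach--Mazur distance between $\chi_{Y_{1}}$ and the pointwise product $K_{O}(D\Psi)K_{I}(D\Psi)$. Your explicit citation of \Cref{cor:global:inner:and:outer:dilatation} merely spells out a step the paper leaves implicit; the underlying reasoning coincides.
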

\begin{proof}
First, we consider the claim about the conformality of $\Psi$ and $D\Psi$ being equivalent.

As we defined in \Cref{sec:intro:definitions}, the map $\Psi$ is conformal if its (global) maximal dilatation equals one. The pointwise dilatations of $\Psi$ can be expressed using the corresponding dilatations of $D\Psi$ (\Cref{cor:dilatations}). The latter dilatations are greater than $\chi_{ Y_{1} }$ (\Cref{def:conformality:remark} and \Cref{def:conformality}). Therefore the maximal dilatation $\Psi$ equals one if and only if the pointwise dilatations of $\Psi$ equal $\chi_{ Y_{1} }$ $\nu_{1}$-almost everywhere. This happens if and only if the product of the pointwise dilatations equals $\chi_{ Y_{1} }$ $\nu_{1}$-almost everywhere.

We conclude that $\Psi$ is conformal if and only if the differential $D\Psi$ is conformal $\nu_{1}$-almost everywhere.

The Banach--Mazur distance between two Banach spaces is always greater than one, therefore \eqref{eq:BM:distance:generalQC} implies that $\Psi$ is conformal if and only if
\begin{equation*}
	K_{O}( D\Psi )K_{I}( D\Psi )
	=
	\rho^{2}( \apmd{1}, \, \apmd{2} \circ D\Psi )
	=
	\chi_{ Y_{1} }
\end{equation*}
$\nu_{1}$-almost everywhere. The proof is complete.
\end{proof}
\begin{proof}[Proof of \Cref{prop:characterizations}]
We fix the Borel set $B_{1}$ as before. We recall the relevant properties. The set $u_{1}(  B_{1} )$ has zero $\nu_{1}$-measure (\Cref{lem:abscontinuity}) and the restriction of $\Psi$ to the complement of $u_{1}( B_{1} )$ satisfies Conditions ($N$) and ($N^{-1}$). The restriction of $u_{1}$ to the complement of $B_{1}$ satisfies Conditions ($N$) and ($N^{-1}$). Also properties \ref{necessaryevil:1} and \ref{necessaryevil:2} in the definition of $B_{1}$ yield the following --- they state that the differential $D\Psi$ is well-defined and invertible everywhere in the complement of $u_{1}( B_{1} )$. Also the norm field $\apmd{1}$ is a norm everywhere in the complement of $u_{1}( B_{1} )$.

Similar properties hold for the set $u_{2}^{-1}\left( \Psi \circ u_{1}( B_{1} ) \right)$ and the maps $u_{2}$ and $\Psi^{-1}$. Also $D( \Psi^{-1} )$ and $\apmd{2}$ are well-defined in the complement of $\Psi( u_{1}(B_{1}) )$.

Let $\Gamma$ denote the $\Psi$-, $u_{1}^{-1}$-, and $u_{2}^{-1} \circ \Psi$-singular paths (recall \Cref{def:QC:singularset;goodpaths}). The family $\Gamma$ has zero modulus by \Cref{lem:QC:differentequivalent}. Let $\Gamma^{+}_{ u_{1}( B_{1} ) }$ denote the paths that have positive length in $u_{1}( B_{1} ) \subset Y_{1}$. That family has zero modulus by \Cref{identity:minimal:upper:gradient}. Let $\Gamma_{1}$ denote the union of $\Gamma$ and $\Gamma^{+}_{ u_{1}( B_{1} ) }$. This family has zero modulus.

We prove two claims. Claim (1): The operator norm
\begin{equation*}
	I_{\Psi} = \norm{ D\Psi } \chi_{ Y_{1} \setminus u_{1}( B_{1} ) }
\end{equation*}
is a weak upper gradient of $\Psi$.

Claim (2): If $\rho_{\Psi}$ is a minimal upper gradient of $\Psi$, then $I_{\Psi} \leq \rho_{\Psi}$ $\mathcal{H}^{2}_{1}$-almost everywhere.

These claims imply that $I_{\Psi} = \rho_{ \Psi }$ $\mathcal{H}^{2}_{1}$-almost everywhere which is what we wanted to prove. The argument for $I_{\Psi^{-1}}$ is similar so we only prove Claims (1) and (2). Since $Y_{1}$ can be covered by isothermal charts, it suffices to prove Claims (1) and (2) in the image of an arbitrary isothermal parametrization.

Consider an isothermal parametrization $u_{1} \circ \phi_{1} \colon U_{1} \rightarrow V_{1} \subset Y_{1}$ and let $V_{2} = \Psi( V_{1} )$. Fix an isothermal parametrization $u_{2} \circ \phi_{2} \colon U_{2} \rightarrow V_{2}$. (The existence of $\phi_{2}$ follows from \Cref{thm:minimal:chart:existence}.) Define a quasiconformal map $\psi \colon U_{1} \rightarrow U_{2}$ by the formula $\Psi \circ ( u_{1} \circ \phi_{1} ) = ( u_{2} \circ \phi_{2} ) \circ \psi$.

Proof of Claim (1): Let $\widetilde{N}_{1} \subset U_{1}$ denote a Borel set of zero $m_{2}$-measure that contains $\phi_{1}^{-1}( B_{1} )$ and the points where the chain rule
\begin{equation}
	\label{eq:chain:rule:intermediate:proof}
	\apmd{ ( u_{2} \circ \phi_{2} ) \circ \psi }
	=
	\apmd{ u_{2} \circ \phi_{2} } \circ D\psi
\end{equation}
fails to hold. (The existence of $\widetilde{N}_{1}$ is implied by the chain rule \Cref{prop:chainrule:QC}, the Condition ($N^{-1}$) of $\phi_{1}$, and the fact that $B_{1}$ has zero $\mathcal{H}^{2}_{G_{1}}$-measure.)

Let $\Gamma_{0}$ denote the union of $\Gamma^{+}_{ \widetilde{N}_{1} }$ and $( u_{1} \circ \phi_{1} )^{-1}( \Gamma_{1} )$. This path family has zero modulus. The paths in $AC_{+}( U_{1} ) \setminus \Gamma_{0}$ are $(u_{1} \circ \phi_{1})$-, $\Psi \circ ( u_{1} \circ \phi_{1} )$-, and $\psi$-good by the construction of $\Gamma_{1}$ and the fact that $\phi_{1}$ and $\phi_{2}$ are diffeomorphisms.

The classical chain rule for differentials and \eqref{eq:chain:rule:intermediate:proof} imply that everywhere in $U_{1} \setminus \widetilde{N}_{1}$,
\begin{align}
	\label{eq:metricderivatives:needed}
	\apmd{ \Psi \circ ( u_{1} \circ \phi_{1} ) }
	&=
	\apmd{2} \circ D\Psi \circ D( u_{1} \circ \phi_{1} )
	\text{ and }
	\\
	\label{eq:metricderivatives:needed.1}
	\apmd{ u_{1} \circ \phi_{1} }
	&=
	\apmd{1} \circ D( u_{1} \circ \phi_{1} ).
\end{align}
Then \Cref{metric:speed:almosteverycurve}, together with \eqref{eq:metricderivatives:needed} and \eqref{eq:metricderivatives:needed.1}, imply that for every $\gamma \in AC( U_{1} ) \setminus \Gamma_{0}$,
\begin{align}
	\label{eq:speed:psiimage}
	v_{ \Psi \circ ( u_{1} \circ \phi_{1} \circ \gamma ) }
	&=
	\apmd{2} \circ D\Psi \circ D( u_{1} \circ \phi_{1} ) \circ D\gamma \text{ and }
	\\
	\label{eq:speed:psidomain}
	v_{ u_{1} \circ \phi_{1} \circ \gamma }
	&=
	\apmd{1} \circ D( u_{1} \circ \phi_{1} ) \circ D\gamma
\end{align}
$m_{1}$-almost everywhere in $\mathrm{dom}( \gamma )$. By definition of the operator norm $\norm{ D\Psi }$ and of $I_{\Psi}$, \eqref{eq:speed:psiimage} and \eqref{eq:speed:psidomain} imply that
\begin{equation}
	\label{eq:uppergradient:estimate}
	v_{ \Psi \circ ( u_{1} \circ \phi_{1} \circ \gamma ) }
	\leq
	I_{\Psi} \circ ( u_{1} \circ \phi_{1} \circ \gamma )
	v_{ u_{1} \circ \phi_{1} \circ \gamma }
\end{equation}
$m_{1}$-almost everywhere in $\mathrm{dom}( \gamma )$; here we use the fact that the path $\gamma$ has zero length in $\widetilde{N}_{1}$ and that the path is $\Psi \circ ( u_{1} \circ \phi_{1} )$- and $u_{1} \circ \phi_{1}$-good. Since the path family $u_{1} \circ \phi_{1}( \Gamma_{0} )$ has zero modulus, we deduce from \eqref{eq:uppergradient:estimate} that Claim (1) holds.

Proof of Claim (2): Fix a representative of the minimal upper gradient of $\rho_{ \Psi }$. The claim follows from a slight modifications of the proof of \Cref{prop:upper:gradient}, therefore we only point out the important details. The modifications require the observations that if $\gamma$ is an arc segment in $U$ and it is not in the path family $\Gamma_{0}$, then \eqref{eq:speed:psiimage} and \eqref{eq:speed:psidomain} hold and such path segments are $\Psi \circ ( u_{1} \circ \phi_{1} )$- and $( u_{1} \circ \phi_{1} )$-good paths. Using these observations, Fubini's theorem, and the change of variables formula for $u_{1} \circ \phi_{1}$, it is possible to prove that for any $\theta \in \mathbb{R}$ for $m_{2}$-almost every $x \in U_{1} \setminus \widetilde{N}_{1}$,
\begin{align}
	\label{eq:lastestimatefornorms}
	\infty
	&>
	\rho_{ \Psi } \circ ( u_{1} \circ \phi_{1} )( x )
	J_{2}( \apmd{ u_{1} \circ \phi_{1} } )(x)
	\\ \notag
	&\geq
	\frac{ \apmd{2} \circ D\Psi \circ D( u_{1} \circ \phi_{1} )(x)[ \Exp{ i \theta } ] }{ \apmd{1} \circ D( u_{1} \circ \phi_{1} )(x)[ \Exp{ i \theta } ] }
	J_{2}( \apmd{ u_{1} \circ \phi_{1} } )(x),
\end{align}
where $\mathbb{R} \ni \theta \mapsto \Exp{ i \theta } \in \mathbb{S}^{1}$ is the exponential map. By considering a countable dense set $D \subset \mathbb{R}$, we deduce from \eqref{eq:lastestimatefornorms} that
\begin{equation*}
	\rho_{ \Psi } \circ ( u_{1} \circ \phi_{1} )(x)
	\geq
	\norm{ D\Psi } \circ ( u_{1} \circ \phi_{1} )(x)
\end{equation*}
$m_{2}$-almost every $x \in U_{1} \setminus \widetilde{N}_{1}$, i.e., $m_{2}$-almost every $x \in U_{1} \setminus \phi_{1}^{-1}( B_{1} )$. This inequality combined with the fact that $\phi_{1}$ and the restriction of $u_{1}$ to the complement of $B_{1}$ satisfy Conditions ($N$) and ($N^{-1}$) yield that
\begin{equation}
	\label{eq:Psi:gradient}
	\rho_{ \Psi }
	\geq
	\norm{ D\Psi }
	=
	I_{\Psi}
\end{equation}
$\mathcal{H}^{2}_{1}$-almost everywhere in $V_{1} \setminus u_{1}( B_{1} )$. Since $I_{\Psi}$ equals zero in $u_{1}( B_{1} )$, we deduce that the inequality \eqref{eq:Psi:gradient} holds $\mathcal{H}^{2}_{1}$-almost everywhere in $V_{1}$. The proof of Claim (2) is complete.
\end{proof}
\begin{remark}\label{computing:dilatations:charts}
The proof of \Cref{prop:characterizations} implies the following. Let $\phi_{1} \colon U_{1} \rightarrow V_{1}$ and $\phi_{2} \colon U_{2} \rightarrow V_{2}$ be isothermal parametrizations and suppose that $\Psi \colon u_{1}( V_{1} ) \rightarrow u_{2}( V_{2} )$ is quasiconformal. Define $\psi \colon U_{1} \rightarrow U_{2}$ by the identity $\left( u_{2} \circ \phi_{2} \right) \circ \psi = \Psi \circ \left( u_{1} \circ \phi_{1} \right)$ and $L$ by
\begin{equation*}
	L
	=
	D\psi
	\colon
	( TU_{1}, \, \apmd{1} \circ D\phi_{1} ) \rightarrow ( TU_{2}, \, \apmd{2} \circ D\phi_{2} ).
\end{equation*}
Then for $m_{2}$-almost every $x \in U_{1}$ and $y = u_{1} \circ \phi_{1}(x)$,
\begin{align}
	\label{eq:outer:usingcharts}
	K_{O}( \Psi )( y )
	&=
	K_{O}\left(
		L
	\right)(x) \text{ and }
	K_{I}( \Psi )(y)
	=
	K_{I}\left(
		L
	\right)(x).
\end{align}
In conclusion, the pointwise dilatations of $\Psi$ can be computed using isothermal charts.
\end{remark}
\section{Applications}\label{sec:applications}
\subsection{Isothermal parametrizations using Riemannian surfaces}\label{sec:isothermal:Riemannian}
We start this section by generalizing local isothermal parametrizations of locally reciprocal surfaces by planar domains (\Cref{def:minimal:chart}) with global isothermal parametrizations by Riemannian surfaces (\Cref{def:minimal:chart:surface}). The main result of this section is \Cref{thm:isothermal:surface}. We conclude the section with the proof of \Cref{thm:uniformization:general}.
\begin{definition}[Isothermal parametrizations]\label{def:minimal:chart:surface}
Let $Z_{d_{G}}$ be a Riemannian surface and $\Psi \colon Z_{d_{G}} \rightarrow Y_{d}$ a quasiconformal map.

We say that the pair $( Z_{d_{G}}, \, \Psi )$ is an \emph{isothermal parametrization of $Y_{d}$} if for every other Riemannian surface $\widetilde{Z}_{d_{\widetilde{G}}}$ and quasiconformal map $\widetilde{\Psi} \colon \widetilde{Z}_{d_{\widetilde{G}}} \rightarrow Y_{d}$ we have that
\begin{equation}
	\label{eq:minimal:surface}
	(K_{O}( \Psi )K_{I}( \Psi ))(x)
	\leq
	\left[ K_{O}( \widetilde{\Psi} )K_{I}( \widetilde{\Psi} ) \right] \circ ( \widetilde{\Psi}^{-1} \circ \Psi )(x)
\end{equation}
at $\mathcal{H}^{2}_{d_{G}}$-almost every $x \in Z_{d_{G}}$. If the image of the map $( Z_{d_{G}}, \, \Psi )$ is clear from the context, we say that $( Z_{d_{G}}, \, \Psi )$ is \emph{isothermal}. If also the domain is clear, we simply say that $\Psi$ is \emph{isothermal}.
\end{definition}
Recall that $u = \id_{Y} \colon Y_{d_{G}} \rightarrow Y_{d}$ is the uniformization map defined in the beginning of \Cref{sec:uniformization:main:theorem}. The following theorem is analogous to \Cref{prop:minimal:TFAE}.
\begin{theorem}\label{thm:isothermal:surface}
The uniformization map $u$ is isothermal. Isothermal parametrizations are unique in the following sense.

For a quasiconformal map $\Psi \colon Z_{d_{G}} \rightarrow Y_{d}$, the following are equivalent.
\begin{enumerate}[label=(\alph*)]
\item\label{thm:isothermal:surface.1} The map $\Psi$ is isothermal;
\item\label{thm:isothermal:surface.2} The composition $u^{-1} \circ \Psi$ is conformal in the Riemannian sense.
\end{enumerate}
Moreover, \ref{thm:isothermal:surface.1} and \ref{thm:isothermal:surface.2} are equivalent to any one of the following properties.
\begin{enumerate}[label=(\alph*)]\setcounter{enumi}{2}
\item\label{thm:isothermal:surface.3} The map $D\Psi \colon ( TZ, \, G ) \rightarrow ( TY, \, \apmd{ d } )$ is a Banach--Mazur minimizer $\mathcal{H}^{2}_{d_{G}}$-almost everywhere;
\item\label{thm:isothermal:surface.4} The identity
\begin{equation}
	\label{eq:characterization:isothermal:surfaces}
	\rho^{2}( G, \, \apmd{ d } \circ D\Psi )
	=
	K_{O}( \Psi )K_{I}( \Psi )
\end{equation}
holds $\mathcal{H}^{2}_{d_{G}}$-almost everywhere in $Z_{d_{G}}$.
\item\label{thm:isothermal:surface.5} The pointwise dilatations satisfy
\begin{equation}
	\label{eq:characterization:isothermal:surfaces:uniformizationmap}
	K_{O}( \Psi )K_{I}( \Psi ) \circ ( \Psi^{-1} \circ u )
	=
	K_{I}( u )K_{I}( u )
\end{equation}
$\mathcal{H}^{2}_{d_{G}}$-almost everywhere in $Y_{d_{G}}$.
\end{enumerate}
\end{theorem}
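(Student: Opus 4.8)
The plan is to reduce the whole statement to the pointwise Banach--Mazur analysis of \Cref{sec:minimal:chart} and \Cref{sec:qcmaps:surfaces}. Two preliminary observations are needed. First, for a Riemannian surface $Z_{d_{G}}$ the norm field $\apmd{d_{G}}$ of \Cref{def:derivative} coincides with the Riemannian norm $G$: isothermal parametrizations of $Z_{d_{G}}$ are conformal in the Riemannian sense and hence smooth by \Cref{isothermal:compatible}, so the chain rule for approximate metric differentials gives $\apmd{d_{G}}=G$. Thus, applied to a quasiconformal $\Psi\colon Z_{d_{G}}\to Y_{d}$, the machinery of \Cref{sec:qcmaps:surfaces} has source norm field $G$ and target norm field $\apmd{d}$; moreover $\rho_{Z_{d_{G}}}=\chi_{Z}$ (a Riemannian surface is locally biLipschitz to a planar domain, so \Cref{lem:Condition(N):inverse} applies), so all the ``$\nu_{1}$-a.e.'' statements there become ``$\mathcal{H}^{2}_{d_{G}}$-a.e.''. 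Second, the Banach--Mazur distance from an ellipse norm to any norm $N$ is unchanged if $N$ is precomposed with a linear isomorphism; in particular $\rho^{2}(E,\,N\circ A)=\rho^{2}(\norm{\cdot}_{2},\,N)$ for every ellipse norm $E$ and every $A\in GL_{2}$. Both facts are elementary but should be recorded explicitly.

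The core step is the pointwise inequality
\[
	[K_{O}(\widetilde\Psi)K_{I}(\widetilde\Psi)](z)
	\ \geq\
	[K_{O}(u)K_{I}(u)](\widetilde\Psi(z))
	\qquad\text{a.e.},
\]
valid for every Riemannian parametrization $\widetilde\Psi\colon\widetilde{Z}_{d_{\widetilde{G}}}\to Y_{d}$, with equality if and only if $D\widetilde\Psi\colon(TZ,\,\widetilde{G})\to(TY,\,\apmd{d})$ is a Banach--Mazur minimizer a.e. Indeed, \Cref{cor:dilatations} gives $K_{O}(\widetilde\Psi)K_{I}(\widetilde\Psi)=K_{O}(D\widetilde\Psi)K_{I}(D\widetilde\Psi)\geq\rho^{2}(\widetilde{G},\,\apmd{d}\circ D\widetilde\Psi)$, with equality exactly for Banach--Mazur minimizers; since $\widetilde{G}_{z}$ is an ellipse, the linear invariance above rewrites $\rho^{2}(\widetilde{G}_{z},\,\apmd{d}_{\widetilde\Psi(z)}\circ D\widetilde\Psi(z))$ as $\rho^{2}(G,\,\apmd{d})(\widetilde\Psi(z))$, which equals $[K_{O}(u)K_{I}(u)](\widetilde\Psi(z))$ by \Cref{cor:uniformization:pointwisedilatation:minimal}. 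Since $Du=D\id$ is itself a Banach--Mazur minimizer a.e. (again \Cref{cor:uniformization:pointwisedilatation:minimal} with \Cref{cor:dilatations}), the choice $\widetilde\Psi=u$ in the inequality shows $u$ is isothermal. Comparing the inequality applied to $\Psi$ (the ``$\geq$'') with \Cref{def:minimal:chart:surface} tested against $u$ (the ``$\leq$'') --- and, for the converse, the inequality applied to an arbitrary competitor --- shows that $\Psi$ is isothermal precisely when equality holds in the inequality for $\Psi$, i.e. $[K_{O}(\Psi)K_{I}(\Psi)](z)=[K_{O}(u)K_{I}(u)](\Psi(z))$ a.e. This yields at once the equivalence of \ref{thm:isothermal:surface.1}, \ref{thm:isothermal:surface.3} and \ref{thm:isothermal:surface.4} (the identity \eqref{eq:characterization:isothermal:surfaces} being the equality case of the \Cref{cor:dilatations} bound), and \ref{thm:isothermal:surface.5} follows by substituting $z=\Psi^{-1}\circ u(y)$.

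For \ref{thm:isothermal:surface.1} $\Leftrightarrow$ \ref{thm:isothermal:surface.2}, write $\nu=u^{-1}\circ\Psi\colon Z_{d_{G}}\to Y_{d_{G}}$, so $D\Psi=Du\circ D\nu$ at a.e. point. By the previous step $\Psi$ is isothermal iff $D\Psi$ is a Banach--Mazur minimizer a.e.; since $Du$ is also a Banach--Mazur minimizer and lands in the same target norm $\apmd{d}$ at each point, the rigidity statement of \Cref{pointwise:BM:properties} --- read in conformal coordinates that normalize $G$ on source and target to the Euclidean norm --- forces this to be equivalent to $D\nu$ being a Euclidean similarity at a.e. point, i.e. to $D\nu\colon(TZ,\,G)\to(TY,\,G)$ being conformal a.e. By \Cref{cor:conformality} (using $\apmd{d_{G}}=G$ on both surfaces) this is equivalent to $\nu$ being conformal in the geometric sense, and a geometrically conformal map between Riemannian surfaces is classically conformal in the conformal Riemannian charts of \Cref{isothermal:compatible} by Weyl's lemma, hence a diffeomorphism, hence conformal in the Riemannian sense (\Cref{def:riemanniansense}). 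This closes the cycle of equivalences.

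The main obstacle is the last equivalence \ref{thm:isothermal:surface.1} $\Leftrightarrow$ \ref{thm:isothermal:surface.2}. Although its analytic content is already packaged in \Cref{prop:characterizations}, \Cref{pointwise:BM:properties} and \Cref{cor:conformality}, the bookkeeping is delicate: one must check $\apmd{d_{G}}=G$ for Riemannian surfaces so that $\Psi$ genuinely has source norm field $G$; choose the conformal coordinate normalizations uniformly enough to invoke the $\mathbb{R}_{+}\cdot O_{2}$-rigidity of Banach--Mazur minimizers against the \emph{variable} target norm $\apmd{d}$; and upgrade the a.e. pointwise conformality of $D\nu$ to the global smoothness demanded by \Cref{def:riemanniansense} (this last passage is where Weyl's lemma and \Cref{cor:conformality} enter). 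The linear invariance of the Banach--Mazur distance, though trivial, is used in both the core inequality and this equivalence and should be isolated first.
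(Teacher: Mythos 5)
Your proof is correct. The first part --- establishing the core pointwise inequality $[K_{O}(\widetilde\Psi)K_{I}(\widetilde\Psi)](z)\geq[K_{O}(u)K_{I}(u)](\widetilde\Psi(z))$ via \Cref{cor:dilatations}, \Cref{cor:uniformization:pointwisedilatation:minimal} and the linear invariance of the Banach--Mazur distance, deducing that $u$ is isothermal, and closing the equivalence of \ref{thm:isothermal:surface.1}, \ref{thm:isothermal:surface.3}, \ref{thm:isothermal:surface.4}, \ref{thm:isothermal:surface.5} --- is essentially the same as the paper's argument (the paper writes $\rho^{2}(G_{Z},\,\apmd{d}\circ D\Psi)=\rho^{2}(G_{Z}\circ D(\Psi^{-1})\circ Du,\,\apmd{d}\circ Du)\circ(u^{-1}\circ\Psi)$ and replaces the ellipse norm $G_{Z}\circ D(\Psi^{-1})\circ Du$ by $G$; your ``linear invariance'' observation is exactly what justifies this). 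Where you genuinely diverge is \ref{thm:isothermal:surface.1} $\Leftrightarrow$ \ref{thm:isothermal:surface.2}: the paper pulls $\Psi$ back to a transition map $\psi$ between two isothermal charts, compares the linear maps $L$ and $\widetilde{L}$, and invokes \Cref{prop:minimal:TFAE} together with \Cref{maximal:atlas:metric} and \Cref{isothermal:compatible} to translate ``$\widetilde{L}$ is a Banach--Mazur minimizer a.e.'' into ``$u^{-1}\circ\Psi$ is conformal in the Riemannian sense''. You instead factor $D\Psi=Du\circ D\nu$ at the level of tangent spaces, normalize the two ellipse norms fibrewise to the Euclidean norm, apply the $\mathbb{R}_{+}\cdot O_{2}$-rigidity of \Cref{pointwise:BM:properties} (noting it transfers to minimizers \emph{into} a fixed norm by taking inverses), deduce that $D\nu$ is conformal a.e., pass to geometric conformality of $\nu$ via \Cref{cor:conformality}, and finally upgrade to Riemannian conformality using Weyl's lemma in conformal coordinates. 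Both routes ultimately rest on the same rigidity and on Weyl's lemma, but your version avoids introducing the auxiliary map $\psi$ and is arguably more transparent: it makes explicit that the whole equivalence hinges on the uniqueness of Banach--Mazur minimizers modulo similarities, whereas the paper hides this inside \Cref{prop:minimal:TFAE}. The cost is the fibrewise normalization bookkeeping, which you have correctly flagged as the delicate point.
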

\begin{proof}
For the duration of the proof, we denote $Z_{d_{G}}$ by $Z$ and the Riemannian norm of $Z$ by $G_{Z}$ to avoid confusion with the Riemannian norm $G$ of $Y_{d_{G}} = Y_{G}$. The Hausdorff $2$-measure on $Z$ is denoted by $\mathcal{H}^{2}_{Z}$ and the Hausdorff $2$-measure of $Y_{G}$ by $\mathcal{H}^{2}_{G}$.

First, the equivalence between Claim \ref{thm:isothermal:surface.3} and \ref{thm:isothermal:surface.4} is established by \Cref{cor:dilatations}.

Secondly, since $\Psi \colon Z_{G_{Z}} \rightarrow Y$ is quasiconformal, \Cref{cor:dilatations} shows that
\begin{align}
	\label{eq:second:observation}
	K_{O}( \Psi )K_{I}( \Psi )
	&\geq
	\rho^{2}( G_{Z}, \, \apmd{ d } \circ D\Psi )
	\\ \notag
	=
	\rho^{2}( G_{Z} \circ D( \Psi^{-1} ) &\circ Du, \, \apmd{ d } \circ Du ) \circ ( u^{-1} \circ \Psi ),
\end{align}
where the terms in \eqref{eq:second:observation} are well-defined and the chain of inequalities holds $\mathcal{H}^{2}_{Z}$-almost everywhere in $Z$.

The composition $G_{Z} \circ D( \Psi^{-1} ) \circ Du$ is a norm induced by a Riemannian norm $\mathcal{H}^{2}_{G}$-almost everywhere in $Y$. Therefore $\mathcal{H}^{2}_{Z}$-almost everywhere, the identity
\begin{gather*}
	\rho^{2}( G_{Z} \circ D( \Psi^{-1} ) \circ Du, \, \apmd{ d } \circ Du ) \circ ( u^{-1} \circ \Psi )
	\\
	=
	\rho^{2}( G, \, \apmd{ d } \circ Du ) \circ ( u^{-1} \circ \Psi )
\end{gather*}
holds. Applying \Cref{cor:uniformization:pointwisedilatation:minimal} to the latter term shows that
\begin{gather}
	\label{eq:third:observation}
	\rho^{2}( G_{Z} \circ D( \Psi^{-1} ) \circ Du, \, \apmd{ d } \circ Du ) \circ ( u^{-1} \circ \Psi )
	\\ \notag
	=
	K_{O}( u )K_{I}( u ) \circ ( u^{-1} \circ \Psi )
\end{gather}
$\mathcal{H}^{2}_{Z}$-almost everywhere in $Z$. Now \eqref{eq:second:observation} and \eqref{eq:third:observation} show that
\begin{equation}
	\label{eq:essentialobservation}
	K_{O}( \Psi )K_{I}( \Psi ) \circ ( \Psi^{-1} \circ u )
	\geq
	K_{O}( u )K_{I}( u )
\end{equation}
$\mathcal{H}^{2}_{G}$-almost everywhere in $Y_{G}$. We deduce from \eqref{eq:essentialobservation} that the uniformization map $u$ is isothermal.

The map $\Psi$ is isothermal if and only if the inequality in \eqref{eq:essentialobservation} is an equality $\mathcal{H}^{2}_{G}$-almost everywhere, and by \eqref{eq:second:observation} and \eqref{eq:third:observation}, this happens if and only if
\begin{equation}
	\label{eq:essentialobservation.1}
	D\Psi
	\colon
	( TZ, \, G_{Z} )
	\rightarrow
	( TY, \, \apmd{ d } )
\end{equation}
is a Banach--Mazur minimizer $\mathcal{H}^{2}_{Z}$-almost everywhere. An immediate consequence of this observation is that Claims \ref{thm:isothermal:surface.1}, \ref{thm:isothermal:surface.3}, \ref{thm:isothermal:surface.4}, and \ref{thm:isothermal:surface.5} are equivalent.

To prove the equivalence of Claims \ref{thm:isothermal:surface.3} and \ref{thm:isothermal:surface.2}, it suffices to show the equivalence in the domain of an arbitrary isothermal chart of $Z$. After this is shown, the proof is complete.

Fix isothermal parametrizations $u_{1} \circ \phi_{1} \colon U_{1} \rightarrow V_{1} \subset Z$ and $u \circ \phi_{2} \colon U_{2} \rightarrow V_{2} = \Psi( V_{1} ) \subset Y$. Define $\psi \colon U_{1} \rightarrow U_{2}$ by the identity $\left( u \circ \phi_{2} \right) \circ \psi = \Psi \circ \left( u_{1} \circ \phi_{1} \right)$ and $L$ by
\begin{equation*}
	L = D\psi
	\colon
	( TU_{1}, \, \apmd{1} \circ D\phi_{1} ) \rightarrow ( TU_{2}, \, \apmd{d} \circ D\phi_{2} ).
\end{equation*}
Then \Cref{computing:dilatations:charts} (pointwise dilatations of $\Psi$ can be computed using $L$) and \Cref{cor:dilatations} (pointwise dilatations of $\Psi$ coincide with the dilatations of $D\Psi$) yield that pointwise $m_{2}$-almost everywhere in $U_{1}$,
\begin{equation*} 
	K_{O}( D\Psi ) \circ ( u_{1} \circ \phi_{1} )
	=
	K_{O}( L )
	\text{ and }
	K_{I}( D\Psi ) \circ ( u_{1} \circ \phi_{1} )
	=
	K_{I}( L ).
\end{equation*}
Recall that $\apmd{d} \circ D\phi_{2} = \apmd{ u \circ \phi_{2} }$ by construction (\Cref{def:derivative}). The domain of $\Psi$ is the Riemannian surface $Z$ therefore \Cref{isothermal:compatible} yields that $\apmd{1} \circ D\phi_{1} = \apmd{ u_{1} \circ \phi_{1} } = G_{Z} \circ D( u_{1} \circ \phi_{1} )$ is comparable to the Euclidean norm $\norm{ \cdot }_{2}$. Therefore
\begin{equation*}
	\widetilde{L} = D\psi
	\colon
	( TU_{1}, \, \norm{ \cdot }_{2} )
	\rightarrow
	( TU_{2}, \, \apmd{ u \circ \phi_{2} } )
\end{equation*}
satisfies
\begin{equation*}
	K_{O}( L )
	=
	K_{O}( \widetilde{L} )
	\text{ and }
	K_{I}( L )
	=
	K_{I}( \widetilde{L} ).
\end{equation*}
We conclude that the differential $D\Psi$ from \eqref{eq:essentialobservation.1} is a Banach--Mazur minimizer $\mathcal{H}^{2}_{Z}$-almost everywhere in the image of $u_{1} \circ \phi_{1}$ if and only if $\widetilde{L}$ is a Banach--Mazur minimizer $m_{2}$-almost everywhere.

The differential $\widetilde{L}$ is a Banach--Mazur minimizer $m_{2}$-almost everywhere if and only if $( u \circ \phi_{2} ) \circ \psi$ is isothermal (\Cref{prop:minimal:TFAE}). This happens if and only if $\phi_{2} \circ \psi$ is conformal in the Riemannian sense (\Cref{maximal:atlas:metric}). Since $u_{1} \circ \phi_{1}$ is conformal in the Riemannian sense (\Cref{isothermal:compatible}), the map $\phi_{2} \circ \psi$ is conformal in the Riemannian sense if and only if
\begin{equation}
	u^{-1} \circ \Psi
	\colon
	Z \supset V_{1}
	\rightarrow
	u^{-1}( V_{2} ) \subset Y_{G_{2}}
\end{equation}
is conformal in the Riemannian sense.

In conclusion, the differential $D\Psi$ from \eqref{eq:essentialobservation.1} is a Banach--Mazur minimizer $\mathcal{H}^{2}_{Z}$-almost everywhere in the image $V_{1}$ of $u_{1} \circ \phi_{1}$ if and only if the restriction of $u^{-1} \circ \Psi$ to $V_{1}$ is conformal in the Riemannian sense. Since $u_{1} \circ \phi_{1}$ was an arbitrary isothermal chart of $Z$, the proof is complete.
\end{proof}
\begin{corollary}\label{cor:essentiallymaintheorem}
Let $Y_{d}$ be locally reciprocal. Then the complete constant curvature Riemannian surface $Y_{d_{G}}$ from \Cref{maximal:atlas:metric} and the uniformization map $u = \id_{Y} \colon Y_{d_{G}} \rightarrow Y_{d}$ provide an isothermal parametrization of $Y_{d}$. Moreover,
\begin{align*}
	\frac{ 2 }{ \pi }
	\rho^{2}( G, \, F )( x )
	&\leq
	K_{O}( u )( x )
	\leq
	\frac{ 4 }{ \pi } \text{ and }
	\\
	\frac{ \pi }{ 4 }
	\rho^{2}(  G, \, F )( x )
	&\leq
	K_{I}( u )( x )
	\leq
	\frac{ \pi }{ 2 }
\end{align*}
for $\mathcal{H}^{2}_{d_{G}}$-almost every $x \in Y_{d_{G}}$. In particular, the pointwise dilatations satisfy $K_{O}( u ) \leq \frac{ 4 }{ \pi }$, $K_{I}( u ) \leq \frac{ \pi }{ 2 }$ and $K_{O}( u )K_{I}( u ) \leq 2$ $\mathcal{H}^{2}_{d_{G}}$-almost everywhere.
\end{corollary}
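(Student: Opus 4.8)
The plan is to derive the first assertion directly from \Cref{thm:isothermal:surface} and to obtain the quantitative bounds by transporting \Cref{cor:minimal:charts} from planar isothermal parametrizations to the uniformization map. The Riemannian surface $Y_{d_{G}}$, together with its completeness and its constant curvature $-1$, $0$, or $1$, is furnished by \Cref{maximal:atlas:metric}, and that the pair $( Y_{d_{G}}, \, u )$ is an isothermal parametrization of $Y_{d}$ is precisely the content of the first sentence of \Cref{thm:isothermal:surface}. Thus only the pointwise estimates on $K_{O}( u )$ and $K_{I}( u )$ require proof.

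For these, I would fix an isothermal parametrization $\phi \colon \mathbb{R}^{2} \supset U \rightarrow V \subset Y_{d_{G}}$ of the type set up at the beginning of \Cref{sec:uniformization:main:theorem}: $\phi$ is conformal in the Riemannian sense, and $u \circ \phi$ is an isothermal parametrization of $u( V ) \subset Y_{d}$ in the planar sense of \Cref{def:minimal:chart}. Applying \Cref{cor:minimal:charts} to $u \circ \phi$ gives, $m_{2}$-almost everywhere in $U$,
\begin{align*}
	\frac{ 2 }{ \pi }
	\rho^{2}( \norm{ \cdot }_{2}, \, \apmd{u \circ \phi} )( x )
	&\leq
	K_{O}( u \circ \phi )( x )
	\leq
	\frac{ 4 }{ \pi }
	\quad \text{ and }
	\\
	\frac{ \pi }{ 4 }
	\rho^{2}( \norm{ \cdot }_{2}, \, \apmd{u \circ \phi} )( x )
	&\leq
	K_{I}( u \circ \phi )( x )
	\leq
	\frac{ \pi }{ 2 }.
\end{align*}
Next I would rewrite each factor. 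By \Cref{lem:uniformization:pointwisedilatation}, $K_{O}( u \circ \phi ) = K_{O}( u ) \circ \phi$ and $K_{I}( u \circ \phi ) = K_{I}( u ) \circ \phi$. By \Cref{def:derivative}, $\apmd{u \circ \phi} = F \circ D\phi$. Since $\phi$ is conformal in the Riemannian sense, $G \circ D\phi$ is a positive scalar multiple of $\norm{ \cdot }_{2}$ pointwise (\Cref{maximal:atlas:metric}), and as the multiplicative Banach--Mazur distance is invariant under rescaling either argument and is symmetric, $\rho( \norm{ \cdot }_{2}, \, \apmd{u \circ \phi} ) = \rho( G \circ D\phi, \, F \circ D\phi ) = \rho( G, \, F ) \circ \phi$, the last equality being the definition of $\rho( G, \, F )$ along $\phi$ recalled just after \Cref{cor:uniformization:pointwisedilatation:minimal}. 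Substituting these identities and precomposing the displayed inequalities with $\phi^{-1}$ yields the asserted bounds, $\mathcal{H}^{2}_{d_{G}}$-almost everywhere in $V$. Here I use that the smooth Riemannian surface $Y_{d_{G}}$ is locally biLipschitz homeomorphic to planar domains, so \Cref{lem:Condition(N):inverse} gives $\rho_{\id_{Y_{d_{G}}}} = \chi_{Y_{d_{G}}}$ up to an $\mathcal{H}^{2}_{d_{G}}$-null set, hence $\nu = \mathcal{H}^{2}_{d_{G}}$; this is what turns the $\nu$-almost-everywhere statements of \Cref{lem:uniformization:pointwisedilatation} and \Cref{cor:uniformization:pointwisedilatation:minimal} into genuine $\mathcal{H}^{2}_{d_{G}}$-almost-everywhere statements. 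Covering $Y_{d_{G}}$ by countably many such $V$ extends the bounds to all of $Y_{d_{G}}$.

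The ``in particular'' clause is then immediate: the upper bounds $K_{O}( u ) \leq \frac{4}{\pi}$ and $K_{I}( u ) \leq \frac{\pi}{2}$ are among those just established, while $K_{O}( u )K_{I}( u ) \leq 2$ follows either by multiplying them or from \Cref{cor:uniformization:pointwisedilatation:minimal} together with $\rho^{2}( G, \, F ) \leq 2$, the latter obtained from $\frac{2}{\pi}\rho^{2}( G, \, F ) \leq K_{O}( u ) \leq \frac{4}{\pi}$. I do not anticipate a genuine obstacle: the statement repackages \Cref{thm:isothermal:surface}, \Cref{cor:minimal:charts}, \Cref{lem:uniformization:pointwisedilatation}, and \Cref{cor:uniformization:pointwisedilatation:minimal}, and the only care required is maintaining the dictionary between the planar and the Riemannian formulations and checking that every exceptional set in play --- where $\phi$ fails Condition $(N)$ or $(N^{-1})$, where $F$ fails to be a norm, or where $\rho_{\id}$ fails to equal $\chi$ --- has $\mathcal{H}^{2}_{d_{G}}$-measure zero, which is guaranteed by \Cref{newtonian:seminorm}, \Cref{norm:almost:everywhere}, and \Cref{lem:Condition(N):inverse}.
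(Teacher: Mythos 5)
Your proof is correct and is essentially the paper's own argument, only written out in more detail. The paper's proof of \Cref{cor:essentiallymaintheorem} simply cites \Cref{thm:isothermal:surface} for the isothermality of $u$ and then invokes \Cref{cor:uniformization:pointwisedilatation:minimal}, \Cref{lem:uniformization:pointwisedilatation}, and \Cref{pointwise:BM:properties} for the bounds; you take the intermediate step of applying \Cref{cor:minimal:charts} (which is itself derived from \Cref{pointwise:BM:properties}) to $u\circ\phi$ and then transporting via \Cref{lem:uniformization:pointwisedilatation} and the scale-invariance of the Banach--Mazur distance, which is exactly the chain of reasoning the paper's terse citation compresses.
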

\begin{proof}
The uniformization map is isothermal by \Cref{thm:isothermal:surface}. The dilatation bounds for $u$ follow from \Cref{cor:uniformization:pointwisedilatation:minimal} and \Cref{lem:uniformization:pointwisedilatation,pointwise:BM:properties}.
\end{proof}
Recall the statement of \Cref{thm:uniformization:general}. We are ready to prove it.
\thmuniformizationgeneral*
\begin{proof}[Proof of \Cref{thm:uniformization:general}]
The "if" direction is clear since local reciprocality is a quasiconformal invariant.

The "only if" direction follows from \Cref{cor:essentiallymaintheorem}. That result proves that the uniformization map $u \colon Y_{G} \rightarrow Y$ and the related Riemannian surface $Y_{G}$ satisfy the claimed dilatation bounds.

The first part of the claim is proved. Next we prove that the upper bounds on the outer dilatation $K_{O}( \phi )$, inner dilatation $K_{I}( \phi )$ and the distortion $\left[ K_{O}( \phi )K_{I}( \phi ) \right]^{1/2}$ are optimal. The following construction implies that every quasiconformal equivalence class of locally reciprocal surfaces has a representative where the equalities are achieved.

The following construction is motivated by the simply connected example of $( \mathbb{R}^{2}, \, \norm{ \cdot }_{\infty} )$, where $\norm{ \cdot }_{\infty}$ is the supremum norm; see Example 2.2 of \cite{uniformization}.

Fix a complete Riemannian surface $Z = Z_{d_{G}}$ of curvature $-1$, $0$, or $1$. Then for $x \in Z$ and small $r > 0$, there exists an orthonormal frame $\left\{ E_{1}, \, E_{2} \right\}$ with respect to $G$ defined in the metric ball $\overline{B}( x, \, 3 r )$. Fix a smooth function $0 \leq \phi \leq 1$ that equals zero in the complement of $B( x, \, 2r )$ and one in $\overline{B}( x, \, r )$. Consider a smooth vector field $X = X_{1} E_{1} + X_{2} E_{2}$ defined in $B( x, \, 3r )$. We define $H[X] = \sup_{ i = 1, \, 2 } \abs{ X_{i} }$ and
\begin{equation*}
	F[X]
	=
	\phi H[X]
	+
	( 1 - \phi )G[X].
\end{equation*}
The continuous norm field $F$ is initially defined in $B( x, \, 3r )$ but it extends to $Z$ due to the choice of $\phi$. By minimizing the length functional induced by $F$ along absolutely continuous paths, we obtain a length distance $d_{F}$ on $Z$ that is $\sqrt{2}$-biLipschitz equivalent to $d_{G}$.

Since $F$ is continuous, it is not difficult to see that $\apmd{ d_{F} } = F$ everywhere (see for example \cite{lipmanifolds}). Then we construct
\begin{equation*}
	T_{F} \colon ( TZ, \, F ) \rightarrow ( TZ, \, G )
\end{equation*}
as in \Cref{beltrami:diff:norm}; due to potential issues with the non-orientability of $Z$, the map $T_{F}$ is only defined locally, say in a single isothermal chart. The following argument is local so we ignore this issue.

The definition of the Banach--Mazur distance implies that fiberwise, the pullback norm $G \circ T_{F}$ is induced by an inner product that is invariant under the isometry group of the corresponding fiber of $(TZ, \, F)$. We claim that this implies that $G \circ T_{F}$ must be comparable to $G$ and thus $T_{F}$ equals $D\id_Z$. This requires an argument.

The construction of $F$ implies that the John's ellipse of the unit ball of $F$ is the unit ball of $G$ everywhere. Also the fibers of $( TZ, \, F )$ have at least as many isometries as the corresponding fibers of $( TZ, \, G )$. Therefore, using the terminology of \cite{enoughsymmetries}, the fibers of $( TZ, \, F )$ have \emph{enough symmetries}. Then Proposition 16.5 of \cite{enoughsymmetries} states that the inner product induced by $T_{F}$ is comparable to the inner product induced by the John's ellipse of the unit ball of $F$, i.e., comparable to the Riemannian metric associated to $G$. Therefore the construction of $T_{F}$ implies that $T_{F} = D\id_{Z}$.

Since $T_{F}$ equals $D\id_{Z}$, we deduce that the map $\Psi = \id_{Z} \colon Z_{d_{G}} \rightarrow Z_{d_{F}}$ is isothermal (\Cref{thm:isothermal:surface} Part \ref{thm:isothermal:surface.3}). Additionally,
\begin{equation}
	\label{BM-distance.bad}
	\rho^{2}( G, \, F \circ D\Psi )
	=
	2
\end{equation}
everywhere in $\overline{B}( x, \, r )$ (see for example Proposition 37.4 of \cite{enoughsymmetries}). Then \eqref{BM-distance.bad} and the lower bounds stated in \Cref{cor:essentiallymaintheorem} imply that in $\overline{B}( x, \, r )$,
\begin{align*}
	K_{O}( \Psi )
	=
	\frac{ 4 }{ \pi },
	\quad
	K_{I}( \Psi )
	=
	\frac{ \pi }{ 2 }
	\quad
	\text{and}
	\quad
	K_{O}( \Psi )K_{I}( \Psi )
	=
	2.
\end{align*}
This shows the sharpness of \Cref{thm:uniformization:general}.
\end{proof}
\subsection{Conformal automorphisms}\label{sec:stuff}
We study when a locally reciprocal surface is conformally equivalent to a Riemannian surface. We start the discussion with trying to understand conformal maps between reciprocal surfaces.

As an application, we show that the conformal automorphism groups of reciprocal disks characterize when a metric surface is conformally equivalent to a Riemannian surfaces (see \Cref{prop:Möbiusgroup}). We apply \Cref{prop:Möbiusgroup} to Alexandrov surfaces in order to show that the uniformization map of an Alexandrov surface is conformal thereby proving \Cref{thm:uniformization:alexandrov}. See the end of this section.

Let $Y_{1}$ and $Y_{2}$ be two locally reciprocal surfaces and $\phi_{1} \colon Z_{ G_{1} } \rightarrow Y_{ 1 }$ and $\phi_{2} \colon Z_{ G_{2} } \rightarrow Y_{ 2 }$ isothermal parametrizations by Riemannian surfaces (the existence of which is guaranteed by \Cref{thm:isothermal:surface}).
\begin{proposition}\label{conformalmaps:betweenreciprocal}
Suppose that $\Psi \colon Y_{1} \rightarrow Y_{2}$ is quasiconformal. If $\Psi$ is conformal, the map
\begin{equation}
	\widetilde{\Psi}
	=
	\phi_{2}^{-1} \circ \Psi \circ \phi_{1}
	\colon
	Z_{G_{1}} \rightarrow Z_{G_{2}}
\end{equation}
is conformal in the Riemannian sense. In particular, $\widetilde{\Psi}$ is a diffeomorphism.
\end{proposition}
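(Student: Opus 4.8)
The plan is to reduce the statement to the uniqueness part of \Cref{thm:isothermal:surface}, by showing that post‑composing an isothermal parametrization with a conformal map is again an isothermal parametrization.

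First I would record what conformality of $\Psi$ gives pointwise. Since $\Psi$ is conformal so is $\Psi^{-1}$, and by \Cref{cor:conformality} the differential $D\Psi \colon (TY_{1},\apmd{1}) \to (TY_{2},\apmd{2})$ is conformal $\nu_{1}$‑almost everywhere. A conformal linear map between two‑dimensional normed planes has distortion $1$, hence is a linear similarity, so the pulled‑back norm $\apmd{2}\circ D\Psi$ is pointwise proportional to $\apmd{1}$ for $\nu_{1}$‑almost every point.

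The core step is that $(Z_{G_{1}},\Psi\circ\phi_{1})$ is an isothermal parametrization of $Y_{2}$. By \Cref{thm:isothermal:surface} Part \ref{thm:isothermal:surface.3}, since $(Z_{G_{1}},\phi_{1})$ is isothermal, the differential $D\phi_{1}\colon (TZ_{G_{1}},G_{1}) \to (TY_{1},\apmd{1})$ is a Banach--Mazur minimizer $\mathcal{H}^{2}_{G_{1}}$‑almost everywhere. Using the chain rule $D(\Psi\circ\phi_{1}) = D\Psi\circ D\phi_{1}$ and the composition laws (\Cref{lem:outer:inner:characterization}), post‑composing with the similarity $D\Psi$ changes neither the distortion nor the relevant Banach--Mazur distance: the target norm merely changes from $\apmd{1}$ to a proportional multiple of it, and rescaling a norm affects neither quantity. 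Hence $D(\Psi\circ\phi_{1})\colon (TZ_{G_{1}},G_{1}) \to (TY_{2},\apmd{2})$ is again a Banach--Mazur minimizer almost everywhere, and \Cref{thm:isothermal:surface} Part \ref{thm:isothermal:surface.3} shows $\Psi\circ\phi_{1}$ is isothermal. (Alternatively, one can argue straight from \Cref{def:minimal:chart:surface}: since $\Psi,\Psi^{-1}$ are conformal, the composition laws give $K_{O}(\Psi\circ h)K_{I}(\Psi\circ h) = K_{O}(h)K_{I}(h)$ pointwise almost everywhere for every quasiconformal $h$, while $k\mapsto\Psi^{-1}\circ k$ bijects the quasiconformal parametrizations of $Y_{2}$ with those of $Y_{1}$, so the defining inequality for $(Z_{G_{1}},\Psi\circ\phi_{1})$ reduces to the one for $(Z_{G_{1}},\phi_{1})$.) A measure‑theoretic point here is that $\Psi$ need not satisfy Condition $(N)$, so the $\nu_{1}$‑almost‑everywhere hypotheses must be transported to $Z_{G_{1}}$ using that $\phi_{1}$ (equivalently $u_{1}$) satisfies Conditions $(N)$ and $(N^{-1})$ with respect to the weighted measures, cf. \Cref{lem:abscontinuity,lem:Condition(N)}.

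Finally I would apply \Cref{thm:isothermal:surface} Part \ref{thm:isothermal:surface.2} to $Y_{2}$ with its uniformization map $u_{2}=\id_{Y_{2}}$: both $\Psi\circ\phi_{1}$ and $\phi_{2}$ are isothermal parametrizations of $Y_{2}$, so $u_{2}^{-1}\circ(\Psi\circ\phi_{1})$ and $u_{2}^{-1}\circ\phi_{2}$ are conformal in the Riemannian sense. Since maps conformal in the Riemannian sense are diffeomorphisms (\Cref{def:riemanniansense}) and this class is closed under composition and inversion,
\[
	\widetilde{\Psi} = \phi_{2}^{-1}\circ\Psi\circ\phi_{1} = \bigl(u_{2}^{-1}\circ\phi_{2}\bigr)^{-1}\circ\bigl(u_{2}^{-1}\circ\Psi\circ\phi_{1}\bigr)
\]
is conformal in the Riemannian sense, hence a diffeomorphism. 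The step I expect to be most delicate is the core step: keeping the several pairs of norms ($G_{i}$ versus $\apmd{i}$ on $TY_{i}$, together with their pullbacks to $TZ_{G_{i}}$) and their measurable dependence on the basepoint straight, and correctly handling the absolute‑continuity issues just mentioned.
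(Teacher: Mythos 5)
Your proof is correct and takes essentially the same route as the paper: show $\Psi\circ\phi_{1}$ is isothermal (the paper does this via the composition-law identity $K_{O}(\Psi\circ\phi_{1})K_{I}(\Psi\circ\phi_{1})=K_{O}(\phi_{1})K_{I}(\phi_{1})$, which is your parenthetical alternative; your primary argument via the Banach--Mazur-minimizer characterization in \Cref{thm:isothermal:surface}\ref{thm:isothermal:surface.3} is an equally valid formulation of the same theorem), then apply the uniqueness statement \Cref{thm:isothermal:surface}\ref{thm:isothermal:surface.2} to both $\Psi\circ\phi_{1}$ and $\phi_{2}$ and compose. The extra attention you give to transporting $\nu_{1}$-a.e.\ statements to $\mathcal{H}^{2}_{G_{1}}$-a.e.\ via Conditions $(N)$/$(N^{-1})$ of $u_{1}$ is a real subtlety the paper glosses over, and you handle it correctly.
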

Let $Y_{G}$ denote the Riemannian surface obtained from \Cref{maximal:atlas:metric}. We obtain the following result from \Cref{cor:essentiallymaintheorem} and \Cref{conformalmaps:betweenreciprocal}.
\begin{corollary}\label{conformal:automorphisms:characterization}
The conformal automorphisms of $Y$ are conformal automorphisms of $Y_{G}$.
\end{corollary}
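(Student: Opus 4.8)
The plan is to deduce the statement directly from \Cref{conformalmaps:betweenreciprocal}, taking both isothermal parametrizations to be the uniformization map $u = \id_{Y} \colon Y_{d_{G}} \to Y_{d}$.

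First I would recall that, by \Cref{cor:essentiallymaintheorem}, the pair $(Y_{d_{G}}, \, u)$ is an isothermal parametrization of $Y_{d}$ in the sense of \Cref{def:minimal:chart:surface}; this is precisely what is needed to insert $u$ into \Cref{conformalmaps:betweenreciprocal} in both slots $\phi_{1}$ and $\phi_{2}$.

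Next, let $\Psi$ be a conformal automorphism of $Y$, that is, a quasiconformal self-map of $Y_{d}$ with $K(\Psi) = 1$. Applying \Cref{conformalmaps:betweenreciprocal} with $Y_{1} = Y_{2} = Y_{d}$ and $\phi_{1} = \phi_{2} = u$ shows that
\[
	\widetilde{\Psi}
	=
	u^{-1} \circ \Psi \circ u
	\colon
	Y_{d_{G}} \rightarrow Y_{d_{G}}
\]
is conformal in the Riemannian sense, and in particular a diffeomorphism. Since $u = \id_{Y}$, the maps $\widetilde{\Psi}$ and $\Psi$ coincide as point maps of the underlying surface $Y$, so $\Psi$, regarded as a self-map of $Y_{G} = Y_{d_{G}}$, is conformal in the Riemannian sense, hence a conformal automorphism of $Y_{G}$. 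This gives the claimed containment.

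The only points needing a line of justification — and these are the mildest of obstacles — are the bookkeeping identifications: that conjugating $\Psi$ by the identity map $u$ does not change the map, and that a diffeomorphism which is conformal in the Riemannian sense is also conformal in the geometric sense, so that the phrase \emph{conformal automorphism of $Y_{G}$} is unambiguous. The latter follows from Weyl's lemma together with \Cref{isothermal:compatible}, exactly as used elsewhere in \Cref{sec:uniformization:structure}. No new estimate or construction is required; the corollary is a genuine consequence of \Cref{cor:essentiallymaintheorem} and \Cref{conformalmaps:betweenreciprocal}.
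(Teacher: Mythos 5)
Your proof is correct and matches the paper's own (one-line) argument, which likewise deduces the corollary by applying \Cref{conformalmaps:betweenreciprocal} with $\phi_1 = \phi_2 = u$, the isothermality of $u$ being guaranteed by \Cref{cor:essentiallymaintheorem}. The brief bookkeeping remarks you add (that conjugation by $\id_Y$ is the identity, and that Riemannian-sense conformality implies geometric conformality) are sound and only make the implicit identifications explicit.
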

\begin{proof}[Proof of \Cref{conformalmaps:betweenreciprocal}]
If $\Psi$ is conformal, then the pointwise dilatations satisfy
\begin{equation}
	\label{eq:conformalmaps:1}
	K_{O}( \Psi \circ \phi_{1} )K_{I}( \Psi \circ \phi_{1} )
	=
	K_{O}( \phi_{1} )K_{I}( \phi_{1} )
\end{equation}
by \Cref{lem:outer:inner:characterization}.

Let $u_{2} = \id_{Y_{2}} \colon Y_{G_{2}} \rightarrow Y_{2}$ be the uniformization map (see \Cref{sec:uniformization:main:theorem}). An application of \eqref{eq:conformalmaps:1} and \Cref{thm:isothermal:surface} shows that $\Psi \circ \phi_{1} = \phi_{2} \circ \widetilde{\Psi}$ is isothermal. The same theorem shows that this happens if and only if $u_{2}^{-1} \circ ( \phi_{2} \circ \widetilde{\Psi} )$ is conformal in the Riemannian sense. Since $u_{2}^{-1} \circ \phi_{2}$ is also conformal in the Riemannian sense by the same theorem, so is the map $\widetilde{\Psi}$.
\end{proof}
Let $u = \id_{Y} \colon Y_{G} \rightarrow Y$ denote the uniformization map. For each disk $D \subset Y_{G}$, let $G$ denote the conformal automorphism group of $D$ and $H$ the conformal automorphism group of $u(D) \subset Y$. \Cref{thm:isothermal:surface} implies that the restriction of $u$ to $D$ is an isothermal parametrization of $u(D)$. \Cref{conformalmaps:betweenreciprocal} implies that the conjugation map
\begin{equation}
	\label{eq:conjugation:uniformization}
	C_{u^{-1}}
	\colon
	H 
	\rightarrow
	G, \,
	h
	\mapsto
	u^{-1}
	\circ
	h
	\circ
	\restr{ u }{ D }
\end{equation}
is a well-defined homomorphism. It is obviously injective. Recall that $Y$ has a norm field $\apmd{} = \apmd{d}$ associated to it by \Cref{def:derivative}.
\begin{proposition}\label{prop:Möbiusgroup}
The conjugation map \eqref{eq:conjugation:uniformization} is an isomorphism if and only if the restriction of $u$ to $D$ is conformal. This happens if and only if the norm field $\apmd{}$ of $Y$ is induced by an inner product $\mathcal{H}^{2}_{G}$-almost everywhere in $D$.
\end{proposition}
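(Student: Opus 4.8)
The plan is to identify the conjugation map $C_{u^{-1}}$ of \eqref{eq:conjugation:uniformization} with a genuine inclusion of automorphism groups and then appeal to the rigidity of the Möbius and affine groups. Write $\mathcal{C}_{1}$ for the conformal structure that $Y_{G}$ induces on the topological disk $D$, and $\mathcal{C}_{2}$ for the one that $Y$ induces on it through the isothermal atlas $\mathcal{I}_{d}$ (recall \Cref{maximal:atlas}); thus $G = \operatorname{Aut}(D, \mathcal{C}_{1})$ and $H = \operatorname{Aut}(D, \mathcal{C}_{2})$, regarded as subgroups of the group of self-homeomorphisms of $D$. Since $u = \id_{Y}$ is the identity on the underlying set, $C_{u^{-1}}$ sends $h \in H$ to the \emph{same} self-map of $D$, now viewed in $G$; in other words $C_{u^{-1}}$ is nothing but the inclusion $H \subseteq G$ (its well-definedness being \Cref{conformalmaps:betweenreciprocal}), and the hypothesis that $C_{u^{-1}}$ is an isomorphism means precisely $H = G$.

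First I would treat the equivalence "$C_{u^{-1}}$ is an isomorphism $\iff$ $u|_{D}$ is conformal". If $u|_{D}$ is conformal then, being the identity set-map $D \to D$, it identifies $\mathcal{C}_{1}$ with $\mathcal{C}_{2}$, so $H = G$. Conversely, suppose $H = G$. As $D$ is simply connected, each $(D, \mathcal{C}_{i})$ admits a conformal homeomorphism $\phi_{i}$ onto some $\Omega_{i} \in \{ \mathbb{D}, \, \mathbb{C} \}$, and $\psi := \phi_{2} \circ \phi_{1}^{-1} \colon \Omega_{1} \to \Omega_{2}$ is quasiconformal (it equals $\phi_{2} \circ u|_{D} \circ \phi_{1}^{-1}$ as a set-map). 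The equality $H = G$ rewrites as $\psi \operatorname{Aut}(\Omega_{1}) \psi^{-1} = \operatorname{Aut}(\Omega_{2})$, so these two groups are isomorphic; since $\operatorname{Aut}(\mathbb{D})$ contains $\operatorname{PSL}(2, \mathbb{R})$ and is therefore not solvable while $\operatorname{Aut}(\mathbb{C})$ is solvable, we get $\Omega_{1} = \Omega_{2} =: \Omega$. Hence $\psi$ lies in the normalizer of $\operatorname{Aut}(\Omega)$ inside the self-homeomorphisms of $\Omega$, which by the classical rigidity of the Möbius group --- respectively of the group $\{ az + b \} \cup \{ a \overline{z} + b \}$, where one first forces $\psi$ to preserve the translation subgroup, hence to be real affine, and then forces it to preserve the point stabiliser $\mathbb{R}_{+} \cdot O_{2}$ --- equals $\operatorname{Aut}(\Omega)$ itself. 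Thus $\psi$ is conformal, and $u|_{D} = \phi_{2}^{-1} \circ \psi \circ \phi_{1}$ is conformal.

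Next I would establish "$u|_{D}$ conformal $\iff$ $\apmd{}$ is induced by an inner product $\mathcal{H}^{2}_{G}$-almost everywhere in $D$". Applying \Cref{cor:conformality} to $u|_{D} \colon (D, d_{G}) \to (u(D), d)$ --- whose domain is Riemannian, so its norm field is the Riemannian norm $G$ (\Cref{isothermal:compatible}) --- conformality of $u|_{D}$ is equivalent to $D\id \colon (TD, G) \to (TD, \apmd{})$ being conformal almost everywhere, i.e. to $\apmd{} = c \cdot G$ almost everywhere for some measurable $c > 0$; in particular $\apmd{}$ is then induced by an inner product. For the converse, assume $\apmd{}$ is induced by an inner product $\mathcal{H}^{2}_{G}$-almost everywhere in $D$, and fix an isothermal parametrization $u|_{D} \circ \phi$ of an open subset of $u(D)$ with $\phi$ conformal in the Riemannian sense. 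Its approximate metric differential $\apmd{ u|_{D} \circ \phi } = \apmd{} \circ D\phi$ is induced by an inner product almost everywhere, while by \Cref{prop:minimal:TFAE} Part \ref{prop:minimal:TFAE:f} its Beltrami differential $\mu_{ \apmd{ u|_{D} \circ \phi } }$ vanishes almost everywhere. But by \Cref{pointwise:BM:properties} an inner-product norm with vanishing Beltrami differential (equivalently, for which $\id$ is a Banach--Mazur minimizer onto $\norm{ \cdot }_{2}$) has distortion one, hence is a scalar multiple of $\norm{ \cdot }_{2}$. So $\apmd{ u|_{D} \circ \phi } = c \norm{ \cdot }_{2}$ almost everywhere, i.e. $u|_{D} \circ \phi$ is conformal, and since such charts cover $u(D)$ the map $u|_{D}$ is conformal.

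The main obstacle is the normalizer rigidity in the second paragraph: one must invoke (or prove) that a quasiconformal self-map of $\mathbb{D}$ or $\mathbb{C}$ normalizing the full group of conformal automorphisms is itself conformal, and in the $\mathbb{C}$ case one must carefully upgrade the conclusion "real affine" to "complex affine or anti-complex-affine" by tracking the isotropy subgroups. A secondary subtlety, handled in the third paragraph, is that \Cref{cor:conformality} on its own only characterises conformality of $u|_{D}$ by $\apmd{} = c \cdot G$; to see that the ostensibly weaker hypothesis "$\apmd{}$ induced by an inner product" already forces this, one must feed that hypothesis back through the defining property \ref{prop:minimal:TFAE:f} of isothermal parametrizations.
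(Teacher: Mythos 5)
The second paragraph of your proof has a fatal gap which makes the converse direction (``if $H = G$ then $u|_D$ is conformal'') vacuous. You identify $H$ with $\operatorname{Aut}(D, \mathcal{C}_2)$, the automorphism group of the conformal structure induced by the isothermal atlas. But $H$ is the group of conformal --- that is, $1$-quasiconformal --- self-homeomorphisms of the \emph{metric surface} $u(D) \subset Y_d$, which is in general a strict subgroup of the biholomorphisms of $(D, \mathcal{C}_2)$: by \Cref{cor:conformality} a $1$-quasiconformal automorphism must in addition preserve the norm field $\apmd{}$ up to scalar, a genuine restriction. Moreover, \Cref{maximal:atlas:metric} gives $\mathcal{I}_d = \mathcal{I}_{d_G}$, hence $\mathcal{C}_1 = \mathcal{C}_2$, so your map $\psi = \phi_2 \circ \phi_1^{-1}$ is \emph{always} biholomorphic and the normalizer argument carries no content: the ``conformality'' it delivers is the trivial conformal-structure compatibility, not metric $1$-quasiconformality of $u|_D$. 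Under your identification one would get $H = G$ unconditionally, contradicting, for example, the surface $(\mathbb{R}^2, \norm{\cdot}_\infty)$, whose conformal automorphism group as a metric surface is a proper subgroup of the complex affine group because its elements must preserve the square norm field up to scale.

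The paper exploits the hypothesis quite differently: writing $\psi = u \circ \phi$ for a classical uniformization $\phi$ of $D$ and $M$ for the conformal automorphism group of $\phi^{-1}(D)$, it fixes a density point $x$ of a Borel set on which $\apmd{\psi}$ is continuous (\Cref{newtonian:seminorm}, \Cref{norm:almost:everywhere}); for each $\lambda \in \mathbb{S}^1$ it chooses $m \in M$ fixing $x$ with $Dm(x) = \lambda$, and since $\psi \circ m \circ \psi^{-1} \in H$ is conformal for the metric, \Cref{computing:dilatations:charts} together with the chain rule of \Cref{diffeo} forces $\apmd{\psi}(x)[\lambda v] = C(x)\,\apmd{\psi}(x)[v]$ for all $v$; iterating in $\lambda$ gives $C(x) = 1$, so $\apmd{\psi}(x)$ is a multiple of the Euclidean norm. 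Your third paragraph, establishing the equivalence between conformality of $u|_D$ and $\apmd{}$ being an inner-product norm almost everywhere via \Cref{cor:conformality}, \Cref{prop:minimal:TFAE}, and \Cref{pointwise:BM:properties}, is correct and essentially matches the paper's terser appeal to \Cref{cor:conformality,cor:uniformization:pointwisedilatation:minimal}.
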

\begin{proof}
The fact that the restriction of $u$ to $D$ is conformal if and only if the norm field $\apmd{}$ of $Y$ is induced by an inner product $\mathcal{H}^{2}_{G}$-almost everywhere in $D$ follows from \Cref{cor:conformality,cor:uniformization:pointwisedilatation:minimal}. If the restriction of $u$ to $D$ is conformal, the surjectivity of the conjugation map is clear. Thus it suffices to consider the case where the conjugation map is surjective.

To that end, the classical uniformization theorem implies that there exists a conformal map $\phi \colon U \rightarrow D \subset Y_{G}$, where $U$ is the Euclidean plane $\mathbb{R}^{2}$ or the Euclidean disk $\mathbb{D}$. Let $\psi = u \circ \phi$ and let $M$ denote the conformal automorphism group of $U$. Then by construction, the conjugation map
\begin{equation}
	\label{eq:conjugation:uniformization:proof}
	C_{ \psi^{-1} }
	\colon
	H
	\rightarrow
	M, \,
	h
	\mapsto
	\psi^{-1}
	\circ
	h
	\circ
	\psi
\end{equation}
is an isomorphism.

We apply \Cref{newtonian:seminorm} and \Cref{norm:almost:everywhere} to the map $\psi$. We obtain an $m_{2}$-null set $N_{0} \subset \mathbb{D}$ and countably many Borel sets $\left\{ B_{i} \right\}_{ i = 1 }^{ \infty }$ partitioning $U \setminus N_{0}$ in such a way that for each $i$, $\restr{ \apmd{\psi} }{ B_{i} }$ is a norm pointwise everywhere and continuous as a map into $\mathcal{C}( \mathbb{S}^{1}, \, \mathbb{R} )$.

Consider a Lebesgue density point $x \in B_{i}$ for some $i$ and fix $\lambda \in \mathbb{S}^{1}$. Then there exists an orientation-preserving Möbius transformation $m \in M$ that fixes $x$ and for which $Dm(x) = \lambda$. (The existence of such an $m$ is readily verified by first considering the special case $x = 0$ and then reducing the general case to $x = 0$ by considering a Möbius transformation of $M$ that maps $0$ to $x$.)

Consider the Borel set $B = B_{i} \cap m^{-1}( B_{i} )$. Since $m$ is a diffeomorphism that fixes $x$, the point $x$ is a density point of $B$ as well. By recalling \Cref{diffeo}, $\apmd{ \psi \circ m }$ exists at $x \in B$ if and only if $\apmd{ \psi }$ exists at $m(x) \in B$, and in either case,
\begin{equation}
	\label{eq:essential:step:continuity}
	\apmd{ \psi \circ m }(x) = \apmd{ \psi } \circ Dm(x).
\end{equation}
Consequently, the restriction of $\apmd{ \psi \circ m }$ to $B$ is continuous.

Since $h = \psi \circ m \circ \psi^{-1}$ is conformal, \Cref{computing:dilatations:charts} implies that the set $K \subset U$ of points where the differential
\begin{equation}
	\label{eq:differential:m}
	Dm
	\colon
	( TU, \, \apmd{ \psi } )
	\rightarrow
	( TU, \, \apmd{ \psi } )
\end{equation}
is conformal has the full measure of $U$. At every $y \in K$, we find that
\begin{equation}
	\label{eq:conformal:inB}
	\apmd{ \psi } \circ Dm(y)
	=
	C( y )\apmd{ \psi }( y )
\end{equation}
for the operator norm $C( y )$ of \eqref{eq:differential:m}. The chain rule \eqref{eq:essential:step:continuity} implies that
\begin{equation}
	\label{eq:operator:norm:chainrule}
	C( y )
	=
	\norm{ D\id \colon ( TU, \, \apmd{ \psi } ) \rightarrow ( TU, \, \apmd{ \psi \circ m } ) } ( y ).
\end{equation}
The norms $\apmd{ \psi \circ m }$ and $\apmd{ \psi }$ are continuous in $B$ by construction of $B$ thus $C$ from \eqref{eq:operator:norm:chainrule} is continuous in $B$. We deduce that \eqref{eq:conformal:inB} holds at Lebesgue density points of $B$ (which are limit points of $B \cap K$) and especially at $y = x$.

We conclude that for $y = x$ and for $m \in M$ with $m(x) = x$ and $Dm(x) = \lambda \in \mathbb{S}^{1}$, \eqref{eq:conformal:inB} shows that for every $v \in \mathbb{R}^{2}$,
\begin{equation}
	\label{eq:iteratethis}
	\apmd{ \psi }(x)[ \lambda v ]
	=
	C( x )\apmd{ \psi }(x)[ v ].
\end{equation}
Iterating \eqref{eq:iteratethis} for $v_{n} = \lambda^{n}$ yields that $C( x ) = 1$. Thus for every $v \in \mathbb{R}^{2}$,
\begin{equation}
	\label{eq:C=1}
	\apmd{ \psi }(x)[ \lambda v ]
	=
	\apmd{ \psi }(x)[ v ].
\end{equation}
Since the conjugation map \eqref{eq:conjugation:uniformization:proof} is an isomorphism, \eqref{eq:C=1} holds for every $\lambda \in \mathbb{S}^{1}$. We deduce that $\apmd{ \psi }( x ) = c(x) \norm{ \cdot }_{2}$ for some constant $0 < c(x) < \infty$. The argument goes through for an arbitrary density point of $B_{i}$ for any $i$, hence such a constant $c(x)$ exists for $m_{2}$-almost every $x \in U$. This conclusion combined with \Cref{prop:dilatationbounds} and \Cref{cor:conformality} imply that $\psi$ is conformal. This means that
\begin{equation*}
	\restr{ u }{ D }
	=
	\psi \circ \phi^{-1}
\end{equation*}
is conformal as well. The proof is complete.
\end{proof}
Next we prove \Cref{thm:uniformization:alexandrov}. Recall the statement.
\thmuniformizationalexandrov*
We elaborate on the precise statement of \Cref{thm:uniformization:alexandrov} a bit more. We recall that a two-dimensional Alexandrov space $X$ can be written as a union of a surface $Y$ and topological boundary $\partial Y = X \setminus Y$ in such a way that every point of $Y$ has a neighbourhood biLipschitz homeomorphic to a planar domain. Also recall that at $\mathcal{H}^{2}_{d}$-almost every point of $Y$, the Gromov--Hausdorff tangents are isometric to $\mathbb{R}^{2}$. See Sections 10.8 and 10.10 of \cite{lengthspace} for details. We prove the theorem for the surface part $Y = Y_{d}$.

The uniformization map $u$ is the map refered as "the map in \Cref{thm:uniformization:general}"; recall the proof of \Cref{thm:uniformization:general}. When we say that the uniformization map is absolutely continuous, we mean that it satisfies Condition ($N$).
\begin{proof}
The existence of biLipschitz charts of $Y$ imply that the minimal upper gradient of $\id_{Y_{d}}$ has $\chi_{ Y_{d} }$ as its representative, therefore the uniformization map $u$ satisfies Conditions ($N$) and ($N^{-1}$) (\Cref{lem:Condition(N):inverse}).

The chain rule \eqref{eq:chain:rule:surface} and \Cref{prop:Möbiusgroup} reduce the claim to showing that for an arbitrary isothermal parametrization $\phi \colon \mathbb{D} \rightarrow V \subset Y_{d}$ of a disk $V \subset Y_{d}$, the approximate metric differential $\apmd{ \phi }$ is induced by an inner product $m_{2}$-almost everywhere.

We use two facts. First, the Gromov--Hausdorff tangents of $Y_{d}$ are unique and isometric to $\mathbb{R}^{2}$ $\mathcal{H}^{2}_{d}$-almost everywhere in $Y_{d}$. Secondly, an application of \Cref{newtonian:seminorm} and a standard blow-up argument at density points along the lines of \cite[Proposition 3.1]{uniquetangents} imply that for $m_{2}$-almost every $x \in \mathbb{D}$, the tangents of $Y_{d}$ at $\phi(x)$ are isometric to $( \mathbb{R}^{2}, \, \apmd{ \phi }(x) )$. We deduce that $\apmd{ \phi }$ must be induced by an inner product $m_{2}$-almost everywhere. The proof is complete.
\end{proof}
\subsection{Reciprocality and quasisymmetric maps}\label{sec:recip:qs}
We consider the connections between local reciprocality and quasisymmetric maps. First we introduce some definitions that are needed later on in this section.

Let $Y$ and $Z$ be metric spaces. For a homeomorphism $\phi \colon Y\rightarrow Z$, $y \in Y$ and $r > 0$, let
\begin{align*}
	L_{\phi}( y, \, r )
	&=
	\sup\left\{
		d_{Z}( \phi(y), \, \phi(w) )
		\mid
		d_{Y}( y, \, w ) \leq r
	\right\} \text{ and }
	\\
	l_{\phi}( y, \, r )
	&=
	\inf\left\{
		d_{Z}( \phi(y), \, \phi(w) )
		\mid
		d_{Y}( y, \, w ) \geq r
	\right\}.
\end{align*}
The map $\phi$ is \emph{$\eta$-quasisymmetric} if there exists a homeomorphism $\eta \colon \left[0, \, \infty\right) \rightarrow \left[0, \, \infty\right)$ for which for every $y \in Y$ and $0 < r_{1}, \, r_{2} < \diam Y$,
\begin{equation}
	L_{\phi}( y, \, r_{1} )
	\leq
	\eta\left( \frac{ r_{1} }{ r_{2} } \right)
	l_{\phi}( y, \, r_{2} ).
\end{equation}
Such a homeomorphism $\eta$ is called the \emph{(quasisymmetric) distortion function} of $\phi$. The map $\phi$ is \emph{locally quasisymmetric} if every $y \in Y$ has a neighbourhood $V = V(y)$ such that the restriction of $\phi$ to $V$ is $\eta_{V}$-quasisymmetric for some distortion function $\eta_{V}$ with $\eta_{V}$ depending on $V$. If the neighbourhood $V$ can be chosen in such a way that the restriction of $\phi$ to $V$ is $\eta$-quasisymmetric, with $\eta$ independent of the point $y \in Y$, the map $\phi$ is \emph{locally $\eta$-quasisymmetric}.

A metric surface $Y_{d}$ is \emph{Ahlfors $2$-regular} if there exists a constant $C \geq 1$ such that for every $y \in Y$ and $\diam Y > r > 0$,
\begin{equation}
	\label{eq:ahlforsregular}
	C^{-1}
	r^{2}
	\leq
	\mathcal{H}^{2}( \overline{B}(y, \, r) )
	\leq
	C r^{2}.
\end{equation}
Such a constant is refered to as the \emph{Ahlfors regularity constant} of $Y$ and is denoted by $C_{A}$.

Let $\lambda \geq 1$. A metric surface $Y_{d}$ is \emph{$\lambda$-linearly locally contractible} if for every $y \in Y$ and $0 < r < \frac{ \diam Y }{ \lambda }$, the metric ball $B( y, \, r )$ is contractible inside the ball $B( y, \, \lambda r )$. That is, there exists $y_{0} \in B( y, \, \lambda r )$ and a continuous map $H \colon B( y, \, r ) \times \left[ 0, \, 1 \right] \rightarrow B( y, \, \lambda r )$ such that for every $z \in B( y, \, r )$, $H( z, \, 0  ) = z$ and $H( z, \, 1 ) = y_{0}$. The constant $\lambda$ is refered to as the linear local contractibility constant of $Y_{d}$.

The rest of the section is split into two parts. \Cref{sec:recip:qs:local} studies when the uniformization map of a locally reciprocal surface is locally quasisymmetric. We prove a quantitative and a qualitative characterization, when the uniformization map $u$ is locally quasisymmetric. In \Cref{sec:recip:qs:global}, we prove that given a Ahlfors $2$-regular linearly locally contractible compact metric surface, the uniformization map is quasisymmetric. We prove \Cref{thm:compact:QS:unif} in the latter part.
\subsubsection{Local quasisymmetric uniformization}\label{sec:recip:qs:local}
Suppose that $Y_{d}$ is a locally reciprocal surface. Let $u \colon Y_{d_{G}} \rightarrow Y_{d}$ be the uniformization map from \Cref{cor:essentiallymaintheorem}. Recall that $u$ is an isothermal parametrization of $Y_{d}$ and therefore $\frac{ \pi }{ 2 }$-quasiconformal.
\begin{restatable}{theorem}{thmlocalQS}\label{thm:local:QS}
Let $Y_{d}$ be a locally reciprocal metric surface. Then the following are equivalent, quantitatively.
\begin{enumerate}[label=(\alph*)]
\item\label{QSparam:loc} The uniformization map $u \colon Y_{d_{G}} \rightarrow Y_{d}$ is locally $\eta$-quasisymmetric;
\item\label{QSparam:charts} The space $Y_{d}$ has an atlas of $\eta'$-quasisymmetric charts.
\end{enumerate}
\end{restatable}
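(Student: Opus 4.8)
The plan is to reduce everything to compositions of quasisymmetric maps, exploiting that on the Riemannian side the uniformization map factors through conformal diffeomorphisms. Recall from \Cref{maximal:atlas:metric} and the normalization \ref{simple:surface:isothermal} that every isothermal parametrization of $Y_{d}$ has the form $u \circ \phi$, where $\phi \colon \mathbb{R}^{2} \supset U \to V \subset Y_{d_{G}}$ is conformal in the Riemannian sense, hence a conformal diffeomorphism onto an open subset of the smooth Riemannian surface $Y_{d_{G}}$. In Riemannian normal coordinates such a $\phi$ is an (anti)holomorphic map read through charts in which the metric is $(1+O(r^{2}))$-close to the Euclidean one; by the Koebe distortion theorem it follows that $\phi$ and $\phi^{-1}$ are, on sufficiently small balls, $\eta_{0}$-quasisymmetric for an absolute $\eta_{0}$. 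Moreover $u$ is $\tfrac{\pi}{2}$-quasiconformal (\Cref{lem:uniformization:pointwisedilatation}) and $\phi$ is conformal in the geometric sense, so $u \circ \phi$ is geometrically $\tfrac{\pi}{2}$-quasiconformal. I track all distortion functions explicitly, so both implications come out quantitative; I also use the elementary facts that the inverse of an $\eta$-quasisymmetric map is $\theta_{\eta}$-quasisymmetric and that a composition of $\eta_{1}$- and $\eta_{2}$-quasisymmetric maps is $(\eta_{1}\circ\eta_{2})$-quasisymmetric.

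For \ref{QSparam:loc} $\Rightarrow$ \ref{QSparam:charts}: given that $u$ is locally $\eta$-quasisymmetric, fix $x \in Y_{d}$, choose an isothermal parametrization $u \circ \phi \colon U \to u(V)$ with $x \in u(V)$, and shrink $U$ so that $\phi$ is $\eta_{0}$-quasisymmetric on $U$ and $u$ is $\eta$-quasisymmetric on $V$. Then $u \circ \phi$ is $(\eta\circ\eta_{0})$-quasisymmetric, so the isothermal chart $f = (u\circ\phi)^{-1}$ is $\eta'$-quasisymmetric with $\eta'$ depending only on $\eta$. Letting $x$ vary over $Y_{d}$ produces the required atlas of $\eta'$-quasisymmetric charts.

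For \ref{QSparam:charts} $\Rightarrow$ \ref{QSparam:loc}: fix $y \in Y_{d_{G}}$ and set $x = u(y)$. Choose an $\eta'$-quasisymmetric chart $g \colon W \to \Omega \subset \mathbb{R}^{2}$ of the atlas with $x \in W$, and an isothermal parametrization $u \circ \phi \colon U \to u(V)$ with $y \in V$ and $u(V) \subset W$. The transition map $h = g \circ (u\circ\phi) \colon U \to g(u(V)) \subset \mathbb{R}^{2}$ is a homeomorphism between planar domains. The crucial point, discussed below, is that $g$ is geometrically $K_{0}$-quasiconformal with $K_{0} = K_{0}(\eta')$; granting this, $h = g \circ (u\circ\phi)$ is geometrically $\tfrac{\pi}{2}K_{0}$-quasiconformal, hence classically quasiconformal between planar domains, hence locally $\eta_{1}$-quasisymmetric with $\eta_{1}$ depending only on $\eta'$. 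Writing $u \circ \phi = g^{-1}\circ h$, with $g^{-1}$ being $\theta_{\eta'}$-quasisymmetric and $h$ locally $\eta_{1}$-quasisymmetric, we get that $u \circ \phi$ is locally $\eta_{2}$-quasisymmetric, $\eta_{2} = \eta_{2}(\eta')$; and since $u = (u\circ\phi)\circ\phi^{-1}$ with $\phi^{-1}$ locally $\eta_{0}$-quasisymmetric, $u$ is locally $\eta$-quasisymmetric near $y$ with $\eta = \eta(\eta')$ independent of $y$. As $y$ was arbitrary, $u$ is locally $\eta$-quasisymmetric.

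The main obstacle is the claim, used above, that the quasisymmetric charts are geometrically quasiconformal with quantitatively controlled dilatation. This is exactly where the standing hypothesis that $Y_{d}$ is locally reciprocal (equivalently, locally quasiconformally Euclidean, \Cref{thm:Kai}) together with the local finiteness of $\mathcal{H}^{2}_{d}$ is needed: an $\eta'$-quasisymmetric homeomorphism is metrically quasiconformal with constant $H(\eta')$, and one upgrades this to geometric quasiconformality with dilatation $K_{0}(\eta')$ using the analytic characterization of quasiconformality (\Cref{thm:QC:differentequivalent}) and the measure-theoretic machinery of \Cref{sec:analysisonQCmaps}. I would isolate this as a preliminary lemma before the proof. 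The remaining ingredients — the Koebe-type estimate for conformal Riemannian charts, the bookkeeping for distortion functions under inversion and composition, and the classical fact that planar quasiconformal homeomorphisms are locally quasisymmetric with control depending only on the dilatation — are routine.
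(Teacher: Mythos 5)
Your outline is correct and, at a high level, matches the paper's: both directions reduce to decomposing the uniformization map through a conformal parametrization $\phi \colon \mathbb{R}^{2} \supset U \to V \subset Y_{d_G}$, comparing $u\circ\phi$ with the given atlas, and tracking distortion functions under composition and inversion.

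The genuine gap is the claim you list as routine: that a conformal embedding $\phi$ into $Y_{d_G}$ and its inverse are, on ``sufficiently small balls,'' $\eta_0$-quasisymmetric for an \emph{absolute} $\eta_0$. Your justification via Riemannian normal coordinates does not deliver a uniform constant. First, in normal coordinates the read-through of $\phi$ is not (anti)holomorphic --- normal coordinates are not isothermal in general --- so Koebe does not apply directly; you only get a quasiconformal map with distortion controlled by the defect of the metric from Euclidean. Second, and more seriously, normal coordinates are defined on a ball of radius at most the injectivity radius, and for the surfaces $Y_{d_G}$ produced by \Cref{maximal:atlas:metric} (e.g.\ a thin hyperbolic surface) the injectivity radius can be arbitrarily small, so the scale on which your estimate is valid --- and hence the resulting distortion function --- depends on the point, not on universal data. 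The precise statement you need is \Cref{QS:restriction:is:controlled}, and its proof has genuine content: one normalizes the domain to $\mathbb{D}$, lifts $\phi$ to a conformal embedding $\psi$ of $\mathbb{D}$ into the universal cover $\Omega \in \{\mathbb{H}^2, \mathbb{R}^2, \mathbb{S}^2\}$, applies Koebe/Bieberbach-type estimates \emph{in the model space} (where the constants are absolute), and then exploits the \emph{injectivity} of $\phi$ to show that $\psi(\beta\mathbb{D})$ sits inside a ball on which the covering projection is an isometry --- so the uniform quasisymmetry descends to $\phi$. The case $\Omega=\mathbb{S}^2$ needs the extra diameter hypothesis $2\diam\phi(\mathbb{D})\leq\diam Y_{d_G}$. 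None of this is visible in your sketch, so the assertion of an absolute $\eta_0$ is unsupported as written.

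Granting \Cref{QS:restriction:is:controlled}, the rest of your argument does close: the upgrade from an $\eta'$-quasisymmetric chart to a geometrically quasiconformal one with dilatation $K_0(\eta')$ is precisely \Cref{qsmaps}, obtained from Theorem 3.13 of \cite{qsfromeuktometric}, Proposition 5.5 of \cite{williams}, and \Cref{thm:QC:differentequivalent}; and the composition bookkeeping is elementary. So the proposal identifies the correct structure of the proof, but the lemma you relegate to ``routine'' is in fact the main technical input and requires the covering-space argument, not just Koebe plus normal coordinates.
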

We need two auxiliary results for the proof of \Cref{thm:local:QS}.
\begin{lemma}\label{qsmaps}
Suppose that $U$ is an open subset of the plane, $V$ is locally reciprocal and $\phi \colon U \rightarrow V$ is an $\eta$-quasisymmetric homeomorphism. Then $\phi$ is $K$-quasiconformal with $K$ depending only on $\eta$.

If $U$ is the Euclidean disk $\mathbb{D}$, there exists a $\frac{ \pi }{ 2 }K$-quasiconformal map $\psi \colon \mathbb{D} \rightarrow \mathbb{D}$ such that $\psi(0) = 0$ and the composition $\phi \circ \psi$ is an isothermal $\eta'$-quasisymmetric map with $\eta'$ depending only on $\eta$.
\end{lemma}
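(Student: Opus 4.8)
The plan is to prove the two assertions of the lemma separately. The first, that an $\eta$-quasisymmetric homeomorphism is automatically $K$-quasiconformal with $K=K(\eta)$, is a repackaging of standard facts; the second, the isothermal normalization, is the substantive point and is where \Cref{thm:minimal:chart:existence}, \Cref{thm:minimal:chart:uniqueness}, the Riemann mapping theorem, and a reflection argument come together.

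For the first assertion I would argue locally: since quasiconformality is a local condition and $V$ is locally reciprocal, it suffices to work inside a reciprocal disk $D'\subset V$ together with Romney's chart $g\colon D'\to\Omega\subset\mathbb{R}^{2}$ from \Cref{thm:Kai}, for which $K_{O}(g)\le\frac{\pi}{2}$ and $K_{I}(g)\le\frac{4}{\pi}$. Writing $\phi=g^{-1}\circ(g\circ\phi)$ over $\phi^{-1}(D')$, it is enough to show that the planar homeomorphism $h=g\circ\phi$ is $K(\eta)$-quasiconformal, and for a homeomorphism between planar domains this can be verified through the measurable metric definition by bounding $H_{h}(x)=\limsup_{r\to0^{+}}L_{h}(x,r)/l_{h}(x,r)$ for $m_{2}$-almost every $x$ by a constant depending only on $\eta$. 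Quasisymmetry gives $H_{\phi}\le\eta(1)$ at \emph{every} point, while \Cref{norm:almost:everywhere}, \Cref{prop:phi:characterization} and \Cref{cor:global:inner:and:outer:dilatation} applied to $g^{-1}$ show that for $m_{2}$-almost every $q\in\Omega$ the approximate metric differential $\apmd[g^{-1}]{q}$ is a norm whose eccentricity is controlled by the absolute dilatation bounds of $g$; composing these two infinitesimal estimates along shrinking balls bounds $H_{h}$ almost everywhere in terms of $\eta$ alone. One must still check that the relevant exceptional sets pull back to $m_{2}$-null sets, which again uses the absolute-continuity available along the reciprocal chart, cf.\ \Cref{lem:Condition(N):inverse}; tracking constants through $\phi=g^{-1}\circ h$ and covering $V$ by such disks then gives $K(\phi)\le K=K(\eta)$.

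For the second assertion, let $U=\mathbb{D}$; by the first part $K_{I}(\phi)\le K=K(\eta)$. Apply \Cref{thm:minimal:chart:existence} to obtain a $\frac{4}{\pi}K_{I}(\phi)$-quasiconformal map $\psi_{0}\colon W\to\mathbb{D}$, $W\subset\mathbb{R}^{2}$, with $\phi\circ\psi_{0}$ isothermal. The map $\psi_{0}^{-1}\colon\mathbb{D}\to W$ is quasiconformal, and quasiconformal maps between planar domains preserve the conformal type, so $W$ is a simply connected proper subdomain of $\mathbb{C}$; let $\tau\colon\mathbb{D}\to W$ be a Riemann map normalized by $\tau(0)=\psi_{0}^{-1}(0)$, and set $\psi=\psi_{0}\circ\tau\colon\mathbb{D}\to\mathbb{D}$. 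Then $\psi(0)=0$; since $\tau$ is conformal, $\psi$ is $\frac{4}{\pi}K_{I}(\phi)\le\frac{4}{\pi}K\le\frac{\pi}{2}K$-quasiconformal; and $\phi\circ\psi=(\phi\circ\psi_{0})\circ\tau$ is isothermal by \Cref{thm:minimal:chart:uniqueness}, because $(\phi\circ\psi_{0})^{-1}\circ(\phi\circ\psi)=\tau$ is conformal in the classical sense. It remains to see that $\phi\circ\psi$ is $\eta'$-quasisymmetric with $\eta'=\eta'(\eta)$: since quasisymmetry composes quantitatively and $\phi$ is $\eta$-quasisymmetric, it suffices that $\psi$ be $\eta''$-quasisymmetric with $\eta''$ depending only on $K(\psi)$, hence only on $\eta$. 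Here the normalization $\psi(0)=0$ is used: the $K(\psi)$-quasiconformal self-homeomorphism $\psi$ of $\mathbb{D}$ extends to a homeomorphism of $\overline{\mathbb{D}}$, and reflecting across $\partial\mathbb{D}$ by $z\mapsto 1/\overline{z}$ produces a $K(\psi)$-quasiconformal homeomorphism of $\mathbb{C}$ fixing $0$ and $\infty$ — the fixing of $\infty$ is precisely where $\psi(0)=0$ enters — and a $K(\psi)$-quasiconformal self-homeomorphism of $\mathbb{C}$ is quantitatively quasisymmetric, so restricting back to $\mathbb{D}$ gives the claim.

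I expect the main obstacle to be the first assertion: the implication ``quasisymmetric $\Rightarrow$ quasiconformal'' is standard in spirit, but extracting a bound on $K(\phi)$ depending only on $\eta$ forces one to genuinely combine the pointwise metric control from quasisymmetry with the almost-everywhere metric differentiability and eccentricity estimates for the reciprocal chart, with careful bookkeeping of null sets through the compositions. The second assertion is largely mechanical once the existence and uniqueness of isothermal parametrizations are granted; there the one point deserving attention is that the normalization $\psi(0)=0$ is essential for the reflection argument, since a general $K$-quasiconformal self-map of $\mathbb{D}$ need not be uniformly quasisymmetric up to the boundary.
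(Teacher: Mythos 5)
Your second half — invoking \Cref{thm:minimal:chart:existence} to produce $\psi_{0}\colon W\to\mathbb{D}$, then postcomposing with a Riemann map (using \Cref{thm:minimal:chart:uniqueness} to preserve the isothermal property) so as to normalise $W=\mathbb{D}$ and $\psi(0)=0$, then extending $\psi$ by reflection in $\partial\mathbb{D}$ to a global $K$-quasiconformal self-map of $\mathbb{R}^{2}$ fixing $0$ and $\infty$ and concluding uniform quasisymmetry — is essentially the paper's argument, and it is correct. Your choice to route through $\frac{4}{\pi}K_{I}(\phi)$ rather than $\frac{\pi}{2}K(\phi)$ is harmless since the former is smaller.

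The first half has a genuine gap. You propose to bound the pointwise metric distortion $H_{h}(x)$ of $h=g\circ\phi$ for $m_{2}$-a.e.\ $x\in U$ by combining $H_{\phi}(x)\le\eta(1)$ (which holds at every point) with the eccentricity bound on $\apmd{g^{-1}}$ that holds off some $m_{2}$-null set $N\subset\Omega$. To use the latter estimate at $h(x)$ for almost every $x$, you need $h^{-1}(N)$ to be $m_{2}$-null; equivalently, $h$ (and so effectively $\phi$) must satisfy Condition $(N^{-1})$. But \Cref{lem:Condition(N):inverse}, which you cite for precisely this step, applies only to a map that is \emph{already known} to be quasiconformal — and that is exactly what the first assertion is supposed to establish, so the argument as written is circular. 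Moreover, even granting the a.e.\ pointwise bound, passing from a bound on $H_{h}$ a.e.\ to geometric quasiconformality requires some Sobolev-type regularity of $h$, which is also not free at this stage. The paper bypasses both problems at once by citing Theorem 3.13 of \cite{qsfromeuktometric} together with Proposition 5.5 of \cite{williams}, which directly verify the analytic hypothesis of \Cref{thm:QC:differentequivalent} for an $\eta$-quasisymmetric $\phi$ and yield $K_{O}(\phi)\le K(\eta)$; it then bounds $K_{I}(\phi)$ by composing with an isothermal chart $f$ (so $K_{O}(f\circ\phi)\le\frac{\pi}{2}K$) and using the self-improving property of planar quasiconformal maps, giving $K_{I}(\phi)\le(\frac{\pi}{2})^{2}K$. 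To repair your proof you should either cite those results, or first establish $\phi\in N^{1,2}_{loc}$ and Condition $(N^{-1})$ for $\phi$ from quasisymmetry alone \emph{before} the pointwise argument is set up — but that groundwork is exactly the content of the cited theorems.
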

\begin{proof}
It follows from Theorem 3.13 of \cite{qsfromeuktometric} and Proposition 5.5 of \cite{williams} that such a map $\phi$ satisfies the assumptions of \Cref{thm:QC:differentequivalent} with $K$ depending only on $\eta$, i.e., $K_{O}( \phi ) \leq K < \infty$. We can assume without loss of generality that there exists an isothermal chart $f \colon V \rightarrow W \subset \mathbb{R}^{2}$.

The composition $f \circ \phi$ satisfies $K_{O}( f \circ \phi ) \leq \frac{ \pi }{ 2 } K$ (since $\phi$ is $\frac{\pi}{2}$-quasiconformal by \Cref{cor:minimal:charts}), hence by \Cref{thm:QC:differentequivalent}, the composition is an element of $N^{1, \, 2}_{loc}( U, \, W )$. If we set $K' = K_{O}( f \circ \phi )$, the map $f \circ \phi$ is $K'$-quasiconformal (see for example \cite[Definition 3.1.1 and Theorem 3.7.7]{astala}). Then the map $\phi = f^{-1} \circ ( f \circ \phi )$ is quasiconformal with $K_{I}( \phi ) \leq \frac{ \pi }{ 2 }K' \leq \left( \frac{ \pi }{ 2 } \right)^{2}K$ and $K_{O}( \phi ) \leq K$.

Now suppose that $U = \mathbb{D}$. We prove the existence of the map $\psi$. Since $\phi$ is $K( \phi )$-quasiconformal, \Cref{thm:minimal:chart:existence} shows that there exists a $\frac{ \pi }{ 2 }K( \phi )$-quasiconformal map $\psi \colon \mathbb{R}^{2} \supset W \rightarrow \mathbb{D}$ for which $\phi \circ \psi$ is isothermal. Due to the Riemann mapping theorem (and \Cref{thm:minimal:chart:uniqueness}), we can assume without loss of generality that $W = \mathbb{D}$ and that $\psi( 0 ) = 0$. Then $\psi \colon \mathbb{D} \rightarrow \mathbb{D}$ is a $K_{0} = \frac{ \pi }{ 2 }K( \phi )$-quasiconformal map that fixes the origin. Such a map is $\eta_{0}$-quasisymmetric, where $\eta_{0}$ depends only on $K_{0}$: Extend the map $\psi$ via reflection to a $K_{0}$-quasiconformal map $\widetilde{\psi}$ from $\mathbb{R}^{2}$ onto itself that fixes the origin and that maps the unit circle onto itself (the existence of $\widetilde{\psi}$ follows for example from Theorem 5.9.1 of \cite{astala}). Then $\widetilde{\psi}$ is $\eta_{0}$-quasiconformal with $\eta_{0}$ depending only on $K_{0}$; see for example Corollary 3.10.4 of \cite{astala}. Therefore $\psi$ is $\eta_{0}$-quasisymmetric.

The composition $\phi \circ \psi$ is $\eta \circ \eta_{0}$-quasisymmetric. Since $\eta_{0}$ depends only on $K_{0}$ and $K_{0}$ only on $\eta$, the distortion function $\eta \circ \eta_{0}$ depends only on $\eta$.
\end{proof}
The author got the idea for the following proposition from \cite{compactqs}.
\begin{proposition}\label{QS:restriction:is:controlled}
Let $Y_{d_{G}}$ be a complete Riemannian surface of curvature $-1$, $0$, or $1$ and
\begin{equation*}
	\phi
	\colon
	\mathbb{D}
	\rightarrow
	Y_{d_{G}}
\end{equation*}
a conformal embedding.

Suppose that $Y_{d_{G}}$ is not homeomorphic to the sphere $\mathbb{S}^{2}$ or that
\begin{equation*}
	2 \diam \phi( \mathbb{D} ) \leq \diam Y_{d_{G}}.
\end{equation*}
Then there exists a constant $2^{-1} > \beta > 0$ and a distortion function $\widetilde{\eta}$ for which
\begin{equation}
	\label{eq:control:diameter}
	\phi( \beta \mathbb{D} )
	\subset
	B_{d_{G}}\left( \phi(0), \, \frac{ l_{\phi}( 0, \, \frac{1}{2} ) }{ 6 } \right)
\end{equation}
and the restriction of $\phi$ to $\beta \mathbb{D}$ is $\widetilde{\eta}$-quasisymmetric. The constant $\beta$ and distortion function $\widetilde{\eta}$ are independent of $\phi$ and the surface $Y_{d_{G}}$.
\end{proposition}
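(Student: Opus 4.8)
The plan is to lift $\phi$ to the universal cover of $Y_{d_{G}}$, reduce the statement to a classical estimate for univalent functions, and then invoke the Koebe distortion and one--quarter theorems. Since $Y_{d_{G}}$ is a complete Riemannian surface of constant curvature, its universal cover $(\Omega,\pi)$ is one of $\mathbb{H}^{2}$, $\mathbb{R}^{2}$, $\mathbb{S}^{2}$, with $\pi$ a local isometry and the covering group acting freely by isometries (\Cref{coveringspaces}, \Cref{maximal:atlas:metric}). As $\mathbb{D}$ is simply connected and $\phi$ is an embedding, $\phi$ lifts to a conformal open embedding $\widetilde\phi\colon\mathbb{D}\to\Omega$ with $\pi\circ\widetilde\phi=\phi$, and $\pi$ is injective on $\widetilde\phi(\mathbb{D})$. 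I will use the hypothesis to ensure that $-\widetilde\phi(0)\notin\widetilde\phi(\mathbb{D})$ in the case $\Omega=\mathbb{S}^{2}$: if $Y$ is not homeomorphic to $\mathbb{S}^{2}$ then $Y=\mathbb{RP}^{2}$ and the two sheets over $\phi(\mathbb{D})$ are disjoint, so $\widetilde\phi(\mathbb{D})$ contains no antipodal pair; if instead $2\diam\phi(\mathbb{D})\le\diam Y=\pi/2$, then $\widetilde\phi(\mathbb{D})$ lies in the hemisphere centred at $\widetilde\phi(0)$. In particular the inradius of $\widetilde\phi(\mathbb{D})$ at $\widetilde\phi(0)$ is at most $\pi/2$ when $\Omega=\mathbb{S}^{2}$. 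After post-composing $\widetilde\phi$ with an isometry of $\Omega$ (which changes neither $d_{G}$-distances nor $l_{\phi},L_{\phi}$ nor the quasisymmetry hypothesis) and passing to a fixed conformal chart — the identity for $\mathbb{H}^{2}$ and $\mathbb{R}^{2}$, stereographic projection from $-\widetilde\phi(0)$ for $\mathbb{S}^{2}$ — I obtain a univalent, holomorphic (conjugating if necessary) map $f\colon\mathbb{D}\to\mathbb{C}$ with $f(0)=0$, in which $d_{G}$ has the form $\lambda\,|dz|$ with $\lambda$ the standard factor, $\lambda(0)=\lambda_{0}\in\{1,2\}$. The one--quarter theorem, together with the inradius bound (for $\mathbb{S}^{2}$) or the Schwarz lemma (for $\mathbb{H}^{2}$, where $|f'(0)|\le 1$), then gives $|f'(0)|\le 4$ in the $\mathbb{S}^{2}$ case (and no bound is needed for $\mathbb{R}^{2}$, where $\lambda\equiv1$). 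Hence $f(\beta\mathbb{D})\subset\{\,|z|\le1\,\}$ once $\beta$ is below a universal constant, a region on which $\lambda$ is comparable to $\lambda_{0}$ with a universal constant and on which $\mathbb{S}^{2}$-geodesics between two points stay (hemisphere convexity).

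The decisive ingredient is a lower bound for $d_{G}$ that survives the possibly non-trivial fundamental group of $Y$: for every $w\in\mathbb{D}$,
\[
	d_{G}(\phi(0),\phi(w))
	\ \ge\
	\min\{\,d_{\Omega}(\widetilde\phi(0),\widetilde\phi(w)),\ d_{\Omega}(\widetilde\phi(0),\partial\widetilde\phi(\mathbb{D}))\,\}.
\]
Indeed, a $d_{G}$-rectifiable path from $\phi(0)$ to $\phi(w)$ of length below $d_{\Omega}(\widetilde\phi(0),\partial\widetilde\phi(\mathbb{D}))$ lifts, starting at $\widetilde\phi(0)$, to a path that cannot reach $\partial\widetilde\phi(\mathbb{D})$, hence stays in $\widetilde\phi(\mathbb{D})$; since $\pi$ is injective there, it ends at $\widetilde\phi(w)$, so its length is at least $d_{\Omega}(\widetilde\phi(0),\widetilde\phi(w))$. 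Reading this in the chart, where $d_{\Omega}(\widetilde\phi(0),\cdot)$ is an increasing, universally controlled function of $|f(\cdot)|$ and $d_{\Omega}(\widetilde\phi(0),\partial\widetilde\phi(\mathbb{D}))$ is the same function applied to the Euclidean distance from $0$ to $\partial f(\mathbb{D})$, I combine the one--quarter theorem (the Euclidean distance from $0$ to $\partial f(\mathbb{D})$ is at least $|f'(0)|/4$) with the Koebe distortion theorem ($|f(w)|\ge\frac{|w|}{(1+|w|)^{2}}|f'(0)|\ge\frac{2}{9}|f'(0)|$ for $|w|\ge\tfrac12$) to conclude
\[
	l_{\phi}(0,\tfrac12)\ \ge\ c_{*}\,\lambda_{0}\,|f'(0)|
\]
for a universal $c_{*}>0$; in particular $l_{\phi}(0,\tfrac12)>0$.

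For the upper bound, the Koebe distortion theorem gives $|f(z)|\le\frac{\beta}{(1-\beta)^{2}}|f'(0)|\le 4\beta|f'(0)|$ for $|z|\le\beta\le\tfrac18$, whence $d_{G}(\phi(0),\phi(z))\le d_{\Omega}(\widetilde\phi(0),\widetilde\phi(z))\le C\beta\lambda_{0}|f'(0)|$, i.e. $L_{\phi}(0,\beta)\le C\beta\lambda_{0}|f'(0)|$. Choosing $\beta\in(0,\tfrac12)$ a small enough universal constant — so that $C\beta\le c_{*}/6$ and the ``$\{|z|\le1\}$'' requirements above hold — yields $L_{\phi}(0,\beta)\le\tfrac16\,l_{\phi}(0,\tfrac12)$, which is the asserted inclusion \eqref{eq:control:diameter}. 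Finally, on $\beta\mathbb{D}$ the map $f$ is bi-Lipschitz onto its image with a universal constant (standard from the Koebe distortion theorem: $f/f'(0)$ is uniformly close to the identity on $\beta\mathbb{D}$ for $\beta$ small), the chart metric $\lambda\,|dz|$ is bi-Lipschitz to $\lambda_{0}\,|dz|$ on $f(\beta\mathbb{D})$, and — using the lemma of the previous paragraph once more for the lower bound — $d_{G}$ restricted to $\phi(\beta\mathbb{D})$ is comparable to $\lambda_{0}\,|f(\cdot)-f(\cdot)|$ with a universal constant. Composing, $\phi|_{\beta\mathbb{D}}$ is $L$-bi-Lipschitz onto its image for a universal $L$, hence $\widetilde\eta$-quasisymmetric with $\widetilde\eta(t)=L^{2}t$; here $\widetilde\eta$ and $\beta$ are independent of $\phi$ and of $Y_{d_{G}}$.

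The step I expect to be the main obstacle is the metric lower bound used twice above. Because $Y$ may have non-trivial fundamental group, $\phi(\mathbb{D})$ can be a long, thin region wrapping around $Y$, and $d_{G}$ can be much smaller than the lifted distance $d_{\Omega}$; what rescues the argument is that $\phi$ is an \emph{embedding}, so any competing short path must leave the lifted copy $\widetilde\phi(\mathbb{D})$, and the Koebe one--quarter theorem bounds the cost of leaving from below by a fixed multiple of the natural scale $\lambda_{0}|f'(0)|$. The hypothesis of the proposition is exactly what makes the lifted picture usable when the universal cover is $\mathbb{S}^{2}$: it guarantees that $-\widetilde\phi(0)$ lies outside $\widetilde\phi(\mathbb{D})$ (so the stereographic chart is available) and that the inradius is at most $\pi/2$ (so $|f'(0)|$ is universally bounded and $\phi(\beta\mathbb{D})$ sees a conformal factor comparable to $\lambda_{0}$).
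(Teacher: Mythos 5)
Your proof is correct and follows the same basic route as the paper: lift $\phi$ through the universal cover $\pi\colon\Omega\to Y_{d_G}$, establish quasisymmetry of the lift $\widetilde\phi$ on a smaller disk by classical one–variable distortion estimates, and transfer back to $Y_{d_G}$ by exploiting the injectivity of $\pi$ on $\widetilde\phi(\mathbb{D})$. Where the paper invokes Propositions~5, 7 and Lemma~10 of Geyer--Wildrick (\cite{compactqs}) for the distortion estimates, you reprove them directly from the Koebe one-quarter and distortion theorems (and Schwarz in the hyperbolic case), which is a more self-contained version of the same argument. One genuine improvement in presentation: the paper's transfer step asserts that $\pi$ restricted to the full ball $B_{d_\Omega}\big(\psi(0),\,l_\psi(0,\tfrac12)\big)$ is an isometry, which, read literally, requires a radius reduction to justify (injectivity on a ball of radius $r$ only forces isometry on a ball of radius about $r/3$, and the paper in fact only needs the ball of radius $r/6$). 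Your replacement lemma,
\[
	d_{G}(\phi(0),\phi(w))
	\ \ge\
	\min\bigl\{\,d_{\Omega}(\widetilde\phi(0),\widetilde\phi(w)),\ d_{\Omega}(\widetilde\phi(0),\partial\widetilde\phi(\mathbb{D}))\,\bigr\},
\]
proved by lifting a competing path and noting that a short lift cannot escape $\widetilde\phi(\mathbb{D})$, avoids this issue cleanly and is the right way to phrase the covering-space input. Your handling of the $\Omega=\mathbb{S}^2$ case is also fine: when $Y\cong\mathbb{RP}^2$, injectivity of $\pi$ on $\widetilde\phi(\mathbb{D})$ gives $\widetilde\phi(\mathbb{D})\cap(-\widetilde\phi(\mathbb{D}))=\emptyset$, hence $-\widetilde\phi(0)\notin\widetilde\phi(\mathbb{D})$; and when $Y\cong\mathbb{S}^2$, the diameter hypothesis puts $\widetilde\phi(\mathbb{D})$ in a hemisphere. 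One small slip: you write $\diam Y_{d_G}=\pi/2$ for the round sphere, but the intrinsic diameter is $\pi$; the hypothesis $2\diam\phi(\mathbb{D})\le\diam Y_{d_G}$ then gives $\diam\phi(\mathbb{D})\le\pi/2$, which is exactly the hemisphere containment you use, so the conclusion survives.
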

\begin{proof}
First suppose that $Y_{d_{G}}$ is not homeomorphic to the sphere $\mathbb{S}^{2}$. The surface $Y_{G} := Y_{d_{G}}$ has a universal cover $\pi \colon \Omega \rightarrow Y_{G}$, where $\pi$ is a local isometry and where $\Omega$ is either the hyperbolic disk $\mathbb{D}_{\text{hyp}}$, the Euclidean plane $\mathbb{R}^{2}$, or the Riemann sphere $\mathbb{S}^{2}$ (see for example \Cref{maximal:atlas} and the proof of \Cref{maximal:atlas:metric}). If $\Omega = \mathbb{S}^{2}$, the covering group of $\pi$ is generated by the antipodal map.

Suppose that $\phi \colon \mathbb{D} \rightarrow Y_{G}$ is as in the claim. Then there exists a conformal embedding $\psi \colon \mathbb{D} \rightarrow \Omega$ for which $\phi = \pi \circ \psi$. Since $\phi$ is an embedding so are $\psi$ and the restriction of $\pi$ to the image of $\psi$.

Claim (1): There exists a $2^{-1} > \beta' > 0$ and a distortion function $\eta$ for which the restriction of $\psi$ to $\beta'\mathbb{D}$ is $\eta$-quasisymmetric.

Proof of Claim (1): If $\Omega$ is the hyperbolic disk or the Euclidean plane, the existence of $\beta'$ and $\eta$ follow from Propositions 5 and 7 of \cite{compactqs} (which are stated for the case when $\psi$ is orientable. However, the non-orientable case follows from the orientable one by recalling that $z \mapsto \overline{z}$ is an isometry of the Euclidean unit disk $\mathbb{D}$).

Consider the case $\Omega = \mathbb{S}^{2}$. We rotate the sphere $\mathbb{S}^{2}$ in such a way that $\psi( 0 ) = (0, \, 0, \, -1)$. Moreover, we identify $\mathbb{S}^{2}$ with the extended plane $\mathbb{R}^{2} \cup \left\{ \infty \right\}$ using the stereographic projection which fixes the \emph{equator} $\mathbb{S}^{1} = \mathbb{S}^{1} \times \left\{ 0 \right\} \subset \mathbb{R}^{3}$ and maps the \emph{south pole} $( 0, \, 0, \, -1)$ to $0$. With this identification, the stereographic projection maps the southern hemisphere to the unit disk $\mathbb{D}$. Let $\tau \colon \mathbb{S}^{2} \rightarrow \mathbb{R}^{2} \cup \left\{ \infty \right\}$ denote the stereographic projection. Recall that $\tau$ is a conformal map.

By construction, the restriction of $\pi$ to the image of $\psi$ is injective. This means that the image of $\psi$ cannot compactly contain the southern hemisphere. As a consequence, we must have that for every $\norm{ x }_{2} = 10^{-1}$, the point $\psi(x)$ is contained in the southern hemisphere (equivalently, $\norm{ \tau \circ \psi(x) }_{2} < 1$). Suppose not. Then a growth estimate for conformal embeddings \cite[Theorem 2.6]{conformaldistortion} implies that $\norm{ ( \tau \circ \psi )'(0) }_{2}$ is sufficiently large in order to deduce that $\tau \circ \psi( 2^{-1}\mathbb{D} )$ contains the closed unit disk $\overline{ \mathbb{D} }$ by the same growth estimate. This contradicts the fact that the restriction of $\pi$ to the image of $\psi$ is injective. In conclusion, $\psi( 10^{-1} \mathbb{D} )$ is contained in the southern hemisphere.

The restriction of the stereographic projection $\tau$ to the southern hemisphere is a biLipschitz map. Also the restriction of $\tau \circ \psi$ to the disk $10^{-1}\mathbb{D}$ is $\eta'$-quasisymmetric with $\eta'$ independent of $\psi$ \cite[Theorem 3.6.2]{astala}. The existence of $\beta'$ and $\eta$ follows.

Claim (2): Let $\beta' > 0$ be as in Claim (1). Then there exists a constant $\beta' > \beta'' > 0$ such that
\begin{equation}
	\label{eq:balliscontainedintheimage}
	\psi( \beta'' \mathbb{D} )
	\subset
	B_{d_{\Omega}}\left( \psi(0), \, \frac{ l_{\psi}( 0, \, \frac{1}{2} ) }{ 6 } \right).
\end{equation}
Proof of Claim (2): Suppose that $\beta' > 0$ and $\eta$ are as in Claim (1) and consider $\beta' > \beta'' > 0$. Since the restriction of $\psi$ to the disk $\beta'\mathbb{D}$ is $\eta$-quasisymmetric,
\begin{equation*}
	L_{\psi}( 0, \, \beta'' )
	\leq
	\eta\left( \frac{ \beta'' }{ \beta' } \right)
	l_{\psi}( 0, \, \beta' )
	\leq
	\eta\left( \frac{ \beta'' }{ \beta' } \right)
	l_{\psi}\left( 0, \, \frac{ 1 }{ 2 } \right).
\end{equation*}
Therefore it suffices to pick $\beta'' > 0$ so small that $\eta\left( \frac{ \beta'' }{ \beta' } \right) < \frac{ 1 }{ 6 }$. Claim (2) follows.

We complete the proof of the claim using Claims (1) and (2) (when $Y_{d_{G}}$ is not homeomorphic to $\mathbb{S}^{2}$). Recall that the restriction of $\pi$ to $\psi( \mathbb{D} )$ is injective. Let $\beta'' > 0$ be as in Claim (2). Since
\begin{equation*}
	B_{d_{\Omega}}\left( \psi(0), \, l_{\psi}\left( 0, \, \frac{1}{2} \right) \right)
	\subset
	\psi\left( 2^{-1}\mathbb{D} \right),
\end{equation*}
the restriction of $\pi$ to $B_{d_{\Omega}}\left( \psi(0), \, l_{\psi}\left( 0, \, \frac{1}{2} \right) \right)$ is an isometry onto its image. This is an immediate consequence of the fact that
\begin{equation}
	\label{eq:coveringspace:localisometry}
	d_{G}( x, \, y )
	=
	\inf\left\{
		d_{\Omega}( x', \, y' )
		\mid
		x' \in \pi^{-1}( x ) \text{ and } y' \in \pi^{-1}( y )
	\right\}.
\end{equation}
The identity \eqref{eq:coveringspace:localisometry} follows from the fact that $\pi$ is a covering map that is also a local isometry between length spaces. In conclusion, the map $\psi$ can be replaced with $\phi$ and $\Omega$ with $Y_{G}$ everywhere in Claims (1) and (2). We define $\beta = \beta''$ as in Claim (2) and $\widetilde{\eta} = \eta$ as in Claim (1) to conclude the proof of \Cref{QS:restriction:is:controlled} when $Y_{G}$ is not homeomorphic to $\mathbb{S}^{2}$.

We are left to consider the case when $Y_{G}$ is homeomorphic to $\mathbb{S}^{2}$. Then there exists an isometry $\pi \colon \mathbb{S}^{2} \rightarrow Y_{G}$. Therefore there exists a conformal embedding $\psi \colon \mathbb{D} \rightarrow \mathbb{S}^{2}$ for which $\phi = \pi \circ \psi$. By rotating the sphere, we can assume that $\psi(0)$ is the south pole. The diameter bound on the image of $\phi$ implies that $\psi( 10^{-1}\mathbb{D} )$ is contained in the southern hemisphere. The rest of the proof is argued as above.
\end{proof}
\begin{proof}[Proof of \Cref{thm:local:QS}]
Suppose that $u$ is locally $\eta_{1}$-quasisymmetric. Fix a conformal homeomorphism $\phi \colon \mathbb{D} \rightarrow V \subset Y_{d_{G}} = Y_{G}$ for which the restriction of $u$ to $V$ is $\eta_{1}$-quasisymmetric. We also assume that $2 \diam V \leq \diam Y_{G}$.

Let $2^{-1} > \beta > 0$ be as in \Cref{QS:restriction:is:controlled}. Then the restriction of $\phi$ to $\beta\mathbb{D}$ is $\widetilde{\eta}$-quasisymmetric, where $\widetilde{\eta}$ is independent of $u$, $Y$, and $Y_{G}$. Therefore the restriction of $u \circ \phi$ to $\beta\mathbb{D}$ is $\eta_{2}$-quasisymmetric, where $\eta_{2}$ depends only on $\eta_{1}$. The classical uniformization theorem implies that as $\phi$ varies such restrictions of $u \circ \phi$ provide an $\eta_{2}$-quasisymmetric atlas for $Y$.

Conversely, suppose that $Y$ has an atlas of $\eta_{2}$-quasisymmetric charts. Suppose that $z \in Y_{G}$. The proof is complete if we find a neighbourhood $W_{z}$ of $z$ and a distortion function $\eta$ depending only on $\eta_{2}$, where the restriction of $u$ to $W_{z}$ is $\eta$-quasisymmetric.

Since $Y$ has the aforementioned atlas, there exists a neighbourhood $V$ of $z$ and an $\eta_{2}$-quasisymmetric chart $f \colon u( V ) \rightarrow W \subset \mathbb{R}^{2}$.

There exists a radius $r > 0$ for which $D = \mathbb{D}( f( u(z) ), \, r ) \subset f \circ u( V )$ and $W = u^{-1} \circ f^{-1}( D )$ satisfies $2 \diam W \leq \diam Y_{G}$. By translating and rescaling $D$ and applying \Cref{qsmaps}, we deduce that there exists an $\eta_{3}$-quasisymmetric isothermal chart $F \colon W \rightarrow \mathbb{D}$ with $F( z ) = 0$, where $\eta_{3}$ depends only on $\eta_{2}$.

Let $2^{-1} > \beta > 0$ and $\widetilde{\eta}$ be as above. Let $\phi$ denote the restriction of $u^{-1} \circ F^{-1}$ to $\beta\mathbb{D}$ and $W' = \phi\left( \beta\mathbb{D} \right)$. The restriction of $u$ to $W'$ coincides with $F^{-1} \circ \phi^{-1}$. As a consequence, the restriction of $u$ to $W'$ is $\eta_{1}$-quasisymmetric, where $\eta_{1}$ depends only on $\eta_{3}$ and $\widetilde{\eta}$. The set $W'$ is the desired neighbourhood of $z$ and $\eta_{1}$ the corresponding distortion function.
\end{proof}
The proof of \Cref{thm:local:QS} implies the following qualitative version.
\begin{lemma}\label{locallyqs:lem}
The uniformization map $u$ of a locally reciprocal surface $Y_{d}$ is locally quasisymmetric if and only if $Y_{d}$ has an atlas of quasisymmetric maps.
\end{lemma}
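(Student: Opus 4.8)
The strategy is simply to extract the qualitative content from the proof of \Cref{thm:local:QS}, which already establishes the quantitative equivalence \ref{QSparam:loc}$\Leftrightarrow$\ref{QSparam:charts}. The forward implication (if $u$ is locally $\eta$-quasisymmetric then $Y_d$ has an atlas of $\eta'$-quasisymmetric charts) and its qualitative analog (if $u$ is locally quasisymmetric then $Y_d$ has an atlas of quasisymmetric charts) are literally the same argument: one fixes a conformal chart $\phi\colon\mathbb{D}\to V\subset Y_{d_G}$ whose image is small enough that $2\diam V\le\diam Y_{d_G}$, applies \Cref{QS:restriction:is:controlled} to get $\beta>0$ with $\phi|_{\beta\mathbb{D}}$ quasisymmetric, and then composes: since $u|_V$ is quasisymmetric, $u\circ\phi|_{\beta\mathbb{D}}$ is quasisymmetric, and as $\phi$ ranges over such charts these compositions cover $Y_d$ and form a quasisymmetric atlas. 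The only difference is that we track mere existence of distortion functions rather than uniform dependence, so no bookkeeping of the form ``$\eta_2$ depends only on $\eta_1$'' is needed.

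For the converse (an atlas of quasisymmetric charts $\Rightarrow$ $u$ locally quasisymmetric), I would again mirror the proof of \Cref{thm:local:QS}: fix $z\in Y_{d_G}$, pick a quasisymmetric chart $f\colon u(V)\to W\subset\mathbb{R}^2$ with $z\in V$, choose a small disk $D=\mathbb{D}(f(u(z)),r)\subset f\circ u(V)$ so that $W':=u^{-1}\circ f^{-1}(D)$ has $2\diam W'\le\diam Y_{d_G}$, apply \Cref{qsmaps} to obtain an isothermal quasisymmetric chart $F\colon W'\to\mathbb{D}$ fixing $z$, and finally use \Cref{QS:restriction:is:controlled} to restrict to $\phi:=u^{-1}\circ F^{-1}|_{\beta\mathbb{D}}$, whose inverse is the restriction of $u$; this exhibits $u$ restricted to a neighborhood of $z$ as a composition of quasisymmetric maps, hence quasisymmetric. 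Again, only qualitative statements are needed, so one invokes ``there exists a distortion function'' at each stage.

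There is essentially no new obstacle here; the work has already been done in \Cref{thm:local:QS,qsmaps,QS:restriction:is:controlled}. The only point requiring a small remark is the sphere case: when $Y_{d_G}$ is homeomorphic to $\mathbb{S}^2$ one cannot pass to an arbitrarily small-diameter chart unless the chart image already satisfies $2\diam\phi(\mathbb{D})\le\diam Y_{d_G}$, but this is automatic once one restricts to a sufficiently small neighborhood, which is exactly what the local statements allow. Thus the cleanest presentation is: \emph{the proof of \Cref{thm:local:QS} goes through verbatim with all quantitative dependencies deleted}, which is what the preceding sentence in the excerpt already asserts. I would therefore write the ``proof'' as a one- or two-sentence reduction to that argument, noting that replacing every occurrence of an $\eta$-quasisymmetry hypothesis by a quasisymmetry hypothesis and every ``$\eta'$ depends only on $\eta$'' by ``$\eta'$ exists'' turns the proof of \Cref{thm:local:QS} into a proof of the present lemma.
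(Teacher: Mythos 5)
Your proposal is correct and takes essentially the same route as the paper: the paper simply states that the proof of \Cref{thm:local:QS} implies this qualitative version, which is exactly your observation that deleting the uniform-dependence bookkeeping from that proof (and from \Cref{qsmaps} and \Cref{QS:restriction:is:controlled}) yields the present lemma. Your remark on the sphere case is also consistent with how the diameter hypothesis in \Cref{QS:restriction:is:controlled} is handled in the paper's quantitative argument.
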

The metric surface $Y_{d}$ is \emph{locally Ahlfors $2$-regular} with constant $C_{A}$ if for every compact set $K \subset Y$, there exists a radius $0 < r < r_{K}$ such that for all $y \in K$ and $0 < r < r_{K}$, the inequalities \eqref{eq:ahlforsregular} hold with the constant $C_{A}$.

The metric surface $Y_{d}$ is \emph{locally linearly locally contractible} with constant $\lambda$ if for every compact set $K \subset Y$, there exists a radius $0 < r < r_{K}$ such that for all $y \in K$ and $0 < r < r_{K}$, the ball $B( y, \, r )$ is contractible inside the ball $B( y, \, \lambda r )$.
\begin{remark}
If the metric surface $Y_{d}$ is locally Ahlfors $2$-regular with constant $C_{A} \geq 1$ and locally linearly locally contractible with constant $\lambda \geq 1$, the surface $Y_{d}$ has an atlas of $\eta'$-quasisymmetric charts, where $\eta'$ depends only on $C_{A}$ and $\lambda$ \cite[Theorem 4.1]{locallyqs}. Locally Ahlfors $2$-regular surfaces are locally reciprocal due to \eqref{eq:local:mass:upper:bound}, therefore \Cref{thm:local:QS} applies to the aforementioned surfaces. We can apply \Cref{locallyqs:lem} to the more general surfaces studied in \cite{locallyqs}, where the constants $C_{A}$ and $\lambda$ are allowed to vary locally uniformly.

Proposition 17.1 of \cite{uniformization} provides a locally reciprocal surface which is not $2$-rectifiable. Therefore that surface cannot have a quasisymmetric atlas (which can be argued using Proposition 17.2 of \cite{uniformization} and \Cref{newtonian:seminorm}). Therefore $u$ cannot be locally quasisymmetric.

In \cite{qssingularinverse}, Ntalampekos and Romney construct a metric surface $Y_{d} \subset \mathbb{R}^{3}$ that has a quasisymmetric parametrization $\phi \colon \mathbb{R}^{2} \rightarrow Y_{d}$ which does not satisty Condition ($N^{-1}$). Consequently, the metric surface $Y_{d}$ cannot be locally reciprocal. (If $Y_{d}$ were locally reciprocal, Proposition 17.2 of \cite{uniformization} would imply that $\phi$ satisfies Condition ($N^{-1}$).)
\end{remark}
\subsubsection{Global parametrizations of compact surfaces}\label{sec:recip:qs:global}
When we say that something in this section depends only on the \emph{data of $Y_{d}$}, we mean that the "something" depends only on the Ahlfors regularity constant $C_{A}$ and linear local contractibility constant $\lambda$.

The main motivation of this section is Theorem 1.2 of \cite{compactqs}. Geyer and Wildrick proved that if $Y_{d}$ is a compact and orientable metric surface that is Ahlfors $2$-regular and linearly locally contractible, then there exists a Riemannian metric $\widetilde{d}$ of constant curvature $-1$, $0$, or $1$ on $Y$ such that the identity map
\begin{equation}
	\label{eq:uniformization:QS}
	w
	=
	\id_{Y}
	\colon
	Y_{d}
	\rightarrow
	Y_{\widetilde{d}}
\end{equation}
is $\eta$-quasisymmetric with $\eta$ depending only on the data of $Y_{d}$. The author was interested whether it is possible to choose $\widetilde{d}$ in such a way that $w$ is isothermal; recall \Cref{maximal:atlas:metric} and \Cref{cor:essentiallymaintheorem}. We prove that this is the case and that $Y_{d}$ does not have to be orientable. Therefore the distance $\widetilde{d}$ can be chosen in such a way that we have optimal control on the pointwise dilatations of $w$ (recall \Cref{cor:uniformization:pointwisedilatation:minimal}) and good control on the quasisymmetric distortion of $w$. We prove these results in \Cref{prop:sphere:isothermalgood:uniformization,prop:compact:generalization}. \Cref{thm:compact:QS:unif} is proved as a corollary at the end of this section.
\begin{proposition}\label{prop:sphere:isothermalgood}
Suppose that $Y_{d}$ is an Ahlfors $2$-regular metric surface that is linearly locally contractible and homeomorphic to $\mathbb{S}^{2}$. Then there exists an isothermal parametrization
\begin{equation}
	\label{eq:isothermalparametrization:QS}
	\phi
	\colon
	\mathbb{S}^{2}
	\rightarrow
	Y_{d}
\end{equation}
that is $\eta$-quasisymmetric with $\eta$ depending only on the data of $Y_{d}$.
\end{proposition}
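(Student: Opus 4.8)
The plan is to combine the Bonk--Kleiner parametrization with the uniformization map of \Cref{cor:essentiallymaintheorem} and then correct the Bonk--Kleiner map by a Möbius transformation, so that it becomes isothermal while keeping a quasisymmetric distortion that depends only on the data. First I would record that $Y_{d}$ is locally reciprocal by \eqref{eq:local:mass:upper:bound} (Ahlfors $2$-regularity gives the local mass upper bound), so \Cref{maximal:atlas:metric} and \Cref{cor:essentiallymaintheorem} apply: there is a complete constant curvature Riemannian surface $Y_{d_{G}}$ and a $\frac{\pi}{2}$-quasiconformal isothermal uniformization map $u = \id_{Y}\colon Y_{d_{G}} \rightarrow Y_{d}$ with $K_{O}(u)\leq\frac{4}{\pi}$, $K_{I}(u)\leq\frac{\pi}{2}$. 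Since $Y$ is homeomorphic to $\mathbb{S}^{2}$, the surface $Y_{d_{G}}$ is compact and simply connected, hence — being a complete surface of constant curvature — isometric to the round sphere $\mathbb{S}^{2}$ of curvature $1$; from now on I identify $Y_{d_{G}}$ with the round $\mathbb{S}^{2}$. By the Bonk--Kleiner theorem \cite{bonk-kleinerthm} there is an $\eta_{0}$-quasisymmetric homeomorphism $h\colon \mathbb{S}^{2}\rightarrow Y_{d}$ with $\eta_{0}$ depending only on the data of $Y_{d}$; working in stereographic charts and invoking \Cref{qsmaps} together with the locality of the analytic condition \eqref{eq:analytic:outer} in \Cref{thm:QC:differentequivalent}, the map $h$ is $K$-quasiconformal with $K$ depending only on $\eta_{0}$, hence only on the data.

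Next I would look at the self-homeomorphism $M = u^{-1}\circ h\colon \mathbb{S}^{2}\rightarrow \mathbb{S}^{2}$. It is quasiconformal with $K(M)\leq K(u^{-1})\,K(h)\leq \frac{\pi}{2}K$, where $K(u^{-1}) = \max\{K_{O}(u),K_{I}(u)\}\leq\frac{\pi}{2}$ by \Cref{cor:essentiallymaintheorem}; so $K(M)$ depends only on the data. Passing to a stereographic chart, I would decompose $M = A\circ\widetilde{M}$, where $A$ is the unique Möbius transformation (orientation-preserving or -reversing according to $M$) sending three fixed points of $\mathbb{S}^{2}$ in general position to their $M$-images, and $\widetilde{M} = A^{-1}\circ M$ is a $K(M)$-quasiconformal self-homeomorphism of $\mathbb{S}^{2}$ fixing those three points at definite mutual distance. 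Such a normalized map — and its inverse — is $\eta_{1}$-quasisymmetric with $\eta_{1}$ depending only on $K(M)$, hence only on the data (cf.\ the reasoning in the proof of \Cref{qsmaps}, e.g.\ Corollary 3.10.4 of \cite{astala}).

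Finally I would set $\phi = u\circ A$. Since $A$ is conformal on the Riemannian surface $Y_{d_{G}}$ and $u$ is isothermal, $\phi$ is an isothermal parametrization of $Y_{d}$ by \Cref{thm:isothermal:surface}, because $u^{-1}\circ\phi = A$ is conformal in the Riemannian sense. Moreover, from $u = h\circ M^{-1}$ and $M = A\circ\widetilde{M}$ one gets $\phi = u\circ A = h\circ M^{-1}\circ A = h\circ\widetilde{M}^{-1}$, a composition of the $\eta_{0}$-quasisymmetric map $h$ with the $\eta_{1}$-quasisymmetric map $\widetilde{M}^{-1}$; since a composition of quasisymmetric maps is quasisymmetric with distortion function determined by the factors, $\phi$ is $\eta$-quasisymmetric with $\eta$ depending only on the data, which is the claim.

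The main obstacle is exactly the Möbius ambiguity flagged in the introduction: the uniformization map $u$ need not itself be uniformly quasisymmetric, and the heart of the argument is choosing the correcting Möbius transformation $A$ — equivalently, re-normalizing the quasiconformal self-map $M$ of $\mathbb{S}^{2}$ — so that the isothermal parametrization $u\circ A$ inherits the controlled quasisymmetry of the Bonk--Kleiner map $h$. The remaining work is quantitative bookkeeping: ensuring that every constant ($K$, $\eta_{0}$, $\eta_{1}$, $\eta$) ultimately depends only on $C_{A}$ and $\lambda$, which forces the use of the quantitative forms of \Cref{qsmaps}, \Cref{cor:essentiallymaintheorem}, and the $\mathbb{S}^{2}$-normalization throughout; this is where a careful adaptation of the argument in the proof of Theorem 2 of \cite{compactqs} is needed.
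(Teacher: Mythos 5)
Your proof is correct and follows essentially the same strategy as the paper: start from the Bonk--Kleiner map, observe it is uniformly quasiconformal, and correct it by a Möbius-normalized quasiconformal self-map of $\mathbb{S}^{2}$ so that the result is isothermal while the normalization (fixing three well-separated points) keeps the quasisymmetric distortion controlled. The only cosmetic difference is that you build the correcting map by factoring $u^{-1}\circ h$ through the identification $Y_{d_{G}}\cong\mathbb{S}^{2}$, whereas the paper invokes \Cref{thm:minimal:chart:existence} directly; since $u$ was itself produced by solving the same Beltrami equation, these amount to the same thing.
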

We postpone the proof of \Cref{prop:sphere:isothermalgood}.
\begin{theorem}\label{prop:sphere:isothermalgood:uniformization}
Suppose that $Y_d$ is as in \Cref{prop:sphere:isothermalgood}. Then there exists a Riemannian distance $d_{G}$ on $Y$ of constant curvature $1$ for which the uniformization map
\begin{equation*}
	u
	=
	\id_{Y}
	\colon
	Y_{d_{G}}
	\rightarrow
	Y_{d}
\end{equation*}
is isothermal and $\eta$-quasisymmetric with $\eta$ depending only on the data of $Y_{d}$.
\end{theorem}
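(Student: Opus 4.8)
The plan is to deduce the statement from the local result \Cref{prop:sphere:isothermalgood} by transporting the round metric along the parametrization it produces, rather than through the canonical metric of \Cref{maximal:atlas:metric}. Let $\phi \colon \mathbb{S}^{2} \rightarrow Y_{d}$ be the isothermal $\eta_{0}$-quasisymmetric parametrization supplied by \Cref{prop:sphere:isothermalgood}, where $\mathbb{S}^{2} = \mathbb{S}^{2}_{G_{0}}$ carries the round metric $G_{0}$ of constant curvature $1$ and $\eta_{0}$ depends only on the data of $Y_{d}$. Since $\phi$ is an isothermal parametrization, \Cref{thm:isothermal:surface} shows that $\phi$ is conformal in the Riemannian sense with respect to the conformal (hence smooth) structure of $Y_{d}$, in particular a diffeomorphism onto $Y$. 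Thus $G := \phi_{*}G_{0} = G_{0} \circ D(\phi^{-1})$ is a genuine smooth Riemannian norm field on $Y$; let $d_{G}$ be the induced length distance. By construction $D\phi$ is a fibrewise linear isometry $(T\mathbb{S}^{2}, G_{0}) \rightarrow (TY, G)$, so $\phi \colon \mathbb{S}^{2}_{G_{0}} \rightarrow Y_{d_{G}}$ is a Riemannian isometry. Consequently $Y_{d_{G}}$ is compact, hence complete, and has Gaussian curvature identically $1$.

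It remains to check that the uniformization map $u = \id_{Y} \colon Y_{d_{G}} \rightarrow Y_{d}$ is quasiconformal, isothermal, and $\eta$-quasisymmetric. Factor $u = \phi \circ \phi^{-1}$, where the first factor is the quasiconformal parametrization $\phi \colon \mathbb{S}^{2}_{G_{0}} \rightarrow Y_{d}$ and the second is the Riemannian isometry $\phi^{-1} \colon Y_{d_{G}} \rightarrow \mathbb{S}^{2}_{G_{0}}$, which is $1$-quasiconformal (in particular conformal). Then $u$ is a composition of quasiconformal maps, hence quasiconformal, so \Cref{def:minimal:chart:surface} applies to the pair $(Y_{d_{G}}, u)$.

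For isothermality, fix any Riemannian surface $\widetilde{Z}_{d_{\widetilde{G}}}$ and quasiconformal map $\widetilde{\Psi} \colon \widetilde{Z}_{d_{\widetilde{G}}} \rightarrow Y_{d}$, and fix $x \in Y_{d_{G}}$ with $z = \phi^{-1}(x)$. Because $\phi \colon \mathbb{S}^{2}_{G_{0}} \rightarrow Y_{d_{G}}$ is conformal, the composition laws of \Cref{lem:outer:inner:characterization}, which become equalities when one factor is conformal, give
\begin{equation*}
	(K_{O}(u)K_{I}(u))(x)
	=
	(K_{O}(u \circ \phi)K_{I}(u \circ \phi))(z).
\end{equation*}
Since $u \circ \phi$ equals $\phi$ as a map into $Y_{d}$, which is isothermal by \Cref{prop:sphere:isothermalgood}, the right-hand side is at most $\left[ K_{O}(\widetilde{\Psi})K_{I}(\widetilde{\Psi}) \right] \circ (\widetilde{\Psi}^{-1} \circ \phi)(z) = \left[ K_{O}(\widetilde{\Psi})K_{I}(\widetilde{\Psi}) \right] \circ (\widetilde{\Psi}^{-1} \circ u)(x)$ for $\mathcal{H}^{2}_{G_{0}}$-almost every $z$, equivalently for $\mathcal{H}^{2}_{d_{G}}$-almost every $x$ (again because $\phi$ is an isometry). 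Hence $u$ is isothermal; an alternative derivation invokes \Cref{thm:isothermal:surface} together with the fact that precomposing a Banach--Mazur minimizer with a fibrewise linear isometry yields a Banach--Mazur minimizer. For the quasisymmetry bound, observe that the Riemannian isometry $\phi^{-1} \colon Y_{d_{G}} \rightarrow \mathbb{S}^{2}_{G_{0}}$ is $\eta_{\mathrm{id}}$-quasisymmetric with $\eta_{\mathrm{id}}(t) = t$, so by the standard composition law for quasisymmetric maps $u = \phi \circ \phi^{-1}$ is $(\eta_{0} \circ \eta_{\mathrm{id}})$-quasisymmetric, i.e.\ $\eta_{0}$-quasisymmetric; as $\eta_{0}$ depends only on the data of $Y_{d}$, this proves \Cref{prop:sphere:isothermalgood:uniformization}.

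The only delicate point is the choice of $d_{G}$, and this is precisely where the abundance of Möbius transformations of $\mathbb{S}^{2}$ must be handled: the canonical metric of \Cref{maximal:atlas:metric} is related to $\mathbb{S}^{2}_{G_{0}}$ only by a conformal self-map of the round sphere, i.e.\ a Möbius transformation, whose quasisymmetric distortion is not controlled by the data, so one cannot directly conclude that the canonical uniformization map has a controlled distortion function. Transporting $G_{0}$ forward along $\phi$ itself replaces that Möbius map by an isometry, which is harmless for quasisymmetry; the remaining verifications — completeness, constant curvature $1$, and isothermality of the identity map — are then routine given \Cref{prop:sphere:isothermalgood}, \Cref{thm:isothermal:surface}, and the composition laws for dilatations.
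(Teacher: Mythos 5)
Your proof is correct and is essentially the same argument as the paper's: both push the round metric $G_{0}$ of $\mathbb{S}^{2}$ forward along $\phi$ (the paper phrases this as pushing forward along $u^{-1}\circ\phi$ and then replacing the canonical $d_{G}$ by the new $d_{G'}$, but since $u=\id_{Y}$ this yields the identical Riemannian distance) so that $\phi$ becomes an isometry onto the new $Y_{d_{G}}$, after which the uniformization map factors through that isometry and inherits the distortion function $\eta_{0}$ from \Cref{prop:sphere:isothermalgood}, while isothermality follows from \Cref{thm:isothermal:surface}. You also correctly pinpointed the reason one cannot use the canonical metric directly: the conformal map relating it to the round sphere is a Möbius transformation whose quasisymmetric distortion is not controlled by the data.
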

\begin{proof}[Proof of \Cref{prop:sphere:isothermalgood:uniformization} assuming \Cref{prop:sphere:isothermalgood}]
Let $Y_{d_{G}} = Y_{G}$ denote the Riemannian surface obtained from \Cref{maximal:atlas:metric}. The surface has curvature equal to one. The uniformization map $u = \id_{Y} \colon Y_{d_{G}} \rightarrow Y_{d}$ is isothermal and therefore $\frac{ \pi }{ 2 }$-quasiconformal (\Cref{cor:essentiallymaintheorem}).

If $\phi$ is the map from \eqref{eq:isothermalparametrization:QS}, the composition $u^{-1} \circ \phi$ is conformal in the Riemannian sense (\Cref{thm:isothermal:surface}). We push forward the distance of $\mathbb{S}^{2}$ using $u^{-1} \circ \phi$. Let $d_{G'}$ denote the obtained distance and $w = \id_{Y} \colon Y_{d_{G}} \rightarrow Y_{d_{G'}}$. Then
\begin{equation*}
	\psi = w \circ \left( u^{-1} \circ \phi \right) \colon \mathbb{S}^{2} \rightarrow Y_{d_{G'}}
\end{equation*}
is an isometry. Therefore $w = \psi \circ ( u^{-1} \circ \phi )^{-1}$ is conformal in the Riemannian sense. Consider the map
\begin{equation*}
	u' = \id_{Y}
	\colon
	Y_{d_{G'}}
	\rightarrow
	Y_{d}.
\end{equation*}
Then \Cref{thm:isothermal:surface} and the fact that $u^{-1} \circ u' = w^{-1}$ prove that $u'$ is isothermal. Since $\psi$ is an isometry and $u' = \phi \circ \psi^{-1}$, the map $u'$ is $\eta$-quasisymmetric. We replace $d_{G}$ and $u$ with $d_{G'}$ and $u'$ to complete the proof.
\end{proof}
\begin{theorem}\label{prop:compact:generalization}
Suppose that $Y$ is a compact Ahlfors $2$-regular and linearly locally contractible metric surface that is not homeomorphic to $\mathbb{S}^{2}$. Then the uniformization map
\begin{equation}
	u = \id_{ Y }
	\colon
	Y_{d_{G}}
	\rightarrow
	Y_{d}
\end{equation}
is $\eta$-quasisymmetric, where $\eta$ depends only on the data of $Y_{d}$.
\end{theorem}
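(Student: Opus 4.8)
The plan is to reduce Theorem 5.26 (the non-spherical compact case) to the already-established Theorem 1.2 of \cite{compactqs} — which produces \emph{some} Riemannian distance $\widetilde{d}$ of constant curvature making the identity $w = \id_Y \colon Y_d \to Y_{\widetilde d}$ $\eta$-quasisymmetric with $\eta$ depending only on the data — and then to ``upgrade'' that distance to the uniformizing distance $d_G$ produced by \Cref{maximal:atlas:metric} using the uniqueness machinery of \Cref{thm:isothermal:surface}. First I would invoke Theorem 1.2 of \cite{compactqs} (its proof works in the non-orientable case verbatim, or one passes to the orientable double cover, observes that $\eta$-quasisymmetry is inherited by the quotient, and descends; I would remark on this rather than belabor it) to obtain $Y_{\widetilde d}$ and the $\eta$-quasisymmetric $w$. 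Then $w$ is in particular $K$-quasiconformal with $K$ depending only on $\eta$ hence only on the data, so $Y_d$ is locally reciprocal (a quasiconformal image of a Riemannian surface is locally reciprocal by \Cref{thm:Kai} and \Cref{def:local:reciprocality}), and \Cref{maximal:atlas:metric} and \Cref{cor:essentiallymaintheorem} apply: there is a complete constant-curvature Riemannian distance $d_G$ on $Y$ with $u = \id_Y \colon Y_{d_G} \to Y_d$ isothermal and $\tfrac\pi2$-quasiconformal.

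Next I would compare the two Riemannian structures. The composition $u^{-1} \circ w^{-1} = (\text{the identity }Y_{\widetilde d} \to Y_{d_G})$ — equivalently $w \circ u \colon Y_{d_G} \to Y_{\widetilde d}$ — is a quasiconformal map between Riemannian surfaces, hence by standard smooth uniformization (or directly by \Cref{thm:isothermal:surface} applied with $Z = Y_{\widetilde d}$, noting that $w$ itself is an isothermal parametrization of $Y_d$ \emph{provided} one already knows $w$ has minimal distortion — which is not automatic) one wants to say that $d_G$ and $\widetilde d$ differ by a conformal change. The cleanest route: apply \Cref{thm:isothermal:surface} to the quasiconformal map $w^{-1} \colon Y_{\widetilde d} \to Y_d$. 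It need not be isothermal, but $u^{-1} \circ w^{-1} \colon Y_{\widetilde d} \to Y_{d_G}$ is a quasiconformal map between Riemannian surfaces, and by classical uniformization every such map is conformally equivalent (via a diffeomorphism of $Y_{\widetilde d}$) to a conformal map. Concretely: the smooth uniformization theorem gives a Riemannian distance $d_{G'}$ on $Y$ of constant curvature and a \emph{conformal-in-the-Riemannian-sense} diffeomorphism $\nu \colon Y_{\widetilde d} \to Y_{d_{G'}}$; by uniqueness of the constant-curvature representative in a conformal class (the classical uniformization theorem again), $Y_{d_{G'}}$ and $Y_{d_G}$ are isometric via a conformal map, so we may take $d_{G'} = d_G$ after post-composing with that isometry. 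Then $\mu := \nu \circ w \colon Y_d \to Y_{d_G}$ is a map for which $\mu^{-1} = w^{-1} \circ \nu^{-1} \colon Y_{d_G} \to Y_d$ has the property that $u^{-1} \circ \mu^{-1} = w^{-1} \circ \nu^{-1} \circ (\text{isometry}) \colon Y_{d_G} \to Y_{\widetilde d} \to \cdots$ is conformal in the Riemannian sense, so by \Cref{thm:isothermal:surface}\ref{thm:isothermal:surface.2} the map $\mu^{-1} = u \colon Y_{d_G} \to Y_d$ \emph{is} (after this identification of structures) the uniformization map up to a Riemannian-conformal diffeomorphism.

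From here the quasisymmetry bound transfers: $u = \mu^{-1} = w^{-1} \circ \nu^{-1}$ (as identity maps, after identifying $d_G$ with $d_{G'}$), where $w^{-1}$ is $\eta$-quasisymmetric with $\eta$ depending only on the data, and $\nu^{-1}$ is a \emph{conformal diffeomorphism between constant-curvature surfaces in a fixed conformal class}. A conformal diffeomorphism of a compact surface onto itself (or between two isometric copies) is bi-Lipschitz with constant depending only on the geometry, but its bi-Lipschitz constant is not a priori bounded by the data — this is the main obstacle. The resolution, following the spirit of \cite{compactqs} and \Cref{thm:local:QS}, is to not separate out $\nu$ at all but to run the argument locally: cover $Y_{d_G}$ by images of conformal embeddings $\phi \colon \mathbb D \to Y_{d_G}$ with $2\diam\phi(\mathbb D) \le \diam Y_{d_G}$, apply \Cref{QS:restriction:is:controlled} to get that $\phi|_{\beta\mathbb D}$ is $\widetilde\eta$-quasisymmetric with $\widetilde\eta$ absolute, compose with $w^{-1}$ (the $\eta$-quasisymmetric chart side), and thereby realize $u$ on a neighborhood of each point as an $\eta_1$-quasisymmetric map with $\eta_1$ depending only on the data; then a standard chaining/compactness argument (using that $Y_d$ is compact and that local quasisymmetry with uniform distortion on a connected compact space upgrades to global quasisymmetry, cf.\ the proofs in \cite{compactqs} and \Cref{thm:local:QS}) yields that the global $u$ is $\eta$-quasisymmetric with $\eta$ depending only on the data. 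The key steps in order: (i) Theorem 1.2 of \cite{compactqs} to get $(Y_{\widetilde d}, w)$ with data-dependent $\eta$, handling non-orientability by the double-cover remark; (ii) deduce local reciprocality of $Y_d$ and invoke \Cref{maximal:atlas:metric}, \Cref{cor:essentiallymaintheorem} for $(d_G, u)$; (iii) use \Cref{thm:isothermal:surface} plus classical uniformization to identify $d_G$ with the constant-curvature representative of the conformal class determined by $Y_{\widetilde d}$, so that $u$ and $w$ have ``the same'' conformal structure; (iv) \Cref{QS:restriction:is:controlled} to control the conformal charts $\phi$ quantitatively; (v) compose and chain to get the global quasisymmetry of $u$ with distortion depending only on the data. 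The main difficulty throughout is keeping \emph{every} constant data-dependent — in particular ruling out that the conformal identification in step (iii) costs an uncontrolled bi-Lipschitz factor — which is exactly why step (iv)–(v) must be done with the scale-invariant estimate of \Cref{QS:restriction:is:controlled} rather than by a soft compactness argument on the conformal class.
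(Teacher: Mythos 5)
Your detour through Theorem~1.2 of \cite{compactqs} and the attempted ``identification of conformal structures'' in step~(iii) has a genuine gap. Writing $w=\id_Y\colon Y_d\to Y_{\widetilde d}$ and $u=\id_Y\colon Y_{d_G}\to Y_d$, the map you would need to control is $w\circ u=\id_Y\colon Y_{d_G}\to Y_{\widetilde d}$. You propose a conformal diffeomorphism $\nu\colon Y_{\widetilde d}\to Y_{d_G}$, but there is no reason for $\nu$ to be the identity: the conformal structure of $Y_{\widetilde d}$ is only \emph{quasiconformally} equivalent to that of $Y_{d_G}$ (via $w\circ u$), so the constant-curvature metrics $\widetilde d$ and $d_G$ need not lie in the same conformal class on $Y$, and even if they did, $\nu$ would a priori be an isometry that moves points. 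Thus $\nu\circ w$ is quasisymmetric but is \emph{not} $u^{-1}$, and the desired bound does not ``transfer.'' You seem to notice this yourself and retreat to a local argument, which renders the whole detour through Theorem~1.2 unnecessary. (The paper accordingly never invokes Theorem~1.2; it builds a uniformly $\eta$-quasisymmetric \emph{atlas} of isothermal charts directly from Theorem~9 of \cite{compactqs} via \Cref{uniformlygoodcharts}, and applies \Cref{QS:restriction:is:controlled} chart by chart.) Your double-cover remark for non-orientability also needs more care: to descend a metric from the orientable double cover one needs equivariance under the deck involution, which is not provided ``verbatim'' by Theorem~1.2; the paper instead handles non-orientability at the local-chart level inside the proof of \Cref{QS:restriction:is:controlled}, using that $z\mapsto\overline z$ is an isometry of $\mathbb D$.

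The fallback local-to-global step~(v) is the right idea and matches the paper's Claim~($\alpha$), but your phrase ``local quasisymmetry with uniform distortion on a connected compact space upgrades to global quasisymmetry'' is not a theorem, and \Cref{thm:local:QS} does not say this either (it equates local quasisymmetry of $u$ with existence of a quasisymmetric atlas). The Tukia--V\"ais\"al\"a local-to-global theorem requires, in addition to uniform local distortion, a quantitative lower bound on the Lebesgue number of the cover relative to $\diam Y_d$ \emph{and} a lower bound of the form $d_G(v(x),v(x'))\geq C^{-1}\diam Y_{d_G}$ whenever $d(x,x')$ is at the scale of the cover (the paper's Claim~($\gamma$)); the paper explicitly flags this separation estimate as ``a delicate part of the proof'' of \cite[Theorem~12]{compactqs}, and it cannot be replaced by a soft compactness argument since everything must remain data-dependent. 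Your proposal omits this input, which is exactly the point where a naive chaining argument would fail to produce a distortion function depending only on $C_A$ and $\lambda$.
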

\begin{remark}
An interesting difference between \Cref{prop:sphere:isothermalgood:uniformization,prop:compact:generalization} is that we do not need to change the Riemannian distance $d_{G}$ on $Y$ to obtain the latter result.
\end{remark}
\begin{proof}[Proof of \Cref{prop:sphere:isothermalgood}]
The Bonk--Kleiner theorem \cite[Theorem 1.1 and Section 10]{bonk-kleinerthm} establishes that there exists an $\eta'$-quasisymmetric map $\phi' \colon \mathbb{S}^{2} \rightarrow Y_{d}$ with $\eta'$ depending only on the data of $Y_{d}$. Then by \Cref{qsmaps}, the map $\phi'$ is $K$-quasiconformal with $K$ depending only on $\eta'$.

A simple consequence of the measurable Riemann mapping theorem and \Cref{thm:minimal:chart:existence} is that there exists a $\frac{ \pi }{ 2 }K$-quasiconformal map $\psi \colon \mathbb{S}^{2} \rightarrow \mathbb{S}^{2}$ with two properties. First, the map $\psi$ fixes the north and south poles of $\mathbb{S}^{2}$ and a point from the equator. Secondly, the composition $\phi = \phi' \circ \psi$ is isothermal.

The map $\psi$ is $\eta''$-quasisymmetric with $\eta''$ depending only on $\frac{ \pi }{ 2 } K$ hence only on $\eta'$ (see for example Proposition 9.1 and Section 3 of \cite{bonk-kleinerthm}). Therefore the composition $\phi \circ \psi$ is $\eta$-quasisymmetric, where $\eta$ depends only on the data of $Y_{d}$.
\end{proof}
\begin{proof}[Proof of \Cref{prop:compact:generalization}]
For the rest of the section, we assume that $\diam Y_{d} = 1$. This can be done without loss of generality since rescaling does not change the data of $Y_{d}$, the relevant quasisymmetric distortion functions, or the isothermal charts or parametrizations of $Y_{d}$. The diameter normalization is needed for the results we use from \cite{compactqs}.

The next result is an immediate consequence of Theorem 9 of \cite{compactqs} and \Cref{qsmaps}.
\begin{theorem}\label{uniformlygoodcharts}
There is a quantity $A_{0} \geq 1$ and a distortion function $\eta$, each depending only on the data of $Y_{d}$, such that for every $0 < R \leq \frac{ 1 }{ A_{0} }$ and $y \in Y_{d}$, there is a neighbourhood $U$ of $y$ for which
\begin{enumerate}[label=(\alph*)]
\item $B( y, \, \frac{ R }{ A_{0} } ) \subset U \subset B( y, \, A_{0} R )$;
\item there exists an $\eta$-quasisymmetric homeomorphism $f \colon U \rightarrow \mathbb{D}$ that is an isothermal chart of $Y_{d}$ with $f( y ) = 0$.
\end{enumerate}
Here $\mathbb{D}$ is the Euclidean unit disk.
\end{theorem}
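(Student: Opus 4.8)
The plan is to combine the uniformly controlled local quasisymmetric parametrizations of Geyer--Wildrick with the isothermal coordinate construction of \Cref{sec:uniformization}, while carefully tracking that every constant and distortion function depends only on the data $(C_{A}, \, \lambda)$ of $Y_{d}$. Recall that the diameter normalization $\diam Y_{d} = 1$ is in force, which is precisely what makes the hypotheses of Theorem 9 of \cite{compactqs} applicable.

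First I would invoke Theorem 9 of \cite{compactqs}: since $Y_{d}$ is Ahlfors $2$-regular, linearly locally contractible, and normalized, that result provides a constant $A_{0} \geq 1$ and a distortion function $\eta_{1}$, both depending only on $(C_{A}, \, \lambda)$, so that for every $0 < R \leq \frac{1}{A_{0}}$ and every $y \in Y_{d}$ there is a neighbourhood $U = U(y, \, R)$ with $B( y, \, \frac{R}{A_{0}} ) \subset U \subset B( y, \, A_{0} R )$ and an $\eta_{1}$-quasisymmetric homeomorphism $g \colon U \rightarrow \mathbb{D}$ with $g(y) = 0$. This immediately yields property (a); the remaining task is to upgrade $g$ to an isothermal chart without losing quantitative quasisymmetry.

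Next I would apply \Cref{qsmaps} to the inverse map. The set $U$ is an open subset of $Y_{d}$, and since $Y_{d}$ is Ahlfors $2$-regular it is locally Ahlfors $2$-regular, hence locally reciprocal by \eqref{eq:local:mass:upper:bound}; thus $U$ is locally reciprocal. The inverse $\phi := g^{-1} \colon \mathbb{D} \rightarrow U$ is $\eta_{2}$-quasisymmetric with $\eta_{2}$ depending only on $\eta_{1}$. The second part of \Cref{qsmaps} then produces a quasiconformal self-homeomorphism $\psi \colon \mathbb{D} \rightarrow \mathbb{D}$ with $\psi(0) = 0$ for which $\phi \circ \psi = g^{-1} \circ \psi \colon \mathbb{D} \rightarrow U$ is simultaneously an isothermal parametrization of $U$ and $\eta_{3}$-quasisymmetric, with $\eta_{3}$ depending only on $\eta_{2}$, hence only on the data.

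Finally I would set $f := ( g^{-1} \circ \psi )^{-1} = \psi^{-1} \circ g \colon U \rightarrow \mathbb{D}$. By construction $f^{-1} = g^{-1} \circ \psi$ is an isothermal parametrization, so $(U, \, f)$ is an isothermal chart of $Y_{d}$, and $f(y) = \psi^{-1}( g(y) ) = \psi^{-1}(0) = 0$. Since the inverse of an $\eta_{3}$-quasisymmetric homeomorphism is $\eta$-quasisymmetric with $\eta$ determined by $\eta_{3}$, the chart $f$ is $\eta$-quasisymmetric with $\eta$ depending only on the data, which is property (b). There is no substantive obstacle: the only care needed is bookkeeping --- confirming that $A_{0}$ and all intermediate distortion functions arising from Theorem 9 of \cite{compactqs} and from \Cref{qsmaps} depend only on $(C_{A}, \, \lambda)$, that \Cref{qsmaps} is applied in the correct direction (to $g^{-1}$, with the locally reciprocal target $U$), and that composing with $\psi$ and passing to inverses preserve quantitative quasisymmetry.
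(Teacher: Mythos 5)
Your proposal is correct and follows exactly the route the paper takes: the paper states that the result is an immediate consequence of Theorem 9 of \cite{compactqs} together with \Cref{qsmaps}, and your argument is precisely the careful expansion of that one-line proof (apply the Geyer--Wildrick chart theorem, post-compose with the quasiconformal disk self-map from \Cref{qsmaps} to make the chart isothermal while preserving quantitative quasisymmetry, and verify the normalizations). The bookkeeping steps you flag --- that $U$ is locally reciprocal, that $\psi(0)=0$ is preserved, and that inverting a quasisymmetric map degrades the distortion function in a controlled way --- are all correct and are exactly what makes the paper's ``immediate consequence'' go through.
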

We combine \Cref{uniformlygoodcharts} and \Cref{QS:restriction:is:controlled} with Lemma 10 of \cite{compactqs} to conclude the following.
\begin{lemma}\label{QS:good:subatlas}
Suppose that $2^{-1} > \beta > 0$ is the universal constant from \Cref{QS:restriction:is:controlled} and $\eta$ is as in \Cref{uniformlygoodcharts}. Then there exist radii $\alpha$ and $r_{0} > 0$ and a positive integer $n$ such that the following statements hold.
\begin{enumerate}[label=(\alph*)]
\item\label{QS:good:subatlas.1} There exists an atlas $\mathcal{A}_{\beta} = \left\{ \left( U_{j}, \, f_{j} \right) \right\}_{ j = 1 }^{ n }$, where every $f_{j}$ is an $\eta$-quasisymmetric isothermal chart of $Y_{d}$ with $f_{j}( U_{j} ) = \mathbb{D}$.
\item\label{QS:good:subatlas.2} Let $x_{j} = f_{j}^{-1}( 0 )$. The collection $\left\{ B( x_{j}, \, r_{0} ) \right\}_{ j = 1 }^{ n }$ is pairwise disjoint.
\item\label{QS:good:subatlas.3} The collection $\left\{ B( x_{j}, \, 2r_{0} ) \right\}_{ j = 1 }^{ n }$ covers $Y_{d}$.
\item\label{QS:good:subatlas.4} For each $j = 1, \, \dots, \, n$, it holds $B( x_{j}, \, 10 r_{0} ) \subset U_{j}$ and
\begin{equation*}
	\alpha\mathbb{D}
	\subset
	f_{j}( B( x_{j}, \, r_{0} ) )
	\subset
	f_{j}( B( x_{j}, \, 10 r_{0} ) )
	\subset
	\beta\mathbb{D}.
\end{equation*}
\end{enumerate}
The radii $\alpha$ and $r_{0}$, and the integer $n$ depend only on the data of $Y_{d}$ and $\beta$. The function $\eta$ depends only on the data of $Y_{d}$.
\end{lemma}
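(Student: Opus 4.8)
The plan is to manufacture the atlas from three ingredients: the uniformly good isothermal charts of \Cref{uniformlygoodcharts}, the universal shrinking constant $\beta$ of \Cref{QS:restriction:is:controlled}, and the packing argument of Lemma 10 of \cite{compactqs}. First I would fix the scale $R = 1/A_{0}$ in \Cref{uniformlygoodcharts}, so that for every $y \in Y_{d}$ there is a neighbourhood $U_{y}$ with $B( y, \, 1/A_{0}^{2} ) \subset U_{y} \subset B( y, \, 1 )$ and an $\eta$-quasisymmetric isothermal chart $f_{y} \colon U_{y} \rightarrow \mathbb{D}$ with $f_{y}( y ) = 0$, where $A_{0}$ and $\eta$ depend only on the data of $Y_{d}$.

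The heart of the matter is to choose $r_{0} > 0$ --- depending only on the data of $Y_{d}$ and the universal $\beta$ --- so small that the two inclusions of \ref{QS:good:subatlas.4} hold for every centred chart. The inclusion $B( y, \, 10 r_{0} ) \subset U_{y}$ is immediate once $10 r_{0} < 1/A_{0}^{2}$. For $f_{y}( B( y, \, 10 r_{0} ) ) \subset \beta \mathbb{D}$, observe that $B( y, \, 1/( 2 A_{0}^{2} ) ) \subset U_{y}$ and $f_{y}( U_{y} ) = \mathbb{D}$, so $l_{ f_{y} }( y, \, 1/( 2 A_{0}^{2} ) ) \leq 1$; the quasisymmetry of $f_{y}$ then gives
\begin{equation*}
	L_{ f_{y} }( y, \, 10 r_{0} )
	\leq
	\eta\left( 20 A_{0}^{2} r_{0} \right)
	l_{ f_{y} }\left( y, \, \frac{ 1 }{ 2 A_{0}^{2} } \right)
	\leq
	\eta\left( 20 A_{0}^{2} r_{0} \right),
\end{equation*}
so it suffices to take $r_{0}$ with $\eta( 20 A_{0}^{2} r_{0} ) < \beta$. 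For the lower bound $\alpha \mathbb{D} \subset f_{y}( B( y, \, r_{0} ) )$ I would run the estimate in reverse: since $f_{y}$ maps $U_{y}$ onto $\mathbb{D}$ there is $t \in ( 0, \, \diam U_{y} )$ with $L_{ f_{y} }( y, \, t ) \geq 1/2$, and as $\diam U_{y} \leq 2$ the ratio $t / r_{0}$ is at most a constant $C_{1}$ determined by the data; quasisymmetry then yields $l_{ f_{y} }( y, \, r_{0} ) \geq L_{ f_{y} }( y, \, t ) / \eta( t / r_{0} ) \geq 1 / ( 2 \eta( C_{1} ) ) =: \alpha$. Since every $q \in U_{y}$ with $d( y, \, q ) \geq r_{0}$ satisfies $\abs{ f_{y}( q ) } \geq \alpha$, this is exactly $\alpha \mathbb{D} \subset f_{y}( B( y, \, r_{0} ) )$, and the chain $\alpha\mathbb{D} \subset f_{y}( B( y, \, r_{0} ) ) \subset f_{y}( B( y, \, 10 r_{0} ) ) \subset \beta\mathbb{D}$ of \ref{QS:good:subatlas.4} follows.

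Finally I would apply the covering lemma of \cite{compactqs} to $\left\{ B( y, \, r_{0} ) \right\}_{ y \in Y_{d} }$. As $Y_{d}$ is compact, a maximal $2 r_{0}$-separated set $\left\{ x_{1}, \, \dots, \, x_{n} \right\}$ is finite; by maximality $\left\{ B( x_{j}, \, 2 r_{0} ) \right\}_{ j }$ covers $Y_{d}$, and by separation the balls $B( x_{j}, \, r_{0} )$ are pairwise disjoint, which is \ref{QS:good:subatlas.2} and \ref{QS:good:subatlas.3}. Writing $( U_{j}, \, f_{j} ) = ( U_{ x_{j} }, \, f_{ x_{j} } )$ and noting $B( x_{j}, \, 2 r_{0} ) \subset U_{j}$, the family $\mathcal{A}_{\beta} = \left\{ ( U_{j}, \, f_{j} ) \right\}_{ j = 1 }^{ n }$ is an atlas, so \ref{QS:good:subatlas.1} holds, and \ref{QS:good:subatlas.4} holds by the previous paragraph. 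The bound on $n$ comes from Ahlfors $2$-regularity: the disjoint balls $B( x_{j}, \, r_{0} )$ each carry $\mathcal{H}^{2}$-mass at least $\tfrac{1}{4} C_{A}^{-1} r_{0}^{2}$ while $\mathcal{H}^{2}( Y_{d} ) \leq C_{A}$, so $n \leq 4 C_{A}^{2} r_{0}^{-2}$, a quantity depending only on the data and $\beta$. The main obstacle is the bookkeeping in the middle step: one must track the dependence of $r_{0}$, $\alpha$, and $n$ on the data through the nested quasisymmetry inequalities, and verify that the inner scales $10 r_{0}$ and $1/( 2 A_{0}^{2} )$ are genuinely admissible in the quasisymmetry estimate for $f_{y}$ --- i.e.\ smaller than $\diam U_{y}$ --- which uses only that $Y_{d}$ is a connected surface of diameter $1$.
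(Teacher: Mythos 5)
Your proof is correct and follows the approach the paper intends: the paper delegates this lemma to Lemma~10 of \cite{compactqs} in combination with \Cref{uniformlygoodcharts}, and your reconstruction --- charts at the fixed scale $1/A_{0}$ from \Cref{uniformlygoodcharts}, a maximal $2r_{0}$-separated net for parts \ref{QS:good:subatlas.2}--\ref{QS:good:subatlas.3}, two-sided quasisymmetry estimates to arrange the nested inclusions of \ref{QS:good:subatlas.4}, and Ahlfors $2$-regularity to bound $n$ --- is precisely the standard content of that cited lemma, with the dependence of $r_{0}$, $\alpha$, $n$ on the data and $\beta$ tracked correctly.
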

The rest of the proof follows along the proof of Theorem 12 of \cite{compactqs}, therefore we only recall the main points. We prove that $v = u^{-1} = \id_{Y} \colon Y_{d} \rightarrow Y_{d_{G}}$ is $\eta_{2}$-quasisymmetric with $\eta_{2}$ depending only on the data of $Y_{d}$. This is sufficient for the claim.

Claim ($\alpha$): For each $j = 1, \, 2, \, \dots, \, n$, the restriction of $v$ to each ball $B( x_{j}, \, 10 r_{0} )$ is $\eta_{1}$-quasisymmetric with $\eta_{1}$ depending only on the data of $Y_{d}$.

Proof of Claim ($\alpha$): For each $j = 1, \, 2, \, \dots, \, n$, let $\phi_{j}$ denote the restriction of $v \circ f_{j}^{-1}$ to the set $W_{j} = f_{j}( B( x_{j}, \, 10 r_{0} ) )$. \Cref{QS:good:subatlas} Part \ref{QS:good:subatlas.4} and \Cref{QS:restriction:is:controlled} show that $\phi_{j}$ is $\widetilde{\eta}$-quasisymmetric with $\widetilde{\eta}$ independent of the data of $Y_{d}$. By assumption, the restriction of $f_{j}$ to $B( x_{j}, \, 10r_{0} )$ is $\eta$-quasisymmetric with $\eta$ depending only on the data of $Y_{d}$. In conclusion,
\begin{equation*}
	\restr{ v }{ B(x_{j}, \, 10r_{0}) }
	=
	\restr{ \phi_{j} }{ W_{j} }
	\circ
	\restr{ f_{j} }{ B( x_{j}, \, 10r_{0} ) }
\end{equation*}
is $\eta_{1} = \widetilde{\eta} \circ \eta$-quasisymmetric. This concludes the proof.

Claim ($\beta$): The Lebesgue number $L$ of the cover $\left\{ B( x_{j}, \, 10r_{0} ) \right\}_{ j = 1 }^{ n }$ is $8r_{0}$. This is clear from \ref{QS:good:subatlas.3} of \Cref{QS:good:subatlas}.

Claim ($\gamma$): For each $x, \, x' \in Y_{d}$ with $d( x, \, x' ) = 4r_{0}$, we have that
\begin{equation*}
	d_{G}( v(x), \, v(y) ) \geq \delta = C^{-1}\diam Y_{d_{G}},
\end{equation*}
where $C$ depends only on the data of $Y_{d}$.

Proof of Claim ($\gamma$): This is a delicate part of the proof of \cite[Theorem 12]{compactqs}. See \cite{compactqs} for the proof.

A theorem by Tukia and Väisälä \cite[Theorem 2.23]{qs:localtoglobal} (see also \cite[Theorem 4]{compactqs}) states that $v$ is $\eta_{2}$-quasisymmetric, where $\eta_{2}$ depends only on $\eta_{1}$ from Claim ($\alpha$), and the ratios $\frac{ \diam Y_{d} }{ L } = \frac{ 1 }{ L }$ and $\frac{ \diam Y_{d_{G}} }{ \delta }$. Then Claims ($\alpha$) to ($\gamma$) and \Cref{QS:good:subatlas} show that $\eta_{2}$ depends only on the data of $Y_{d}$.
\end{proof}
Next we prove \Cref{thm:compact:QS:unif}. Recall the statement. \thmcompactQSunif*
By recalling the proof of \Cref{thm:uniformization:general}, we find that "the map in \Cref{thm:uniformization:general}" is the uniformization map. Therefore the claim follows from \Cref{prop:sphere:isothermalgood:uniformization,prop:compact:generalization}.
\section{Open problems}\label{sec:concluding}
After proving \Cref{cor:dilatations}, we have a better understanding of quasiconformal maps between locally reciprocal surfaces. Also, \Cref{cor:conformality} and \Cref{conformalmaps:betweenreciprocal} provide insight to when two such surfaces are conformally equivalent.
\begin{openproblem}\label{end:Q1}
Let $Y_{d}$ be locally reciprocal. Is the metric surface $Y_{d}$ conformally equivalent to a metric surface $Z_{d}$ where $Z_{d}$ has desirable geometric properties?
\end{openproblem}
Some of the desirable properties are listed below.
\begin{enumerate}[label=(\alph*)]
\item\label{end:Q1.1} The space $Z_{d}$ is $\sqrt{2}$-biLipschitz equivalent to $Y_{d_{G}}$. Here $Y_{d_{G}}$ is the complete Riemannian surface of constant curvature $-1$, $0$ or $1$ obtained from \Cref{maximal:atlas:metric};
\item\label{end:Q1.2} The space $Z_{d}$ is biLipschitz equivalent to a complete Riemannian surface whose curvature is bounded from below;
\item\label{end:Q1.3} The space $Z_{d}$ has locally $2$-bounded geometry in the sense of \cite{locallyboundedgeometry};
\item\label{end:Q1.4} The minimal upper gradient of $\id_{Z}$ equals $\chi_{ Z }$ $\mathcal{H}^{2}_{Z}$-almost everywhere;
\item\label{end:Q1.5} The space $Z_{d}$ is $2$-rectifiable.
\end{enumerate}
The Properties \ref{end:Q1.1} to \ref{end:Q1.5} are listed in such a way that the preceding one is always stronger.

Property \ref{end:Q1.1} is motivated by the Riemannian version of the classical uniformization theorem. The classical result states that every smooth Riemannian metric is conformally equivalent to a Riemannian metric of constant curvature. In the setting of locally reciprocal surfaces, Property \ref{end:Q1.1} would be a close analog of this statement. We outline an argument below why this would be expected to be true.

Suppose $Y_{d}$ is a smooth surface whose distance $d$ is obtained from a continuous Finslerian norm field $\apmd{}$. This means that $\apmd{}(x)$ is a norm at every point of $x \in Y_{d}$ and that the map $x \mapsto \apmd{}(x)$ is a continuous section of the norm bundle of $Y$. The distance $d$ is obtained by minimizing the length functional induced by $\apmd{}$ over the paths that are absolutely continuous with respect to the smooth structure on $Y$. This is what we mean when we say that $d$ is obtained from the continuous norm field $\apmd{}$.

Let $G$ denote the Riemannian norm field obtained from \Cref{maximal:atlas:metric}. Then the norm field $\apmd{d}$ has $\apmd{}$ as its representative (this is not difficult to prove using the continuity of $\apmd{}$ but it also follows from \cite{lipmanifolds}). We rescale the Finslerian norm $\apmd{d}$ with the operator norm of
\begin{equation*}
	D\id \colon ( TY, \, \apmd{d} ) \rightarrow ( TY, \, G ),
\end{equation*}
which is the minimal upper gradient of the inverse of the uniformization map, to obtain a continuous Finslerian norm $H$ that satisfies
\begin{equation*}
	G \leq H \leq \rho( G, \, \apmd{d} )G,
\end{equation*}
where $\rho( G, \, \apmd{d} )$ is the Banach--Mazur distance between $G$ and $\apmd{d}$. The latter inequality uses the fact that the uniformization map is isothermal.

John's theorem implies that $\rho( G, \,\apmd{d} ) \leq \sqrt{ 2 }$. If $d_{H}$ is the geodesic distance induced by $H$, then $Y_{d_{H}}$ has Property \ref{end:Q1.1}: the metric space $Z_{d} = Y_{d_{H}}$ is $\sqrt{2}$-biLipschitz equivalent to $Y_{d_{G}}$ and the identity map from $Y_{d}$ to $Z_{d}$ is conformal. Consequently, \Cref{end:Q1} can be answered affirmatively if the distance $d$ is obtained from a continuous Finslerian distance.

More generally, the norm field $\apmd{d}$ does not have much regularity aside from $L^{2}_{loc}$-integrability (recall that the minimal upper gradients of the uniformization map $u$ and its inverse $u^{-1}$ are elements of $L^{2}_{loc}$). Nevertheless, suppose that we try to define $H$ as above. Then the distance $d_{H}$ should be the same even if we change the representative of $\apmd{d}$ (or $H$). Even if we take this into account by defining $d_{H}$ carefully --- for example, by following the techniques of Section 3 of \cite{lipmanifolds} --- it is not clear that we can guarantee that $H = \apmd{d_{H}}$ almost everywhere; see Section 5 of \cite{lipmanifolds} for a related problem on Lipschitz manifolds.
\begin{openproblem}\label{end:Q2}
Even if the norm field $\apmd{d}$ of the distance $d$ is not continuous, is the surface $Y_{d_{H}}$ always conformally equivalent to the locally reciprocal surface $Y_{d}$? If not, which geometric or analytic assumptions on the surface $Y_{d}$ guarantee that $Y_{d_{H}}$ is conformally equivalent to $Y_{d}$?
\end{openproblem}
If $\apmd{d}$ is induced by an inner product almost everywhere, we can set $Z_{d} = Y_{d_{G}}$ to guarantee Property \ref{end:Q1.1} since the uniformization map is conformal in this case (\Cref{prop:Möbiusgroup}). The property (ET) studied in \cite[Section 11]{metricderivative} is relevant to this special case.

Continuing down the list of desirable properties, clearly Property \ref{end:Q1.1} implies \ref{end:Q1.2}. Property \ref{end:Q1.2} implies \ref{end:Q1.3}, see \cite{locallyboundedgeometry}. Property \ref{end:Q1.3} implies \ref{end:Q1.4} by Lemma 8.17 of \cite{locallyboundedgeometry} and \Cref{identity:minimal:upper:gradient}. Property \ref{end:Q1.4} implies \ref{end:Q1.5} as a consequence of \Cref{lem:Condition(N):inverse} and \Cref{newtonian:seminorm}. We recall from Proposition 17.1 of \cite{uniformization} that there exists a locally reciprocal surface that is not $2$-rectifiable. That example is conformally equivalent to $\mathbb{R}^{2}$, therefore Property \ref{end:Q1.1} holds in that case.
\bibliographystyle{amsalpha}
\bibliography{UniformizationIkonen}
\end{document}